\newcommand{\RR}{\ensuremath{\mathbb{R}}}
\newcommand{\CC}{\ensuremath{\mathbb{C}}}
\newcommand{\NN}{\ensuremath{\mathbb{N}}}
\newcommand{\eps}{\ensuremath{\varepsilon}}
\newcommand{\abs}[1]{\left\vert #1 \right\vert}
\newcommand{\nor}[1]{\left\Vert #1 \right\Vert}
\newcommand{\vA}{\ensuremath{\mathbf{A}}}
\newcommand{\va}{\ensuremath{\mathbf{a}}}
\newtheorem{thm}{Theorem}[section]
\newtheorem{prop}[thm]{Proposition}
\newtheorem{cor}[thm]{Corollary}
\newtheorem{lem}[thm]{Lemma}
\newtheorem{rem}[thm]{Remark}
\title{Eigenvalue  variation under moving mixed
Dirichlet--Neumann boundary conditions and applications}
\author{L. Abatangelo\footnote{
Dipartimento di Matematica e Applicazioni,
 Universit\`a di Milano--Bicocca,
Via Cozzi 55, 20125 Milano, Italy,
\texttt{laura.abatangelo@unimib.it}}, V. Felli\footnote{
Dipartimento di Scienza dei Materiali,
 Universit\`a di Milano--Bicocca,
Via Cozzi 55, 20125 Milano, Italy,
\texttt{veronica.felli@unimib.it}},
C. L\'ena\footnote{
Matematiska institutionen, Stockholms universitet, 106 91 Stockholm, Sweden, \texttt{corentin@math.su.se}
}}
\date{Revised version,
 March 27, 2019}
\begin{document}

\maketitle

\begin{abstract}
  We deal with the sharp asymptotic behaviour of eigenvalues of
  elliptic operators with varying mixed Dirichlet--Neumann boundary
  conditions.  In case of simple eigenvalues, we compute explicitly
  the constant appearing in front of the expansion's leading term.
  This allows inferring some remarkable consequences for
  Aharonov--Bohm eigenvalues when the singular part of the operator
  has two coalescing poles.
\end{abstract}

\paragraph{Keywords.} Mixed boundary conditions, asymptotics of eigenvalues, Aharonov--Bohm eigenvalues.

\paragraph{MSC classification.} Primary: 35P20; Secondary: 35P15, 35J25.

\section{Introduction and main results}

The present paper deals with elliptic  operators with varying mixed
Dirichlet--Neumann boundary conditions and their spectral stability
under varying of the Dirichlet and Neumann boundary regions. More
precisely, we study the behaviour of eigenvalues under a homogeneous Neumann condition on a
portion of the boundary concentrating at 
 a point  and  a homogeneous Dirichlet boundary condition on
the complement.

Let $\Omega$ be a bounded
open set in $\RR^2_+:=\{(x_1,x_2)\in\RR^2:x_2>0\}$ having the following properties:
\begin{align}\label{eq:38}
&\text{$\Omega$ is Lipschitz},\\
\label{eq:40}
&\text{there exists $\eps_0>0$ such that $\Gamma_{\eps_0}:=[-\eps_0,\eps_0]\times\{0\}\subset \partial \Omega$}.
\end{align}
We consider the eigenvalue problem for the
Dirichlet Laplacian on the  domain $\Omega$
\begin{equation}\label{eqD}
\begin{cases}
  -\Delta\,u=\lambda\,u,& \mbox{in }\Omega,\\
  u=0,&\mbox{on } \partial\Omega.
         \end{cases}
\end{equation}
	We denote by $(\lambda_j)_{j\ge 1}$  the eigenvalues of Problem \eqref{eqD}, arranged in non-decreasing order and counted with multiplicities.
	
	For each $\eps\in (0,\eps_0]$, we also consider the following eigenvalue problem with mixed boundary conditions:
\begin{equation}\label{eqDND}
\begin{cases}
-\Delta\,u=\lambda\,u,& \mbox{in }\Omega,\\
u=0,&\mbox{on } \partial\Omega\setminus \Gamma_{\eps},\\
\frac{\partial u}{\partial \nu}=0,&\mbox{on } \Gamma_{\eps},
\end{cases}
\end{equation}
	with $\Gamma_{\eps}:=[-\eps,\eps]\times\{0\}$, see Figure \ref{fig:f1}.
	We denote by $(\lambda_j(\eps))_{j\ge 1}$  the eigenvalues of Problem \eqref{eqDND}, arranged in non-decreasing order and counted with multiplicities.

	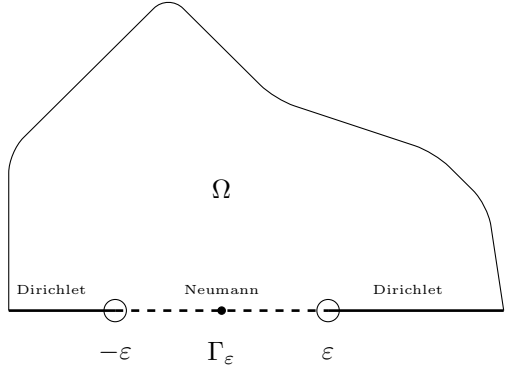
\begin{figure}\label{fig:f1}
	\centering
	\begin{tikzpicture}[scale=0.7]
 \draw[rounded corners=1.8ex]
 (0,0)  --
(0,3)  --
(3,6)  --
(5,4)  --
(8,3)  --
(9,2)  --
(9.3,0); 
 \draw[dashed,line width=1pt](2,0) -- (6,0);
  \draw (4,0.4) node {\tiny Neumann};
  \draw[line width=1pt](0,0) -- (2,0);
 \draw (0.8,0.4) node {\tiny Dirichlet};
\draw[line width=1pt](6,0) -- (9.3,0);
 \draw (7.5,0.4) node {\tiny Dirichlet};
\draw  (2,0) circle (6pt);
\draw[fill=black]  (4,0) circle (2pt);
\draw (2,-0.8) node {$-\eps$};
 \draw  (6,0) circle (6pt);
 \draw (6,-0.8) node {$\eps$};
 \draw (4,-0.8) node {$\Gamma_\eps$};
\draw (4,2.3) node {$\Omega$};
\end{tikzpicture}
\caption{The mixed boundary condition problem in the  domain $\Omega$.}
\end{figure}
	
A rigorous weak formulation of the eigenvalue problems described above
can be given as follows. For $\eps \in (0,\eps_0]$, we define
\begin{equation*}
  \mathcal{Q}_{\eps}=\left\{u\in H^1(\Omega):\,\chi_{\partial\Omega\setminus\Gamma_{\eps}}\gamma_0(u)=0\mbox{ in } L^2(\partial \Omega)\right\},
\end{equation*}
where $\gamma_0$ is the trace operator from $H^1(\Omega)$ to
$L^2(\partial \Omega)$, which is a continuous linear mapping (see for
instance \cite[Definition 13.2]{Tartar2007Sobolev}) and
$\chi_{\partial\Omega\setminus\Gamma_{\eps}}$ is the indicator function of $\partial\Omega\setminus\Gamma_{\eps}$ in
$\partial \Omega$. Furthermore, we define the quadratic form $q$ on
$H^1(\Omega)$ by
\begin{equation}
		\label{eqQuad}
		q(u):=\int_{\Omega}\left|\nabla u\right|^2\,dx.
\end{equation}
Let us denote by $q_0$ the restriction of $q$ to $H^1_0(\Omega)$ and
by $q_{\eps}$ the restriction of $q$ to $\mathcal Q_{\eps}$.  The
sequences $(\lambda_j)_{j\ge 1}$ and $(\lambda_j(\eps))_{j\ge 1}$ for
$\eps\in (0,\eps_0]$ can then be defined by the min-max principle:
\begin{equation}\label{eqMinMaxD}
  \lambda_j:=\min_{\substack{
\mathcal E\subset H^1_0(\Omega)\text{ subspace}\\
\mbox{dim}(\mathcal E)=j}}\max_{u\in \mathcal E} \frac{q(u)}{\|u\|^2}
\end{equation}
	and 
\begin{equation}\label{eqMinMaxDND}
  \lambda_j(\eps):=\min_{\substack{\mathcal E\subset
      \mathcal{Q}_{\eps}\text{ subspace}\\
\mbox{dim}(\mathcal E)=j}}\max_{u\in \mathcal E} \frac{q(u)}{\|u\|^2},
\end{equation}
where 
\[
\|u\|^2=\int_{\Omega}u^2(x)\,dx.
\]
Since $H^1(\Omega)$ is compactly embedded in $L^2(\Omega)$ (see
e.g. \cite[Lemma 18.4]{Tartar2007Sobolev}), the eigenvalues of $q_0$,
defined by Equation \eqref{eqMinMaxD}, and those of $q_{\eps}$, defined
by Equation \eqref{eqMinMaxDND}, are of finite multiplicity, and form sequences tending to $+\infty$.


\begin{rem} \label{remIneq}
 Let us fix $\eps_1$ and $\eps_2$ in
  $(0,\eps_0]$ such that $\eps_1>\eps_2$. Since
  $H^1_0(\Omega)\subset\mathcal Q_{\eps_2} \subset\mathcal Q_{\eps_1}$, the
  definitions given by Formulas \eqref{eqMinMaxD} and
  \eqref{eqMinMaxDND} immediately imply that
  $\lambda_j(\eps_1)\le \lambda_j(\eps_2)\le \lambda_j$ for each
  integer $j\ge 1$. The function
  $(0,\eps_0]\ni \eps\mapsto \lambda_j(\eps)$ is therefore
  non-increasing and bounded by $\lambda_j$ for each integer $j\ge1$.
\end{rem}

\begin{rem} \label{rem:conf_change}
For the sake of simplicity, in the present paper we assume that the
domain $\Omega$ satisfies assumption \eqref{eq:40}, i.e. that $\partial\Omega$
is straight in a neighborhood of $0$. We observe that, since we are in
dimension $2$, 
this assumption is not restrictive. Indeed, starting from a general
sufficiently regular domain $\Omega$, a conformal
transformation leads us to consider a new domain satisfying
\eqref{eq:40}, see e.g. \cite{FFFN}. The counterpart is the appearance of a
conformal weight in the new eigenvalue problem; however, if $\Omega$
is sufficiently regular, the
weighted problem presents no additional difficulties.
\end{rem}

The purpose of the present paper is to study the
  eigenvalue function  $\eps\mapsto\lambda_j(\eps)$ as
$\eps\to 0^+$. The continuity of this map as well as some asymptotic
expansions were obtained in \cite{Gad} (see also Appendix C of the
present paper for an alternative proof of some results of
\cite{Gad}).  Here we mean to provide some
explicit characterization of the leading terms in the expansion given
in \cite{Gad} and of the limit profiles arising from blowing-up of eigenfunctions.

Spectral stability and asymptotic expansion of the
  eigenvalue variation in a somehow complementary setting were
  obtained in \cite{AFHL2016}; indeed, if we consider  the eigenvalue problem under
  homogeneous Dirichlet boundary conditions on a
vanishing portion of a straight part of the  boundary, Neumann
conditions on
the complement in the straight part and Dirichlet conditions elsewhere, by
a reflection the problem becomes
equivalent to the one studied in \cite{AFHL2016}, i.e. a Dirichlet eigenvalue problem in a domain with a small
segment removed.

Related spectral stability results were discussed in
\cite[Section 4]{ColoradoPeral2003} for the first eigenvalue under mixed
Dirichlet-Neumann boundary conditions on a smooth bounded domain
$\Omega\subset\RR^N$ ($N\geq3$), both for  vanishing
Dirichlet boundary portion  and for vanishing Neumann boundary portion.

We also mention that some regularity results for solutions to
  second-order elliptic problems with mixed Dirichlet--Neumann type
  boundary conditions were obtained in \cite{Kassmann,Savare}, see
  also the references therein, whereas asymptotic expansions at
  Dirichlet-Neumann boundary  junctions
were derived in \cite{FFFN}.

Let us assume that 
\begin{equation}\label{eq:6}
\lambda_N \text{ (i.e. the $N$-th eigenvalue of $q_0$) is simple}.
\end{equation}
Let $u_N$ be a normalized eigenfunction associated to $\lambda_N$, i.e. $u_N$ satisfies 
  \begin{equation}\label{eq:5}
   \begin{cases}
 -\Delta u_N=\lambda_N u_N,&\text{in }\Omega,\\
u_N=0,&\text{on }\partial\Omega,\\
\int_\Omega u_N^2(x)\,dx=1.
\end{cases}
\end{equation}
It is known (see \cite{Gad}) that, under assumption \eqref{eq:6}, the
 rate of the convergence $\lambda_N^\eps\to\lambda_N$ is strongly related to 
the order of vanishing of the Dirichlet
eigenfunction $u_N$ at $0$.
Moreover $u_N$ has an integer order of vanishing $k\geq1$
at $0\in\partial\Omega$ and there exists
  $\beta\in\RR\setminus\{0\}$ such that 
\begin{align}\label{eq:orderk}
r^{-k} u_N(r\cos t,r\sin t ) \to \beta
\sin(kt) \text{ as }r\to 0 \text{ in }C^{1,\tau}([0,\pi]) 
\end{align}
for any $\tau\in(0,1)$, see e.g. \cite[Theorem 1.1]{FF}.
Actually, one can see that $\beta$ is directly linked to the norm of the $k$-th differential of $u_N$ at $0$. More precisely, if we consider 
\[
 \| d^{j}u (x) \|^2 := \sum_{i_1,\ldots,i_j=1}^2 \left| \frac{\partial ^j u}{\partial x_{i_1}\ldots \partial x_{i_j}}(x) \right|^2 ,
\]
then 
\[
\beta^2 = \frac{\| d^{k}u_N (0) \|^2}{(k!)^2 \,2^{k-1}}.
\]
 This follows by differentiating the harmonic homogeneous functions $\beta r^k \sin(kt)$ and $\beta r^k \cos(kt)$ with respect to $x_1$ and $x_2$ and considering $d^{k}u_N (0)$.

Our main results provide sharp
asymptotic estimates with explicit coefficients for the eigenvalue variation
$\lambda_N-\lambda_N(\eps)$ as $\eps\to0^+$ under assumption
\eqref{eq:6} (Theorem \ref{t:new_main}),
as well as an explicit representation in elliptic
  coordinates of the limit blow-up
  profile for the corresponding eigenfunction $u_N^\eps$
  (Theorem \ref{t:blowup2}).

\begin{thm}\label{t:new_main}
Let $\Omega$ be a bounded open set in $\RR^2$ satisfying 
\eqref{eq:38} and \eqref{eq:40}.
Let $N\geq 1$ be such that 
the $N$-th eigenvalue $\lambda_N$ of $q_0$ on $\Omega$  is simple 
 with associated eigenfunctions having  in
  $0$ a zero of order $k$ with $k$ as in \eqref{eq:orderk}. For
  $\eps\in (0,\eps_0)$, let $\lambda_N(\eps)$ be
  the $N$-th eigenvalue of $q_\eps$ on $\Omega$. 
Then 
\begin{equation*}
\lim_{\eps\to 0^+}\frac{\lambda_N-\lambda_N(\eps)}{\eps^{2k}}=
 \beta^2\, \frac{k\pi}{2^{2k-1}}
\binom{k-1}{\left\lfloor\frac{k-1}2\right\rfloor}^{\!2}
\end{equation*}
with $\beta\neq0$ being as in \eqref{eq:5}--\eqref{eq:orderk}.
\end{thm}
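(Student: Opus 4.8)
The plan is to prove this asymptotic via a variational/capacitary approach, treating the Neumann "hole" $\Gamma_\eps$ as a perturbation of the Dirichlet problem. Write $\lambda_N-\lambda_N(\eps)$ as a quotient whose numerator and denominator I estimate separately. Concretely, I expect the standard Courant--Fischer comparison to give
\begin{equation*}
\lambda_N-\lambda_N(\eps) \sim \frac{\text{(some quadratic functional of }u_N\text{ restricted near }\Gamma_\eps)}{\|u_N\|^2},
\end{equation*}
and since $u_N$ is $L^2$-normalized the denominator is $1+o(1)$. The key analytic object should be the $\eps$-\emph{capacity} of the segment $\Gamma_\eps$ relative to the behaviour of $u_N$ at $0$: one introduces the function $w_\eps\in H^1(\Omega)$ that agrees with $u_N$ on $\partial\Omega\setminus\Gamma_\eps$, is "harmonic-like" (i.e. minimizes the Dirichlet energy, or the energy with the $-\lambda_N$ zeroth-order term) subject to having a free Neumann condition on $\Gamma_\eps$, and use $u_N - w_\eps$ (suitably corrected) as a test function in the min-max for $\lambda_N(\eps)$ and, conversely, use an extension of eigenfunctions of $q_\eps$ as competitors for $\lambda_N$. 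This two-sided estimate is the technical heart and I expect it to already be essentially available from \cite{Gad} (or Appendix C); the new content is the \emph{explicit} constant.

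The second and genuinely new step is the blow-up analysis that identifies the constant. Rescale by $\eps$: set $u_N^\eps(\eps x)/\eps^k$ (and similarly for the corrector) and pass to the limit. By \eqref{eq:orderk}, $\eps^{-k}u_N(\eps\,\cdot\,)\to \beta\,r^k\sin(k\theta)$, so after rescaling the leading-order "energy defect" is governed by a \emph{limit problem on the half-plane} $\RR^2_+$: find the function $\Psi$ that behaves like $r^k\sin(k\theta)$ at infinity, is harmonic in $\RR^2_+$, vanishes on $\{x_2=0\}\setminus[-1,1]$ and has vanishing normal derivative on the unit segment $[-1,1]\times\{0\}$. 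The constant in the theorem should come out as $\beta^2$ times a purely numerical quantity — the renormalized energy (a "capacity-type" number) of this limit profile,
\begin{equation*}
\mathcal{C}_k := \lim_{R\to\infty}\Bigl(\int_{B_R^+} |\nabla(\text{difference of }\Psi\text{ and its harmonic extension})|^2 - (\text{explicit }R\text{-dependent divergent part})\Bigr),
\end{equation*}
which must be computed in closed form. This is where elliptic coordinates enter: the segment $[-1,1]$ is a degenerate ellipse, and in coordinates $x_1+ix_2 = \cosh(\mu+i\phi)$ (or $=\sin$, depending on normalization) the mixed boundary condition becomes a clean separated problem. Expanding $r^k\sin(k\theta) = \operatorname{Im}((x_1+ix_2)^k)$ in these coordinates and solving mode-by-mode, the Neumann-vs-Dirichlet alternation on $\{x_2=0\}$ forces keeping only the "odd" or "even" harmonics of $\phi$, and summing the resulting Fourier coefficients produces the central binomial coefficient $\binom{k-1}{\lfloor (k-1)/2\rfloor}$; its square appears because the energy is quadratic. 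The factor $k\pi/2^{2k-1}$ should emerge from the Jacobian of the elliptic coordinate change together with the normalization of $\operatorname{Im}(z^k)$ on the unit-segment scale.

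So the steps, in order, are: \textbf{(1)} set up the two-sided variational comparison and reduce $\lim \eps^{-2k}(\lambda_N-\lambda_N(\eps))$ to $\beta^2$ times the renormalized energy $\mathcal{C}_k$ of the half-plane limit profile with mixed boundary data $r^k\sin(k\theta)$ at infinity; \textbf{(2)} rigorously justify the blow-up convergence $\eps^{-k}u_N^\eps(\eps\,\cdot\,)\to \Psi_k$ in an appropriate weighted (Beppo-Levi / $\mathcal{D}^{1,2}$-type) space on $\RR^2_+\setminus[-1,1]$, controlling the behaviour at infinity using \eqref{eq:orderk} and the min-max monotonicity of Remark \ref{remIneq} for energy bounds; \textbf{(3)} solve the limit problem explicitly in elliptic coordinates and evaluate $\mathcal{C}_k$ in closed form; and \textbf{(4)} assemble the constant and record the eigenfunction limit profile (which is Theorem \ref{t:blowup2}). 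The main obstacle will be Step (3) together with the matching normalization in Step (1): getting the renormalized/regularized energy right — i.e. correctly subtracting the divergent part coming from the $r^k\sin(k\theta)$ growth at infinity and carefully tracking every power of $2$ and factor of $\pi$ through the elliptic-coordinate Fourier expansion — is delicate, and a secondary difficulty is ensuring the blow-up limit is \emph{unique} (no loss of energy/mass at infinity), which requires a quantitative decay estimate on $u_N^\eps - w_\eps$ away from $\Gamma_\eps$, uniform in $\eps$.
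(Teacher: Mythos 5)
Your proposal takes essentially the same route as the paper: invoke the existing asymptotics from \cite{Gad} (re-proved in Appendix C via Almgren monotonicity and blow-up) to reduce the claim to an explicit computation of the energy of the half-plane limit profile $w_k$ satisfying \eqref{eq:wk}, then solve that limit problem exactly in elliptic coordinates $x_1+ix_2=\cosh(\xi+i\eta)$ and read off the constant from the resulting trigonometric expansion. One small gap worth flagging: you gloss over the closing algebraic step — the elliptic-coordinate calculation produces $\sum_{j=0}^{\lfloor(k-1)/2\rfloor}(k-2j)\binom{k}{j}^2$, and the identity reducing this to $k\binom{k-1}{\lfloor(k-1)/2\rfloor}^2$ (the paper's Lemma \ref{lemSumBin}, proved via Vandermonde) is a nontrivial combinatorial step, not merely "the square appears because the energy is quadratic" — but the overall strategy and the role of elliptic coordinates are exactly as in the paper.
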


\begin{thm}\label{t:blowup2}
Let $\Omega$ be a bounded open set in $\RR^2$ satisfying 
\eqref{eq:38}  and \eqref{eq:40}.
Let $N\geq 1$ be such that 
the $N$-th eigenvalue $\lambda_N$ of $q_0$ on $\Omega$  is simple 
 with associated eigenfunctions having  in
  $0$ a zero of order $k$ with $k$ as in \eqref{eq:orderk}. For
  $\eps\in [0,\eps_0)$, let $\lambda_N(\eps)$ be
  the $N$-th eigenvalue of $q_\eps$ on $\Omega$
and $u_N^{\eps}$ be an
associated eigenfunction satisfying $\int_\Omega |u_N^{\eps}|^2\,dx=1$
and $\int_{\Omega}u_N^{\eps}\,u_N \,dx\ge0$. Then 
  \[
\eps^{-k}u_N^\eps(\eps x)\to
\beta (\psi_k+W_k\circ F^{-1})\quad\text{as
}\eps\to0^+
\]
in $H^{1}_{\rm loc}(\overline{\RR^2_+})$, a.e. and in 
$C^{2}_{\rm loc}(\overline{\RR^2_+}\setminus\{(1,0),(-1,0)\})$,
 where $\beta$ is as in \eqref{eq:5}--\eqref{eq:orderk},
\begin{align}\label{eq:psi_k}
& \psi_k(r\cos t,r\sin t)=r^k \sin(kt), \quad\text{for }t\in[0,\pi] \text{ and }r>0,\\
\label{eq:44}
&F(\xi,\eta)=(\cosh(\xi)\cos(\eta),\sinh(\xi)\sin(\eta)),\quad
  \text{for }\xi\geq0,\ \eta\in[0,2\pi),
\end{align}
 and 
 \begin{equation}\label{eq:W_k}
    W_k(\xi,\eta)=\frac1{2^{k-1}}\sum_{j=0}^{\left\lfloor{\frac{k-1}2}\right\rfloor}
\binom kj
\exp(-(k-2j)\xi)\sin((k-2j)\eta).
 \end{equation}
\end{thm}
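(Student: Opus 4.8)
The plan is to prove Theorem \ref{t:blowup2} by a blow-up analysis of the eigenfunctions $u_N^\eps$ at the shrinking Neumann window $\Gamma_\eps$, following the standard scheme for such problems: establish compactness of the rescaled family, identify the limit equation on the half-plane, and then solve that limit problem explicitly in elliptic coordinates.

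\medskip

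\textbf{Step 1: Set-up and rescaling.} First I would introduce the scaled functions $v^\eps(x):=\eps^{-k}u_N^\eps(\eps x)$, defined on the dilated domain $\Omega_\eps:=\eps^{-1}\Omega$, which invades $\RR^2_+$ as $\eps\to0^+$. From \eqref{eqDND}, $v^\eps$ satisfies $-\Delta v^\eps=\eps^2\lambda_N(\eps)\,v^\eps$ in $\Omega_\eps$, with $v^\eps=0$ on $\partial\Omega_\eps\setminus([-1,1]\times\{0\})$ and $\partial_\nu v^\eps=0$ on $[-1,1]\times\{0\}$. The key preliminary input — which I expect to have been established in the earlier sections of the paper (analogous to the energy/Almgren-type estimates in \cite{AFHL2016,FF}) — is a uniform bound of the form $\int_{\Omega_\eps\cap B_R}|\nabla v^\eps|^2+\int_{\Omega_\eps\cap B_R}|v^\eps|^2\le C_R$ for every fixed $R>0$, together with the sharp blow-up rate $\eps^{-k}$ being the correct normalization (this is exactly where assumption \eqref{eq:6} and the order of vanishing $k$ enter, via \eqref{eq:orderk}). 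Granting this, Rellich's theorem and a diagonal argument give a subsequence along which $v^\eps\rightharpoonup V$ weakly in $H^1_{\rm loc}(\overline{\RR^2_+})$ and strongly in $L^2_{\rm loc}$, with $\eps^2\lambda_N(\eps)\to0$, so that the limit $V$ is \emph{harmonic} in $\RR^2_+$, satisfies the homogeneous Dirichlet condition on $\{x_2=0,\ |x_1|>1\}$ and the homogeneous Neumann condition on $\{x_2=0,\ |x_1|<1\}$.

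\medskip

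\textbf{Step 2: Matching condition at infinity and uniqueness of $V$.} Next I would pin down the behaviour of $V$ at infinity. Writing $u_N$ near $0$ via \eqref{eq:orderk}, the natural matching condition is $V(x)-\beta\,\psi_k(x)\to0$ (in a suitable sense, e.g. $o(|x|^k)$) as $|x|\to\infty$; equivalently $V=\beta\psi_k+Z$ where $Z$ is harmonic in $\RR^2_+$, decays at infinity, has the \emph{same} mixed boundary conditions except that on the Neumann segment $\partial_{x_2}Z=-\beta\,\partial_{x_2}\psi_k=-\beta\,\partial_{x_2}(r^k\sin k t)$, which on $\{x_2=0,|x_1|<1\}$ is an explicit polynomial in $x_1$. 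The claim is that this corrector is exactly $\beta\,W_k\circ F^{-1}$. To verify this I would pass to elliptic coordinates through the conformal map $F$ of \eqref{eq:44}: $F$ maps the half-strip $\{\xi>0,\ \eta\in(0,\pi)\}$ conformally onto $\RR^2_+$ minus the slit, sending $\{\eta=0\}\cup\{\eta=\pi\}$ (the $\xi$-axis, i.e. $|x_1|>1$) to the Dirichlet part and $\{\xi=0\}$ (i.e. $|x_1|<1$, $x_2=0$) to the Neumann part, and $\xi\to+\infty$ to $|x|\to\infty$. Harmonicity is preserved, so in $(\xi,\eta)$ the corrector must be a bounded-at-infinity (in fact exponentially decaying) harmonic function that is odd/$\pi$-antiperiodic in $\eta$ — hence a series $\sum_m c_m e^{-m\xi}\sin(m\eta)$ — matching the prescribed Neumann data $\partial_\xi(\cdot)|_{\xi=0}$ coming from $-\beta\partial_{x_2}\psi_k$ expressed in these coordinates. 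A direct computation, expanding $\sin^k$ (or using $\mathrm{Im}\,(e^{i\cdot})^k$) and reading off Fourier coefficients, yields precisely the finite sum \eqref{eq:W_k} with the binomial coefficients $\binom{k}{j}$ and the truncation at $\lfloor(k-1)/2\rfloor$; the uniqueness of a decaying harmonic function with given Neumann data on $\{\xi=0\}$ and Dirichlet data on the rest then forces $Z=\beta W_k\circ F^{-1}$ and identifies the whole weak limit $V=\beta(\psi_k+W_k\circ F^{-1})$, independently of the subsequence, so the full family converges. The sign normalization $\int_\Omega u_N^\eps u_N\ge0$ fixes the sign of $\beta$ consistently.

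\medskip

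\textbf{Step 3: Upgrading the convergence.} Finally I would bootstrap the topology of convergence. Interior elliptic estimates away from the two junction points $(\pm1,0)$, combined with boundary regularity for the Dirichlet problem and for the Neumann problem on the flat pieces, upgrade weak $H^1_{\rm loc}$ convergence to $C^2_{\rm loc}(\overline{\RR^2_+}\setminus\{(1,0),(-1,0)\})$; the $\eps^2\lambda_N(\eps)v^\eps$ right-hand side is harmless since it tends to $0$ in $L^p_{\rm loc}$. Strong $H^1_{\rm loc}$ convergence follows from testing the equations for $v^\eps$ and for $V$ against $(v^\eps-V)$ times a cutoff and using the $L^2_{\rm loc}$ strong convergence; a.e. convergence is then automatic along a further subsequence, hence for the whole family by uniqueness of the limit.

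\medskip

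\textbf{Main obstacle.} The genuine difficulty is Step 2 — more precisely, justifying the matching condition at infinity rigorously (that $V$ grows no faster than $|x|^k$ and that its leading homogeneous part is exactly $\beta\psi_k$, not a different $k$-homogeneous harmonic function) and proving the accompanying \emph{uniqueness} statement for the limit problem in an unbounded domain, where one must rule out spurious homogeneous harmonic solutions satisfying the mixed boundary conditions. This is where the doubling/monotonicity estimates from the earlier part of the paper are indispensable: they control the Almgren frequency of $v^\eps$ uniformly, preventing concentration of energy and forcing the limit to have the prescribed homogeneity at infinity. Once the limit problem is correctly posed on the half-strip in elliptic coordinates, the explicit identification of $W_k$ is a routine (if slightly tedious) Fourier computation.
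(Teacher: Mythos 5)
Your proposal is correct and follows essentially the same route as the paper, which factors the argument into (i) the blow-up convergence $\eps^{-k}u_N^\eps(\eps\,\cdot)\to\beta(\psi_k+w_k)$ (quoted from \cite{Gad} as Theorem \ref{t:gad}, with an alternative Almgren-monotonicity proof in Appendix \ref{sec:altern-proof-theor} along the lines of your Steps 1 and 3) and (ii) the explicit identification $w_k\circ F=W_k$ via the conformal change to elliptic coordinates (Proposition \ref{propwk}), exactly your Step 2. The only minor difference is that the paper expands $(\cosh\zeta)^k$ by the binomial theorem and then replaces $\sinh((k-2j)\xi)$ by the decaying $e^{-(k-2j)\xi}$ mode by mode, rather than "expanding $\sin^k$"; but your mechanism — Fourier modes on the half-strip matched to the prescribed Neumann trace and selected by decay and uniqueness — is precisely the one used.
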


Actually, 
the fact that $\lim_{\eps\to
  0^+}\frac{\lambda_N-\lambda_N(\eps)}{\eps^{2k}}$ is finite and
different from zero and the convergence of $\eps^{-k}u_N^\eps(\eps x)$
to some nontrivial profile was proved in the paper \cite{Gad}
with a
quite  implicit description of the limits (see also Appendix
\ref{sec:altern-proof-theor} for an alternative proof).
The original contribution of the present paper relies in the explicit 
characterization of the leading term of the expansion provided by \cite{Gad}
and in its  applications to Aharonov--Bohm operators, see
Section \ref{subs:IntroAB}.
The key tool  allowing us to write explicitly the coefficients of the
expansion consists in the use of elliptic coordinates, which turn out
to be  more suitable  
to our problem than
radial ones, see Section \ref{sec:explicit}.

\section{Applications to Aharonov--Bohm operators}
\label{subs:IntroAB}

The present work is in part motivated by the study of Aharonov--Bohm
eigenvalues.
In this section we describe some applications of Theorem
\ref{t:new_main} to the problem of spectral stability of
Aharonov--Bohm operators with two moving poles, referring to Section
\ref{s:AB} for the proofs.

 Let us first review  some definitions and known
results. For any point $\va=(a_1,a_2)\in \RR^2$, we define the
Aharonov--Bohm potential of circulation $1/2$ by
\begin{equation*}
	\vA_\va(x)=\frac12\left(\frac{-(x_2-a_2)}{(x_1-a_1)^2+(x_2-a_2)^2},\frac{x_1-a_1}{(x_1-a_1)^2+(x_2-a_2)^2}\right).
\end{equation*}
Let us consider an open and bounded open set $\widehat\Omega$ with Lipschitz boundary, such that $0\in\widehat\Omega$. For better readability, we denote by $\mathcal H$ the complex Hilbert space of complex-valued functions $L^2(\widehat\Omega,\CC)$, equipped with the scalar product defined, for all $u,v\in \mathcal H$, by 
\begin{equation*}
	\langle u,v\rangle:=\int_{\widehat\Omega}u\overline v\,dx.
\end{equation*}
We define, for $\va\in \widehat\Omega$,
\begin{equation}
\label{eq:QAB1Pole}
	\mathcal Q^{AB}_\va:=\left\{u\in  H^1_0\big(\widehat{\Omega},\CC\big)\,;\, \frac{|u|}{|x-\va|}\in L^2\big(\widehat{\Omega}\big)\right\},
\end{equation}
the quadratic form $q^{AB}_\va$ on $\mathcal Q^{AB}_\va$ by
\begin{equation}
\label{eq:QuadAB1Pole}
q^{AB}_\va(u):=\int_{\widehat\Omega}\left|(i\nabla+\vA_\va)u\right|^2\,dx,
\end{equation}
and the sequence of eigenvalues $\left(\lambda^{AB}_j(\va)\right)_{j\ge 1}$ by the min-max principle
\begin{equation}
\label{eq:MinMaxAB1Poles}
	\lambda_j^{AB}(\va):=  \min_{\substack{\mathcal E\subset
      \mathcal{Q}^{AB}_{\va}\text{ subspace}\\
\mbox{dim}(\mathcal E)=j}}\max_{u\in \mathcal E} \frac{q^{AB}_\va(u)}{\|u\|^2}.
\end{equation}
It follows from the definition in Equation \eqref{eq:QAB1Pole} that $\mathcal Q^{AB}_\va$ is compactly embedded in $\mathcal H$. The above eigenvalues are therefore of finite multiplicity and $\lambda_j^{AB}(\va)\to +\infty$ as $j\to +\infty$.

\begin{rem} Let us note that, as shown in \cite[Lemma 2.1]{NT},
    $\mathcal Q^{AB}_\va$ is the completion of the set of smooth functions supported in $\widehat\Omega\setminus\{\va\}$ for the norm $\|\cdot\|_\va$ defined by
\begin{equation*}
	\|u\|_\va^2=\|u\|^2+q^{AB}_\va(u).
\end{equation*}
Let us point out that functions in $\mathcal Q^{AB}_\va$ satisfy a Dirichlet boundary condition,  which is not the case in \cite{NT}. However, this difference is unimportant for the compact embedding.
\end{rem}

\begin{rem} We could also consider the Friedrichs extension of the differential operator 
\begin{equation*}
(i\nabla+\vA_\va)^*(i\nabla+\vA_\va) u=	-\Delta u+2i\vA_\va\cdot\nabla u+\left|\vA_\va\right|^2u
\end{equation*} 
acting on functions $u\in C^{\infty}_c(\widehat\Omega\setminus\{\va\},\CC)$. 
As shown for instance in \cite[Section I]{Len15AB} or \cite[Section 2]{BNNT}), this defines a positive and self-adjoint operator with compact resolvent, whose eigenvalues, counted with multiplicities, are $\left(\lambda^{AB}_j(\va)\right)_{j\ge 1}$. It is called the Aharonov-Bohm operator of pole $\va$ and circulation $1/2$. 
\end{rem}

In recent years, several authors have studied the dependence of
eigenvalues on the position of the pole.  It has been established in
\cite[Theorem 1.1]{BNNT}, that the functions
$\va \mapsto \lambda^{AB}_j(\va)$ are continuous in
$\overline{\Omega}$. In
\cite{abatangelo2015sharp,AbatangeloFelli2016SIAM}, the two first
authors obtained the precise rate of convergence $\lambda^{AB}_j(\va)
\to \lambda^{AB}_j(0)$ as $\va$ converges to the interior
point $0$
for  simple eigenvalues. In order to state the most complete result,
given in \cite[Theorem 1.2]{AbatangeloFelli2016SIAM}, we consider an
$L^2$-normalized eigenfunction $u_N^0$ of $q_0^{AB}$ associated with
the eigenvalue $\lambda^{AB}_N(0)$. We additionally assume that
$\lambda^{AB}_N(0)$ is simple. From \cite[Section 7]{FFT} it follows
that there exists an odd positive integer $k$ and a non-zero complex
number $\beta_0$ such that, up to a rotation of the
coordinate axes,
\begin{equation*}
	r^{-\frac{k}2}u_N^0(r\cos t,r\sin t)\to {\beta_0} e^{i\frac{t}2}\sin\left(\frac{k}2t\right)\mbox{ in }C^{1,\tau}\left([0,2\pi],\CC\right)
\end{equation*} 
as $r \to 0^+$, for all $\tau\in (0,1)$. The integer $k$ has a simple
geometric interpretation: it is the number of nodal lines of the
function $u_N^0$ which meet at $0$. We say that $u_N^0$
has a zero of order $k/2$ in $0$. Our coordinate axes are chosen in
such a way that one of these nodal lines is tangent to the
positive $x_1$ semi-axis.

\begin{thm}	\label{t:monopole} Let us define $\va_\eps:=\eps(\cos(\alpha),\sin(\alpha))$, with $\eps>0$. We have, as $\eps\to 0^+$,
\begin{equation*}
	\lambda^{AB}_N\left(\va_\eps\right)=\lambda^{AB}_N(0)-\frac{k\pi\beta_0^2}{2^{2k-1}}\left(\begin{array}{c} k-1\\ \left\lfloor \frac{k-1}{2}\right\rfloor\end{array}\right)^2\cos(k\alpha)\eps^k+o\left(\eps^k\right).
\end{equation*}
\end{thm}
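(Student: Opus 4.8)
The plan is to reduce the Aharonov--Bohm problem with a pole at $\va_\eps$ to the mixed Dirichlet--Neumann problem covered by Theorem \ref{t:new_main}, exploiting the well-known gauge/double-covering correspondence. First I would recall that, since the circulation is $1/2$, the magnetic operator with pole $\va_\eps$ is gauge-equivalent (via the multiplier $e^{i\theta_{\va_\eps}/2}$, $\theta_{\va_\eps}$ being the polar angle around $\va_\eps$) to the Laplacian acting on the antisymmetric part of functions on a suitable double cover. Concretely, after lifting to the double covering of $\widehat\Omega$ branched at $\va_\eps$, magnetic eigenfunctions of $q_{\va_\eps}^{AB}$ correspond exactly to eigenfunctions of the Dirichlet Laplacian on the cover that are odd under the deck transformation. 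One then moves the branch point: placing the pole at $\va_\eps=\eps(\cos\alpha,\sin\alpha)$ near the interior point $0$, and using that $\lambda_N^{AB}(0)$ is simple with the eigenfunction $u_N^0$ having a zero of order $k/2$ at $0$ (so $k$ nodal lines meet at $0$, one tangent to the positive $x_1$-axis).

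Next I would set up the local picture near $0$. Blow up at scale $\eps$: the rescaled domain invades $\RR^2$, the pole sits at the fixed point $(\cos\alpha,\sin\alpha)$, and the limiting profile of the blown-up eigenfunction is the antisymmetric harmonic function on the double cover of $\RR^2$ branched at that point whose behavior at infinity matches the leading homogeneous term $\beta_0 r^{k/2}e^{it/2}\sin(kt/2)$ of $u_N^0$. The key observation is that the double cover of $\RR^2\setminus\{\text{pt}\}$ can be uniformized so that the odd-eigenfunction problem becomes precisely a Dirichlet/Neumann mixed problem: the segment joining the two preimages of the branch point (equivalently, the relevant arc) carries a Neumann condition and its complement a Dirichlet condition — this is exactly the geometry of \eqref{eqDND} after the reflection explained in the introduction (cf. the discussion of \cite{AFHL2016}). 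This is where the elliptic coordinates of Theorem \ref{t:blowup2} enter: the map $F$ in \eqref{eq:44} uniformizes the exterior of the segment, and the explicit profile $\beta(\psi_k+W_k\circ F^{-1})$ is exactly the limiting blow-up profile, with $\beta$ replaced by $\beta_0$ (the half-integer order $k/2$ for the magnetic problem corresponds to the integer order $k$ in the auxiliary real problem on the cover, consistently with the statements: both theorems produce the same combinatorial factor $\binom{k-1}{\lfloor (k-1)/2\rfloor}$).

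With this dictionary in place, Theorem \ref{t:new_main} gives, for the auxiliary problem, $\lambda_N - \lambda_N(\eps) = \beta_0^2 \frac{k\pi}{2^{2k-1}}\binom{k-1}{\lfloor (k-1)/2\rfloor}^2 \eps^{2k'} + o(\eps^{2k'})$ where $k'$ is the integer order in the auxiliary variable; tracking the change of scale induced by the uniformization of the double cover (which squares or halves exponents appropriately) converts $\eps^{2k'}$ into $\eps^{k}$ and, crucially, produces the angular factor $\cos(k\alpha)$ from the dependence of the uniformizing map on the direction $\alpha$ of $\va_\eps$ — rotating the pole by $\alpha$ rotates the Neumann segment, and the interaction with the fixed nodal structure of $u_N^0$ (one nodal line tangent to the $x_1$-axis) contributes $\cos(k\alpha)$ by a Fourier-mode computation on the circle at infinity. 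I would verify the sign and the fact that $\lambda_N^{AB}$ increases or decreases according to $\cos(k\alpha)$, matching the known special cases in \cite{abatangelo2015sharp,AbatangeloFelli2016SIAM}.

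\textbf{Main obstacle.} The delicate point is making the gauge/double-cover reduction fully rigorous at the level of the \emph{variation} of eigenvalues, not just the eigenvalues themselves: one must match norms, domains of quadratic forms, and — most importantly — show that the error terms in the blow-up analysis transfer through the uniformization without degrading the $o(\eps^k)$ remainder. Equivalently, one has to justify that the Rayleigh-quotient/Courant--Fischer argument underlying Theorem \ref{t:new_main} (the upper and lower bounds via suitable test functions and via the limiting profile) can be run directly for $q_{\va_\eps}^{AB}$, using $u_N^0$ multiplied by a cut-off and corrected by a rescaled copy of $\beta_0(\psi_k+W_k\circ F^{-1})$ near the pole as the trial function, and controlling the magnetic energy of this trial function. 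The combinatorial identity producing $\binom{k-1}{\lfloor(k-1)/2\rfloor}^2$ is already delivered by Theorem \ref{t:new_main}, so the real work is the geometric bookkeeping of the direction $\alpha$ and the uniform control of remainders.
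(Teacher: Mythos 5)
Your plan attempts to re-derive the full expansion in Theorem \ref{t:monopole} from Theorem \ref{t:new_main} via a double-cover reduction. That is not what the paper does, and — more importantly — the reduction as described does not apply, because the two problems have incompatible geometries.

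The paper's proof of Theorem \ref{t:monopole} is much lighter: it \emph{cites} the already-established expansion of \cite[Theorem 1.2]{abatangelo2015sharp} (see also \cite{AbatangeloFelli2016SIAM}), which asserts
$\lambda_N^{AB}(\va_\eps) = \lambda_N^{AB}(0) + 2\beta_0^2\,\mathfrak m_k'\cos(k\alpha)\,\eps^k + o(\eps^k)$
for an implicit constant $\mathfrak m_k'$, and then merely \emph{computes} that constant. The computation is Appendix \ref{a:A}: the profile $w_k'$ from \cite{abatangelo2015sharp} lives on $\RR^2_+$ with Dirichlet data on the half-line $s=\{(x_1',0):x_1'\ge 1\}$ and Neumann data on the complementary half-line, while the profile $w_k$ of \eqref{eq:wk} lives on $\RR^2_+$ with Neumann data on the \emph{bounded} segment $\Gamma_1$ and Dirichlet data on $s=\{|x_1|\ge1,\ x_2=0\}$. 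Lemma \ref{lem2Prob} shows $w_k=\widetilde{w}_k'\circ G$ with $G(x)=(x_1^2-x_2^2,2x_1x_2)$, the conformal squaring map $z\mapsto z^2$ that sends $\RR^2_+$ to the slit plane $\RR^2\setminus s_0$, and Corollary \ref{cor:mk} gives $\mathfrak m_k'=\tfrac12\mathfrak m_k$. Plugging in Proposition \ref{propmk} gives the explicit binomial constant.

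The gaps in your argument are concrete. First, in Theorem \ref{t:new_main} the small parameter is the half-length of a vanishing Neumann segment on $\partial\Omega$; in Theorem \ref{t:monopole} the small parameter is the distance of a \emph{single} pole from an interior point. After blow-up the pole sits at the fixed point $(\cos\alpha,\sin\alpha)$ and there is \emph{no} shrinking Neumann segment; the relevant branch cut is an unbounded ray emanating from the pole, so the limit profile ($w_k'$) has mixed data on two half-lines, not a Dirichlet/Neumann split around a bounded segment. Your phrase ``the segment joining the two preimages of the branch point'' has no meaning here — there is only one branch point. (That segment picture is correct for the \emph{two-pole} problem of Theorems \ref{t:dipoleSymEven}--\ref{t:dipoleSymOdd}, which is why those are proved via Theorem \ref{t:new_main}, but not for the single pole.) Second, the exponent $\eps^k$ with $k$ odd and the angular factor $\cos(k\alpha)$ are features of the single-pole analysis that were established in \cite{AbatangeloFelli2016SIAM} by a dedicated blow-up and Rayleigh-quotient argument; they cannot be read off from the $\eps^{2k}$ rate of Theorem \ref{t:new_main} by ``squaring or halving exponents,'' because $\eps$ has a different meaning in the two problems and the conformal correspondence acts on the limit \emph{profiles}, not on the scaling parameter or on the domain. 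Third, you do not use, or even identify, the key ingredient actually needed here: the squaring map $G$ and the resulting identity $\mathfrak m_k'=\tfrac12\mathfrak m_k$. Without that link the combinatorial constant does not transfer from Proposition \ref{propmk} to the single-pole setting.

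If you want to follow the paper's path, the structure to aim for is: (i) quote the expansion of \cite[Theorem 1.2]{abatangelo2015sharp} with its implicit constant $\mathfrak m_k'$; (ii) relate $w_k'$ to $w_k$ by the conformal map $z\mapsto z^2$; (iii) conclude via Proposition \ref{propmk} and Lemma \ref{lemSumBin}.
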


\begin{rem} The expansion in \cite{abatangelo2015sharp,AbatangeloFelli2016SIAM} involves a constant depending on $k$, defined as the minimal energy in a Dirichlet-type problem. We compute this constant in Appendix \ref{a:A} in order to obtain the more explicit result in Theorem \ref{t:monopole}.
\end{rem}

Let us now consider, for any $\eps>0$, an Aharonov-Bohm potential with two poles $(\eps,0)$ and $(-\eps,0)$, of fluxes respectively $1/2$ and $-1/2$:
\begin{equation*}
	\vA_{\eps}:=\vA_{(\eps,0)}-\vA_{(-\eps,0)}.
\end{equation*}
As in the case of one pole, we define the vector space $\mathcal Q^{AB}_\eps$ by
\begin{equation}
\label{eq:QAB2Poles}
\mathcal Q^{AB}_\eps:=\left\{u\in  H^1_0\big(\widehat{\Omega},\CC \big)\,;\, \frac{|u|}{|x\pm\eps\,{\mathbf e}|}\in L^2\big(\widehat{\Omega}\big)\right\},
\end{equation}
 where $\mathbf e=(1,0)$, the quadratic form 
 $q^{AB}_\eps$ on $\mathcal Q^{AB}_\eps$ by
\begin{equation}
\label{eq:QuadAB2Poles}
q^{AB}_\eps(u):=\int_{\widehat\Omega}\left|(i\nabla+\vA_\eps)u\right|^2\,dx,
\end{equation}
and the sequence of eigenvalue $\left(\lambda^{AB}_j(\eps)\right)_{j\ge 1}$ by the min-max principle
\begin{equation}
\label{eq:MinMaxAB1Poles}
	\lambda_j^{AB}(\eps):=  \min_{\substack{\mathcal E\subset
      \mathcal{Q}^{AB}_{\eps}\text{ subspace}\\
\mbox{dim}(\mathcal E)=j}}\max_{u\in \mathcal E} \frac{q^{AB}_\eps(u)}{\|u\|^2}.
\end{equation}
It follows from \cite[Corollary 3.5]{Len15AB} that, for any $j\ge1$,
$\lambda^{AB}_j(\eps)$ converges to the $j$-th eigenvalue of the
Laplacian in $\widehat\Omega$ as $\eps\to 0^+$.  In
\cite{AFL2017ANS,AFHL2016} the authors obtained in some cases a sharp
rate of convergence. In order to state the result, let us
introduce some notation. We denote by $\widehat q_0$ the quadratic
form on $H^1_0(\widehat \Omega)$ defined by Equation \eqref{eqQuad},
replacing $\Omega$ with $\widehat\Omega$, and we denote by
$\big(\widehat\lambda_j\big)_{j\ge1}$ the sequence of eigenvalues
defined by Equation \eqref{eqMinMaxD},  replacing $\Omega$ with
$\widehat\Omega$ and $q$ with $\widehat q_0$. We fix an integer
$N\ge1$ and assume that $\widehat\lambda_N$ is a simple eigenvalue. We
denote by $\widehat u_N$ an associated eigenfunction, normalized in
$L^2\big(\widehat\Omega\big)$.
\begin{thm}{\cite[Theorem 1.2]{AFL2017ANS}} \label{thmNonZero} If $\widehat u_N(0)\neq 0$, we have, as $\eps\to0$,
\begin{equation*}
  \lambda_N^{AB}(\eps)=\widehat\lambda_N+\frac{2\pi}{|\log(\eps)|}\widehat u_N ^2 (0)+o\left(\frac1{|\log(\eps)|}\right).
\end{equation*}
\end{thm}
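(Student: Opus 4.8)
We sketch how Theorem~\ref{thmNonZero} may be proved, following \cite{AFL2017ANS}. The plan is to reduce the two--pole Aharonov--Bohm problem to a magnetic--free eigenvalue problem on a domain with a shrinking slit, to recognise the leading term of $\lambda_N^{AB}(\eps)-\widehat\lambda_N$ as a capacity of that slit weighted by $\widehat u_N^2(0)$, and to compute this capacity explicitly as $\frac{2\pi}{\abs{\log\eps}}(1+o(1))$. \emph{Gauge reduction.} Since the total flux of $\vA_\eps=\vA_{(\eps,0)}-\vA_{(-\eps,0)}$ around any loop not separating the two poles vanishes, we may write $\vA_\eps=\tfrac12\nabla\Phi_\eps$ with $\Phi_\eps=\arg\frac{x_1+ix_2-\eps}{x_1+ix_2+\eps}$, a function smooth on $\widehat\Omega\setminus\Sigma_\eps$, where $\Sigma_\eps:=[-\eps,\eps]\times\{0\}$, jumping by $2\pi$ across $\Sigma_\eps$. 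The unitary substitution $u\mapsto v:=e^{-i\Phi_\eps/2}u$ transforms $q^{AB}_\eps(u)$ into $\int_{\widehat\Omega\setminus\Sigma_\eps}\abs{\nabla v}^2$, and since $e^{i\Phi_\eps/2}$ jumps by $-1$ across $\Sigma_\eps$ it identifies $\mathcal Q^{AB}_\eps$ with the real form domain
\[
\mathcal V_\eps:=\{v\in H^1(\widehat\Omega\setminus\Sigma_\eps):\ v=0\text{ on }\partial\widehat\Omega,\ v|_+=-v|_-\text{ on }\Sigma_\eps\}.
\]
Hence $\lambda_N^{AB}(\eps)$ is the $N$--th eigenvalue of $-\Delta$ on $\mathcal V_\eps$. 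By \cite[Corollary 3.5]{Len15AB} we have $\lambda_N^{AB}(\eps)\to\widehat\lambda_N$, and elliptic estimates away from $0$ then give $v_N^\eps\to\widehat u_N$ in $H^1_{\rm loc}(\widehat\Omega\setminus\{0\})$ (up to a sign).

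\emph{The capacity.} Fix a disk $B_\delta\subset\widehat\Omega$ centred at $0$ and set
\[
\mathrm{cap}_\eps(\delta):=\min\Big\{\textstyle\int_{B_\delta\setminus\Sigma_\eps}\abs{\nabla\varphi}^2:\ \varphi|_+=-\varphi|_-\text{ on }\Sigma_\eps,\ \varphi=1\text{ on }\partial B_\delta\Big\}.
\]
Decomposing any competitor into its parts even and odd under the reflection $(x_1,x_2)\mapsto(x_1,-x_2)$ (a symmetry of $B_\delta\setminus\Sigma_\eps$), the transmission condition forces the even part to vanish on $\Sigma_\eps$, while the odd part automatically vanishes on $(\RR\times\{0\})\cap B_\delta$ outside $\Sigma_\eps$; the two Dirichlet energies add, and the odd part is minimised by $0$. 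Therefore $\mathrm{cap}_\eps(\delta)$ equals the \emph{classical} relative capacity of the segment $\Sigma_\eps$ in $B_\delta$, attained by the harmonic function $w_\eps$ with $w_\eps=0$ on $\Sigma_\eps$ and $w_\eps=1$ on $\partial B_\delta$ (even under the reflection). In the elliptic coordinates $F$ of \eqref{eq:44}, for which the rescaled segment $\Sigma_1$ corresponds to $\{\xi=0\}$, the energy is conformally invariant and the minimisation reduces to $\min\int_0^{\xi_{\max}}(\varphi')^2\,d\xi$ with $\varphi(0)=0$, $\varphi(\xi_{\max})=1$, so that $\mathrm{cap}_\eps(\delta)=\frac{2\pi}{\xi_{\max}}(1+o(1))$ with $\xi_{\max}=\log(2\delta/\eps)+o(1)$. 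Choosing $\delta=\delta(\eps)\to0$ with $\log\delta(\eps)=o(\log\eps)$ (e.g.\ $\delta(\eps)=e^{-\sqrt{\abs{\log\eps}}}$) gives $\mathrm{cap}_\eps(\delta(\eps))=\frac{2\pi}{\abs{\log\eps}}(1+o(1))$.

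\emph{Upper bound.} Extend $w_\eps$ by $1$ outside $B_{\delta(\eps)}$ and take $v_j^\eps:=\widehat u_j\,w_\eps\in\mathcal V_\eps$ ($j=1,\dots,N$) as competitors in the min--max for $\lambda_N^{AB}(\eps)$; these lie in $\mathcal V_\eps$ because $w_\eps$ vanishes on $\Sigma_\eps$ (and like $\mathrm{dist}^{1/2}$ at its endpoints, which is what is needed for the functions pulled back by $e^{i\Phi_\eps/2}$ to belong to $\mathcal Q^{AB}_\eps$). Expanding, $q(v_j^\eps)=\widehat\lambda_j+\widehat u_j^2(0)\,\mathrm{cap}_\eps(\delta(\eps))+o(1/\abs{\log\eps})$ and $\|v_j^\eps\|^2=1+o(1/\abs{\log\eps})$: the main contribution is $\int\widehat u_j^2\abs{\nabla w_\eps}^2=\widehat u_j^2(0)\,\mathrm{cap}_\eps(\delta(\eps))(1+o(1))$, the cross term $\int\widehat u_j(1-w_\eps)\nabla\widehat u_j\cdot\nabla w_\eps$ is absorbed since $\nabla w_\eps$ has $L^1$--norm $O\big(\delta(\eps)\,\mathrm{cap}_\eps^{1/2}\big)=o(\mathrm{cap}_\eps)$, and the remaining terms are $O(\delta(\eps)^2)=o(1/\abs{\log\eps})$. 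As the $v_j^\eps$ are, for small $\eps$, a small perturbation of the orthonormal system $(\widehat u_j)_{j\le N}$ and $\widehat\lambda_N$ is simple, the span of $v_1^\eps,\dots,v_N^\eps$ has maximal Rayleigh quotient $\widehat\lambda_N+\widehat u_N^2(0)\,\mathrm{cap}_\eps(\delta(\eps))+o(1/\abs{\log\eps})$, so $\lambda_N^{AB}(\eps)\le\widehat\lambda_N+\frac{2\pi}{\abs{\log\eps}}\widehat u_N^2(0)+o(1/\abs{\log\eps})$.

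\emph{Lower bound, and the main obstacle.} For the converse, let $v_1^\eps,\dots,v_N^\eps$ span the first $N$ eigenspaces of $-\Delta$ on $\mathcal V_\eps$, let $\eta_\eps$ be a cutoff vanishing on $\Sigma_\eps$ and equal to $1$ outside $B_{\delta(\eps)}$, and use $\tilde v_j^\eps:=\eta_\eps v_j^\eps\in H^1_0(\widehat\Omega)$ as competitors for $\widehat\lambda_N$ (the map $v\mapsto\tilde v$ being injective on the span for small $\eps$, since $v_j^\eps\to\widehat u_j$). This gives $\widehat\lambda_N\le\max_j q(\tilde v_j^\eps)/\|\tilde v_j^\eps\|^2$, and the result follows provided one shows that the cutoff decreases each quotient by at least $\widehat u_j^2(0)\,\mathrm{cap}_\eps(\delta(\eps))-o(1/\abs{\log\eps})$. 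This is the technical heart of the argument: it requires a precise description of $v_N^\eps$ near the shrinking slit — namely that, gradient included, $v_N^\eps$ is governed there by the capacitary potential $w_\eps$ with amplitude $\widehat u_N(0)$ — which is obtained by a blow-up analysis of $v_N^\eps$ about $\Sigma_\eps$ with a logarithmic normalisation (the unnormalised rescalings $v_N^\eps(\eps\,\cdot)$ tending to $0$), in the spirit of \cite{AbatangeloFelli2016SIAM,AFHL2016}. Once this is in hand the energy bookkeeping mirrors the upper bound and yields $\lambda_N^{AB}(\eps)\ge\widehat\lambda_N+\frac{2\pi}{\abs{\log\eps}}\widehat u_N^2(0)-o(1/\abs{\log\eps})$; together with the upper bound this proves the expansion. (When $\widehat\Omega$ is symmetric the problem splits into an even sector, a classical small-obstacle Dirichlet perturbation with known sharp asymptotics, and an odd sector, unaffected to leading order; the general Lipschitz case amounts to localising this splitting. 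Alternatively, one may first establish an exact identity for $\lambda_N^{AB}(\eps)-\widehat\lambda_N$ as a ratio of energies involving $v_N^\eps$ and $\widehat u_N$, and then reduce its leading term to $\mathrm{cap}_\eps$ by the same blow-up analysis.)
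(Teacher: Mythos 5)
This theorem is stated in the paper purely as a citation of \cite[Theorem 1.2]{AFL2017ANS}; the present paper offers no proof of it, only uses it. Your sketch correctly reconstructs the strategy of that reference: gauge reduction to a slit-domain eigenvalue problem with an antiperiodic transmission condition, the even/odd splitting that identifies the relevant capacity with the classical Dirichlet capacity of the segment, its conformal evaluation in elliptic coordinates (giving $\frac{2\pi}{|\log\eps|}(1+o(1))$ after the natural choice of intermediate scale $\delta(\eps)$), cut-off competitors built from $\widehat u_j\,w_\eps$ for the upper bound, and a blow-up argument with logarithmic normalisation for the lower bound; the constant you obtain is the right one. As you yourself note, the lower bound is where the real technical work is, and it is only described rather than carried out here — that blow-up analysis (and the precise energy identity it feeds) is the core of \cite{AFL2017ANS} and the machinery of \cite{AFHL2016}, so the sketch is consistent with the cited proof but not a self-contained replacement for it.
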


In the case $\widehat u_N(0)=0$, it is well known that there exist  $k\in\NN\setminus\{0\}$, $\widehat\beta\in \RR\setminus\{0\}$ and $\alpha\in [0,\pi)$ 
such that
\begin{equation*}
r^{-k}	\widehat u_N (r\cos t,r\sin t)\to \widehat\beta \sin\left(\alpha-kt\right)\mbox{ in }C^{1,\tau}\left([0,2\pi],\CC\right)
\end{equation*}
as $r\to 0^+$ for all $\tau\in (0,1)$. In particular, there is a nodal
line whose tangent makes the angle $\alpha/k$ with the 
  positive $x_1$ semi-axis.  As above we can characterize
  $\widehat\beta$ as $|\widehat\beta|^2 = \frac{\| d^{k}\widehat u_N (0) \|^2}{(k!)^2 \,2^{k-1}}$.\ 

Let us assume that 
\[ 
\widehat\Omega \text{ is symmetric with respect to the $x_1$-axis.} 
\] 
Since $\widehat\lambda_N$ is simple, $\widehat u_N$ is either even or odd in the variable $x_2$ and $\alpha$ is either $\pi/2$ or $0$  accordingly. 
\begin{thm}{\cite[Theorem 1.16]{AFHL2016}} \label{t:dipoleSymEven}
 If $\widehat u_N$ is even in $x_2$, which corresponds to $\alpha=\pi/2$, we have, as $\eps\to 0^+$,
\begin{equation*}
	\lambda_N^{AB}(\eps)=\widehat\lambda_N+\frac{k\pi{\widehat\beta}^2}{4^{k-1}}\left(\begin{array}{c} k-1\\ \left\lfloor\frac{k-1}2\right\rfloor\end{array}\right)^2\eps^{2k}+o\left(\eps^{2k}\right).
\end{equation*}
\end{thm}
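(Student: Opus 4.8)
The plan is to desingularize the two--pole Aharonov--Bohm operator by a gauge transformation, to reduce it via the $x_2$--symmetry to a mixed Dirichlet--Neumann problem on the half--domain $\Omega:=\widehat\Omega\cap\{x_2>0\}$, to recognize that problem as the one treated in Theorem~\ref{t:new_main} with the Dirichlet and Neumann portions of the flat boundary interchanged, and finally to read off the coefficient from the elliptic--coordinate computation of Section~\ref{sec:explicit}. First I would perform the gauge transformation: writing $\vA_\eps=\tfrac12\nabla\Theta_\eps$ with $\Theta_\eps(x_1,x_2)=\arg\big((x_1-\eps+ix_2)/(x_1+\eps+ix_2)\big)$, the function $\Theta_\eps$ is smooth on $\widehat\Omega\setminus\Gamma_\eps$ and jumps by $2\pi$ across $\Gamma_\eps$, so multiplication by $e^{-\frac i2\Theta_\eps}$ is a unitary equivalence between $q^{AB}_\eps$ and the Laplacian on $\widehat\Omega\setminus\Gamma_\eps$ with homogeneous Dirichlet condition on $\partial\widehat\Omega$ and antiperiodic transmission conditions across $\Gamma_\eps$ (both the trace and the normal derivative change sign across the slit); the magnetic Hardy--type integrability in the definition of $\mathcal Q^{AB}_\eps$ becomes an ordinary $H^1$ membership, and this passage is standard (cf.\ \cite{Len15AB,AFL2017ANS,AFHL2016}). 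In particular $\lambda^{AB}_N(\eps)$ is the $N$--th eigenvalue of this slit problem.

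Next I would exploit the symmetry. Since $\widehat\Omega$ and $\Gamma_\eps$ are symmetric about the $x_1$--axis, the slit problem commutes with $\sigma\colon(x_1,x_2)\mapsto(x_1,-x_2)$, so its spectrum is the union, with multiplicities, of those of its $\sigma$--even and $\sigma$--odd parts. Writing $\Sigma:=\widehat\Omega\cap\{x_2=0\}$, the $\sigma$--odd part is the mixed problem in $\Omega$ with Neumann condition on $\Gamma_\eps$ and Dirichlet condition on the rest of $\partial\Omega$ --- precisely the setting of Theorem~\ref{t:new_main} (this is the route giving the analogous expansion when $\widehat u_N$ is odd in $x_2$) --- whereas the $\sigma$--even part is the \emph{complementary} mixed problem in $\Omega$: Dirichlet on $\Gamma_\eps\cup(\partial\Omega\cap\partial\widehat\Omega)$ and Neumann on $\Sigma\setminus\Gamma_\eps$. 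As $\eps\to0^+$ these two families converge respectively to the $\sigma$--odd and the $\sigma$--even Dirichlet eigenvalues of $\widehat\Omega$. Because $\widehat\lambda_N$ is simple and $\widehat u_N$ is even in $x_2$, for small $\eps$ one has $\lambda^{AB}_N(\eps)=\mu(\eps)$, the eigenvalue of the $\sigma$--even problem converging to $\widehat\lambda_N$, and its $L^2(\Omega)$--normalized eigenfunctions $u^\eps$ converge to $\sqrt2\,\widehat u_N|_\Omega$, the factor $\sqrt2$ arising because $\int_\Omega\widehat u_N^2=\tfrac12$.

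To expand $\mu(\eps)-\widehat\lambda_N$ I would run the blow--up argument behind Theorem~\ref{t:new_main} for this complementary problem. Reflecting the $\sigma$--even problem once more across $\Sigma$ realizes it as the Dirichlet Laplacian on the slit domain $\widehat\Omega\setminus\Gamma_\eps$ restricted to $\sigma$--even functions, a ``domain with a small segment removed''; equivalently it is the present paper's mixed problem with the Dirichlet and Neumann parts of $\Sigma$ exchanged, so the limiting profile behaves near $0$ like the conjugate harmonic $r^k\cos(kt)$ rather than like $\psi_k$ of \eqref{eq:psi_k}. Rescaling $y=x/\eps$, I expect $\eps^{-k}u^\eps(\eps x)\to\sqrt2\,\widehat\beta\,(\phi_k+\widetilde W_k\circ F^{-1})$ in $H^{1}_{\rm loc}(\overline{\RR^2_+})$, where $\phi_k(r\cos t,r\sin t)=r^k\cos(kt)$ for $t\in[0,\pi]$, $F$ is the map \eqref{eq:44}, and $\widetilde W_k(\xi,\eta)$ is the bounded harmonic corrector on $\{\xi>0\}$, decaying as $\xi\to+\infty$, enforcing the Dirichlet condition on $[-1,1]\times\{0\}=\{\xi=0\}$ and the Neumann condition on $\{|x_1|>1,\ x_2=0\}=\{\eta=0\}\cup\{\eta=\pi\}$; as in Section~\ref{sec:explicit} it is given by an explicit finite cosine series, the analogue for these complementary conditions of the series \eqref{eq:W_k} defining $W_k$. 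Inserting this profile in the Rayleigh quotient and evaluating the resulting elliptic--coordinate integral exactly as in Section~\ref{sec:explicit} gives $\mu(\eps)-\widehat\lambda_N=(\sqrt2\,\widehat\beta)^2\,c_k\,\eps^{2k}+o(\eps^{2k})$ with the same structural constant $c_k=\frac{k\pi}{2^{2k-1}}\binom{k-1}{\lfloor(k-1)/2\rfloor}^2$ as in Theorem~\ref{t:new_main}, now with a $+$ sign because we add a Dirichlet, not a Neumann, condition on $\Gamma_\eps$. Since $2\widehat\beta^2c_k=\frac{k\pi\widehat\beta^2}{4^{k-1}}\binom{k-1}{\lfloor(k-1)/2\rfloor}^2$ and $\lambda^{AB}_N(\eps)=\mu(\eps)$, this is exactly the claimed expansion.

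The main obstacle is the sharp blow--up analysis of the $\sigma$--even problem. In dimension two, switching from Neumann to Dirichlet on the small segment $\Gamma_\eps$ is a priori only a logarithmically small perturbation; to obtain the $\eps^{2k}$ rate one must use that $\widehat u_N$ vanishes to order $k$ at the midpoint $0$ of $\Gamma_\eps$ (so that the ``Dirichlet charge'' the corrector absorbs is $O(\eps^k)$ in the relevant norm), through an Almgren--type monotonicity formula and capacity estimates in the spirit of \cite{Gad,AFHL2016}; the same tools provide the compactness needed to identify the blow--up limit. Once the rate and the profile are secured, constructing $\widetilde W_k$ and computing the integral yielding $c_k$ in elliptic coordinates is routine, entirely parallel to Section~\ref{sec:explicit} (and to Appendix~\ref{a:A}, where the analogous constant is computed for the monopole). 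A shorter route, acceptable here, is to quote from \cite{AFHL2016} that $\mu(\eps)-\widehat\lambda_N\sim C\widehat\beta^2\eps^{2k}$ with $C$ given implicitly as a minimal Dirichlet energy, and to supply only the explicit evaluation of $C$ by elliptic coordinates, in the spirit of the remark following Theorem~\ref{t:monopole}.
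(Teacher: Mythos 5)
Your proposal is correct, and the two ``routes'' you describe split exactly along the line separating your approach from the paper's. The paper does not reprove this result: Theorem~\ref{t:dipoleSymEven} is quoted verbatim from \cite[Theorem 1.16]{AFHL2016}, and the only new contribution is the explicit evaluation of the constant $C_k$ that appears there implicitly as a $u$-capacity. That evaluation is Proposition~\ref{p:A3} in Appendix~\ref{a:A}: starting from the formula $C_k=\sum_{j=1}^k j|A_{j,k}|^2$ of \cite[Lemma 2.3, Eq.\ (22)]{AFHL2016}, where $A_{j,k}$ are the cosine Fourier coefficients of $\eta\mapsto(\cos\eta)^k$, one expands $(\cos\eta)^k$ by the binomial theorem and invokes Lemma~\ref{lemSumBin}. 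This is precisely your ``shorter route,'' and it is essentially the only step the paper takes beyond citing \cite{AFHL2016}.

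Your main (longer) route is a genuinely different, more self-contained argument: gauge away $\vA_\eps$, use the $\sigma$-symmetry to reduce to the complementary mixed Dirichlet--Neumann problem on the half-domain (Dirichlet on $\Gamma_\eps$, Neumann on $\partial\Omega_0\setminus\Gamma_\eps$, Dirichlet on $\partial\Omega_+$), and then re-run the blow-up and elliptic-coordinate analysis of Section~\ref{sec:explicit} for this swapped configuration, with $r^k\cos(kt)$ replacing $\psi_k$ and a finite cosine series replacing the sine series \eqref{eq:W_k}. The symmetry-reduction steps are exactly the content of Section~\ref{s:AB} and Lemma~\ref{l:IsoAS}/Corollary~\ref{c:MagnEV}; the normalization factor $\sqrt 2$ and the sign (increase, from adding a Dirichlet condition on a small set) are correct; and the structural constant $c_k=\frac{k\pi}{2^{2k-1}}\binom{k-1}{\lfloor(k-1)/2\rfloor}^2$ does coincide with the one of Theorem~\ref{t:new_main}, since $\int_0^\pi\cos^2(m\eta)\,d\eta=\int_0^\pi\sin^2(m\eta)\,d\eta$ for $m\ge1$ and the constant mode (present for even $k$) contributes nothing to the energy, so $2\widehat\beta^2 c_k=\frac{k\pi\widehat\beta^2}{4^{k-1}}\binom{k-1}{\lfloor(k-1)/2\rfloor}^2$ as required. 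What this route buys is independence from \cite{AFHL2016} (a self-contained proof built on the machinery of Appendix~\ref{sec:altern-proof-theor}); what it costs is that the Almgren monotonicity and blow-up estimates for the Dirichlet-on-slit configuration would have to be redone in detail (the paper's Appendix~\ref{sec:altern-proof-theor} treats only the Neumann-on-slit case), and you leave these as a sketch. Within the paper's economy, quoting \cite{AFHL2016} and proving Proposition~\ref{p:A3} is the right call, and you correctly identify it as the acceptable shortcut.
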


\begin{rem} The statements in \cite{AFHL2016} contain a constant $C_k$ which we put in a simpler form in Appendix \ref{a:A}, in order to obtain Theorem \ref{t:dipoleSymEven}. \end{rem}

As a corollary of Theorem \ref{t:new_main}, we prove in Section \ref{s:AB} the following result, which complements the previous theorem.

\begin{thm} \label{t:dipoleSymOdd}
	If $\widehat u_N$ is odd in $x_2$, which corresponds to $\alpha=0$, we have, as $\eps\to 0^+$,
\begin{equation*}
	\lambda_N^{AB}(\eps)=\widehat\lambda_N-\frac{k\pi{\widehat\beta}^2}{4^{k-1}}\left(\begin{array}{c} k-1\\ \left\lfloor\frac{k-1}2\right\rfloor\end{array}\right)^2\eps^{2k}+o\left(\eps^{2k}\right).
\end{equation*}
\end{thm}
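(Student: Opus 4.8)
The plan is to reduce Theorem \ref{t:dipoleSymOdd} to Theorem \ref{t:new_main} via a gauge transformation and a reflection argument, exactly parallel to how Theorem \ref{t:dipoleSymEven} is presumably derived from the Neumann-type version of Theorem \ref{t:new_main}. First I would recall that the Aharonov--Bohm operator with two poles of opposite flux $\pm1/2$ located at $(\pm\eps,0)$ is gauge-equivalent to the Laplacian acting on a suitable space of multivalued (or equivalently, anti-periodic across a cut) functions: the potential $\vA_\eps=\vA_{(\eps,0)}-\vA_{(-\eps,0)}$ is, away from the segment $\Sigma_\eps:=[-\eps,\eps]\times\{0\}$, locally a gradient, $\vA_\eps=\nabla\theta_\eps$, where $\theta_\eps$ jumps by $\pi$ across $\Sigma_\eps$. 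Writing $u=e^{i\theta_\eps}v$ transforms $q^{AB}_\eps(u)$ into $\int_{\widehat\Omega}|\nabla v|^2$, where now $v$ is required to change sign across $\Sigma_\eps$ and to vanish on $\partial\widehat\Omega$; this is the standard correspondence (see \cite{BNNT,Len15AB,NT}). So $\lambda^{AB}_N(\eps)$ is the $N$-th eigenvalue of the Dirichlet Laplacian on the double cover of $\widehat\Omega\setminus\Sigma_\eps$ restricted to the antisymmetric subspace.

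Next I would exploit the assumed symmetry of $\widehat\Omega$ with respect to the $x_1$-axis. Because $\widehat\lambda_N$ is simple and $\widehat u_N$ is odd in $x_2$, and because the whole two-pole configuration (including the cut $\Sigma_\eps$ on the axis) is symmetric under the reflection $(x_1,x_2)\mapsto(x_1,-x_2)$, the eigenfunction $u_N^{AB}(\eps)$ can be chosen to have a definite parity in $x_2$ for $\eps$ small; by continuity (using that $\lambda^{AB}_N(\eps)\to\widehat\lambda_N$ and the limiting eigenfunction is $\widehat u_N$ up to gauge, odd in $x_2$) it must be the odd one. Restricting to the half-domain $\widehat\Omega^+:=\widehat\Omega\cap\RR^2_+$ and to $x_2$-odd functions, the Dirichlet condition on $\partial\widehat\Omega\cap\{x_2=0\}$ away from the segment becomes automatic (oddness forces vanishing on the axis), while on the segment $\Sigma_\eps$ the antiperiodicity/sign-change condition translates, after the gauge transformation, precisely into a \emph{Neumann} condition on $\Sigma_\eps$ for the half-domain problem — whereas off the segment on the axis one has Dirichlet. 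Thus $\lambda^{AB}_N(\eps)$ equals the $N'$-th eigenvalue (for the appropriate relabeling index $N'$ accounting for the symmetry decomposition, which by the simplicity hypothesis and continuity is forced) of a mixed Dirichlet--Neumann problem on $\widehat\Omega^+$ of exactly the type \eqref{eqDND}, with $\Omega=\widehat\Omega^+$, Neumann on $\Gamma_\eps=\Sigma_\eps$ and Dirichlet elsewhere.

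Then Theorem \ref{t:new_main} applies directly: the Dirichlet eigenfunction on $\widehat\Omega^+$ corresponding to $\widehat\lambda_N$ is $u_N := \sqrt2\,\widehat u_N|_{\widehat\Omega^+}$ (the $\sqrt2$ restores $L^2$-normalization on the half-domain), and its order of vanishing at $0$ is $k$ with, in the notation of \eqref{eq:orderk}, a coefficient $\beta$ related to $\widehat\beta$ by the halving of the $L^2$-mass, namely $\beta^2 = 2\widehat\beta^2$ — this uses the formula $\beta^2=\|d^k u_N(0)\|^2/((k!)^2 2^{k-1})$ quoted in the excerpt together with the analogous one for $\widehat\beta$, plus the fact that $\alpha=0$ means the relevant spherical harmonic is $\sin(kt)$ (after the reflection, the odd part $\widehat\beta\sin(-kt)=-\widehat\beta\sin(kt)$ restricted to $[0,\pi]$ is exactly of the form $\beta\sin(kt)$). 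Plugging $\beta^2=2\widehat\beta^2$ into $\lim_{\eps\to0^+}(\lambda_N-\lambda_N(\eps))/\eps^{2k}=\beta^2 k\pi\,2^{1-2k}\binom{k-1}{\lfloor(k-1)/2\rfloor}^2$ gives $\widehat\lambda_N-\lambda^{AB}_N(\eps)=\widehat\beta^2\,k\pi\,2^{2-2k}\binom{k-1}{\lfloor(k-1)/2\rfloor}^2\eps^{2k}+o(\eps^{2k})$, which is the claimed expansion since $2^{2-2k}=4^{1-k}=1/4^{k-1}$; the sign is $-$ because mixed D--N eigenvalues lie \emph{below} the Dirichlet ones (Remark \ref{remIneq}).

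The main obstacle I anticipate is the bookkeeping in the reduction step: one must verify carefully that (i) the gauge transformation correctly turns the sign-change condition across $\Sigma_\eps$ into a pure Neumann condition on the half-domain and a pure Dirichlet condition on the rest of the axis (this requires choosing the gauge function $\theta_\eps$ with the right branch so that the phase jump is exactly $\pi$ and checking the induced conormal condition), and (ii) the index $N$ is preserved under the decomposition into $x_2$-even and $x_2$-odd subspaces — i.e. that the $N$-th Aharonov--Bohm eigenvalue really does correspond, for small $\eps$, to the eigenvalue of the mixed problem on $\widehat\Omega^+$ that Theorem \ref{t:new_main} describes, and not to some other index. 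Both points follow from simplicity of $\widehat\lambda_N$ and the spectral continuity $\lambda^{AB}_j(\eps)\to\widehat\lambda_j$ (from \cite{Len15AB}) combined with the corresponding statement on $\widehat\Omega^+$, but the argument needs to be spelled out; the rest is the computation $\beta^2=2\widehat\beta^2$ and direct substitution.
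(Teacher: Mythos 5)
Your proposal is correct and follows essentially the same route as the paper: gauge away the Aharonov--Bohm potential via the fluxon function $\Phi_\eps$, use the symmetry of $\widehat\Omega$ under $(x_1,x_2)\mapsto(x_1,-x_2)$ to reduce the $\Sigma^c$-antisymmetric ($x_2$-odd) sector to the mixed Dirichlet--Neumann problem \eqref{eqDND} on $\Omega=\widehat\Omega\cap\RR^2_+$, identify indices using the simplicity of $\widehat\lambda_N$ and the monotone convergences $\lambda_j(\eps)\nearrow\lambda_j$, $\mu_j(\eps)\searrow\mu_j$ so that $\lambda^{AB}_N(\eps)=\lambda_K(\eps)$ for $\eps$ small, and then apply Theorem \ref{t:new_main} with $\beta=\sqrt2\,\widehat\beta$ to account for the change of normalization from $L^2(\widehat\Omega)$ to $L^2(\Omega)$. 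The details you flag as needing care — the sign-jump-to-Neumann translation and the index bookkeeping — are exactly what the paper makes precise in Lemmas \ref{l:gauge}, \ref{l:IsoAS}, Corollary \ref{c:MagnEV} and Proposition \ref{p:ConvEven}, so your outline matches the paper's proof.
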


\begin{rem}	As discussed in Section \ref{s:AB}, the assumption
  that $\widehat\lambda_N$ is simple can be slightly relaxed,
admitting, in some cases, also double eigenvalues.
\end{rem}

\section{Sharp asymptotics for the eigenvalue
  variation}\label{sec:explicit}

  \subsection{Related results from the literature}\label{subsec:relres}
As already mentioned in the introduction, some
  asymptotic expansions for the eigenvalue variation
  $\lambda_N-\lambda_N(\eps)$ were derived in  \cite{Gad}. Let us first
  recall the results from \cite{Gad} which are the starting of our analysis.

Let $s:=\{(x_1,x_2)\in\RR^2: x_2=0\text{ and }x_1\geq 1\ \text{ or }x_1 \leq -1\}$.  We denote as $\mathcal Q$
the completion of $C^\infty_{\rm c}(\overline{\RR^2_+} \setminus s)$
under the norm $( \int_{\RR^2_+} |\nabla u|^2\,dx )^{1/2}$. 
  From the
Hardy type inequality proved in \cite{LW99} and
a change of gauge, it follows that
functions in $\mathcal Q$ 
satisfy the Hardy type inequalities
\begin{equation}\label{eq:1} 
\frac14 \int_{\RR^2_+} \dfrac{|\varphi(x)|^2}{|x-{\mathbf e}|^2}\,dx\leq
\int_{\RR^2_+} |\nabla \varphi(x)|^2\,dx,
\quad\text{for all }\varphi\in \mathcal Q,
\end{equation}
and 
\begin{equation}\label{eq:2}
\frac14 \int_{\RR^2_+} \dfrac{|\varphi(x)|^2}{|x+{\mathbf e}|^2}\,dx\leq
\int_{\RR^2_+} |\nabla \varphi(x)|^2\,dx,
\quad\text{for all }\varphi\in \mathcal Q,
\end{equation}
where $\mathbf e=(1,0)$. Inequalities \eqref{eq:1} and \eqref{eq:2}
allow characterizing $\mathcal Q$ as the following concrete functional
space:
\[
\mathcal Q=\Big\{ u\in L^1_{\rm loc}(\RR^2_+):\
 \nabla u\in L^2(\RR^2_+), 
\ \tfrac{u}{|x\pm\mathbf e|}\in L^2(\RR^2_+ ), \text{ and } u=0 \text{
  on }s\Big\}. 
\]

We refer to the paper \cite{Gad}, where the following 
  theorem can be found 
as a particular case of more general results.

\begin{thm}[\cite{Gad}]\label{t:gad}
 Let $\Omega$ be a bounded open set in $\RR^2$ satisfying 
\eqref{eq:38} and \eqref{eq:40}.
Let $N\geq 1$ be such that 
the $N$-th eigenvalue $\lambda_N$ of $q_0$ on $\Omega$  is simple 
 with associated eigenfunction $u_N$ having  in
  $0$ a zero of order $k$ with $k$ as in \eqref{eq:orderk}. For
  $\eps\in (0,\eps_0)$, let $\lambda_N(\eps)$ be
  the $N$-th eigenvalue of $q_\eps$ on $\Omega$ and $u_N^\eps$ be its 
 associated eigenfunction, normalized to satisfy  $\int_\Omega |u_N^{\eps}|^2\,dx=1$
and $\int_{\Omega}u_N^{\eps}\,u_N \,dx\ge0$. 
Then, as $\eps\to0^+$,
\begin{align}
&\frac{\lambda_N-\lambda_N(\eps)}{\eps^{2k}} \to -
 \beta^2\, \displaystyle{\int_{-1}^1 \frac{\partial w_k}{\partial x_2}\,w_k \, dx_1 }, \label{eq:gad}\\
 &\eps^{-k}u_N^\eps(\eps x) \to
\beta (\psi_k+w_k) \quad \text{in $H^{1}_{\rm loc}(\overline{\RR^2_+})$}, \label{eq:gad2}
\end{align}
with $\beta\neq0$ being as in \eqref{eq:5}--\eqref{eq:orderk},
$\psi_k$ being defined in \eqref{eq:psi_k}, and $w_k$  being the
unique $\mathcal Q$-weak solution to the problem
 \begin{equation}\label{eq:wk}
 \begin{cases}
   -\Delta w_k=0, &\text{in }\RR^2_+, \\
   w_k=0, &\text{on }s, \\
   \frac{\partial w_k}{\partial \nu}=-\frac{\partial \psi_k}{\partial
     \nu}, &\text{on }\Gamma_1.
 \end{cases}
\end{equation} 
\end{thm}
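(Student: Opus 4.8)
\emph{Proof strategy.} The plan is to combine a blow-up analysis of the perturbed eigenfunctions near $0$ with an exact integral identity for the eigenvalue variation. Set $\Omega_\eps:=\eps^{-1}\Omega$ and $v_\eps(x):=\eps^{-k}u_N^\eps(\eps x)$ for $x\in\Omega_\eps$; by \eqref{eq:40} the boundary of $\Omega_\eps$ is straight in a neighbourhood of $0$ that invades $\overline{\RR^2_+}$ as $\eps\to0^+$, and $v_\eps$ solves $-\Delta v_\eps=\eps^2\lambda_N(\eps)v_\eps$ in $\Omega_\eps$, vanishes on $\partial\Omega_\eps\setminus\Gamma_1$, and satisfies the homogeneous Neumann condition on $\Gamma_1=[-1,1]\times\{0\}$. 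The first and principal task is the uniform bound $\|v_\eps\|_{H^1(\Omega_\eps\cap B_R)}\le C_R$ for every fixed $R>0$. I would derive it from an Almgren-type monotonicity formula for $u_N^\eps$ centred at $0$: at scales $r\gg\eps$ the Neumann window $\Gamma_\eps$ has vanishing capacity and the boundary is flat, so the frequency function of $u_N^\eps$ is almost monotone with an error integrable in $r$, and its value as $r\to0$ tends, as $\eps\to0^+$, to the vanishing order $k$ of $u_N$ from \eqref{eq:orderk}. The resulting doubling inequality gives $\|u_N^\eps\|_{L^2(B_r)}\le C\,r^{k+1}$ uniformly in $r$ and $\eps$, hence $\|v_\eps\|_{L^2(\Omega_\eps\cap B_R)}\le C_R$ after rescaling, and a Caccioppoli estimate upgrades this to the full $H^1$ bound. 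This quantitative control of the monotonicity formula in the presence of the moving mixed boundary condition is the main obstacle.

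Granting the a priori bound, extract a subsequence with $v_\eps\rightharpoonup V$ weakly in $H^{1}_{\rm loc}(\overline{\RR^2_+})$. Since $\eps^2\lambda_N(\eps)\to0$, the limit $V$ is harmonic in $\RR^2_+$, vanishes on $s$ and satisfies $\frac{\partial V}{\partial\nu}=0$ on $\Gamma_1$. To match the behaviour at infinity, observe that $u_N^\eps\to u_N$ strongly in $H^1(\Omega)$ (the $u_N^\eps$ are bounded in $H^1$, converge to $u_N$ in $L^2$ by spectral stability and the sign normalisation, and $\|\nabla u_N^\eps\|^2=\lambda_N(\eps)\to\lambda_N=\|\nabla u_N\|^2$), while $u_N(y)=\beta\,\psi_k(y)+o(|y|^k)$ as $y\to0$ by \eqref{eq:orderk} (with $\psi_k$ as in \eqref{eq:psi_k}); rescaling then shows $V-\beta\psi_k\in\mathcal Q$ and that it solves \eqref{eq:wk} with right-hand side multiplied by $\beta$. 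By uniqueness of $w_k$ in $\mathcal Q$, $V=\beta(\psi_k+w_k)$, and since the limit does not depend on the subsequence the whole family converges; interior and boundary elliptic regularity for the mixed problem improves this to strong $H^{1}_{\rm loc}$ and to $C^{1}_{\rm loc}(\overline{\RR^2_+}\setminus\{(1,0),(-1,0)\})$ convergence, which is \eqref{eq:gad2}.

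For \eqref{eq:gad} I would use the identity obtained by inserting $u_N\in H^1_0(\Omega)\subset\mathcal Q_\eps$ as a test function for $u_N^\eps$ and integrating $-\Delta u_N=\lambda_N u_N$ against $u_N^\eps$; subtracting the two relations and using that $u_N$ vanishes on all of $\partial\Omega$ while $u_N^\eps$ is unconstrained only on $\Gamma_\eps$, where the outer normal is $(0,-1)$, yields
\begin{equation*}
\lambda_N-\lambda_N(\eps)=\frac{\displaystyle\int_{\Gamma_\eps}\frac{\partial u_N}{\partial x_2}\,u_N^\eps\,d\sigma}{\displaystyle\int_\Omega u_N\,u_N^\eps\,dx}.
\end{equation*}
The denominator tends to $\int_\Omega u_N^2=1$. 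For the numerator I substitute $x_1=\eps t$ and use $\partial_{x_2}u_N(\eps t,0)=\beta k(\eps t)^{k-1}+o(\eps^{k-1})$ (from \eqref{eq:orderk}, noting $\partial_{x_2}\psi_k(x_1,0)=k x_1^{k-1}$) together with $u_N^\eps(\eps t,0)=\eps^k v_\eps(t,0)\to\eps^k\beta\,w_k(t,0)$ (since $\psi_k$ vanishes on the $x_1$-axis), which gives
\begin{equation*}
\int_{\Gamma_\eps}\frac{\partial u_N}{\partial x_2}\,u_N^\eps\,d\sigma=\beta^2 k\,\eps^{2k}\int_{-1}^1 t^{k-1}w_k(t,0)\,dt+o(\eps^{2k}).
\end{equation*}
Finally the Neumann condition in \eqref{eq:wk} reads $\partial_{x_2}w_k(x_1,0)=-k x_1^{k-1}$ on $\Gamma_1$, so $k\int_{-1}^1 t^{k-1}w_k(t,0)\,dt=-\int_{-1}^1\frac{\partial w_k}{\partial x_2}\,w_k\,dx_1$ (and testing \eqref{eq:wk} with $w_k$ shows this also equals $\|\nabla w_k\|_{L^2(\RR^2_+)}^2\ge0$, consistent with $\lambda_N(\eps)\le\lambda_N$); dividing by $\eps^{2k}$ gives \eqref{eq:gad}. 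The remaining delicate point is the passage to the limit in the boundary integral near the junctions $(\pm1,0)$, where $w_k$ has $\rho^{1/2}$-type behaviour: one shows $v_\eps$ is uniformly bounded there, for instance by a barrier argument, so that dominated convergence applies.
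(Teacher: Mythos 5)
Your plan shares the paper's two structural pillars (Almgren monotonicity to bound the blow-up family, and uniqueness of the limit profile in $\mathcal Q$ to identify it), but you then take a genuinely different and cleaner route to \eqref{eq:gad}: the Green/Hadamard identity
\[
(\lambda_N-\lambda_N(\eps))\int_\Omega u_N u_N^\eps\,dx=\int_{\Gamma_\eps} u_N^\eps\,\frac{\partial u_N}{\partial x_2}\,dx_1
\]
is exact, and combined with \eqref{eq:gad2} and the boundary conditions for $w_k$ it gives the stated constant with a one-line computation. The paper instead works exclusively with Rayleigh quotients, constructing modified test functions $v_{R,\eps}$ and $w_{j,R,\eps}$ on $D_{R\eps}^+$, running a Gram--Schmidt procedure, and squeezing $\lambda_N-\lambda_N(\eps)$ between two bounds whose common limit is computed through the $k$-th Fourier coefficient of the limit profile. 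Your route, once \eqref{eq:gad2} is in hand, is shorter and more transparent, and your sign/consistency check via $k\int_{-1}^1 t^{k-1}w_k\,dt=\|\nabla w_k\|_{L^2}^2\ge 0$ is a welcome sanity test.

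However, there is a real gap in your derivation of \eqref{eq:gad2}, precisely at the point you yourself flag as the main obstacle, and it is not just a technicality. First, the Almgren upper bound $\mathcal N(u_N^\eps,r,\lambda_N(\eps))\le k+\delta$ yields, upon integration, a \emph{lower} bound on $H(u_N^\eps,r)$, not the upper bound $H(u_N^\eps,r)\le C r^{2k}$ that you would need to claim $\|u_N^\eps\|_{L^2(B_r)}\le Cr^{k+1}$ and hence the uniform bound on $v_\eps=\eps^{-k}u_N^\eps(\eps\,\cdot)$. The paper sidesteps this by normalizing the blow-up family by $H(u_N^\eps,\mu_\delta\eps)^{-1/2}$ instead of $\eps^{-k}$; the bound on the renormalized family is then automatic, while the identification $\sqrt{H(u_N^\eps,\mu_\delta\eps)}\sim c\,\eps^k$ is obtained only at the end. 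Second, and more seriously, your identification step reads: ``$u_N^\eps\to u_N$ strongly in $H^1(\Omega)$ \dots rescaling then shows $V-\beta\psi_k\in\mathcal Q$''. Strong $H^1(\Omega)$ convergence is qualitative; to conclude that $V-\beta\psi_k$ has finite energy over all of $\RR^2_+$ one must control $\eps^{-2k}\int_{D^+_{R\eps}\setminus D^+_\eps}|\nabla(u_N^\eps-u_N)|^2$ uniformly in both $\eps$ and $R$, i.e.\ one needs the \emph{rate} $\|u_N^\eps-u_N\|_{H^1}=O(\eps^k)$ (or $O(\sqrt{H})$) near the origin. The paper obtains this quantitative estimate by a Fr\'echet-differential/inverse-function argument applied to the operator $F(\lambda,\varphi)=(q(\varphi)-\lambda_N,\,-\Delta\varphi-\lambda\varphi)$ at $(\lambda_N,u_N)$, and that argument in turn feeds on the very Rayleigh-quotient estimates you hoped to avoid. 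So the economy you gain with the Hadamard identity in step \eqref{eq:gad} is to a large extent borrowed back: without those estimates (or a substitute producing the rate), the chain ``weak limit $\Rightarrow$ finite energy of $V-\beta\psi_k$ $\Rightarrow$ uniqueness'' does not close.
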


Convergence  \eqref{eq:gad} can be obtained combining \cite[Equation (4.6)]{Gad} 
for simple eigenvalues, \cite[Equation (3.4)]{Gad} together with \cite[Lemma 3.3]{Gad}.
As well, \eqref{eq:gad2} is given by \cite[Equation (2.3)]{Gad}, which is a consequence of 
\cite[Theorem 5.2]{Gad}, \cite[Equation (4.10)]{Gad}, \cite[Lemma 3.3]{Gad}.
For the sake of clarity and completeness, we present an alternative
proof in Appendix \ref{sec:altern-proof-theor}, which relies on
energy estimates obtained by an Almgren type monotonicity argument and
blow-up analysis.

We remark that in \cite{Gad} the author describes the 
  limit profile $w_k$ 
  solving \eqref{eq:wk}
with polar coordinates. On the contrary, our contribution relies essentially on 
  the use of elliptic coordinates 
in place of polar ones. 
  This allows us to compute explicitly the right hand side
  of \eqref{eq:gad}, thus obtaining the following result.

\begin{prop} \label{propmk} For any positive integer $k$, 
\begin{equation*}
	\int_{-1}^1 \frac{\partial w_k}{\partial x_2}\,w_k \, dx_1
	=-\frac{k\pi}{2^{2k-1}}\left(\begin{array}{c}k-1\\ \left\lfloor\frac{k-1}2\right\rfloor\end{array}\right)^{\!2}.
\end{equation*}
\end{prop}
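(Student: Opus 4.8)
The plan is to solve \eqref{eq:wk} explicitly in elliptic coordinates and then evaluate the boundary integral directly. First I would introduce the conformal map $F$ of \eqref{eq:44}, which sends the half-strip $\{\xi>0,\ \eta\in(0,\pi)\}$ onto $\RR^2_+\setminus s$, carrying the segment $\Gamma_1=(-1,1)\times\{0\}$ to $\{\xi=0,\ \eta\in(0,\pi)\}$ and the slit $s$ to $\{\eta=0\}\cup\{\eta=\pi\}$. Since the Laplace equation and the homogeneous Dirichlet condition on $s$ are conformally invariant, the pulled-back function $W_k:=w_k\circ F$ must be harmonic and bounded in the half-strip, vanish on the two horizontal sides, and — because the outer normal derivative on $\Gamma_1$ matches (up to the conformal factor, which I would track carefully) minus the normal derivative of $\psi_k$ — satisfy an inhomogeneous Neumann condition on $\{\xi=0\}$. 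I would compute the pullback $\psi_k\circ F$: writing $x_1+ix_2=\cosh(\xi+i\eta)$, one gets $\psi_k(r\cos t,r\sin t)=\operatorname{Im}((x_1+ix_2)^k)=\operatorname{Im}(\cosh^k(\xi+i\eta))$, and expanding $\cosh^k$ via the binomial theorem in terms of $e^{\pm(\xi+i\eta)}$ produces a finite Fourier–exponential sum in $(\xi,\eta)$. Matching the Neumann data mode by mode against the harmonic functions $e^{-(k-2j)\xi}\sin((k-2j)\eta)$ (the bounded, $s$-vanishing solutions) yields exactly the coefficients in \eqref{eq:W_k}; this is the bookkeeping step and the main place where care is needed.

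Next I would rewrite the target integral. On $\Gamma_1$ we have $w_k=\beta^{-1}$-free data; parametrizing $\Gamma_1$ by $x_1=\cos\eta$, $\eta\in(0,\pi)$, so $dx_1=-\sin\eta\,d\eta$, and using that on $\{\xi=0\}$ the normal derivative $\partial_{x_2}$ corresponds to $-\tfrac1{J}\partial_\xi$ for the Jacobian factor $J=|\partial_\xi F|$, the integral $\int_{-1}^1 \frac{\partial w_k}{\partial x_2} w_k\,dx_1$ becomes, after the substitution, a multiple of $\int_0^\pi (\partial_\xi W_k)(0,\eta)\,W_k(0,\eta)\,d\eta$ (the Jacobian factors cancel between the normal-derivative conversion and the arclength element — this cancellation is precisely why elliptic coordinates are the right choice). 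Now $W_k(0,\eta)=2^{-(k-1)}\sum_j \binom kj \sin((k-2j)\eta)$ and $(\partial_\xi W_k)(0,\eta)=-2^{-(k-1)}\sum_j (k-2j)\binom kj \sin((k-2j)\eta)$, so by orthogonality of $\{\sin(m\eta)\}$ on $(0,\pi)$ the double sum collapses to a single sum:
\[
\int_0^\pi (\partial_\xi W_k)(0,\eta)\,W_k(0,\eta)\,d\eta
=-\frac{\pi}{2}\cdot\frac1{4^{k-1}}\sum_{j=0}^{\lfloor (k-1)/2\rfloor}(k-2j)\binom kj^{2}.
\]

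Finally I would identify the combinatorial sum $S_k:=\sum_{j=0}^{\lfloor(k-1)/2\rfloor}(k-2j)\binom kj^{2}$ with $k\binom{k-1}{\lfloor (k-1)/2\rfloor}^{2}/\,?$ — more precisely, the claimed identity is equivalent to
\[
\sum_{j=0}^{\lfloor(k-1)/2\rfloor}(k-2j)\binom kj^{2}=k\binom{k-1}{\left\lfloor\frac{k-1}2\right\rfloor}^{2},
\]
which I would prove by writing $(k-2j)\binom kj^2=\binom kj\big[(k-j)\binom kj-j\binom kj\big]=k\big[\binom{k}{j}\binom{k-1}{j}-\binom{k}{j}\binom{k-1}{j-1}\big]$, so the sum telescopes against the Vandermonde-type identity $\sum_j \binom{k}{j}\binom{k-1}{j} - \binom{k}{j}\binom{k-1}{j-1}$; the surviving term is the central binomial coefficient $\binom{k-1}{\lfloor(k-1)/2\rfloor}$ squared, with the even/odd parity of $k$ handled separately but identically. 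Combining the constant $-\tfrac\pi2\cdot 4^{-(k-1)}$ with $S_k=k\binom{k-1}{\lfloor(k-1)/2\rfloor}^2$ and the cancelled Jacobian gives $\int_{-1}^1\partial_{x_2}w_k\,w_k\,dx_1=-\dfrac{k\pi}{2^{2k-1}}\binom{k-1}{\lfloor(k-1)/2\rfloor}^{2}$, as stated. The main obstacle I anticipate is not any single hard estimate but keeping the conformal-factor bookkeeping consistent — verifying that $W_k\circ F^{-1}$ genuinely solves \eqref{eq:wk} with the correct normalization, and that the Jacobian truly cancels in the boundary integral — together with pinning down the telescoping/parity argument for $S_k$.
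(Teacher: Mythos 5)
Your proposal is correct and, for the first two stages, takes the same route as the paper: Proposition~\ref{propwk} constructs $W_k=w_k\circ F$ by pulling $\psi_k$ back through the conformal map, expanding $\cosh^k$ binomially, and matching Neumann data mode by mode; Corollary~\ref{cormk} then performs exactly the change of variables you describe, where the scale factor $h(0,\eta)=\sin\eta$ from \eqref{eq:normder} cancels against $dx_1=-\sin\eta\,d\eta$, and orthogonality of $\{\sin(m\eta)\}_{m\ge1}$ on $(0,\pi)$ collapses the double sum to $-\tfrac{\pi}{2^{2k-1}}\sum_{j}(k-2j)\binom kj^2$.

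Where you genuinely diverge is in the combinatorial step, Lemma~\ref{lemSumBin}. The paper writes $S=S_0-\tfrac2kS_1-\tfrac2kS_2$ and evaluates each piece with the Vandermonde identity, the absorption identity and Pascal's rule, treating $k$ even and $k$ odd as separate cases. Your telescoping idea is cleaner: $(k-2j)\binom kj=k\bigl[\binom{k-1}{j}-\binom{k-1}{j-1}\bigr]$, and then multiplying by $\binom kj=\binom{k-1}{j}+\binom{k-1}{j-1}$ (Pascal) gives
\[
(k-2j)\binom kj^2=k\Bigl[\binom{k-1}{j}^2-\binom{k-1}{j-1}^2\Bigr],
\]
so the sum over $j=0,\dots,\lfloor\tfrac{k-1}{2}\rfloor$ telescopes to $k\binom{k-1}{\lfloor(k-1)/2\rfloor}^2$ in one line, with no Vandermonde and no parity split. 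I would tighten the wording there: what you call ``the Vandermonde-type identity'' is really just Pascal's rule applied to the residual $\binom kj$, and once you have the difference of squares there is nothing to handle separately for $k$ even versus odd---the $j=0$ boundary term vanishes because $\binom{k-1}{-1}=0$. With that bookkeeping made explicit, your proof of the sum identity is both correct and simpler than the one in the paper.
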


The proof of Proposition \ref{propmk} relies in an
  explicit construction of the limit profile $w_k$, 
using a parametrization of the upper half-plane $\RR^2_+$ by elliptic coordinates, 
a finite trigonometric expansion, and the simplification of a sum involving binomial coefficients. 

\subsection{Computation of the limit profile $w_k$}

Let us first compute $w_k$. By uniqueness, any function
in the functional space $\mathcal Q$ 
that satisfies all the conditions of Problem \eqref{eq:wk} is equal to
$w_k$. In order to find such a function, we use the elliptic
coordinates $(\xi,\eta)$ defined by
\begin{equation}\label{eqChangeCoord}
\begin{cases}
x_1=\cosh(\xi)\cos(\eta),\\
x_2=\sinh(\xi)\sin(\eta).
\end{cases}
\end{equation}
More precisely, we consider the function
$F:(\xi,\eta)\mapsto (x_1,x_1)$ defined by the equations
\eqref{eqChangeCoord}. It is a $C^{\infty}$ diffeomorphism from
$D:=(0,+\infty)\times (0,\pi)$ to $\RR^2_+$. We note that $F$ is
actually a conformal mapping. Indeed, if we define the complex
variables $z:=x_1+ix_2$ and $\zeta:=\xi+i\eta$, we have
$z=\cosh(\zeta)$, which proves the claim since $\cosh$ is an entire
function. Let us denote by $h(\xi,\eta)$ the scale factor associated
with $F$, expressed in elliptic coordinates. We have
\begin{equation*}
  h(\xi,\eta)=\left|\cosh'(\zeta)\right|=\left|\sinh(\zeta)\right|
  =\left|\sinh(\xi)\cos(\eta)+i\cosh(\xi)\sin(\eta)\right|=\sqrt{\cosh^2(\xi)-\cos^2(\eta)}.
\end{equation*}
For any function $u\in\mathcal
  Q$, let us define $U:=u\circ F$. From the fact the $F$ is conformal, it follows that $\left|\nabla U\right|$ is in $L^2(D)$ with
\begin{equation*}
	\int_{D}\left|\nabla U\right|^2\,d\xi d\eta=\int_{\RR^2_+}\left|\nabla u\right|^2\,dx.
\end{equation*} 
We also have
\begin{equation}\label{eq:normder}
	\frac{\partial u}{\partial \nu}(x)=-\frac1{h(0,\eta)}\frac{\partial  U}{\partial \xi}(0,\eta)
\end{equation}
for any $x\in \Gamma_1$, where $\eta\in(0,\pi)$ satisfies $x=F(0,\eta)=(\cos(\eta),0)$. Furthermore, $U$ is harmonic in $D$ if, and only if, $u$ is harmonic in $\RR^2_+$. 

We now give an explicit formula for $w_k\circ F$.
\begin{prop} \label{propwk} For any positive integer $k$, $w_k\circ
  F=W_k$, where $W_k$ is defined in \eqref{eq:W_k}.
\end{prop}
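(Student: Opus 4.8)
The plan is to verify directly that the function $W_k$ defined in \eqref{eq:W_k} satisfies, when pulled back to $\RR^2_+$ via $F$, all three conditions characterizing $w_k$ in Problem \eqref{eq:wk}, and then invoke uniqueness of the $\mathcal Q$-weak solution. Concretely, I would set $\widetilde w_k := W_k \circ F^{-1}$ and check: (i) $\widetilde w_k$ is harmonic in $\RR^2_+$; (ii) $\widetilde w_k = 0$ on $s$; (iii) $\frac{\partial \widetilde w_k}{\partial \nu} = -\frac{\partial \psi_k}{\partial \nu}$ on $\Gamma_1$; and (iv) $\widetilde w_k \in \mathcal Q$, i.e. $\nabla \widetilde w_k \in L^2(\RR^2_+)$ together with the weighted integrability (the latter following from the Hardy inequalities \eqref{eq:1}--\eqref{eq:2} once the Dirichlet energy is finite). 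Because $F$ is conformal, harmonicity transfers: it suffices to show $W_k$ is harmonic in $D = (0,\infty)\times(0,\pi)$, which is immediate since each summand $\exp(-(k-2j)\xi)\sin((k-2j)\eta)$ is (the imaginary part of) an exponential $\exp(-(k-2j)\zeta)$ in the complex variable $\zeta = \xi + i\eta$, hence harmonic.

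Next I would translate the boundary conditions into elliptic coordinates. The set $s$ corresponds under $F$ to $\{\xi = 0,\ \eta \in \{0,\pi\}\}$ together with the limit $\xi \to +\infty$; on $\{\eta = 0\}$ and $\{\eta = \pi\}$ every term $\sin((k-2j)\eta)$ vanishes, giving condition (ii), and the exponential decay in $\xi$ handles the behaviour at infinity. For condition (iii), the Neumann segment $\Gamma_1$ is exactly $F(\{0\}\times(0,\pi))$, and by \eqref{eq:normder} the condition $\frac{\partial w_k}{\partial\nu} = -\frac{\partial\psi_k}{\partial\nu}$ on $\Gamma_1$ becomes
\begin{equation*}
\frac{\partial W_k}{\partial \xi}(0,\eta) = \frac{\partial \Psi_k}{\partial \xi}(0,\eta), \qquad \eta \in (0,\pi),
\end{equation*}
where $\Psi_k := \psi_k \circ F$. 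So the crux is to compute $\psi_k \circ F$ and its $\xi$-derivative at $\xi = 0$, and to match it against $\partial_\xi W_k(0,\eta)$. Since $\psi_k(r\cos t, r\sin t) = r^k\sin(kt) = \mathrm{Im}(z^k)$ with $z = re^{it}$, and $z = \cosh\zeta$, we get $\Psi_k = \mathrm{Im}(\cosh^k\zeta)$. Expanding $\cosh^k\zeta = 2^{-k}(e^\zeta + e^{-\zeta})^k = 2^{-k}\sum_{j=0}^k \binom{k}{j} e^{(k-2j)\zeta}$ and taking imaginary parts yields a finite trigonometric expansion for $\Psi_k$; pairing the $j$ and $k-j$ terms (they contribute $\exp(\pm(k-2j)\zeta)$) and using $\sin$ parity, one differentiates in $\xi$, evaluates at $\xi = 0$ where $\cosh(0\cdot(k-2j))$ and $\sinh$ collapse appropriately, and checks the identity term by term. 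This binomial bookkeeping — getting the factor $2^{-(k-1)}$, the truncation at $\lfloor (k-1)/2\rfloor$, and the cancellation of the $j$ with $k - 2j = 0$ term when $k$ is even — is the main computational obstacle, though it is elementary.

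Finally, for the membership $\widetilde w_k \in \mathcal Q$: since $F$ is conformal, $\int_{\RR^2_+}|\nabla\widetilde w_k|^2\,dx = \int_D |\nabla W_k|^2\,d\xi\,d\eta$, and the latter is finite because of the exponential decay $e^{-(k-2j)\xi}$ in $\xi$ for each term (all exponents $k - 2j \geq 1$ since $j \leq \lfloor(k-1)/2\rfloor < k/2$) and the bounded $\eta$-interval; the weighted $L^2$ bounds then come for free from \eqref{eq:1}--\eqref{eq:2}, and the Dirichlet condition on $s$ was checked in step (ii), so $W_k$ is indeed the pullback of an element of $\mathcal Q$. Having verified all the defining properties, uniqueness of the $\mathcal Q$-weak solution to \eqref{eq:wk} forces $w_k \circ F = W_k$, which is the claim. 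I would present the argument in the order: (1) reduce to checking properties of $W_k$ in $D$ via conformality; (2) harmonicity and boundary behaviour on $\{\eta = 0,\pi\}$ and at $\xi = \infty$; (3) compute $\Psi_k = \psi_k \circ F$ by the binomial expansion of $\cosh^k\zeta$ and verify the Neumann matching at $\xi = 0$; (4) conclude by uniqueness.
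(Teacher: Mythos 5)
Your proposal follows essentially the same route as the paper: compute $\Psi_k=\psi_k\circ F$ via the binomial expansion of $\cosh^k\zeta$, verify that $W_k$ is harmonic in $D$, has finite Dirichlet energy, vanishes on $\{\eta=0\}\cup\{\eta=\pi\}$ (the $F$-preimage of $s$), and satisfies the Neumann matching at $\xi=0$, then conclude by uniqueness of the $\mathcal Q$-weak solution to \eqref{eq:wk}. The structure and all the key ingredients match the paper's proof.

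One sign error should be fixed: substituting \eqref{eq:normder} into $\frac{\partial w_k}{\partial\nu}=-\frac{\partial\psi_k}{\partial\nu}$ gives
\begin{equation*}
-\frac1{h(0,\eta)}\frac{\partial W_k}{\partial\xi}(0,\eta)=\frac1{h(0,\eta)}\frac{\partial\Psi_k}{\partial\xi}(0,\eta),
\end{equation*}
i.e.\ $\frac{\partial W_k}{\partial\xi}(0,\eta)=-\frac{\partial\Psi_k}{\partial\xi}(0,\eta)$, not $\frac{\partial W_k}{\partial\xi}(0,\eta)=\frac{\partial\Psi_k}{\partial\xi}(0,\eta)$ as you wrote. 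Your $W_k$ does satisfy the corrected identity, since $\partial_\xi\Psi_k(0,\eta)=2^{-(k-1)}\sum_j(k-2j)\binom{k}{j}\sin((k-2j)\eta)$ while $\partial_\xi W_k(0,\eta)$ is its negative, so the conclusion survives, but the intermediate statement as written is wrong and would have led you astray if taken literally. Also, the description of $F^{-1}(s)$ is a little garbled: $s$ corresponds to the two horizontal rays $\{\eta=0,\ \xi>0\}$ and $\{\eta=\pi,\ \xi>0\}$ of $\partial D$, not to $\{\xi=0,\ \eta\in\{0,\pi\}\}$ (which is just two points); your verification that $W_k$ vanishes there is nevertheless correct since $\sin((k-2j)\eta)=0$ at $\eta\in\{0,\pi\}$.
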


\begin{proof} Let us begin by computing the function $\Psi_k:=\psi_k\circ F$. We have $\psi_k(x)=\mbox{Im}\left(z^k\right)$, so that $\Psi_k(\xi,\eta)=\mbox{Im}\left((\cosh(\zeta))^k\right)$, where the complex variables $z$ and $\zeta$ are defined as above. Using the binomial theorem, we find
\begin{equation*}
  \Psi_k(\xi,\eta)=\mbox{Im}\left(\frac1{2^k}
    \sum_{j=0}^k\left(\begin{array}{c}k\\
                        j\end{array}\right)e^{(k-2j)\zeta}\right)=
                  \frac1{2^k}\sum_{j=0}^k\left(\begin{array}{c}k\\
                                                 j\end{array}\right)
                                             e^{(k-2j)\xi}\sin\left((k-2j)\eta\right).
\end{equation*}
This can be written
\begin{equation*}
	\Psi_k(\xi,\eta)=\frac1{2^{k-1}}\sum_{j=0}^{\left\lfloor\frac{k-1}2\right\rfloor}\left(\begin{array}{c}k\\ j\end{array}\right)\sinh\left((k-2j)\xi\right)\sin\left((k-2j)\eta\right)
\end{equation*}
by grouping terms of the sum in pairs, starting from opposite extremities. In particular, for all $\eta\in(0,\pi)$,
\begin{equation*}
	\frac{\partial \Psi_k}{\partial \xi}(0,\eta)=\frac1{2^{k-1}}\sum_{j=0}^{\left\lfloor\frac{k-1}2\right\rfloor}(k-2j)\left(\begin{array}{c}k\\ j\end{array}\right)\sin\left((k-2j)\eta\right).
\end{equation*}
We now define 
\begin{equation*}
	V(\xi,\eta)=\frac1{2^{k-1}}\sum_{j=0}^{\left\lfloor\frac{k-1}2\right\rfloor}\left(\begin{array}{c}k\\ j\end{array}\right)e^{-(k-2j)\xi}\sin\left((k-2j)\eta\right).
\end{equation*}
The function $|\nabla V|$ is in $L^2(D)$ and, for all $\eta\in(0,\pi)$,
\begin{equation*}
	\frac{\partial V}{\partial \xi}(0,\eta)=
-\frac1{2^{k-1}}\sum_{j=0}^{\left\lfloor\frac{k-1}2\right\rfloor}
(k-2j)\left(\begin{array}{c}k\\ j\end{array}\right)\sin\left((k-2j)\eta\right).
\end{equation*}
Additionally, $V$ vanishes on half-lines defined by $\eta=0$ and
$\eta=\pi$, which are the lower and upper boundary of $D$, respectively,
and are mapped to $\RR\times\{0\}\setminus\Gamma_1$ by
$F$. It can be checked directly that $V\circ F^{-1}\in \mathcal Q$. Finally, $V$ is harmonic in $D$, since it is a
linear combination of functions of the type
$(\xi,\eta)\mapsto e^{\pm n \xi}e^{\pm i n \eta}$, which are harmonic. We conclude that $V\circ F^{-1}$ is a solution of
Problem \eqref{eq:wk}, and therefore $V=w_k\circ F$ by uniqueness.
\end{proof}

\begin{proof}[Proof of Theorem \ref{t:blowup2}]
   Theorem \ref{t:blowup2} follows combining Theorem
   \ref{t:gad} and Proposition \ref{propwk}. 
\end{proof}

\begin{cor} \label{cormk} For any positive integer $k\ge1$,
\begin{equation}\label{eq:const1}
	\int_{-1}^1 \frac{\partial w_k}{\partial x_2}\,w_k \, dx_2
	=-\frac\pi{2^{2k-1}}\sum_{j=0}^{\left\lfloor\frac{k-1}2\right\rfloor}(k-2j)\left(\begin{array}{c}k\\ j\end{array}\right)^2.
\end{equation}
\end{cor}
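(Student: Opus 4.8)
The plan is to evaluate the boundary integral directly from the explicit formula $w_k\circ F=W_k$ of Proposition~\ref{propwk}, with $W_k$ as in \eqref{eq:W_k}, after passing to the elliptic coordinates \eqref{eqChangeCoord}. First I would parametrize the segment $\Gamma_1=[-1,1]\times\{0\}$ by $\eta\in(0,\pi)\mapsto F(0,\eta)=(\cos\eta,0)$; since $x_1=\cos\eta$, for a function $g$ on $\Gamma_1$ one has $\int_{-1}^1 g\,dx_1=\int_0^\pi g(F(0,\eta))\,\sin\eta\,d\eta$ (the sign in $d(\cos\eta)=-\sin\eta\,d\eta$ being compensated by the reversal of the limits of integration). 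Next, on $\Gamma_1$ the outer unit normal to $\RR^2_+$ is $\nu=(0,-1)$, so $\frac{\partial w_k}{\partial x_2}=-\frac{\partial w_k}{\partial\nu}$; combining this with the scale-factor identity \eqref{eq:normder} applied to $u=w_k$, $U=W_k$, and with $h(0,\eta)=\sqrt{\cosh^2 0-\cos^2\eta}=\sin\eta$ for $\eta\in(0,\pi)$, I obtain
\[
\frac{\partial w_k}{\partial x_2}\bigl(F(0,\eta)\bigr)=\frac{1}{\sin\eta}\,\frac{\partial W_k}{\partial\xi}(0,\eta).
\]

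The point of working in elliptic coordinates is that the factor $\sin\eta$ coming from the parametrization of $\Gamma_1$ cancels exactly the factor $1/\sin\eta$ coming from the scale factor, so the integral collapses to
\[
\int_{-1}^{1}\frac{\partial w_k}{\partial x_2}\,w_k\,dx_1=\int_0^\pi\frac{\partial W_k}{\partial\xi}(0,\eta)\,W_k(0,\eta)\,d\eta.
\]
From \eqref{eq:W_k} I read off, with both sums running over $0\le j\le\left\lfloor\frac{k-1}2\right\rfloor$,
\[
W_k(0,\eta)=\frac{1}{2^{k-1}}\sum_{j}\binom{k}{j}\sin\bigl((k-2j)\eta\bigr),\qquad
\frac{\partial W_k}{\partial\xi}(0,\eta)=-\frac{1}{2^{k-1}}\sum_{j}(k-2j)\binom{k}{j}\sin\bigl((k-2j)\eta\bigr).
\]
I would then multiply these two finite sums, integrate term by term over $(0,\pi)$, and use $\int_0^\pi\sin(m\eta)\sin(n\eta)\,d\eta=\frac{\pi}{2}\,\delta_{mn}$, valid for positive integers $m,n$. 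Since the frequencies $k-2j$ for $j=0,\dots,\left\lfloor\frac{k-1}2\right\rfloor$ are pairwise distinct positive integers (they run through $k,k-2,\dots$ down to $1$ or $2$), only the diagonal terms survive, and
\[
\int_0^\pi\frac{\partial W_k}{\partial\xi}(0,\eta)\,W_k(0,\eta)\,d\eta=-\frac{1}{2^{2k-2}}\cdot\frac{\pi}{2}\sum_{j=0}^{\left\lfloor\frac{k-1}2\right\rfloor}(k-2j)\binom{k}{j}^{\!2}=-\frac{\pi}{2^{2k-1}}\sum_{j=0}^{\left\lfloor\frac{k-1}2\right\rfloor}(k-2j)\binom{k}{j}^{\!2},
\]
which is the asserted identity.

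I do not expect any substantial difficulty here; the steps requiring the most care are the bookkeeping of signs (the direction of $\nu$ and the orientation reversal implicit in $d(\cos\eta)=-\sin\eta\,d\eta$) and checking that the boundary integral converges near the endpoints $(\pm1,0)$. For the latter I would note that, since $w_k=W_k\circ F^{-1}$ by Proposition~\ref{propwk} and $W_k$ is a finite trigonometric--exponential sum, $w_k$ is smooth on $\overline{\RR^2_+}\setminus\{(1,0),(-1,0)\}$ (as also recorded in Theorem~\ref{t:blowup2}), while near $\eta=0$ (resp.\ $\eta=\pi$) one has $W_k(0,\eta)=O(\eta)$ (resp.\ $O(\pi-\eta)$) and $\frac{1}{\sin\eta}\frac{\partial W_k}{\partial\xi}(0,\eta)$ stays bounded; hence the integrand is bounded on $\Gamma_1$, in fact $O\bigl(\sqrt{1\mp x_1}\bigr)$ near $x_1=\pm1$, and the integral converges absolutely, so the change of variable above is fully justified.
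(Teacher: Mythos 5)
Your proof is correct and follows the same route as the paper's: express $\nabla W_k$ from \eqref{eq:W_k}, pass to elliptic coordinates on $\Gamma_1$ via \eqref{eq:normder}, observe the cancellation of the scale factor $\sin\eta$ against the Jacobian, and use the $L^2(0,\pi)$-orthogonality of $\sin((k-2j)\eta)$. The paper compresses these steps into two lines, but the computation and the key observations — in particular, the sign accounting through $\nu=(0,-1)$ and the frequencies $k-2j$ being pairwise distinct positive integers — are exactly the ones the paper relies on; your extra care about convergence near $(\pm1,0)$ is sound and not needed beyond what you wrote.
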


\begin{proof}
Using \eqref{eq:W_k}, a direct computation gives
\begin{equation*}
	\nabla W_k(\xi,\eta)=\frac1{2^{k-1}}\sum_{j=0}^{\left\lfloor\frac{k-1}2\right\rfloor}(k-2j)\left(\begin{array}{c}k\\ j\end{array}\right)e^{-(k-2j)\xi}(-\sin\left((k-2j)\eta\right),\cos\left((k-2j)\eta\right).
\end{equation*}
Recalling \eqref{eq:normder}, we perform a standard change of variables in the left-hand side of \eqref{eq:const1}
to elliptic coordinates and this yields the thesis.
\end{proof}

\subsection{Simplification of the sum}

We now prove  the following result.

\begin{lem} \label{lemSumBin} For every integer $k\ge1$,
\begin{equation*}
	\sum_{j=0}^{\left\lfloor\frac{k-1}2\right\rfloor}(k-2j)\left(\begin{array}{c} k \\ j\end{array}\right)^{\!\!2}=k\left(\begin{array}{c}k-1\\ \left\lfloor\frac{k-1}2\right\rfloor\end{array}\right)^{\!\!2}.
\end{equation*}
\end{lem}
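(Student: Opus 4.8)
The strategy is to turn the sum into a telescoping one by means of a pointwise identity for the summand. The key claim is that, for every $j\ge 0$,
\begin{equation*}
(k-2j)\binom kj^{\!2}=k\left[\binom{k-1}{j}^{\!2}-\binom{k-1}{j-1}^{\!2}\right],
\end{equation*}
with the usual convention $\binom{k-1}{-1}=0$. Once this is available, summing over $j$ from $0$ to $m:=\left\lfloor\frac{k-1}2\right\rfloor$ collapses the right-hand side to $k\binom{k-1}{m}^2-k\binom{k-1}{-1}^2=k\binom{k-1}{m}^2$, which is exactly the claimed value.

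\textbf{Proof of the pointwise identity.} I would first factor the right-hand side using the difference of squares and Pascal's rule $\binom kj=\binom{k-1}{j}+\binom{k-1}{j-1}$:
\begin{equation*}
k\left[\binom{k-1}{j}^{\!2}-\binom{k-1}{j-1}^{\!2}\right]
=k\left(\binom{k-1}{j}-\binom{k-1}{j-1}\right)\binom kj.
\end{equation*}
It then remains to check that $k\left(\binom{k-1}{j}-\binom{k-1}{j-1}\right)=(k-2j)\binom kj$. This follows from the two absorption identities $k\binom{k-1}{j}=(k-j)\binom kj$ and $k\binom{k-1}{j-1}=j\binom kj$, whose difference is $(k-2j)\binom kj$. (These hold trivially for the boundary values of $j$ as well, given the conventions on binomial coefficients with negative lower index.)

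\textbf{Conclusion and difficulty.} Substituting the pointwise identity into the sum and telescoping gives
\begin{equation*}
\sum_{j=0}^{m}(k-2j)\binom kj^{\!2}
=k\sum_{j=0}^{m}\left[\binom{k-1}{j}^{\!2}-\binom{k-1}{j-1}^{\!2}\right]
=k\binom{k-1}{m}^{\!2}=k\binom{k-1}{\left\lfloor\frac{k-1}2\right\rfloor}^{\!2},
\end{equation*}
as desired. The argument is elementary; the only genuine step is recognizing the correct telescoping representation of the summand, after which everything reduces to Pascal's rule and the standard absorption identities. No case distinction on the parity of $k$ is needed, since the summation range is encoded uniformly through $m=\left\lfloor\frac{k-1}2\right\rfloor$ and the vanishing of $\binom{k-1}{-1}$.
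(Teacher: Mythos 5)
Your proof is correct. The pointwise identity
\[
(k-2j)\binom kj^{\!2}=k\left[\binom{k-1}{j}^{\!2}-\binom{k-1}{j-1}^{\!2}\right]
\]
does indeed follow from Pascal's rule and the two absorption identities, and summing it telescopes cleanly since $\binom{k-1}{-1}=0$; I checked the resulting formula numerically for small $k$ as well. Your route is genuinely different from the paper's: the paper writes $(k-2j)\binom kj^2$ as $k\binom kj^2-\frac2k\,j(k-j)\binom kj^2-\frac2k\,j^2\binom kj^2$, evaluates each of the three partial sums via Vandermonde's convolution and the absorption identity, and then recombines them — which forces a separate treatment of the cases $k$ odd and $k$ even because the half-range sums interact differently with the central binomial term. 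Your telescoping argument avoids both the Vandermonde identity and the parity split entirely, at the cost only of guessing the right antidifference for the summand; this is shorter and arguably more transparent, and it is the kind of proof one would want to record if aiming for a self-contained, case-free derivation. The paper's version, by contrast, keeps the computation close to the generating-function manipulations appearing elsewhere in the argument, which may be why the authors chose it, but it offers no real advantage here.
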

\begin{proof}We will use repeatedly the two following properties of binomial coefficients. First, the \emph{Vandermonde identity}: for any non-negative integers $m$, $n$ and $r$,
\begin{equation}
\label{eqVdM}
	\sum_{j=0}^r\left(\begin{array}{c} m \\ j\end{array}\right)\left(\begin{array}{c} n \\ r-j\end{array}\right)=\left(\begin{array}{c} m+n \\ r\end{array}\right);
\end{equation}
and second, the elementary identity
\begin{equation}
\label{eqElem}
	\left(\begin{array}{c} n \\ r\end{array}\right)=\frac{n}{r}\left(\begin{array}{c} n-1 \\ r-1\end{array}\right)
\end{equation}
with $n$ and $r$ positive integers.

Let us now fix an integer $k\ge 1$. To simplify the notation, we write 
\begin{equation*}
s:=\left\lfloor\frac{k-1}2\right\rfloor \hspace{1cm} \mbox{ and } \hspace{1cm}  S:=\sum_{j=0}^s(k-2j)\left(\begin{array}{c} k \\ j\end{array}\right)^{\!\!2}.
\end{equation*}
Next, we remark that 
\begin{equation*}
	S=S_0-\frac2kS_1-\frac2kS_2,
\end{equation*}
with
\begin{equation*}
	S_0:=\sum_{j=0}^s k\left(\begin{array}{c} k \\
                                   j\end{array}\right)^{\!\!2},\quad 
S_1:=\sum_{j=0}^s j(k-j)\left(\begin{array}{c} k \\ j\end{array}\right)^{\!\!2},\quad
	S_2:=\sum_{j=0}^s j^2\left(\begin{array}{c} k \\ j\end{array}\right)^{\!\!2}.
      \end{equation*}
Let us compute the previous sums when $k=2p+1$, with $p$ a non-negative integer. We first have
\begin{equation*}
	\frac{S_0}k=\frac12\sum_{j=0}^k\left(\begin{array}{c} k \\ j\end{array}\right)^{\!\!2}=\frac12\left(\begin{array}{c} 2k \\ k\end{array}\right),
\end{equation*}
where the last equality is a special case of identity \eqref{eqVdM}. We then find
\begin{multline*}
  S_1=\frac12\sum_{j=0}^kj\left(\begin{array}{c} k \\
      j\end{array}\right)(k-j)\left(\begin{array}{c} k \\
      k-j\end{array}\right)=\frac{k^2}2\sum_{j=1}^{k-1}\left(\begin{array}{c}
      k-1 \\ j-1\end{array}\right)\left(\begin{array}{c} k-1 \\
      k-j-1\end{array}\right)\\
 = \frac{k^2}2\sum_{\ell=0}^{k-2}\left(\begin{array}{c} k-1 \\
      \ell\end{array}\right)\left(\begin{array}{c} k-1 \\
      k-2-\ell\end{array}\right)=\frac{k^2}2\left(\begin{array}{c}
      2k-2 \\ k-2\end{array}\right)
\end{multline*}
by applying Identity \eqref{eqElem} followed by \eqref{eqVdM}. Finally, Identity \eqref{eqElem} implies
\begin{multline*}
  S_2=k^2\sum_{j=1}^p\left(\begin{array}{c} k-1 \\
      j-1\end{array}\right)^{\!\!2}=k^2\sum_{\ell=0}^{p-1}\left(\begin{array}{c}
      k-1 \\ \ell\end{array}\right)^{\!\!2} \\
 = \frac{k^2}2\left(\sum_{\ell=0}^{k-1}\left(\begin{array}{c} k-1 \\
        \ell\end{array}\right)^{\!\!2}-\left(\begin{array}{c} k-1 \\
        p\end{array}\right)^{\!\!2}\right)=\frac{k^2}2\left(\begin{array}{c}
      2k-2 \\ k-1\end{array}\right)-\frac{k^2}2\left(\begin{array}{c}
      k-1 \\ p\end{array}\right)^{\!\!2}.
\end{multline*}
We obtain
\begin{multline*}
	S=\frac{k}2
\left(\begin{array}{c} 2k \\ k\end{array}\right)-k\left(
\begin{array}{c} 2k-2 \\ k-2\end{array}\right)-k\left(\begin{array}{c} 2k-2 \\
	k-1\end{array}\right)+k\left(\begin{array}{c} k-1 \\ p\end{array}\right)^{\!\!2} \\
=\frac{k}2\left(\begin{array}{c} 2k \\ k\end{array}\right)-k
\left(\begin{array}{c} 2k-1 \\
        k-1\end{array}\right)+k\left(\begin{array}{c} k-1 \\
                                       p\end{array}\right)^{\!\!2}
=k\left(\begin{array}{c} k-1 \\ p\end{array}\right)^{\!\!2}
\end{multline*}
where the second equality follows from Pascal's identity and the third from Identity \eqref{eqElem}.

Let us now treat the case $k=2p$, with $p$ a positive integer. In a similar way as before, we find
\begin{equation*}
	S_0=\frac{k}2\left(\sum_{j=0}^k\left(\begin{array}{c} k \\ j\end{array}\right)^{\!\!2}-\left(\begin{array}{c} k \\ p\end{array}\right)^{\!\!2}\right)=\frac{k}2\left(\begin{array}{c} 2k \\ k\end{array}\right)-\frac{k}2\left(\begin{array}{c} k \\ p\end{array}\right)^{\!\!2},
\end{equation*}
\begin{equation*}	
	S_1=\frac12\left(\sum_{j=0}^kj\left(\begin{array}{c} k \\ j\end{array}\right)(k-j)\left(\begin{array}{c} k \\ k-j\end{array}\right)-p^2\left(\begin{array}{c} k \\ p\end{array}\right)^{\!\!2}\right)=\frac{k^2}2\left(\begin{array}{c} 2k-2 \\ k-2\end{array}\right)-\frac{k^2}8\left(\begin{array}{c} k \\ p\end{array}\right)^{\!\!2}
	\end{equation*}
and
\[
S_2=k^2\sum_{j=0}^{p-2}\left(\begin{array}{c} k-1 \\
                               j\end{array}\right)^{\!\!2}
  =\frac{k^2}2\left(\sum_{j=0}^{k-1}\left(\begin{array}{c} k-1
                                            \\
                                            j\end{array}\right)^{\!\!2}-2
  \left(\begin{array}{c} k-1 \\ p-1\end{array}\right)^{\!\!2}\right)
  =\frac{k^2}2\left(\begin{array}{c}
                      2k-2 \\
                      k-1\end{array}\right)-k^2\left(\begin{array}{c}
                                                       k-1 \\
                                                       p-1\end{array}\right)^{\!\!2}.
\]
We finally obtain, after simplifications,
\begin{equation*}
	S=k\left(\begin{array}{c} k-1 \\ p-1\end{array}\right)^2.
\end{equation*}
This completes the proof of Lemma \ref{lemSumBin}. 
\end{proof}

\subsection{Conclusions}

By the results from the preceding subsections, we can now prove Proposition \ref{propmk} and Theorem \ref{t:new_main}.

\begin{proof}[Proof of Proposition \ref{propmk}]
It follows from Corollary \ref{cormk} and Lemma \ref{lemSumBin}.
\end{proof}

Combining the above results, we can now prove our main theorem.
\begin{proof}[Proof of Theorem \ref{t:new_main}]
   Theorem \ref{t:new_main} follows from the combination of Theorem
   \ref{t:gad} and Proposition \ref{propmk}. 
\end{proof}

\section{Asymptotic estimates for Aharonov--Bohm eigenvalues} \label{s:AB}

\subsection{Symmetry for the Aharonov--Bohm operator}

As in Section \ref{subs:IntroAB}, we assume
$\widehat\Omega\subset \RR^2$ to be a bounded open set with a
Lipschitz boundary, such that $0\in
  \widehat\Omega$.
We additionally assume that $\widehat\Omega$ is symmetric with respect
to the $x_1$-axis and that $\Omega:=\widehat\Omega\cap\RR^2_+$ also
has a Lipschitz boundary.

\begin{figure}
\centering
 \begin{tikzpicture}[scale=0.5]
 \draw[rounded corners=1.5ex]
 (0,0)  --
(0,3)  --
(3,6)  --
(5,4)  --
(8,3)  --
(9,2)  --
(9.3,0)  --
(9,-2)  --
(8,-3)  --
(5,-4)  --
(3,-6)  --
 (0,-3) --cycle; 
 \draw[->](4,-7) -- (4,7);
 \draw[->](-1,0) -- (10,0);
 \draw  (2,0) circle (6pt);
 \draw[->,line width=1pt](2,0) -- (3,0);
 \draw (2,-0.8) node {$-\eps$};
 \draw  (6,0) circle (6pt);
 \draw[->,line width=1pt](6,0) -- (5,0);
\draw (6,-0.8) node {$\eps$};
\end{tikzpicture}
\caption{The domain considered for Aharonov--Bohm eigenvalues with collapsing symmetric poles.}
\label{fig:2}\end{figure}
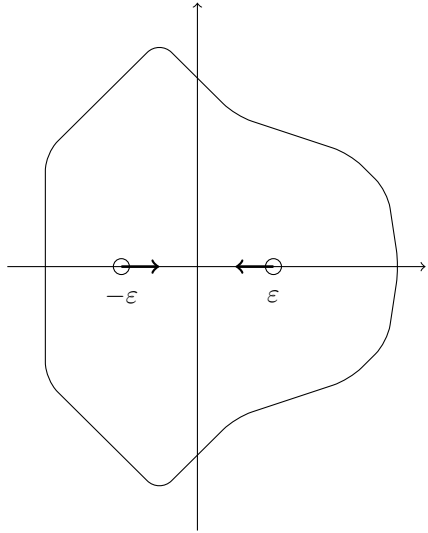

According to \cite[Theorem VIII.15]{ReeSim80}, there exists a unique
Friedrichs extension $H_\eps$ of the quadratic form $q^{AB}_\eps$, that is
to say a self-adjoint operator whose domain $\mathcal D(H_\eps)$ is
contained in $\mathcal Q^{AB}_\eps$ and which satisfies
\begin{equation*}
  \langle H_\eps u,v \rangle= q^{AB}_\eps(u,v)
=\int_{\widehat\Omega}(i\nabla+\vA_\eps)u\cdot
\overline{(i\nabla+\vA_\eps)v}\,dx\quad
\mbox{for all } u,v\in \mathcal D(H_\eps),
\end{equation*}
where we are denoting by $q^{AB}_\eps$ both the quadratic
 form defined in \eqref{eq:QuadAB2Poles} and the associated bilinear form (see Figure \ref{fig:2}).
We recall in this section the results proved in \cite{AFHL2016}
concerning the  properties of $H_\eps$, in particular the effect of
the symmetry of the domain on its spectrum. Since most of the
  proofs in the present section reduce to a series of standard
  verifications, we generally only give an indication of them. We use
gauge functions $\Phi_\eps$, for $\eps \in (0,\eps_0]$, whose
existence is guaranteed by the following result. In
  the sequel the denote as $\sigma$ the reflection through the
  $x_1$-axis, i.e. $\sigma(x_1,x_2)=(x_1,-x_2)$.
\begin{lem} \label{l:gauge} For each $\eps>0$, there exists a function $\Phi_{\eps}$ in $C^{\infty}\left(\RR^2\setminus \Gamma_\eps\right)$ satisfying
\begin{enumerate}[(i)]
	\item $\Phi_\eps\circ\sigma=\overline{\Phi}_\eps$ in $\RR^2\setminus \Gamma_\eps$; 
	\item $\left|\Phi_\eps\right|=1$ in $\RR^2\setminus \Gamma_\eps$;
	 \item $\left(i\nabla+\vA_\eps\right)\Phi_\eps=0$ in $\RR^2\setminus \Gamma_\eps$;
	 \item $\Phi_\eps=1$ on $(\RR\times\{0\})\setminus\Gamma_\eps$ and $\lim_{\delta\to0^+}\Phi_\eps(t,\pm \delta)=\pm i$ for every $t\in(-\eps,\eps)$.
\end{enumerate}
\end{lem}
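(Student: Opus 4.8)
The plan is to construct $\Phi_\eps$ explicitly from the multivalued angle functions attached to the two poles and then verify the four listed properties directly. Recall that the Aharonov--Bohm potential $\vA_\va$ of circulation $1/2$ is (up to the factor $1/2$) the gradient of the polar angle $\theta_\va$ centered at $\va$; more precisely, writing $\theta_\va(x)$ for a (locally defined, multivalued mod $2\pi$) argument of $x-\va$, one has $\vA_\va = \frac12\nabla\theta_\va$, hence $\bigl(i\nabla+\vA_\va\bigr)e^{-\frac i2\theta_\va}=0$ wherever $e^{-\frac i2\theta_\va}$ is single-valued. Since $\vA_\eps=\vA_{(\eps,0)}-\vA_{(-\eps,0)}$, the natural candidate is
\begin{equation*}
\Phi_\eps(x)=\exp\!\left(-\tfrac i2\bigl(\theta_{(\eps,0)}(x)-\theta_{(-\eps,0)}(x)\bigr)\right),
\end{equation*}
which formally satisfies $(i\nabla+\vA_\eps)\Phi_\eps=0$. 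The point is to choose the branches of the two angles so that the combination $\theta_{(\eps,0)}-\theta_{(-\eps,0)}$ is a genuine smooth function on $\RR^2\setminus\Gamma_\eps$ taking values in $2\pi\ZZ$ jumps precisely across the segment $\Gamma_\eps=[-\eps,\eps]\times\{0\}$.

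Concretely, I would fix $\theta_{(\eps,0)}$ to be the branch of $\arg(x-(\eps,0))$ with values in $(-\pi,\pi)$, cut along the ray $\{x_1<\eps,\ x_2=0\}$, and $\theta_{(-\eps,0)}$ the branch of $\arg(x-(-\eps,0))$ with values in $(0,2\pi)$, cut along the ray $\{x_1>-\eps,\ x_2=0\}$ — or, more symmetrically, rotate both cuts so that the individual discontinuities of $\theta_{(\eps,0)}-\theta_{(-\eps,0)}$ cancel on $(\RR\times\{0\})\setminus\Gamma_\eps$ and survive on $\Gamma_\eps$. With such a choice the difference $g_\eps:=\theta_{(\eps,0)}-\theta_{(-\eps,0)}$ is $C^\infty$ on $\RR^2\setminus\Gamma_\eps$, so $\Phi_\eps=e^{-ig_\eps/2}\in C^\infty(\RR^2\setminus\Gamma_\eps)$, giving the regularity; property (ii) is immediate since $g_\eps$ is real; property (iii) follows from $\nabla g_\eps=2(\vA_{(\eps,0)}-\vA_{(-\eps,0)})=2\vA_\eps$ and a one-line computation $(i\nabla+\vA_\eps)e^{-ig_\eps/2}=e^{-ig_\eps/2}\bigl(\tfrac12\nabla g_\eps-\vA_\eps\bigr)=0$. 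For property (iv): on the part of the $x_1$-axis outside $[-\eps,\eps]$ the two points $x-(\pm\eps,0)$ lie on the same side relative to the chosen cuts, so $g_\eps\equiv0$ there and $\Phi_\eps=1$; approaching a point $(t,0)$ with $|t|<\eps$ from above one gets $\theta_{(\eps,0)}\to\pi$ and $\theta_{(-\eps,0)}\to0$ (or the symmetric choice), so $g_\eps\to\pi$ and $\Phi_\eps\to e^{-i\pi/2}=-i$; from below $g_\eps\to-\pi$ and $\Phi_\eps\to i$ — after adjusting signs to match the stated orientation, this is exactly $\lim_{\delta\to0^+}\Phi_\eps(t,\pm\delta)=\pm i$.

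For property (i), the symmetry $\Phi_\eps\circ\sigma=\overline{\Phi_\eps}$, the key observation is that reflection through the $x_1$-axis sends the argument of $x-(\pm\eps,0)$ to its negative (both poles lie on the axis), i.e. $\theta_{(\pm\eps,0)}(\sigma(x))=-\theta_{(\pm\eps,0)}(x)$ for the symmetric branch choices, hence $g_\eps\circ\sigma=-g_\eps$ and therefore $\Phi_\eps\circ\sigma=e^{+ig_\eps/2}=\overline{\Phi_\eps}$; one just has to check that the chosen branch cuts are themselves invariant under $\sigma$, which is why I would place them symmetrically (e.g. along the two horizontal rays exterior to $\Gamma_\eps$). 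The main obstacle is purely bookkeeping: making a single global branch choice for $g_\eps$ that is simultaneously smooth off $\Gamma_\eps$, $\sigma$-invariant, and produces the correct boundary traces $\pm i$; once the cuts are fixed symmetrically along $\RR\times\{0\}\setminus\Gamma_\eps$, all four verifications reduce to elementary properties of $\arg$, and there is no analytic difficulty. A short alternative, if one prefers to avoid branch-cut discussions, is to note that $\RR^2\setminus\Gamma_\eps$ is simply connected, that the closed $1$-form $2\vA_\eps$ has vanishing periods there (the two poles carry opposite fluxes $\pm\pi$, which cancel around any loop enclosing both, while a loop enclosing $\Gamma_\eps$ bounds in $\RR^2\setminus\Gamma_\eps$), hence $2\vA_\eps=dg_\eps$ for a smooth $g_\eps$, unique up to an additive constant fixed by normalizing $\Phi_\eps=1$ on one exterior ray; the symmetry property (i) then follows from uniqueness because $\overline{\Phi_\eps}\circ\sigma^{-1}$ solves the same defining ODE with the same normalization.
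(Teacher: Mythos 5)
The paper itself does not prove this lemma: it explicitly defers to \cite{AFHL2016} and remarks that the verifications in this section are standard, so there is no in-paper proof to compare against. Your strategy --- define $g_\eps=\theta_{(\eps,0)}-\theta_{(-\eps,0)}$ with compatible branch cuts and set $\Phi_\eps$ to be the corresponding unimodular exponential --- is the standard construction and the right one, but it contains a sign error that makes property (iii) fail for the function you actually wrote down. With the paper's convention $\vA_\va=\tfrac12\nabla\theta_\va$, one computes
\begin{equation*}
\bigl(i\nabla+\vA_\eps\bigr)e^{-ig_\eps/2}
= \Bigl(\tfrac12\nabla g_\eps + \vA_\eps\Bigr)e^{-ig_\eps/2}
= 2\vA_\eps\, e^{-ig_\eps/2}\neq 0 ,
\end{equation*}
since $\nabla g_\eps=2\vA_\eps$; the factorization you wrote has $-\vA_\eps$ where it should have $+\vA_\eps$. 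The correct gauge is $\Phi_\eps=e^{+\,i g_\eps/2}$, and once you make this change the trace in (iv) also comes out exactly as stated (from above $g_\eps\to\pi$ gives $e^{i\pi/2}=+i$, from below $g_\eps\to-\pi$ gives $-i$), so there is no need for the hand-wave ``after adjusting signs.''

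A second, smaller issue is the concrete branch choice you list first ($\theta_{(\eps,0)}$ in $(-\pi,\pi)$ with cut $\{x_1<\eps\}$ but $\theta_{(-\eps,0)}$ in $(0,2\pi)$ with cut $\{x_1>-\eps\}$): the latter cut runs through the whole ray $\{x_1>\eps,\,x_2=0\}$, across which $\theta_{(-\eps,0)}$ jumps by $2\pi$ while $\theta_{(\eps,0)}$ is smooth, so $\Phi_\eps$ would pick up a factor $-1$ across a portion of the axis outside $\Gamma_\eps$. The clean choice is to take both branches in $(-\pi,\pi)$, i.e.\ both cuts pointing to the left from their respective poles; then the cuts coincide on $\{x_1<-\eps\}$ and cancel there, and $g_\eps$ is smooth off $\Gamma_\eps$, equal to $0$ on both exterior rays, and jumps by $2\pi$ across $\Gamma_\eps$. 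Alternatively, the closing argument you sketch via simple connectedness of $\RR^2\setminus\Gamma_\eps$ and cancellation of the fluxes $\pm\pi$ is the cleanest way to get existence and bypasses branch bookkeeping entirely; property (i) then follows by uniqueness as you indicate, once the normalization $\Phi_\eps=1$ on one exterior ray is imposed. With the sign of the exponent corrected and the symmetric branch choice (or the period argument), your proof is complete and matches the standard one.
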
 

We define the anti-unitary operators $K_\eps$ and $\Sigma^c$ by
$K_\eps u:=\Phi_\eps^2 \overline u$ and
$\Sigma^c u:=\overline u\circ \sigma$. The subspace
$\mathcal D(H_\eps)\subset \mathcal H$ is preserved by $K_\eps$ and
$\Sigma^c$. The operators $K_\eps$, $\Sigma^c$ and $H_\eps$ mutually
commute. In particular, we can define the following subsets
\begin{align*}
	\mathcal H_{K,\eps}&:=\{u\in\mathcal H:\,K_\eps u=u\};\\
	\mathcal D(H_{K,\eps})&:=\{u\in\mathcal D(H_\eps):\,K_\eps u=u\}.
\end{align*} 
The scalar product $\langle\cdot\,,\cdot\rangle$ gives $\mathcal H_{K,\eps}$ the structure of a real Hilbert space. As suggested by the notation, we define $H_{K,\eps}$ as the restriction of $H_\eps$ to $\mathcal D(H_{K,\eps})$. It is a positive self-adjoint operator on $\mathcal H_{K,\eps}$ of domain $\mathcal D(H_{K,\eps})$, with compact resolvent. It has the same eigenvalues as $H_\eps$, with the same multiplicities.  The fact that $K$ and $\Sigma^c$ commute ensures that $\mathcal H_{K,\eps}$ and $\mathcal D(H_{K,\eps})$ are $\Sigma^c$-invariant. We can therefore define
\begin{align*}
	\mathcal H_{K,\eps}^s&:=\{u\in\mathcal H_{K,\eps}:\Sigma^cu=u\};\\
	\mathcal D(H_{K,\eps}^s)&:=\{u\in\mathcal D(H_{K,\eps}):\Sigma^cu=u\};\\
	\mathcal H_{K,\eps}^a&:=\{u\in\mathcal H_{K,\eps}:\Sigma^cu=-u\};\\
	\mathcal D(H_{K,\eps}^a)&:=\{u\in\mathcal D(H_{K,\eps}):\Sigma^cu=-u\}.
\end{align*} 
We have the following orthogonal decomposition of $\mathcal H_{K,\eps}$ into spaces of symmetric and antisymmetric functions:
\begin{equation}
\label{eq:MagnDecomp}
	\mathcal H_{K,\eps}=\mathcal H_{K,\eps}^s \oplus \mathcal H_{K,\eps}^a.
\end{equation}
We also define $H_{K,\eps}^s$ and $H_{K,\eps}^a$ as the restrictions
of $H_{K,\eps}$ to $\mathcal D(H_{K,\eps}^s)$ and $\mathcal
D(H_{K,\eps}^a)$ respectively. The operator $H_{K,\eps}^s$ is positive
and self-adjoint on $\mathcal H_{K,\eps}^s$ of domain $\mathcal
D(H_{K,\eps}^s)$ , with compact resolvent. Similar
  conclusions hold for $\mathcal H_{K,\eps}^a$. Decomposition \eqref{eq:MagnDecomp} implies the following result.
\begin{lem}	The spectrum of $H_{K,\eps}$ is the union of the spectra of $H_{K,\eps}^s$ and $H_{K,\eps}^a$, counted with multiplicities.
\end{lem}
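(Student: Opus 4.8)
The plan is to use the abstract spectral-theory fact that a self-adjoint operator with compact resolvent which decomposes as an orthogonal direct sum of two self-adjoint operators (with compact resolvent) has spectrum equal to the union of the two spectra, counted with multiplicities. Concretely, I would first record that decomposition \eqref{eq:MagnDecomp} reduces the operator $H_{K,\eps}$: since $\mathcal H_{K,\eps}^s$ and $\mathcal H_{K,\eps}^a$ are the images of the orthogonal projections $P^s:=\tfrac12(\mathrm{Id}+\Sigma^c)$ and $P^a:=\tfrac12(\mathrm{Id}-\Sigma^c)$ on $\mathcal H_{K,\eps}$, and $\Sigma^c$ commutes with $H_{K,\eps}$, these projections commute with $H_{K,\eps}$; hence each subspace is invariant and $H_{K,\eps}=H_{K,\eps}^s\oplus H_{K,\eps}^a$ in the sense of a direct sum of operators. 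The domain splits accordingly as $\mathcal D(H_{K,\eps})=\mathcal D(H_{K,\eps}^s)\oplus\mathcal D(H_{K,\eps}^a)$, which is exactly the content already stated before the lemma.

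Next I would observe that for any $\mu\in\RR$ the resolvent-type operator relation
\[
H_{K,\eps}-\mu\,\mathrm{Id}=(H_{K,\eps}^s-\mu\,\mathrm{Id})\oplus(H_{K,\eps}^a-\mu\,\mathrm{Id})
\]
holds on the direct sum of domains. Therefore $H_{K,\eps}-\mu$ is bijective from its domain onto $\mathcal H_{K,\eps}$ (with bounded inverse) if and only if each summand is bijective with bounded inverse on the corresponding subspace; this yields $\sigma(H_{K,\eps})=\sigma(H_{K,\eps}^s)\cup\sigma(H_{K,\eps}^a)$ at the level of sets. Since all three operators have compact resolvent, every point of the spectrum is an eigenvalue of finite multiplicity, and the eigenspace of $H_{K,\eps}$ for an eigenvalue $\mu$ is $\ker(H_{K,\eps}-\mu)=\ker(H_{K,\eps}^s-\mu)\oplus\ker(H_{K,\eps}^a-\mu)$, so the multiplicities add up. This establishes the statement ``counted with multiplicities.''

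I do not expect a genuine obstacle here: as the authors themselves remark, the proofs in this section ``reduce to a series of standard verifications,'' so the only mild care needed is to check that the splitting of $\Sigma^c$ into spectral projections really does commute with $H_{K,\eps}$ on the domain level (this is the statement, recalled from \cite{AFHL2016}, that $K_\eps$, $\Sigma^c$ and $H_\eps$ mutually commute, restricted to the real Hilbert space $\mathcal H_{K,\eps}$), and that the direct-sum decomposition of the operator is in the strong sense of reducing subspaces, not merely an algebraic one. The cleanest writeup simply cites the already-stated facts that $H_{K,\eps}^s$ and $H_{K,\eps}^a$ are self-adjoint with compact resolvent on the respective summands of \eqref{eq:MagnDecomp}, and invokes the standard reducing-subspace lemma; I would keep the proof to two or three sentences.

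\begin{proof}
	By \eqref{eq:MagnDecomp}, the space $\mathcal H_{K,\eps}$ is the orthogonal direct sum of the two $H_{K,\eps}$-invariant subspaces $\mathcal H_{K,\eps}^s$ and $\mathcal H_{K,\eps}^a$, with $\mathcal D(H_{K,\eps})=\mathcal D(H_{K,\eps}^s)\oplus\mathcal D(H_{K,\eps}^a)$ and $H_{K,\eps}=H_{K,\eps}^s\oplus H_{K,\eps}^a$. Since all three operators are self-adjoint with compact resolvent, their spectra consist of eigenvalues of finite multiplicity, and for each $\mu$ one has $\ker(H_{K,\eps}-\mu)=\ker(H_{K,\eps}^s-\mu)\oplus\ker(H_{K,\eps}^a-\mu)$. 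Hence $\mu$ is an eigenvalue of $H_{K,\eps}$ if and only if it is an eigenvalue of $H_{K,\eps}^s$ or of $H_{K,\eps}^a$, and its multiplicity as an eigenvalue of $H_{K,\eps}$ equals the sum of its multiplicities (possibly zero) as an eigenvalue of $H_{K,\eps}^s$ and of $H_{K,\eps}^a$. This is the claim.
\end{proof}
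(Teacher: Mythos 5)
Your proposal is correct and is essentially the argument the paper has in mind: the paper gives no explicit proof, stating only that ``Decomposition \eqref{eq:MagnDecomp} implies the following result,'' and your write-up simply spells out the standard reducing-subspace/direct-sum spectral fact that this remark is invoking. Nothing is missing and nothing diverges from the intended route.
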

\begin{rem} Let us note that we can give an alternative description of the spectra of $H_{K,\eps}^s$ and $H_{K,\eps}^a$. One can check that they are the spectra of the quadratic form $q^{AB}_\eps$ restricted to $\mathcal Q^{AB}_\eps \cap \mathcal H_{K,\eps}^s$ and $\mathcal Q^{AB}_\eps \cap \mathcal H_{K,\eps}^a$ respectively. These spectra can therefore be obtained by the min-max principle.
\end{rem}

\subsection{Isospectrality}

In this subsection, we establish an isospectrality result between  Aharonov-Bohm 
eigenvalue problems with symmetry and Laplacian eigenvalue problems
with mixed boundary conditions, in the spirit of \cite{BN-He-HH2009}.

To this aim, we define an additional family of eigenvalue problems, similar to Problems \eqref{eqD} and \eqref{eqDND}. With the notation $\partial\Omega_+:=\partial \Omega\cap \RR_+^2$ and $\partial\Omega_0:=\partial \Omega\cap (\RR\times\{0\})$, we consider the eigenvalue problem 
\begin{equation}\label{eqN}
\begin{cases}
-\Delta\,u=\lambda\,u,& \mbox{in }\Omega,\\
u=0,&\mbox{on } \partial\Omega_+,\\
\frac{\partial u}{\partial \nu}=0,&\mbox{on } \partial\Omega_0.
\end{cases}
\end{equation}
We denote by $(\mu_j)_{j\ge1}$ the eigenvalues of Problem \eqref{eqN}. We also consider, for each $\eps\in(0,\eps_0]$,
\begin{equation}\label{eqNDN}
\begin{cases}
-\Delta\,u=\lambda\,u,& \mbox{in }\Omega,\\
u=0,&\mbox{on } \partial\Omega_+ \cup\Gamma_{\eps},\\
\frac{\partial u}{\partial \nu}=0,&\mbox{on } \partial\Omega_0\setminus\Gamma_{\eps},
\end{cases}
\end{equation}
and denote by  $(\mu_j(\eps))_{j\ge1}$ the corresponding eigenvalues. In order to give a rigorous definitions, we use a weak formulation.  We define
\begin{equation*}
  \mathcal{R}_0=\left\{u\in H^1(\Omega)\,;\,\chi_{\partial\Omega_+}\gamma_0u=0\mbox{ in } L^2(\partial \Omega)\right\},
\end{equation*}
and, for  $\eps\in(0,\eps_0]$,
\begin{equation*}
  \mathcal{R}_\eps=\left\{u\in H^1(\Omega)\,;\,\chi_{\partial\Omega_+\cup\Gamma_{\eps}}\gamma_0u=0\mbox{ in } L^2(\partial \Omega)\right\}.
\end{equation*}
We denote by $r_0$ and $r_\eps$ the restriction of the quadratic form $q$, defined in Equation \eqref{eqQuad}, to $\mathcal{R}_0$ and $\mathcal{R}_\eps$ respectively. We then define $(\mu_j)_{j\ge1}$ and $(\mu_j(\eps))_{j\ge1}$ as,  respectively, the eigenvalues of the quadratic forms $r_0$ and $r_\eps$; they are obtained by the min-max principle. 

\begin{rem}  \label{remDecompLap} We can give another interpretation of the eigenvalues $(\mu_j)_{j\ge1}$ and $(\lambda_j)_{j\ge1}$. Using the unitary operator $\Sigma: u\mapsto u\circ\sigma$, we obtain a orthogonal decomposition of $L^2\big(\widehat\Omega\big)$ into symmetric and antisymmetric functions:
\begin{equation}
	\label{eq:DecompLap}
	L^2\big(\widehat\Omega\big)=\mbox{ker}\left(I-\Sigma\right)\oplus\mbox{ker}\left(I+\Sigma\right).
\end{equation}
This decomposition is preserved by the action of the Dirichlet Laplacian $-\widehat \Delta$, and we can therefore define $-\Delta^s$ (resp. $-\Delta^a$) as the restriction of $-\widehat \Delta$ to symmetric (resp. antisymmetric) functions in the domain of $-\widehat \Delta$. One can then check that $(\mu_j)_{j\ge1}$ is the spectrum of  $-\Delta^s$ and $(\lambda_j)_{j\ge1}$ is the spectrum of $-\Delta^a$.
\end{rem}

It remains to connect the eigenvalues of Problems \eqref{eqNDN} and \eqref{eqDND} to the eigenvalues of $H_\eps$. To this end, we define the following linear operator, which performs a gauge transformation:
\begin{equation*}
	\begin{array}{cccc}
		U_\eps:&\mathcal H&\to&L^2(\Omega,\CC)\\[3pt]
		&u&\mapsto&\sqrt2\,\overline\Phi_\eps u_{|\Omega}.
	\end{array}
\end{equation*}
We 
recall that $L^2(\Omega)$ denotes the real Hilbert space of real-valued $L^2$ functions in $\Omega$. We have the following result.
\begin{lem}\label{l:IsoAS}
The operator $U_\eps$ satisfies the following properties:
\begin{enumerate}[(i)]
	\item $U_\eps\left(\mathcal H_{K,\eps}\right)\subset L^2(\Omega)$ and $U_\eps\left(\mathcal Q^{AB}_\eps\right)\subset H^1(\Omega,\CC)$;
	\item $U_\eps$ induces a real-unitary bijective map from
          $\mathcal Q^{AB}_\eps \cap \mathcal H_{K,\eps}^s$ to
          $\mathcal R_\eps$ such that $q^{AB}_\eps (u)=q\left(U_\eps
            u\right)$ for all $u\in \mathcal Q^{AB}_\eps \cap \mathcal H_{K,\eps}^s$;
	\item $U_\eps$ induces a real-unitary bijective map from
          $\mathcal Q^{AB}_\eps \cap \mathcal H_{K,\eps}^a$ to
          $\mathcal Q_\eps$ such that $q^{AB}_\eps (u)=q\left(U_\eps
            u\right)$ for all $u\in \mathcal Q^{AB}_\eps \cap \mathcal H_{K,\eps}^a$.
\end{enumerate}
\end{lem}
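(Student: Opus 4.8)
The plan is to use the gauge function $\Phi_\eps$ of Lemma \ref{l:gauge} to trivialise the magnetic potential, so that $U_\eps$ becomes, up to the factor $\sqrt2$ and the restriction to $\Omega$, an isometry onto its image. The starting point is the pointwise gauge covariance identity: since $(i\nabla+\vA_\eps)\Phi_\eps=0$ and $\abs{\Phi_\eps}=1$, writing $u=\Phi_\eps v$ with $v=\overline\Phi_\eps u$ and using the Leibniz rule gives
\[
(i\nabla+\vA_\eps)u=\Phi_\eps\, i\,\nabla(\overline\Phi_\eps u)\qquad\text{in }\widehat\Omega\setminus\Gamma_\eps,
\]
so that $\abs{(i\nabla+\vA_\eps)u}=\abs{\nabla(\overline\Phi_\eps u)}$ almost everywhere and $q^{AB}_\eps(u)=\int_{\widehat\Omega}\abs{\nabla(\overline\Phi_\eps u)}^2\,dx$. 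First I would make this rigorous on $\mathcal Q^{AB}_\eps$, checking that $v=\overline\Phi_\eps u\in H^1(\widehat\Omega,\CC)$: since $\nabla v=\overline{\nabla\Phi_\eps}\,u+\overline\Phi_\eps\nabla u$ with $\abs{\nabla\Phi_\eps}=\abs{\vA_\eps}\lesssim\tfrac1{\abs{x-\eps\mathbf e}}+\tfrac1{\abs{x+\eps\mathbf e}}$, the weighted integrability $\tfrac{\abs u}{\abs{x\pm\eps\mathbf e}}\in L^2$ built into the definition \eqref{eq:QAB2Poles} of $\mathcal Q^{AB}_\eps$, together with $\abs{\Phi_\eps}=1$, yields $v\in H^1$. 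This gives the second inclusion in (i); the first inclusion, $U_\eps(\mathcal H_{K,\eps})\subset L^2(\Omega)$, follows once reality is established below.

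Next I would rephrase the algebraic constraints in terms of $v$. From $K_\eps u=\Phi_\eps^2\overline u$ and $\abs{\Phi_\eps}=1$ one checks that $K_\eps u=u$ is equivalent to $\overline v=v$, i.e. $U_\eps u$ is real-valued on $\Omega$; this places $U_\eps$ into the real Hilbert space $L^2(\Omega)$ on $\mathcal H_{K,\eps}$. If moreover $\Sigma^c u=\pm u$, then $v\circ\sigma=\pm\overline v$, hence $\abs u$ and $\abs{\nabla v}$ are $\sigma$-invariant, so that using the symmetry of $\widehat\Omega$
\[
\nor{U_\eps u}_{L^2(\Omega)}^2=2\int_\Omega\abs v^2\,dx=2\int_\Omega\abs u^2\,dx=\int_{\widehat\Omega}\abs u^2\,dx
\]
and, by the covariance identity, $q(U_\eps u)=2\int_\Omega\abs{\nabla v}^2\,dx=\int_{\widehat\Omega}\abs{(i\nabla+\vA_\eps)u}^2\,dx=q^{AB}_\eps(u)$. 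Thus $U_\eps$ restricts to a real-linear isometry on $\mathcal Q^{AB}_\eps\cap\mathcal H^s_{K,\eps}$ and on $\mathcal Q^{AB}_\eps\cap\mathcal H^a_{K,\eps}$; it only remains to identify the two images and to invert the map.

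The core of the proof — and the step I expect to be the main obstacle — is the boundary analysis, which is exactly where the dichotomy between $\mathcal R_\eps$ and $\mathcal Q_\eps$ originates. Since $u\in H^1_0(\widehat\Omega,\CC)$, its trace vanishes on $\partial\widehat\Omega$, so $U_\eps u$ vanishes on $\partial\Omega_+$. On $\partial\Omega_0$ I would invoke property (iv) of Lemma \ref{l:gauge}: on $\partial\Omega_0\setminus\Gamma_\eps$ one has $\overline\Phi_\eps=1$, so the trace of $v$ there is that of $u$, whereas on $\Gamma_\eps$ the trace of $v$ taken from within $\Omega$ equals $-i$ times the trace of $u$. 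Since the fixed-point set of $\sigma$ is the whole $x_1$-axis, $\Sigma^c u=u$ forces $u$ real on $\partial\Omega_0$ while $\Sigma^c u=-u$ forces $u$ purely imaginary on $\partial\Omega_0$; combining this with the reality of $v$, in the symmetric case reality of $v=u$ imposes nothing on $\partial\Omega_0\setminus\Gamma_\eps$ while on $\Gamma_\eps$ reality of $v=-iu$ with $u$ real forces $u=v=0$, so $U_\eps u\in\mathcal R_\eps$; in the antisymmetric case reality of $v=u$ with $u$ purely imaginary forces $u=v=0$ on $\partial\Omega_0\setminus\Gamma_\eps$ while on $\Gamma_\eps$ reality of $v=-iu$ is automatic, so $U_\eps u$ vanishes precisely on $\partial\Omega\setminus\Gamma_\eps$, i.e. $U_\eps u\in\mathcal Q_\eps$. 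For surjectivity I would exhibit the inverse explicitly: given a real $w\in\mathcal R_\eps$ (resp. $w\in\mathcal Q_\eps$), set $u:=\tfrac1{\sqrt2}\Phi_\eps w$ on $\Omega$ and extend it to $\widehat\Omega$ by $u(x):=\tfrac1{\sqrt2}\Phi_\eps(x)\,w(\sigma x)$ (resp. $u(x):=-\tfrac1{\sqrt2}\Phi_\eps(x)\,w(\sigma x)$) on the lower half-plane; using $\Phi_\eps\circ\sigma=\overline\Phi_\eps$ and $\abs{\Phi_\eps}=1$ one verifies $K_\eps u=u$ and $\Sigma^c u=\pm u$, the weighted $L^2$ control near the two poles follows from a Hardy inequality for $w$ (which vanishes on one of the two boundary rays emanating from each pole), and the delicate point is the interplay between the multiplicative phase jump $-1$ of $\Phi_\eps$ across $\Gamma_\eps$ (value $+i$ from above, $-i$ from below) and the parity of the reflection: for the even reflection the jump across $\Gamma_\eps$ is incompatible with $H^1$-gluing unless $w=0$ there, which is the Dirichlet condition of $\mathcal R_\eps$, while across $\partial\Omega_0\setminus\Gamma_\eps$ the gauge is continuous and the gluing is automatic; for the odd reflection the situation is reversed, the jump across $\Gamma_\eps$ being exactly cancelled by the change of sign whereas the continuity of $\Phi_\eps$ across $\partial\Omega_0\setminus\Gamma_\eps$ forces $w=0$ there, which is the Dirichlet condition of $\mathcal Q_\eps$. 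In both cases one concludes $u\in H^1_0(\widehat\Omega,\CC)\cap\mathcal Q^{AB}_\eps$ with $U_\eps u=w$, which closes the proof; this sign-and-phase bookkeeping is the portion of the argument demanding the most care.
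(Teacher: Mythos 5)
Your proof is correct and follows the same route as the paper: gauge covariance via $\Phi_\eps$ to identify $q^{AB}_\eps(u)$ with $q(\overline\Phi_\eps u)$, reality of $\overline\Phi_\eps u$ from the $K_\eps$-condition, boundary identification via Property (iv) of Lemma \ref{l:gauge}, and an explicit reflection-based inverse $V_\eps$. One small value-add on your side: you spell out that the inverse for (iii) must use the \emph{odd} reflection (whereas the paper defines $V_\eps$ with the even extension and merely says ``the proof of (iii) is similar''), and you correctly identify why this is forced by the $-1$ phase jump of $\Phi_\eps$ across $\Gamma_\eps$ — a point worth making explicit, since an even extension would fail to produce an $H^1$ function when $w$ does not vanish on $\Gamma_\eps$.
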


\begin{proof} If $u\in \mathcal H_{K,\eps}$, then
  $u=\Phi_\eps^2\overline u$, so that
  $\overline \Phi_\eps u= \overline{\overline \Phi_\eps u}$, that is
  to say $\overline \Phi_\eps u$ is real-valued. This proves the first
  half of (i). For the second half, let us assume that
  $u\in\mathcal Q^{AB}_\eps$. Using the definition of
  $\mathcal Q^{AB}_\eps$, given in Equation \eqref{eq:QAB2Poles}, and
  Property (iii) of Lemma \ref{l:gauge}, we find the following
  identity, in the sense of distributions in $\Omega$:
\begin{equation*}
  \nabla\left(\overline \Phi_\eps u\right)=\overline
    \Phi_\eps\nabla
  u+\nabla\left(\overline\Phi_\eps\right)u=\overline
    \Phi_\eps\left(\nabla-iA_\eps\right)u\quad\text{in
  $\Omega$}.
\end{equation*}
This proves that $\overline\Phi_\eps u_{|\Omega} \in 
H^1(\Omega,\CC)$ and that 
\begin{equation*}
	\int_{\Omega}\left|\left(\nabla-iA_\eps\right)u\right|^2\,dx=
\int_{\Omega}\left|\nabla\left(\overline\Phi_\eps u\right)\right|^2\,dx.
\end{equation*}
Let us now additionally assume that $u\in \mathcal Q^{AB}_\eps \cap
\mathcal H_{K,\eps}^s$. Since $\Sigma^cu=u$, Property (i) of Lemma
\ref{l:gauge} implies that $(\overline\Phi_\eps
  u)\circ\sigma=\Phi_\eps
  \overline u$. Therefore,
\begin{equation*}
	\int_{\widehat\Omega}\left|u\right|^2\,dx=2\int_{\Omega}\left|\overline\Phi_\eps u\right|^2\,dx=\int_{\Omega}\left|U_\eps u\right|^2\,dx.
\end{equation*}
Furthermore, Property (iv) of Lemma \ref{l:gauge} and the equation
$\Sigma^cu=u$ imply that $u$ vanishes on
$\Gamma_\eps$, hence $U_\eps u\in \mathcal R_\eps$.  This implies that $\overline\Phi_\eps
  u\in H^1(\widehat \Omega)$
and 
\begin{equation*}
	\int_{\widehat\Omega}\left|\left(\nabla-iA_\eps\right)u\right|^2\,dx
=\int_{\widehat\Omega}\left|\nabla\left(\overline\Phi_\eps u\right)\right|^2\,dx=
2\int_{\Omega}\left|\nabla\left(\overline\Phi_\eps u\right)\right|^2\,dx=\int_{\Omega}\left|\nabla\left(U_\eps u\right)\right|^2\,dx.
\end{equation*}
We conclude  that the mapping $U_\eps: \mathcal Q^{AB}_\eps \cap \mathcal H_{K,\eps}^s\to\mathcal R_\eps$ is well-defined, real-unitary, and that $q^{AB}_\eps (u)=q\left(U_\eps u\right)$. To show that the mapping is bijective, we consider the operator $V_\eps$ defined in the following way: given $v\in L^2(\Omega)$, we denote by $\widetilde v$ its extension by symmetry to $\widehat \Omega$ and we set
\begin{equation*}
	V_\eps v:=\frac1{\sqrt2}\Phi_\eps \widetilde v.
\end{equation*}
It can be checked, in a way similar to what has been done for $U_\eps$, that $V_\eps$ induces the inverse of $U_\eps$, from $\mathcal R_\eps$ to $Q^{AB}_\eps \cap \mathcal H_{K,\eps}^s$. This proves (ii). The proof of (iii) is similar, the difference being that we must check that $\overline\Phi_\eps u$ vanishes on $(\RR\times\{0\})\setminus\Gamma_\eps$ when $u\in Q^{AB}_\eps \cap \mathcal H_{K,\eps}^a$.
\end{proof}

\begin{cor} \label{c:MagnEV} The spectra of $H_{K,\eps}^s$ and $H_{K,\eps}^a$ are $(\mu_j(\eps))_{j\ge1}$ and $(\lambda_j(\eps))_{j\ge1}$ respectively.
\end{cor}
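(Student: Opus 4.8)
The plan is to deduce the corollary directly from Lemma \ref{l:IsoAS} together with the preceding structural results. First I would recall that, by the variational characterisation of eigenvalues of a positive self-adjoint operator with compact resolvent (or equivalently by the min-max principle applied to a semibounded closed quadratic form), the spectrum of $H_{K,\eps}^s$ coincides with the sequence obtained by the min-max principle applied to $q^{AB}_\eps$ restricted to $\mathcal Q^{AB}_\eps \cap \mathcal H_{K,\eps}^s$, and similarly the spectrum of $H_{K,\eps}^a$ is given by the min-max principle applied to $q^{AB}_\eps$ restricted to $\mathcal Q^{AB}_\eps \cap \mathcal H_{K,\eps}^a$; this was already observed in the remark following the lemma on the decomposition of the spectrum of $H_{K,\eps}$.

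Next I would invoke part (ii) of Lemma \ref{l:IsoAS}: $U_\eps$ is a real-unitary bijection from $\mathcal Q^{AB}_\eps \cap \mathcal H_{K,\eps}^s$ onto $\mathcal R_\eps$ intertwining the quadratic forms, i.e. $q^{AB}_\eps(u) = q(U_\eps u)$ and $\|u\|^2 = \|U_\eps u\|^2$ for all such $u$. Consequently, for every $j \ge 1$,
\begin{equation*}
  \min_{\substack{\mathcal E \subset \mathcal Q^{AB}_\eps \cap \mathcal H_{K,\eps}^s \\ \dim(\mathcal E)=j}} \max_{u \in \mathcal E} \frac{q^{AB}_\eps(u)}{\|u\|^2}
  = \min_{\substack{\mathcal F \subset \mathcal R_\eps \\ \dim(\mathcal F)=j}} \max_{v \in \mathcal F} \frac{q(v)}{\|v\|^2},
\end{equation*}
since $U_\eps$ sets up a bijection between $j$-dimensional subspaces preserving the Rayleigh quotient. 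The right-hand side is by definition the $j$-th eigenvalue $\mu_j(\eps)$ of the form $r_\eps$. Hence the spectrum of $H_{K,\eps}^s$ is $(\mu_j(\eps))_{j\ge1}$. The identical argument using part (iii) of Lemma \ref{l:IsoAS}, with $\mathcal R_\eps$ replaced by $\mathcal Q_\eps$ and $r_\eps$ replaced by $q_\eps$, shows that the spectrum of $H_{K,\eps}^a$ is $(\lambda_j(\eps))_{j\ge1}$.

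The only point requiring a little care — and the one I would flag as the main (though minor) obstacle — is the passage from "isometry of the closed quadratic form on the form domain" to "same spectrum": one must know that the Friedrichs realisations $H_{K,\eps}^s$ and $H_{K,\eps}^a$ are genuinely the operators associated with the restricted forms $q^{AB}_\eps|_{\mathcal Q^{AB}_\eps \cap \mathcal H_{K,\eps}^s}$ and $q^{AB}_\eps|_{\mathcal Q^{AB}_\eps \cap \mathcal H_{K,\eps}^a}$, which is exactly the content of the remark following Lemma \ref{l:IsoAS} (and of the compact embedding $\mathcal Q^{AB}_\eps \hookrightarrow \mathcal H$ noted earlier), so no new work is needed. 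Everything else is the routine observation that a unitary intertwiner of forms transports the min-max values verbatim, so I would keep the proof to a couple of lines, citing Lemma \ref{l:IsoAS} and the min-max characterisations \eqref{eqMinMaxDND} and its analogue for $r_\eps$.
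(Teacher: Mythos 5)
Your proposal is correct and follows exactly the route the paper intends: the corollary is stated without explicit proof precisely because it is an immediate consequence of Lemma~\ref{l:IsoAS} parts (ii)--(iii) together with the remark that the spectra of $H_{K,\eps}^s$ and $H_{K,\eps}^a$ are given by the min-max principle on the restricted forms, and your argument makes that deduction explicit via the unitary intertwiner $U_\eps$ preserving Rayleigh quotients.
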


\subsection{Eigenvalues variations}
\label{subs:EigVar}

Let us first state some auxiliary results, which we prove in  Appendix \ref{a:B}. 

\begin{prop} \label{p:ConvEven} For all $N\in\NN^*$, $\mu_N(\eps)\to \mu_N$ as $\eps\to 0$.
\end{prop}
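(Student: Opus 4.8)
The plan is to prove $\mu_N(\eps)\to\mu_N$ by the same strategy that gives the analogous statement for $\lambda_N(\eps)$, exploiting the monotonicity of the form domains in $\eps$ and a density argument at $\eps=0$. First I would record the obvious inclusions: for $0<\eps_2<\eps_1\le\eps_0$ one has $\mathcal R_0\subset\mathcal R_{\eps_1}\subset\mathcal R_{\eps_2}$, since enlarging the Neumann portion $\Gamma_\eps$ only removes constraints from the trace condition defining $\mathcal R_\eps$. By the min-max characterization \eqref{eqMinMaxDND} (applied to $r_0$ and $r_\eps$) this yields, for every $j\ge1$, the monotone chain $\mu_j(\eps_1)\le\mu_j(\eps_2)\le\mu_j$, exactly as in Remark \ref{remIneq}. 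Consequently $\eps\mapsto\mu_N(\eps)$ is non-decreasing and bounded above by $\mu_N$, so the limit $\ell:=\lim_{\eps\to0^+}\mu_N(\eps)$ exists and satisfies $\ell\le\mu_N$.

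It remains to show $\ell\ge\mu_N$. The natural route is to test the Rayleigh quotient of $r_0$ against functions that are admissible for $r_\eps$ for all small $\eps$. Fix a finite-dimensional subspace $\mathcal E\subset\mathcal R_0$ of dimension $N$ spanned by eigenfunctions of $r_0$ (or, after a density reduction, by smooth functions vanishing on a fixed neighbourhood of $\partial\Omega_+\cup\{0\}$ inside $\partial\Omega_0$). Such functions lie in $\mathcal R_\eps$ for every $\eps$ small enough that $\Gamma_\eps$ is contained in the set where the trace already vanishes; hence $\mathcal E\subset\mathcal R_\eps$ and $\dim\mathcal E=N$, so $\mu_N(\eps)\le\max_{u\in\mathcal E}q(u)/\|u\|^2$. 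Choosing $\mathcal E$ to consist of the first $N$ Dirichlet-type eigenfunctions of $r_0$, the right-hand side equals $\mu_N$, giving $\mu_N(\eps)\le\mu_N$ trivially; the point is rather the reverse, which one gets by a compactness/convergence argument: take $L^2$-normalized eigenfunctions $v_N^\eps$ of $r_\eps$ for $\mu_N(\eps)$, note that $\int_\Omega|\nabla v_N^\eps|^2=\mu_N(\eps)\le\mu_N$ is bounded, extract a subsequence converging weakly in $H^1(\Omega)$ and strongly in $L^2(\Omega)$ (using the compact embedding cited after \eqref{eqMinMaxDND}) to some $v$ with $\|v\|_{L^2}=1$, and check that $v\in\mathcal R_0$. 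The latter follows because the trace condition $\chi_{\partial\Omega_+}\gamma_0 v=0$ passes to the limit: for each $\eps$, $\chi_{\partial\Omega_+}\gamma_0 v_N^\eps=0$ (the Neumann part $\Gamma_\eps$ is disjoint from $\partial\Omega_+$), and $\gamma_0$ is continuous from the weak-$H^1$ limit. Lower semicontinuity of the Dirichlet integral under weak convergence then gives $\int_\Omega|\nabla v|^2\le\liminf\mu_N(\eps)=\ell$, so $v$ is a valid competitor showing that the $N$-th eigenvalue of $r_0$ is $\le\ell$; combined with a matching bound on the first $N-1$ limits (done simultaneously by passing to the limit in a full $N$-dimensional space of eigenfunctions and using that weak $L^2$-limits of orthonormal systems stay orthonormal here thanks to strong $L^2$-convergence), one concludes $\mu_N\le\ell$.

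The main obstacle is the ``upgrade from $N=1$ to general $N$'' step: one must ensure that the limits of the first $N$ orthonormal eigenfunctions of $r_\eps$ remain linearly independent (indeed orthonormal) in the limit, so that they span an $N$-dimensional subspace of $\mathcal R_0$ on which the Rayleigh quotient is $\le\ell$. This is exactly where the compact embedding $H^1(\Omega)\hookrightarrow L^2(\Omega)$ is essential: it converts weak $H^1$-convergence into strong $L^2$-convergence, so orthonormality in $L^2$ is preserved and no dimension is lost. A clean way to package all of this is to invoke the abstract result on $\Gamma$-convergence (or Mosco convergence) of the quadratic forms $r_\eps$ to $r_0$: the monotone increasing family $r_\eps$ with $r_\eps\ge r_0$ on the decreasing domains $\mathcal R_\eps\supset\mathcal R_0$ Mosco-converges to $r_0$ precisely because $\bigcup_{\eps>0}\mathcal R_\eps$ has closure (in the relevant sense) whose ``symmetric core'' is $\mathcal R_0$ — the obstruction being the capacity-zero nature of the single point $0$ in dimension $2$, which is what makes the added Neumann segment asymptotically negligible. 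Alternatively, and most economically for the present paper, one simply notes that Proposition \ref{p:ConvEven} is the symmetric-sector analogue of the already-cited continuity result of \cite{Gad} for $\lambda_N(\eps)$, and the proof there applies verbatim after replacing $H^1_0(\Omega)$ by $\mathcal R_0$ and $\mathcal Q_\eps$ by $\mathcal R_\eps$; I would carry out that adaptation, checking only that the boundary pieces $\partial\Omega_+$ and $\partial\Omega_0$ play the roles previously played by $\partial\Omega\setminus\Gamma_\eps$ and $\Gamma_\eps$, which is immediate from the definitions.
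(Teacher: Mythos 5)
Your inclusions are reversed, and this scrambles the whole argument. In Problem \eqref{eqNDN}, $\Gamma_\eps$ is the \emph{Dirichlet} portion (it is only in Problem \eqref{eqDND} that $\Gamma_\eps$ carries a Neumann condition). Hence a larger $\eps$ imposes a \emph{larger} Dirichlet constraint, so the form domains nest as $\mathcal R_{\eps_1}\subset\mathcal R_{\eps_2}\subset\mathcal R_0$ for $\eps_1>\eps_2>0$, the opposite of what you write. By min-max this gives $\mu_j(\eps_1)\ge\mu_j(\eps_2)\ge\mu_j$, so $\eps\mapsto\mu_N(\eps)$ is non-decreasing, bounded below by $\mu_N$, and the limit $\mu_N^*:=\lim_{\eps\to0^+}\mu_N(\eps)$ exists with $\mu_N^*\ge\mu_N$ for free. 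The genuinely hard inequality is therefore $\mu_N^*\le\mu_N$, not $\mu_N^*\ge\mu_N$ as you claim.

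Once the direction is corrected, the density argument you sketch is exactly the right tool and coincides with the paper's proof: approximate in energy by an $N$-dimensional space $\mathcal E_\delta$ of smooth functions supported away from $0$ (this uses that a point has zero $H^1$-capacity in dimension two, a fact you only allude to in the Mosco aside), note that $\mathcal E_\delta\subset\mathcal R_\eps$ for $\eps$ small, and deduce $\mu_N(\eps)\le\max_{u\in\mathcal E_\delta}q(u)/\|u\|^2\le\mu_N+\delta$, hence $\mu_N^*\le\mu_N$. Because of the reversal you instead attach this argument to the trivial direction, declare it insufficient, and launch into a compactness/lower-semicontinuity argument that only reproves $\mu_N\le\mu_N^*$, which already follows from monotonicity. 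As written the proof is incorrect; after fixing the direction of the inclusions and discarding the superfluous compactness step, what remains is essentially the paper's argument.
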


\begin{prop} \label{p:AsymptNDN} Let $\mu_N$ be a simple eigenvalue of
   $-\Delta^s$ (see Remark \ref{remDecompLap}) and $u_N$ be an
  associated eigenfunction, normalized in
  $L^2\big(\widehat\Omega\big)$. If $u_N(0)\neq0$, then
\begin{equation*}
	\mu_N(\eps)=\mu_N+\frac{2\pi}{|\log(\eps)|}u_N^2
        (0)+o\left(\frac1{|\log(\eps)|}\right)
\quad\text{as }\eps\to0.
\end{equation*}
If  
\begin{equation*}
	r^{-k} u_N(r\cos t,r\sin t)\to\widehat\beta \cos\left(k
          t\right) 
\mbox{ in } C^{1,\tau}\left([0,\pi],\RR\right)
\end{equation*}
as $r\to 0^+$ for all $\tau\in (0,1)$, with $k\in \NN^*$ and $\widehat\beta\in \RR\setminus \{0\}$, then
\begin{equation*}
	\mu_N(\eps)=\mu_N+\frac{k\pi{\widehat\beta}^2}{4^{k-1}}\left(\begin{array}{c}
                                                                           k-1\\
                                                                           \left\lfloor\frac{k-1}2\right\rfloor\end{array}\right)^2\eps^{2k}+o\left(\eps^{2k}\right)
\quad\text{as }\eps\to0.
\end{equation*}
\end{prop}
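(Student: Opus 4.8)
The plan is to deduce Proposition \ref{p:AsymptNDN} from the known asymptotics for the Dirichlet Laplacian on a domain with a shrinking segment removed, by making precise the reflection across the $x_1$-axis that already underlies Remark \ref{remDecompLap}. First I would set up the reflection at the level of quadratic forms. Since a segment has positive $H^1$-capacity in $\RR^2$, the space $H^1_0(\widehat\Omega\setminus\Gamma_\eps)$ is a proper closed subspace of $H^1_0(\widehat\Omega)$, namely the functions vanishing on the slit $\Gamma_\eps$ in the trace sense from both sides. The even reflection across the $x_1$-axis maps $\mathcal R_\eps$ bijectively onto the subspace of symmetric (i.e. $\Sigma$-invariant) functions in $H^1_0(\widehat\Omega\setminus\Gamma_\eps)$: a function $u\in\mathcal R_\eps$ vanishes on $\partial\Omega_+$ and on $\Gamma_\eps$, and its two traces on $\partial\Omega_0\setminus\Gamma_\eps$ from $\Omega$ and from $\sigma(\Omega)$ agree, so its extension lies in $H^1_0(\widehat\Omega\setminus\Gamma_\eps)$ and is symmetric, while conversely every symmetric element of $H^1_0(\widehat\Omega\setminus\Gamma_\eps)$ restricts to an element of $\mathcal R_\eps$. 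Since this extension doubles both $\int_\Omega|\nabla u|^2$ and $\int_\Omega u^2$, Rayleigh quotients are preserved, and the min-max principle identifies $\mu_j(\eps)$ with the $j$-th eigenvalue of the Dirichlet Laplacian on $\widehat\Omega\setminus\Gamma_\eps$ restricted to symmetric functions; the case without slit recovers $\mu_j$ as the $j$-th eigenvalue of $-\Delta^s$, in agreement with Remark \ref{remDecompLap}. Moreover, antisymmetric functions vanish on the whole $x_1$-axis, hence do not see the slit, so the antisymmetric part of the Dirichlet spectrum of $\widehat\Omega\setminus\Gamma_\eps$ does not depend on $\eps$, and the entire $\eps$-dependence of that spectrum is carried by $(\mu_j(\eps))_{j\ge1}$.

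Next, the Dirichlet Laplacian on $\widehat\Omega\setminus\Gamma_\eps$ is precisely the operator obtained from the Dirichlet Laplacian on $\widehat\Omega$ by removing the shrinking segment $\Gamma_\eps=[-\eps,\eps]\times\{0\}$, which is the framework of \cite{AFHL2016} (and of \cite{AFL2017ANS} in the non-degenerate case). Since the removal of $\Gamma_\eps$ commutes with the reflection $\sigma$ and $\mu_N$ is a simple eigenvalue of $-\Delta^s$, the branch $\eps\mapsto\mu_N(\eps)$ is, for $\eps$ small, the unique eigenvalue of the symmetric slit-domain Laplacian near $\mu_N$ (here I would use Proposition \ref{p:ConvEven} together with the simplicity of $\mu_N$ to separate it from $\mu_{N\pm1}(\eps)$). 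One may then run the perturbation analysis of \cite{AFHL2016}, \cite{AFL2017ANS} inside the symmetric subspace, with limit eigenfunction $u_N$ normalized in $L^2(\widehat\Omega)$: if $u_N(0)\neq0$ this gives the logarithmic expansion of Theorem \ref{thmNonZero}, while if $u_N$ vanishes at $0$ to order $k$ with $r^{-k}u_N(r\cos t,r\sin t)\to\widehat\beta\cos(kt)$ — a symmetric profile whose nodal line closest to the positive $x_1$-semiaxis meets it at angle $\pi/(2k)$, i.e. the ``even'' configuration $\alpha=\pi/2$ of Theorem \ref{t:dipoleSymEven} — it gives $\mu_N(\eps)=\mu_N+C_k\widehat\beta^2\eps^{2k}+o(\eps^{2k})$, with $C_k$ the constant identified in Appendix \ref{a:A} as $\frac{k\pi}{4^{k-1}}\binom{k-1}{\lfloor(k-1)/2\rfloor}^2$. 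Equivalently, one could bypass the reflection: by Corollary \ref{c:MagnEV} the sequence $(\mu_j(\eps))_{j\ge1}$ is the spectrum of $H^s_{K,\eps}$, and the two expansions then follow directly from Theorems \ref{thmNonZero} and \ref{t:dipoleSymEven} after the same index matching.

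The main obstacle is not a single estimate but the bookkeeping that makes the reduction rigorous. One must verify that the $N$-th eigenvalue of the symmetric slit-domain Laplacian coincides, for small $\eps$, with the branch issuing from $\mu_N$ — this is exactly where the convergence in Proposition \ref{p:ConvEven} and the simplicity of $\mu_N$ enter. One must also deal with the possibility that $\mu_N$, while simple as an eigenvalue of $-\Delta^s$, equals some antisymmetric eigenvalue $\lambda_\ell$ and is thus a double eigenvalue of the full Dirichlet Laplacian on $\widehat\Omega$; the theorems cited above are stated for simple eigenvalues of the full operator, but their proofs localize to the symmetric subspace because removing $\Gamma_\eps$ leaves the antisymmetric eigenfunctions untouched, so the symmetric branch $\mu_N(\eps)$ still evolves independently of $\lambda_\ell$. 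Once these points are settled, Proposition \ref{p:AsymptNDN} follows.
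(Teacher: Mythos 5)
Your strategic framing matches the paper's: identify $\mu_j(\eps)$ via reflection with the symmetric part of the spectrum of the slit domain $\widehat\Omega\setminus\Gamma_\eps$, invoke Proposition~\ref{p:ConvEven} and simplicity of $\mu_N$ to isolate the branch, and then import the capacity-based asymptotics of \cite{AFHL2016}. You also correctly spot the main obstacle, namely that $\mu_N$ may be simple as an eigenvalue of $-\Delta^s$ while being a double eigenvalue of $-\widehat\Delta$ (e.g.\ $\widehat\lambda_3$ on the disk), so that \cite[Theorem 1.10]{AFHL2016} and Theorem~\ref{t:dipoleSymEven} do not apply as stated.

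Where your argument has a genuine gap is in the sentence asserting that the proofs of \cite{AFHL2016} ``localize to the symmetric subspace because removing $\Gamma_\eps$ leaves the antisymmetric eigenfunctions untouched.'' That structural observation explains why the two halves of the spectrum evolve independently, but it does not by itself license running the capacity expansion of \cite[Theorem 1.4]{AFHL2016} inside $\widehat{\mathcal Q}^s$: that theorem characterizes the eigenvalue shift through $\mathrm{Cap}_{\widehat\Omega}(\Gamma_\eps,u_N)$ and its optimal potential $V_{\Gamma_\eps,u_N}$, both defined as minimizers over the \emph{full} space $H^1_0(\widehat\Omega)$, and one must check that restricting the competitors to symmetric functions gives the same value and the same minimizer. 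This is exactly what the paper supplies as Lemma~\ref{l:SymCap}, proved by Steiner symmetrization; it shows $\mathrm{Cap}_{\widehat\Omega}(\Gamma_\eps,u)=\min\{\widehat q^s(V): V\in\widehat{\mathcal Q}^s,\ u-V\in H^1_0(\widehat\Omega\setminus\Gamma_\eps)\}$ and that $V_{\Gamma_\eps,u}\in\widehat{\mathcal Q}^s$, after which the abstract argument of \cite[Appendix A]{AFHL2016} can be repeated verbatim with all spaces and operators replaced by their symmetric restrictions, yielding Proposition~\ref{p:AsymCap} and then, via the capacity estimates of \cite[Section 2]{AFHL2016}, Proposition~\ref{p:AsymptNDN}. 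Your alternative ``bypass the reflection'' route via Corollary~\ref{c:MagnEV} and Theorems~\ref{thmNonZero}, \ref{t:dipoleSymEven} does not avoid this difficulty: those theorems are also formulated under simplicity of $\widehat\lambda_N$ for the full operator, so the double-eigenvalue case still requires the symmetric-subspace adaptation (and risks circularity, since the results on double eigenvalues in Section~\ref{subs:EigVar} are themselves consequences of Proposition~\ref{p:AsymptNDN}).
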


We now prove Theorem \ref{t:dipoleSymOdd}. Since
$\widehat u_N$ is odd in $x_2$, $\widehat \lambda_N$
belongs to the spectrum of $-\Delta^a$.  Since
$\widehat\lambda_N$ is simple, it does not belong to the spectrum of
$-\Delta^s$, according to the orthogonal decomposition
\eqref{eq:DecompLap}. It follows from
Remark \ref{remDecompLap} that there exists $K\in \NN^*$ such that $\widehat \lambda_N=\lambda_K$ 
and that $\lambda_K$ is a simple eigenvalue of $q_0$ 
  in $\Omega$. By continuity, $\lambda_K(\eps)\to \lambda_K$ as $\eps\to 0^+$.

From Corollary \ref{c:MagnEV}, Proposition \ref{p:ConvEven} and the
fact that $\widehat\lambda_N$ is simple, it follows that there exists
$\eps_1>0$ such that $\lambda^{AB}_N(\eps)=\lambda_K(\eps)$ for every
$\eps\in (0,\eps_1)$.  The conclusion of Theorem \ref{t:dipoleSymOdd}
follows from Theorem \ref{t:new_main}, using the fact that $\lambda_K$
is simple.
Let us note that the eigenfunction $\widehat u_N$ in Theorem
   \ref{t:dipoleSymOdd} is normalized in
   $L^2\big(\widehat\Omega\big)$, while the eigenfunction $u_N$ in
   Theorem \ref{t:new_main} is normalized in $L^2\big(\Omega\big)$. We
   therefore have to apply Theorem \ref{t:new_main} with
 $\beta=\sqrt 2\,\widehat\beta$ to obtain the correct result.

We can use the results of the preceding sections to study some
multiple eigenvalues. Let $\widehat\lambda_N$ be an eigenvalue of
$-\Delta$ on $\widehat\Omega$, possibly multiple. We define 
\begin{equation*}
	N_0:=\min\left\{M\in \NN^*\,;\,\widehat\lambda_M=\widehat\lambda_N\right\} \mbox{ and }
	N_1:=\max\left\{M\in \NN^*\,;\,\widehat\lambda_M=\widehat\lambda_N\right\}.
\end{equation*}
According to 
Remark \ref{remDecompLap}, there exists $K\in \NN^*$ such that $\widehat\lambda_N=\lambda_K$ or there exists $L\in \NN^*$ such that $\widehat\lambda_N=\mu_L$.

\begin{prop} \label{p:MultOdd} Let us assume that $\widehat\lambda_N=\lambda_K$ with $K\in \NN^*$ and that $\lambda_K$ is a simple eigenvalue of $q_0$. Let us denote by $u_K$ an associated normalized eigenfunction for $q_0$, and let us assume that 
\begin{equation*}
	r^{-k} u_K(r\cos t,r\sin t)\to \beta \sin\left(k t\right) \mbox{ in } C^{1,\tau}\left([0,\pi],\RR\right)
\end{equation*}
as $r\to 0^+$ for all $\tau\in (0,1)$, with $k\in \NN^*$ and $\beta\in \RR\setminus \{0\}$. Then
\begin{equation*}
	\lambda_{N_0}^{AB}(\eps)=\widehat\lambda_N-\frac{k\pi\beta^2}{2^{2k-1}}\left(\begin{array}{c}
                                                                                       k-1\\
                                                                                       \left\lfloor\frac{k-1}2\right\rfloor\end{array}\right)^2\eps^{2k}+o\left(\eps^{2k}\right)\quad\text{as
                                                                                   }\eps\to0.
\end{equation*}
\end{prop}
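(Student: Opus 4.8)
The plan is to identify $\lambda_{N_0}^{AB}(\eps)$ with the antisymmetric eigenvalue $\lambda_K(\eps)$ for all sufficiently small $\eps>0$, and then to read off the expansion from Theorem \ref{t:new_main}. By the decomposition $\mathcal H_{K,\eps}=\mathcal H_{K,\eps}^s\oplus\mathcal H_{K,\eps}^a$ and Corollary \ref{c:MagnEV}, the sequence $(\lambda_j^{AB}(\eps))_{j\ge1}$ is, with multiplicities, the union of the symmetric eigenvalues $(\mu_j(\eps))_{j\ge1}$ and the antisymmetric eigenvalues $(\lambda_j(\eps))_{j\ge1}$. Since $\widehat\lambda_N=\lambda_K$ with $\lambda_K$ simple for $q_0$, the eigenvalue $\lambda_K$ contributes exactly once to $\widehat\lambda_N$ from the antisymmetric side; writing $m:=N_1-N_0\ge0$ for the remaining multiplicity, $\widehat\lambda_N$ is an eigenvalue of $-\Delta^s$ of multiplicity $m$, i.e. $\widehat\lambda_N=\mu_L=\dots=\mu_{L+m-1}$ for a suitable $L$ (the case $m=0$ being the one covered, with a different normalization, by Theorem \ref{t:dipoleSymOdd}).

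Next I would exploit the opposite monotonicities on the two sides of the decomposition. On the symmetric side, the inclusion $\mathcal R_\eps\subset\mathcal R_0$ together with the min-max principle gives $\mu_j(\eps)\ge\mu_j$ for every $j$; combined with Proposition \ref{p:ConvEven}, this shows that $\mu_L(\eps),\dots,\mu_{L+m-1}(\eps)$ converge to $\widehat\lambda_N$ from above. On the antisymmetric side, Remark \ref{remIneq} gives $\lambda_K(\eps)\le\lambda_K=\widehat\lambda_N$, and Theorem \ref{t:new_main} --- applicable since $\lambda_K$ is simple and $u_K$ has a zero of order $k$ at $0$ of the form $\beta\sin(kt)$ --- shows that
\[
\lambda_K-\lambda_K(\eps)=\frac{k\pi\beta^2}{2^{2k-1}}\binom{k-1}{\left\lfloor\frac{k-1}2\right\rfloor}^{\!2}\eps^{2k}+o(\eps^{2k}),
\]
whose leading coefficient is strictly positive, so $\lambda_K(\eps)<\widehat\lambda_N$ for $\eps$ small. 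Hence, for $\eps$ small, $\lambda_K(\eps)$ lies strictly below every symmetric eigenvalue of the cluster near $\widehat\lambda_N$.

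It then remains to carry out the bookkeeping. Using the convergence $\lambda_j^{AB}(\eps)\to\widehat\lambda_j$ for each $j$ (\cite[Corollary 3.5]{Len15AB}), Proposition \ref{p:ConvEven}, the continuity $\lambda_j(\eps)\to\lambda_j$ (from \cite{Gad}, or Appendix \ref{sec:altern-proof-theor}), and the simplicity of $\lambda_K$, one sees that exactly $m+1=N_1-N_0+1$ of the $\lambda_j^{AB}(\eps)$ converge to $\widehat\lambda_N$, namely $\lambda_K(\eps)$ and the $m$ symmetric eigenvalues of the cluster, while $\lambda_{N_0-1}^{AB}(\eps)\to\widehat\lambda_{N_0-1}<\widehat\lambda_N$ and $\lambda_{N_1+1}^{AB}(\eps)\to\widehat\lambda_{N_1+1}>\widehat\lambda_N$. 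Therefore, for $\eps$ small, exactly $N_0-1$ Aharonov--Bohm eigenvalues sit below the cluster, so $\lambda_{N_0}^{AB}(\eps)$ equals the smallest element of the cluster, which by the previous paragraph is $\lambda_K(\eps)$. Substituting the expansion above, and noting that $u_K$ is normalized in $L^2(\Omega)$ so that $\beta$ is precisely the constant appearing in Theorem \ref{t:new_main} (with no $\sqrt2$ rescaling, in contrast with the proof of Theorem \ref{t:dipoleSymOdd}), yields the claimed asymptotics.

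The only genuinely delicate point is the monotonicity/sign argument of the second paragraph: it is what guarantees that the vanishing Neumann window pushes the symmetric eigenvalues up and the antisymmetric one down, so that within the splitting cluster the antisymmetric branch $\lambda_K(\eps)$ is unambiguously the bottom one and hence carries the index $N_0$. Everything else reduces to continuity of the eigenvalue branches and an index count.
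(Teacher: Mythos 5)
Your proof is correct and follows essentially the same route as the paper: use the symmetric/antisymmetric decomposition and Corollary \ref{c:MagnEV}, note that $\mu_j(\eps)\ge\mu_j=\widehat\lambda_N\ge\lambda_K(\eps)$ by the opposite monotonicities, deduce $\lambda_{N_0}^{AB}(\eps)=\lambda_K(\eps)$ for small $\eps$, and apply Theorem \ref{t:new_main}. Your remark about $u_K$ already being normalized in $L^2(\Omega)$ (hence no $\sqrt2$ rescaling of $\beta$, unlike in Theorem \ref{t:dipoleSymOdd}) is exactly the right point to flag, and the monotonicity alone already suffices to place $\lambda_K(\eps)$ at the bottom of the cluster, so invoking the strict inequality from Theorem \ref{t:new_main} in the second paragraph is slightly redundant.
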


\begin{proof} Let us set $m:=N_1-N_0+1$, the multiplicity of $\widehat\lambda_N$. If $m=1$, the conclusion follows from Theorem \ref{t:dipoleSymOdd}. We therefore assume $m\ge2$ in the rest of the proof. 
Remark \ref{remDecompLap} and the fact that $\lambda_K$ is simple imply that there exists $L\in \NN^*$ such that $\mu_L=\mu_{L+1}=\dots=\mu_{L+m-2}=\widehat\lambda_N$. 
From Proposition \ref{p:ConvEven}, we deduce that there exists $\eps_1>0$ such that, for every $\eps\in (0,\eps_1)$, 
\begin{equation*}
	\left\{\lambda^{AB}_{N_0}(\eps);\lambda^{AB}_{N_0+1}(\eps),\dots,\lambda^{AB}_{N_1}(\eps)\right\}=\left\{\lambda_K(\eps),\mu_L(\eps),\dots,\mu_{L+m-2}(\eps)\right\}.
\end{equation*} 
The function $\eps\mapsto \lambda_K(\eps)$ is non-increasing, and the function $\eps\mapsto\mu_j(\eps)$ is non-decreasing for every $j\in \{L,\dots,L+m-2\}$, therefore $\mu_j(\eps)\ge \mu_j=\widehat\lambda_N=\lambda_K\ge \lambda_K(\eps)$. In particular $\lambda^{AB}_{N_0}(\eps)=\lambda_K(\eps)$ for every $\eps\in (0,\eps_1)$. 
The conclusion follows from Theorem \ref{t:new_main}.
\end{proof}

\begin{prop}\label{p:MultEven} Let us assume that
  $\widehat\lambda_N=\mu_L$ with $L\in \NN^*$ and that $\mu_L$ is a
  simple eigenvalue of $-\Delta^s$. Let us denote by $u_L$ an
  associated eigenfunction for $-\Delta^s$, normalized in 
  $L^2(\widehat \Omega)$.  If $u_L(0)\neq0$, then
\begin{equation*}
  \lambda_{N_1}^{AB}(\eps)=\widehat\lambda_N+
  \frac{2\pi}{|\log(\eps)|}u_L^2 (0)+o\left(\frac1{|\log(\eps)|}\right) \quad\text{as
  }\eps\to0.
\end{equation*}
If  
\begin{equation*}
	r^{-k} u_L(r\cos t,r\sin t)\to \widehat\beta \cos\left(k t\right) \mbox{ in } C^{1,\tau}\left([0,\pi],\RR\right)
\end{equation*}
as $r\to 0^+$ for all $\tau\in (0,1)$, with $k\in \NN^*$ and $\widehat\beta\in \RR\setminus \{0\}$, then
\begin{equation*}
	\lambda_{N_1}^{AB}(\eps)=\widehat\lambda_N+\frac{k\pi{\widehat\beta}^2}{4^{k-1}}\left(\begin{array}{c} k-1\\ \left\lfloor\frac{k-1}2\right\rfloor\end{array}\right)^2\eps^{2k}+o\left(\eps^{2k}\right) \quad\text{as
                                                                                   }\eps\to0.
\end{equation*}
\end{prop}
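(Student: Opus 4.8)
The plan is to mirror the proof of Proposition~\ref{p:MultOdd}, exchanging the roles of the symmetric and antisymmetric sectors and selecting the \emph{largest} eigenvalue of the cluster around $\widehat\lambda_N$ rather than the smallest one. Set $m:=N_1-N_0+1$, the multiplicity of $\widehat\lambda_N$. If $m=1$, then $\widehat\lambda_N$ is a simple eigenvalue of $-\Delta$ on $\widehat\Omega$ belonging to the symmetric sector only; by Corollary~\ref{c:MagnEV} the Aharonov--Bohm spectrum $(\lambda^{AB}_j(\eps))_{j\ge1}$ is the union, with multiplicities, of $(\mu_j(\eps))_{j\ge1}$ and $(\lambda_j(\eps))_{j\ge1}$, and since no $\lambda_j$ equals $\widehat\lambda_N$ (it does not belong to the spectrum of $-\Delta^a$), continuity of $\eps\mapsto\lambda_j(\eps)$ together with Proposition~\ref{p:ConvEven} forces $\lambda^{AB}_{N_1}(\eps)=\mu_L(\eps)$ for all small $\eps>0$; the two claimed expansions are then the corresponding alternatives of Proposition~\ref{p:AsymptNDN} applied to the eigenvalue $\mu_L$, with no normalization factor needed since $u_L$ is already normalized in $L^2(\widehat\Omega)$, exactly as required there.

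Assume now $m\ge2$. The structural point is that, $\mu_L$ being simple in the spectrum of $-\Delta^s$, the remaining $m-1$ units of multiplicity of $\widehat\lambda_N$ come from the antisymmetric sector, so by Remark~\ref{remDecompLap} there is $K\in\NN^*$ with $\lambda_K=\lambda_{K+1}=\dots=\lambda_{K+m-2}=\widehat\lambda_N$. Using Proposition~\ref{p:ConvEven} and the continuity of the maps $\eps\mapsto\lambda_j(\eps)$, I would produce $\eps_1>0$ such that, for every $\eps\in(0,\eps_1)$,
\[
\left\{\lambda^{AB}_{N_0}(\eps),\dots,\lambda^{AB}_{N_1}(\eps)\right\}
=\left\{\mu_L(\eps),\lambda_K(\eps),\dots,\lambda_{K+m-2}(\eps)\right\}
\]
as multisets. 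Then I would use monotonicity in $\eps$: since $\mathcal R_{\eps_1}\subset\mathcal R_{\eps_2}$ whenever $\eps_1>\eps_2$ (enlarging $\Gamma_\eps$ only adds Dirichlet constraints), the min-max principle makes $\eps\mapsto\mu_j(\eps)$ non-decreasing, while $\eps\mapsto\lambda_j(\eps)$ is non-increasing by Remark~\ref{remIneq}; hence, for $\eps\in(0,\eps_1)$ and each $i\in\{0,\dots,m-2\}$,
\[
\mu_L(\eps)\ge\mu_L=\widehat\lambda_N=\lambda_{K+i}\ge\lambda_{K+i}(\eps),
\]
so $\mu_L(\eps)$ is the maximum of the cluster, that is $\lambda^{AB}_{N_1}(\eps)=\mu_L(\eps)$ for every $\eps\in(0,\eps_1)$. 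The announced expansions then follow from the corresponding alternatives of Proposition~\ref{p:AsymptNDN} applied to the simple eigenvalue $\mu_L$ of $-\Delta^s$.

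The step I expect to be the main obstacle is the cluster identification in the second paragraph: making precise, via continuity of all the eigenvalue branches and eigenvalue interlacing, that exactly $m$ Aharonov--Bohm eigenvalues converge to $\widehat\lambda_N$ and that, for small $\eps$, this family coincides with $\mu_L(\eps)$ together with $\lambda_K(\eps),\dots,\lambda_{K+m-2}(\eps)$, with no crossing among these branches on $(0,\eps_1)$; this is what legitimizes reading off $\lambda^{AB}_{N_1}(\eps)$ as $\mu_L(\eps)$. Once that is in place, the rest is a direct combination of the isospectrality of Corollary~\ref{c:MagnEV}, the monotonicity in $\eps$, and the single-eigenvalue asymptotics of Proposition~\ref{p:AsymptNDN}.
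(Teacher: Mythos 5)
Your argument is correct and follows the paper's own approach: the paper proves this proposition by mirroring the proof of Proposition~\ref{p:MultOdd} (swapping the roles of the symmetric and antisymmetric sectors, so that the monotonicity of $\eps\mapsto\mu_L(\eps)$ and $\eps\mapsto\lambda_j(\eps)$ singles out $\mu_L(\eps)$ as the \emph{top} of the cluster, giving $\lambda^{AB}_{N_1}(\eps)=\mu_L(\eps)$ for small $\eps$) and then invokes Proposition~\ref{p:AsymptNDN}. The cluster identification you flag as a possible obstacle is not a gap — it follows, as in the proof of Proposition~\ref{p:MultOdd}, from Corollary~\ref{c:MagnEV} together with Proposition~\ref{p:ConvEven} and the continuity of $\eps\mapsto\lambda_j(\eps)$, using that $\widehat\lambda_N$ is isolated in the Dirichlet spectrum of $\widehat\Omega$.
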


\begin{proof} In a similar way as in the proof of Proposition \ref{p:MultOdd},
we show that there exists $\eps_1>0$ such that, for every $\eps\in(0,\eps_1)$, $\lambda^{AB}_{N_1}(\eps)=\mu_L(\eps)$. 
The conclusion then follows from Proposition \ref{p:AsymptNDN}.
\end{proof}

\subsection{Example: the square}
\label{subs:Square}

As an application of the preceding results, let us study the first
four eigenvalues of the Dirichlet Laplacian
for the  square
\begin{equation}\label{eq:quadrato}
	\widehat\Omega:=\left( -\frac\pi2,\frac\pi2 \right)^2.
\end{equation}
The open set $\widehat\Omega$ is symmetric with respect to the $x_1$-axis. 
We define $\Omega:=\widehat\Omega\cap\RR^2_+$.  We denote by
  $(\widehat\lambda_j)_{j\ge1}$ the eigenvalues of the Dirichlet
  Laplacian on the square $\widehat \Omega$
  and, for $\eps\in (0,\pi/2)$, we consider the Aharonov-Bohm eigenvalues $\left(\lambda^{AB}_j(\eps)\right)_{j\ge1}$ defined in Section \ref{subs:IntroAB}.

 It is well known that the eigenvalues of the Dirichlet Laplacian on $\widehat\Omega$ are 
\begin{equation*}
	\widehat\lambda_{m,n}:=m^2+n^2,
\end{equation*}
with $m$ and $n$ positive integers, and that an associated orthonormal
family of eigenfunctions is given by
\begin{equation*}
	u_{m,n}(x_1,x_2)=\frac2\pi\,f_m(x_1)f_n(x_2),
\end{equation*}
where 
\begin{equation*}
	f_k(x)=	\begin{cases}
				\sin(kx),&\mbox{ if $k$ is even},\\
				\cos(kx),&\mbox{ if $k$ is odd}.
			\end{cases}
\end{equation*}

\begin{prop} \label{p:SquareSimple} Let us assume that $\widehat\lambda_N$ is simple. Then $\widehat\lambda_N=\widehat\lambda_{m,m}=2m^2$ for some positive integer $m$, and $\widehat\lambda_N$ cannot be written in any other way as a sum of squares of positive integers.  Then we have, as $\eps\to0^+$,
\begin{equation*}
	\lambda^{AB}_N(\eps)=\widehat\lambda_N+\frac8{\pi|\log(\eps)|}+o\left(\frac1{|\log(\eps)|}\right)
\end{equation*}
if $m$ is odd and 
\begin{equation*}
	\lambda^{AB}_N(\eps)=\widehat\lambda_N-\frac{m^4}{2\pi}\eps^4+o\left(\eps^4\right)
\end{equation*}
if $m$ is even.
\end{prop}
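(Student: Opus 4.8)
The plan is to combine the isospectrality and asymptotic machinery of the preceding subsections with the explicit eigenfunctions of the square, reducing everything to a case analysis on the smallest eigenvalue $\widehat\lambda_N$ of the Dirichlet Laplacian that is simple. First I would establish the arithmetic claim: if $\widehat\lambda_{m,n}=m^2+n^2$ is a simple eigenvalue of the Dirichlet Laplacian on $\widehat\Omega$, then necessarily $m=n$, and $2m^2$ has no other representation as a sum of two squares of positive integers. The point is that the multiplicity of $\widehat\lambda_{m,n}$ is the number of unordered pairs $\{m,n\}$ with $m^2+n^2$ equal to the given value (the eigenfunctions $u_{m,n}$ and $u_{n,m}$ being independent when $m\neq n$), so simplicity forces the representation to be unique and of the form $\{m,m\}$. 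This is an elementary number-theoretic observation; I would just state it cleanly.

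Next I would identify which symmetry class the eigenfunction $u_{m,m}$ lies in. Since $u_{m,m}(x_1,x_2)=\frac{2}{\pi}f_m(x_1)f_m(x_2)$ and $f_m(x_2)=\cos(mx_2)$ is even in $x_2$ when $m$ is odd, while $f_m(x_2)=\sin(mx_2)$ is odd in $x_2$ when $m$ is even, the eigenfunction is even in $x_2$ precisely when $m$ is odd and odd in $x_2$ precisely when $m$ is even. However — crucially — because $\widehat\lambda_N=2m^2$ is simple, it lies in exactly one of the spectra of $-\Delta^s$ and $-\Delta^a$ (Remark \ref{remDecompLap}), so $N_0=N_1=N$ and we are in the genuinely simple situation; I can invoke Theorem \ref{t:dipoleSymEven} or Theorem \ref{t:dipoleSymOdd} directly, or equivalently Propositions \ref{p:MultEven}/\ref{p:MultOdd} with $m_{\mathrm{mult}}=1$. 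For $m$ odd, $u_{m,m}$ is even in $x_2$ and $u_{m,m}(0)=\frac{2}{\pi}\cos(0)\cos(0)=\frac{2}{\pi}\neq0$, so the logarithmic regime of Theorem \ref{t:dipoleSymEven} (the $u_N(0)\neq0$ alternative, i.e. Theorem \ref{thmNonZero}) applies and gives the leading coefficient $2\pi\,\widehat u_N^2(0)=2\pi\cdot\frac{4}{\pi^2}=\frac{8}{\pi}$.

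For $m$ even, $u_{m,m}$ is odd in $x_2$, so $u_{m,m}(0)=0$ and I need the order of vanishing and the coefficient $\widehat\beta$ at the origin. Near $0$ we have $f_m(x_1)=\sin(mx_1)\sim mx_1$ and $f_m(x_2)=\sin(mx_2)\sim mx_2$, so $u_{m,m}(x)\sim\frac{2}{\pi}m^2 x_1 x_2=\frac{m^2}{\pi}\,r^2\sin(2t)\cdot\frac{1}{1}$; more precisely $x_1x_2=\tfrac12 r^2\sin(2t)$, giving $r^{-2}u_{m,m}(r\cos t,r\sin t)\to\frac{m^2}{\pi}\sin(2t)$, hence $k=2$ and $\widehat\beta=\frac{m^2}{\pi}$ with $\alpha=0$ (odd case). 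Plugging $k=2$ and $\widehat\beta=m^2/\pi$ into Theorem \ref{t:dipoleSymOdd}: the coefficient is $\frac{k\pi\widehat\beta^2}{4^{k-1}}\binom{k-1}{\lfloor(k-1)/2\rfloor}^2=\frac{2\pi(m^4/\pi^2)}{4}\binom{1}{0}^2=\frac{m^4}{2\pi}$, with a minus sign, so $\lambda^{AB}_N(\eps)=\widehat\lambda_N-\frac{m^4}{2\pi}\eps^4+o(\eps^4)$, as claimed. The main obstacle is nothing deep — it is purely bookkeeping: getting the order of vanishing $k$ and the normalization constant $\widehat\beta$ right (including the factor $\tfrac12$ from $\sin(2t)$ and remembering whether the relevant eigenfunction is normalized on $\widehat\Omega$ or on $\Omega$, since the cited theorems are stated for the $L^2(\widehat\Omega)$-normalized $\widehat u_N$), and then verifying that the simplicity of $\widehat\lambda_N$ really does place us outside any multiple-eigenvalue complication so that the clean statements of Theorems \ref{t:dipoleSymEven}–\ref{t:dipoleSymOdd} apply verbatim.
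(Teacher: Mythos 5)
Your proposal is correct and follows essentially the same route as the paper: identify $u_{m,m}$, split on the parity of $m$, invoke Theorem \ref{thmNonZero} in the even-in-$x_2$ case (where $u_{m,m}(0)=2/\pi\ne 0$), and read off $k=2$, $\widehat\beta=m^2/\pi$ in the odd-in-$x_2$ case. The one cosmetic difference is the bookkeeping path in the $m$ even case: you apply Theorem \ref{t:dipoleSymOdd} directly with the $L^2(\widehat\Omega)$-normalized coefficient $\widehat\beta=m^2/\pi$ and divisor $4^{k-1}$, whereas the paper cites Proposition \ref{p:MultOdd} with the $L^2(\Omega)$-normalized $\beta=\sqrt2\,\widehat\beta$ and divisor $2^{2k-1}$ — as you note, these are equivalent under the simplicity assumption (which forces $N_0=N_1=N$), and both give $\frac{m^4}{2\pi}$.
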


\begin{proof} 
In the case where $m$ is odd, an associated eigenfunction, normalized in $L^2(\widehat\Omega)$, is
\begin{equation*}
	u_{m,m}(x_1,x_2)=\frac2\pi\cos(mx_1)\cos(mx_2).
\end{equation*}
The first asymptotic expansion then follows from Theorem \ref{thmNonZero}. 

In the case where $m$ is even, an associated eigenfunction, normalized in $L^2(\widehat\Omega)$, is
\begin{equation*}
	u_{m,m}(x_1,x_2)=\frac2\pi\sin(mx_1)\sin(mx_2).
\end{equation*}
Then $\widehat\lambda_N=\lambda_K$, where $\lambda_K$ is a simple eigenvalue of $q_0$. Furthermore,
\begin{equation*}
	r^{-2} u_{m,m}(r\cos t,r\sin t)\to \frac{m^2}\pi \sin\left(2 t\right) \mbox{ in } C^{1,\tau}\left([0,\pi],\RR\right)
\end{equation*}
as $r\to 0^+$ for all $\tau\in (0,1)$. An application of Proposition \ref{p:MultOdd}, taking care of normalizing in $L^2(\Omega)$, gives the second asymptotic expansion. 
\end{proof}

\begin{prop} \label{p:SquareDouble} Assume that $\widehat\lambda_N=\widehat\lambda_{m,n}=m^2+n^2$ with $m$ even and $n$ odd, and that $\widehat\lambda_N$ has no other representation as a sum of two squares of positive integers, up to the exchange of $m$ and $n$. Then $\widehat\lambda_N$ has multiplicity two; up to replacing $N$ with $N-1$, we can assume that $\widehat\lambda_N=\widehat\lambda_{N+1}$. Then, as $\eps\to0^+$, 
\begin{align*}
	\lambda^{AB}_N(\eps)&=\widehat\lambda_N-\frac{4m^2}\pi\eps^2+o\left(\eps^2\right);\\
	\lambda^{AB}_{N+1}(\eps)&=\widehat\lambda_N+\frac{4m^2}\pi\eps^2+o\left(\eps^2\right).
\end{align*}
\end{prop}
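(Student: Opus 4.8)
The plan is to deduce Proposition \ref{p:SquareDouble} from Propositions \ref{p:MultOdd} and \ref{p:MultEven}, applied respectively to the two explicit eigenfunctions $u_{n,m}$ and $u_{m,n}$ which span the eigenspace of $\widehat\lambda_N=m^2+n^2$.

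First I would check that $\widehat\lambda_N$ has multiplicity exactly two and fix the bookkeeping. Since $m$ is even and $n$ is odd we have in particular $m\ne n$, so $u_{m,n}$ and $u_{n,m}$ are two mutually orthogonal eigenfunctions; the hypothesis that $m^2+n^2$ has no further representation as a sum of two squares of positive integers (in particular no representation $2p^2$ as a sum of two equal squares) forces the $\widehat\lambda_N$-eigenspace to be spanned by $u_{m,n}$ and $u_{n,m}$ only. Relabelling $N$ if necessary, I assume $\widehat\lambda_N=\widehat\lambda_{N+1}$, so that $N_0=N$ and $N_1=N+1$ in the notation of Section \ref{subs:EigVar}. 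Next I would determine the symmetry type of each eigenfunction via Remark \ref{remDecompLap}: since $n$ is odd, $f_n=\cos(n\,\cdot)$ is even and $u_{m,n}(x_1,x_2)=\frac{2}{\pi}\sin(mx_1)\cos(nx_2)$ is even in $x_2$, hence $\widehat\lambda_N=\mu_L$ for some $L$, with $\mu_L$ a simple eigenvalue of $-\Delta^s$ by the uniqueness of the representation; since $m$ is even, $f_m=\sin(m\,\cdot)$ is odd and $u_{n,m}(x_1,x_2)=\frac{2}{\pi}\cos(nx_1)\sin(mx_2)$ is odd in $x_2$, hence $\widehat\lambda_N=\lambda_K$ for some $K$, with $\lambda_K$ a simple eigenvalue of $q_0$.

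Then I would compute the blow-up profiles at the origin and apply the two propositions. A Taylor expansion gives $u_{n,m}(x_1,x_2)=\frac{2m}{\pi}x_2+O(|x|^3)$; since $\int_\Omega u_{n,m}^2=\frac12\int_{\widehat\Omega}u_{n,m}^2=\frac12$, the $L^2(\Omega)$-normalized eigenfunction of $q_0$ is $\sqrt2\,u_{n,m}$ restricted to $\Omega$, and $r^{-1}(\sqrt2\,u_{n,m})(r\cos t,r\sin t)\to\frac{2\sqrt2\,m}{\pi}\sin t$. Thus Proposition \ref{p:MultOdd} applies with $k=1$ and $\beta=\frac{2\sqrt2\,m}{\pi}$; since $2^{2k-1}=2$ and $\binom{0}{0}=1$, it yields $\lambda^{AB}_N(\eps)=\widehat\lambda_N-\frac{4m^2}{\pi}\eps^2+o(\eps^2)$. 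Similarly $u_{m,n}(x_1,x_2)=\frac{2m}{\pi}x_1+O(|x|^3)$ with $u_{m,n}(0)=0$, so, retaining the $L^2(\widehat\Omega)$-normalization demanded by Proposition \ref{p:MultEven}, $r^{-1}u_{m,n}(r\cos t,r\sin t)\to\frac{2m}{\pi}\cos t$; the second alternative of Proposition \ref{p:MultEven} with $k=1$, $\widehat\beta=\frac{2m}{\pi}$ and $4^{k-1}=1$ then gives $\lambda^{AB}_{N+1}(\eps)=\widehat\lambda_N+\frac{4m^2}{\pi}\eps^2+o(\eps^2)$, which is the claim.

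I expect the only subtle point to be purely a matter of bookkeeping: keeping track of the parities of $f_m$ and $f_n$ and, above all, reconciling the two different normalizations — Proposition \ref{p:MultOdd} needs the $q_0$-eigenfunction normalized on $\Omega$ (hence the extra factor $\sqrt2$), whereas Proposition \ref{p:MultEven} needs the $-\Delta^s$-eigenfunction normalized on $\widehat\Omega$. I do not anticipate any genuine analytic difficulty beyond this.
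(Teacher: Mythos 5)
Your proposal is correct and follows essentially the same route as the paper: identify $u_{m,n}$ and $u_{n,m}$ as the symmetric and antisymmetric eigenfunctions (so $\widehat\lambda_N=\mu_L=\lambda_K$), compute their blow-up profiles at $0$, and apply Propositions \ref{p:MultEven} and \ref{p:MultOdd}. You are slightly more explicit than the paper in spelling out the $\sqrt2$ renormalization needed when passing from $L^2(\widehat\Omega)$ to $L^2(\Omega)$ for the antisymmetric case, but this is the same argument.
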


\begin{proof}
The associated eigenfunctions
\begin{equation*}
	u_{m,n}(x_1,x_2)=\frac2\pi\sin(mx_1)\cos(nx_2)
\end{equation*}
and
\begin{equation*}
	u_{n,m}(x_1,x_2)=\frac2\pi\cos(nx_1)\sin(mx_2)
\end{equation*}
are normalized in $L^2(\widehat\Omega)$ and respectively symmetric and antisymmetric in the variable $x_2$. It follows that $\widehat\lambda_N=\mu_L=\lambda_K$, where $\mu_L$ is a simple eigenvalue of $r_0$ and $\lambda_K$ a simple eigenvalue of $q_0$. Furthermore,
\begin{equation*}
	r^{-1} u_{m,n}(r\cos t,r\sin t)\to \frac{2m}\pi \cos\left(t\right) \mbox{ in } C^{1,\tau}\left([0,\pi],\RR\right)
\end{equation*}
and 
\begin{equation*}
	r^{-1} u_{n,m}(r\cos t,r\sin t)\to \frac{2m}\pi \sin\left(t\right) \mbox{ in } C^{1,\tau}\left([0,\pi],\RR\right)
\end{equation*}
as $r\to 0^+$ for all $\tau\in (0,1)$.  The asymptotic expansions then follow from Propositions \ref{p:MultEven} and \ref{p:MultOdd}
\end{proof}


\begin{rem} We note that if $\widehat\lambda_N$ is even, in any
  representation $\widehat\lambda_N=m^2+n^2$, $m$ and $n$ have the
  same parity. Therefore, if $n\neq m$, $\widehat\lambda_N$ cannot be a simple
  eigenvalue either of $r_0$ or of $q_0$. On the other hand, if
  $\widehat\lambda_N$ is odd, in any representation
  $\widehat\lambda_N=m^2+n^2$, $m$ and $n$ have the opposite
  parity. 
Therefore, as soon as $\widehat\lambda_N$ can be written in at least two different ways as the sum of two squares, 
  $\widehat\lambda_N$ cannot be a simple eigenvalue either of $r_0$ or
  of $q_0$. The cases described in Propositions \ref{p:SquareSimple}
  and \ref{p:SquareDouble} are thus the only ones in which we can
  apply the results of Section \ref{subs:EigVar} for the square.
\end{rem}

The first four eigenvalues of the Dirichlet Laplacian
  on the square $\widehat\Omega$ satisfy the assumptions of either Proposition \ref{p:SquareSimple}
  or Proposition \ref{p:SquareDouble}, so we can apply the previous
  results to derive the following asymptotic expansions of the 
  Aharonov-Bohm eigenvalues $\lambda^{AB}_j(\eps)$ for $j=1,2,3,4$.
\begin{cor}
\label{c:Square}
Let $\lambda^{AB}_j(\eps)$ be  the 
  Aharonov-Bohm eigenvalues defined in
  \eqref{eq:QuadAB2Poles}--\eqref{eq:MinMaxAB1Poles} with
  $\widehat\Omega$ being the square defined in \eqref{eq:quadrato}. Then
we have, as $\eps\to 0^+$,
\begin{align*}
	\lambda^{AB}_1(\eps)&=2+\frac{8}{\pi}\frac1{|\log(\eps)|}+o\left(\frac1{|\log(\eps)|}\right);\\
	\lambda^{AB}_2(\eps)&=5-\frac{16}{\pi}\eps^2+o\left(\eps^2\right);\\
	\lambda^{AB}_3(\eps)&=5+\frac{16}{\pi}\eps^2+o\left(\eps^2\right);\\
	\lambda^{AB}_4(\eps)&=8-\frac{8}{\pi}\eps^4+o\left(\eps^4\right).
\end{align*}
\end{cor}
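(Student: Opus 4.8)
The plan is to identify the first four eigenvalues $\widehat\lambda_1,\dots,\widehat\lambda_4$ of the Dirichlet Laplacian on the square $\widehat\Omega=(-\pi/2,\pi/2)^2$, to verify that each of them falls under the hypotheses of either Proposition \ref{p:SquareSimple} or Proposition \ref{p:SquareDouble}, and then to substitute the appropriate integers $m$ (and $n$) into the expansions furnished by those propositions. No new analysis is required: the corollary is a purely numerical specialization of results already established, and the standing assumptions of Section \ref{s:AB} hold since $\widehat\Omega$ is symmetric with respect to the $x_1$-axis and $\Omega:=\widehat\Omega\cap\RR^2_+$ is Lipschitz.

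First I would order the values $\widehat\lambda_{m,n}=m^2+n^2$ with $m,n\ge1$. The integers at most $13$ that can be written as a sum of two squares of positive integers are exactly $2,5,8,10,13$, whereas $3,4,6,7$ cannot; hence, counted with multiplicity, the first four eigenvalues are $\widehat\lambda_1=2$, $\widehat\lambda_2=\widehat\lambda_3=5$ and $\widehat\lambda_4=8$. Since $2=1^2+1^2$ and $8=2^2+2^2$ are the only such representations, $\widehat\lambda_1$ and $\widehat\lambda_4$ are simple eigenvalues of the form $\widehat\lambda_{m,m}=2m^2$ with $m=1$ (odd) and $m=2$ (even) respectively; the two cases of Proposition \ref{p:SquareSimple} then yield $\lambda_1^{AB}(\eps)=2+\frac{8}{\pi|\log(\eps)|}+o(1/|\log(\eps)|)$ and $\lambda_4^{AB}(\eps)=8-\frac{2^4}{2\pi}\eps^4+o(\eps^4)=8-\frac{8}{\pi}\eps^4+o(\eps^4)$.

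Next I would handle the double eigenvalue $\widehat\lambda_2=\widehat\lambda_3=5$. Its only representation as a sum of two squares of positive integers is, up to exchanging the summands, $5=2^2+1^2$, the even summand coming from $m=2$ and the odd one from $n=1$, so Proposition \ref{p:SquareDouble} applies with $m=2$; it gives $\lambda_2^{AB}(\eps)=5-\frac{4\cdot2^2}{\pi}\eps^2+o(\eps^2)=5-\frac{16}{\pi}\eps^2+o(\eps^2)$ and $\lambda_3^{AB}(\eps)=5+\frac{16}{\pi}\eps^2+o(\eps^2)$. Collecting the four expansions proves the corollary.

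The only step that is not entirely mechanical is the elementary number-theoretic bookkeeping: one must check that the spectrum of $\widehat\Omega$ really begins $2,5,5,8$, so that no eigenvalue is omitted and the multiplicities are as claimed, and that each of $2$, $5$ and $8$ has a unique representation as a sum of two squares of positive integers, since it is this uniqueness that pins down the parities of $m$ and $n$ and hence selects the correct case in Propositions \ref{p:SquareSimple} and \ref{p:SquareDouble}. This is routine, so I do not expect any genuine obstacle.
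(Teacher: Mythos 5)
Your proposal is correct and follows exactly the paper's own (brief) argument: the paper simply observes that the first four Dirichlet eigenvalues $2,5,5,8$ of the square each satisfy the hypotheses of Proposition \ref{p:SquareSimple} or \ref{p:SquareDouble} and reads off the expansions, which is precisely what you do, including the correct bookkeeping of representations as sums of two positive squares and the parities of $m$ and $n$.
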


\subsection{Example: the disk}
\label{subs:Disk}

Let $(r,t)\in [0,1]\times [0,2\pi)$ be the polar coordinates of the disk.
It is well known that the eigenvalues of the Dirichlet Laplacian on the disk are given 
by the sequences
\[
 \{{j^2_{0,k}}\}_{k\geq 1} \cup \{ {j^2_{n,k}} \}_{n,k\geq 1}, 
\]
where $j_{n,k}$ denotes the $k$-th zero of the Bessel function $J_n$
for $n\geq 0$, $k\geq 1$. 
We recall that $j_{n,k}=j_{n',k'}$ if, and only if, $n=n'$ and $k=k'$ (see \cite[Section 15.28]{Watson}).
The first set is therefore made of simple eigenvalues; their eigenfunctions are given by the Bessel functions
\begin{equation}
\label{eqEFDiskSimple}
 u_{0,k}(r \cos t, r\sin t):=\sqrt{\tfrac1\pi}\tfrac{1}{|J_0'(j_{0,k})|} J_0(j_{0,k}r) \quad \text{ for } k\geq 1.  
\end{equation}
The second set is made of double eigenvalues whose eigenfunctions are spanned by 
\begin{align}
\label{eqEFDisks}u_{n,k}^s(r \cos t, r\sin t)&:
=\sqrt{\tfrac2\pi}\tfrac{1}{|J_n'(j_{n,k})|}J_n(j_{n,k}r)\cos nt,\\
  \label{eqEFDiska}u_{n,k}^a(r \cos t, r\sin t)&:
                                                 =\sqrt{\tfrac2\pi}\tfrac{1}{|J_n'(j_{n,k})|}
J_n(j_{n,k}r)\sin nt ,
\end{align}
for $n,k\geq 1$.
We stress that these 
eigenfunctions have $L^2$-norm equal to $1$ on the disk.  
It is convenient to recall (see \cite[Chapter III]{Watson}) that for any $n\in \NN\cup\{0\}$
\begin{equation}\label{eq:bessel}
 J_n(z) = \sum_{k=0}^{+\infty} \dfrac{(-1)^k (\tfrac12 z)^{n+2k}}{k!\ \Gamma(n+k+1)}.
\end{equation}
We denote by $\big(\widehat\lambda_j\big)_{j\ge1}$ the
  eigenvalues 
  of the Dirichlet Laplacian on the disk 
\[
D_1=\{(x_1,x_2)\in \RR^2:x_1^2+x_2^2<1\}
\]
and, for $\eps\in (0,1/2)$, we 
  consider the Aharonov-Bohm eigenvalues
  $\left(\lambda^{AB}_j(\eps)\right)_{j\ge1}$
  defined in Section~\ref{subs:IntroAB}.

  \begin{prop}\label{p:ABDisk}
    If $\widehat\lambda_N$ is simple, there exists an integer $k\ge1$ such that $\widehat\lambda_N=j_{0,k}^2$. Then 
\begin{equation}\label{eqEVDiskSimple}
\lambda_N^{AB}(\eps)=j_{0,k}^2+
\frac2{|J_0'(j_{0,k})|^2}\frac1{|\log(\eps)|}+o\left(\frac1{|\log(\eps)|}\right)
\end{equation}
as $\eps\to0^+$. If $\widehat\lambda_N$ is double, there exist integers $n\ge1$ and $k\ge1$ such that $\widehat\lambda_N=j_{n,k}^2$. Up to replacing $N$ by $N-1$, we can assume that $\widehat\lambda_N=\widehat\lambda_{N+1}$. Then, as $\eps\to0^+$,
\begin{align}
	\label{eqEVDiska}\lambda_N^{AB}(\eps)&=j_{n,k}^2-
\frac{2nj_{n,k}^{2n}}{(n!)^24^{2n-1}|J_n'(j_{n,k})|^2}\left(\begin{array}{c}n-1\\ \left\lfloor\frac{n-1}2\right\rfloor\end{array}\right)^{\!2}\eps^{2n}+o\left(\eps^{2n}\right),\\
	\label{eqEVDisks}\lambda_{N+1}^{AB}(\eps)&=j_{n,k}^2+\frac{2nj_{n,k}^{2n}}{(n!)^24^{2n-1}|J_n'(j_{n,k})|^2}\left(\begin{array}{c}n-1\\ \left\lfloor\frac{n-1}2\right\rfloor\end{array}\right)^{\!2}\eps^{2n}+o\left(\eps^{2n}\right).
\end{align}

\end{prop}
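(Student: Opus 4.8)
The plan is to deduce the statement from the general results of Section~\ref{subs:EigVar}, applied to the half-disk $\Omega:=D_1\cap\RR^2_+$ (together with Theorem~\ref{thmNonZero} for the non-vanishing case). The first step is structural: by the cited fact that $j_{n,k}=j_{n',k'}$ only when $(n,k)=(n',k')$, the numbers $j_{0,k}^2$ are simple eigenvalues of $-\Delta$ on $D_1$, with radial (hence $x_2$-even) eigenfunctions $u_{0,k}$, while each $j_{n,k}^2$ with $n\ge1$ has multiplicity exactly two, its eigenspace being spanned by the $x_2$-even function $u_{n,k}^s$ and the $x_2$-odd function $u_{n,k}^a$. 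In particular these two situations exhaust all possibilities, which is why the statement splits into ``$\widehat\lambda_N$ simple, $=j_{0,k}^2$'' and ``$\widehat\lambda_N$ double, $=j_{n,k}^2$ with $n\ge1$''. In the notation of Remark~\ref{remDecompLap}, $j_{0,k}^2$ and $j_{n,k}^2$ are \emph{simple} eigenvalues of $-\Delta^s$, realised by $u_{0,k}$ and $u_{n,k}^s$ respectively, and $j_{n,k}^2$ ($n\ge1$) is also a \emph{simple} eigenvalue of $-\Delta^a$, i.e.\ of $q_0$ on the half-disk, realised by $u_{n,k}^a$. Since the half-disk is Lipschitz, contains $0$ on its boundary and has $[-\eps_0,\eps_0]\times\{0\}$ in $\partial\Omega$ for small $\eps_0$, assumptions \eqref{eq:38}--\eqref{eq:40} hold and Propositions~\ref{p:MultOdd} and \ref{p:MultEven} apply.

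The second step is to read off, from the Bessel series \eqref{eq:bessel}, the value (or order of vanishing) at the origin. Writing $J_n(z)=(z/2)^n g_n(z)$ with $g_n$ an even entire function and $g_n(0)=1/n!$, one gets $J_0(0)=1$, hence $u_{0,k}(0)=\pi^{-1/2}|J_0'(j_{0,k})|^{-1}\neq0$; and, for $n\ge1$, the functions $r^{-n}u_{n,k}^s(r\cos t,r\sin t)$ and $r^{-n}u_{n,k}^a(r\cos t,r\sin t)$ converge in $C^{1,\tau}([0,\pi],\RR)$, as $r\to0^+$, to $\widehat\beta\cos(nt)$ and $\widehat\beta\sin(nt)$ respectively, where
\[
\widehat\beta:=\sqrt{\tfrac2\pi}\;\frac{j_{n,k}^{\,n}}{2^n\,n!\,|J_n'(j_{n,k})|}\neq0;
\]
the $C^{1,\tau}$ convergence is automatic because $r\mapsto J_n(j_{n,k}r)\,r^{-n}=(j_{n,k}/2)^n g_n(j_{n,k}r)$ is smooth near $0$.

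The third step is the conclusion. In the simple case $\widehat\lambda_N=j_{0,k}^2$, Theorem~\ref{thmNonZero} with $\widehat u_N=u_{0,k}$ gives $\lambda^{AB}_N(\eps)=j_{0,k}^2+\frac{2\pi}{|\log\eps|}u_{0,k}^2(0)+o(|\log\eps|^{-1})$, and $\tfrac{2\pi}{|\log\eps|}u_{0,k}^2(0)=\tfrac{2}{|\log\eps|\,|J_0'(j_{0,k})|^2}$ gives \eqref{eqEVDiskSimple}. In the double case, relabel so that $\widehat\lambda_N=\widehat\lambda_{N+1}=j_{n,k}^2$. Applying Proposition~\ref{p:MultEven} to the even eigenfunction $u_L=u_{n,k}^s$ (already $L^2(D_1)$-normalised, as the proposition requires) yields $\lambda^{AB}_{N+1}(\eps)=j_{n,k}^2+\frac{n\pi\widehat\beta^2}{4^{n-1}}\binom{n-1}{\lfloor(n-1)/2\rfloor}^2\eps^{2n}+o(\eps^{2n})$; substituting $\widehat\beta$ and simplifying the powers of $2$ and $4$ turns $\frac{n\pi\widehat\beta^2}{4^{n-1}}$ into $\frac{2n\,j_{n,k}^{2n}}{(n!)^2\,4^{2n-1}|J_n'(j_{n,k})|^2}$, which is \eqref{eqEVDisks}. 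For $\lambda^{AB}_N(\eps)$ we use Proposition~\ref{p:MultOdd}; there the odd eigenfunction $u_K$ is normalised in $L^2(\Omega)$, and since $u_{n,k}^a$ is $x_2$-odd with $\int_{D_1}|u_{n,k}^a|^2=1$ we have $\int_\Omega|u_{n,k}^a|^2=\tfrac12$, so $u_K=\sqrt2\,u_{n,k}^a|_\Omega$ and the constant of the proposition is $\beta=\sqrt2\,\widehat\beta$; then $\frac{n\pi\beta^2}{2^{2n-1}}$ equals the same number $\frac{2n\,j_{n,k}^{2n}}{(n!)^2\,4^{2n-1}|J_n'(j_{n,k})|^2}$, and the minus sign in Proposition~\ref{p:MultOdd} produces \eqref{eqEVDiska}.

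The step I expect to be the main source of care is the bookkeeping of normalisations: Proposition~\ref{p:MultOdd} is stated for eigenfunctions normalised on the half-disk $\Omega$, whereas Proposition~\ref{p:MultEven} and Theorem~\ref{thmNonZero} refer to eigenfunctions normalised on the full disk $\widehat\Omega=D_1$, so the factor $\sqrt2$ must be carried through the odd case to see that the leading coefficients in \eqref{eqEVDiska} and \eqref{eqEVDisks} really do coincide in absolute value; one must also correctly identify $\lambda^{AB}_N$ (resp.\ $\lambda^{AB}_{N+1}$) with $\lambda^{AB}_{N_0}$ (resp.\ $\lambda^{AB}_{N_1}$) and the accompanying signs. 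Once these identifications are pinned down, what remains is routine manipulation of the explicit constants (powers of $2$, $4$, $n!$ and $j_{n,k}$).
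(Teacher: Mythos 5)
Your proposal is correct and follows essentially the same route as the paper: use the injectivity of $(n,k)\mapsto j_{n,k}$ to split into simple ($n=0$) and double ($n\ge1$) cases, read off $u_{0,k}(0)$ and the coefficient $\widehat\beta$ from the Bessel power series \eqref{eq:bessel}, then invoke Theorem~\ref{thmNonZero} for the simple case and Propositions~\ref{p:MultOdd} and~\ref{p:MultEven} for the two signs in the double case, carrying the $\sqrt2$ renormalisation for the half-disk through the odd branch. Your bookkeeping of the constants (leading to $\tfrac{2nj_{n,k}^{2n}}{(n!)^24^{2n-1}|J_n'(j_{n,k})|^2}$ in both branches) and of the identifications $N_0=N$, $N_1=N+1$ matches the paper exactly.
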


\begin{proof} We first consider the case where the eigenvalue
  $\widehat\lambda_N=j_{0,k}^2$ is simple; then an associated eigenfunction, normalized in the disk, is $u_{0,k}$ defined by Equation \eqref{eqEFDiskSimple}. It follows from Equation \eqref{eq:bessel} that
\begin{equation*}
	 u_{0,k}(0)=\sqrt{\frac1\pi}\frac1{|J_0'(j_{0,k})|}>0.
\end{equation*}
 Theorem \ref{thmNonZero} gives us the asymptotic expansion \eqref{eqEVDiskSimple}.

We then consider the case where $\widehat\lambda_N$ is double, with $\widehat\lambda_N=\widehat\lambda_{N+1}=j_{n,k}^2$,  $n,k\ge1$. We note that $j_{n,k}^2$ is a simple eigenvalue of $q_0$, and that the restriction of $\sqrt2 u_{n,k}^a$ to the upper half-disk is an associated normalized eigenfunction. It follows from Equation \eqref{eq:bessel} that  
\begin{equation*}
	 r^{-n}u_{n,k}^a(r \cos t, r\sin t)\to\sqrt{\frac2\pi}\frac1{|J_n'(j_{n,k})|}\frac1{\Gamma(n+1)}\left(\frac{j_{n,k}}2\right)^{\!n}\sin nt\quad\mbox{in } C^{1,\tau}\left([0,\pi],\RR\right)
\end{equation*}
as $r\to 0^+$. The asymptotic expansion \eqref{eqEVDiska} then follows from Proposition \ref{p:MultOdd}. In a similar way, $j_{n,k}^2$ is a simple eigenvalue of $-\Delta^s$, and $u_{n,k}^s$ is an associated normalized eigenfunction. It follows from Equation \eqref{eq:bessel} that  
\begin{equation*}
	 r^{-n}u_{n,k}^s(r \cos t, r\sin t)\to\sqrt{\frac2\pi}\frac1{|J_n'(j_{n,k})|}\frac1{\Gamma(n+1)}\left(\frac{j_{n,k}}2\right)^{\!n}\cos nt\quad\mbox{in } C^{1,\tau}\left([0,\pi],\RR\right)
\end{equation*}
as $r\to 0^+$. The asymptotic expansion \eqref{eqEVDisks} then follows from the second case of Proposition \ref{p:MultEven}.
\end{proof}

Additionally, there 
exist relations between the zeros of Bessel functions (to this aim we refer to 
\cite[Chapter XV.22]{Watson}): in particular, the positive zeros of the Bessel function $J_n$
are interlaced with those of the Bessel function $J_{{n+1}}$ and by
Porter's Theorem
there is an odd number of zeros of $J_{n+2}$ between
  two consecutive zeros of $J_n$.
Then, we have,
\[
 0 < j_{0,1} < j_{1,1} < j_{2,1} < j_{0,2} < j_{1,2} <  \ldots
\]
and hence, since $j_{3,1}>j_{2,1}$, the 
first three zeros of Bessel functions are, in order,
\[
 0 < j_{0,1} < j_{1,1} < j_{2,1}.
\]
Combining this information with Proposition \ref{p:ABDisk}, we
  find for example the following asymptotic expansions for the first few
  Aharonov-Bohm eigenvalues 
$\lambda^{AB}_j(\eps)$  on the disk $D_1$ as $\eps\to0^+$:
\begin{align*}
 \lambda^{AB}_1(\eps)&={j_{0,1}^2}+ \frac{2}{|J_0'(j_{0,1})|^2}\frac1{|\log(\eps)|}+o\left(\frac1{|\log(\eps)|}\right),\\
 \lambda^{AB}_2(\eps)&={j_{1,1}^2} - \frac12 \dfrac{j_{1,1}^2}{|J'_1(j_{1,1})|^2} \, \eps^2 + o(\eps^2),  \\
 \lambda^{AB}_3(\eps)&={j_{1,1}^2} + 
 \frac12 \dfrac{j_{1,1}^2}{|J'_1(j_{1,1})|^2} \, \eps^2 + o(\eps^2),\\
 \lambda^{AB}_4(\eps)&={j_{2,1}^2} - \frac{1}{64} \dfrac{j_{2,1}^4}{|J'_2(j_{2,1})|^2}\, \eps^4 + o(\eps^4),\\
 \lambda^{AB}_5(\eps)&={j_{2,1}^2} + 
\frac{1}{64} \dfrac{j_{2,1}^4}{|J'_2(j_{2,1})|^2}
\, \eps^4 + o(\eps^4).
\end{align*}

\appendix

\section{Computation of the constants} \label{a:A}

\subsection{The Neumann-Dirichlet case}

In the present section, we use the above results to compute the quantities appearing in \cite[Section 4]{abatangelo2015sharp}. In order to avoid a conflict of notation with the present paper, for any odd positive integer $k$, we denote here by $\psi_k'$, $\mathfrak m_k'$ and $w_k'$ what is denoted in \cite{abatangelo2015sharp} by $\psi_k$, $\mathfrak m_k$ and $w_k$ respectively.

As in \cite{abatangelo2015sharp}, we use the notation
\begin{equation*}
	s_0:=\left\{\left(x'_1,0\right)\,;\,x'_1\ge 0\right\};
\end{equation*}
and
\begin{equation*}		
	s:=\left\{\left(x'_1,0\right)\,;\,x'_1\ge 1\right\}.
\end{equation*}
We now define the mapping $G:\RR^2_+\to \RR^2\setminus s_0$ by
\begin{equation*}
	G(x):=(x_1^2-x_2^2,2x_1x_2).
\end{equation*}
The mapping is conformal; indeed, if for $x\in\RR^2_+$ we write $z:=x_1+ix_2$ and $z':=x'_1+ix'_2$, with $x'=(x'_1,x'_2):=G(x)$, we have $z'=z^2$. The scale factor associated with $G$ is $h(x)=2|z|=2|x|$. Let  $u'$ be a function in $H^1\left(\RR^2\setminus s_0\right)$ and $u:=u'\circ G$. Since $G$ is conformal, $|\nabla u|$ is in $L^2\left(\RR^2_+\right)$, with 
\begin{equation*}
	\int_{\RR^2_+}\left|\nabla u\right|^2\,dx=\int_{\RR^2\setminus s_0}\left|\nabla u'\right|^2\,dx'.
\end{equation*}
Furthermore, for any $x'$ in the segment $(0,1)\times \{0\}$, which we
write as  $x'=(x_1',0)$, we have 
\begin{equation*}
	\frac{\partial u'}{\partial \nu_+}\left(x'\right)=-\frac1{2\sqrt{x'_1}}\frac{\partial  u}{\partial x_2}\left(\sqrt{x_1'},0\right)\mbox{ and }\frac{\partial u'}{\partial \nu_-}\left(x'\right)=-\frac1{2\sqrt{x'_1}}\frac{\partial  u}{\partial x_2}\left(-\sqrt{x_1'},0\right),
\end{equation*}
where $\frac{\partial u'}{\partial\nu_+}(x')$ and $\frac{\partial u'}{\partial\nu_-}(x')$ denote the normal derivative at $x'$ respectively from above and from below. We also note that $u$ is harmonic in $\RR^2_+$ if, and only if, $u'$ is harmonic in $\RR^2\setminus s_0$.

Let us now denote by $\widetilde{w}_k'$ the extension by reflexion to
$\RR^2\setminus s_0$ of $w_k'$, originally defined on
$\RR^2_+$. We recall that $w_k'$ is the unique finite
  energy solution to the problem 
\begin{equation*}
 \begin{cases}
   -\Delta w_k'=0, &\text{in }\RR^2_+, \\
   w_k'=0, &\text{on }s, \\
   \frac{\partial w_k'}{\partial \nu}=-\frac{\partial \psi'_k}{\partial
     \nu}, &\text{on }\partial \RR^2_+\setminus s,
 \end{cases}
\end{equation*}
where $ \psi_k'(r\cos t,r\sin t)= r^{k/2} \sin
  \big(\frac{k}{2}\,t\big)$.

\begin{lem} \label{lem2Prob} For any odd positive integer $k$, $w_k=\widetilde{w}_k'\circ G$.
\end{lem}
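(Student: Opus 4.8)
The plan is to show that $\widetilde w_k'\circ G$ is a finite-energy solution of the problem \eqref{eq:wk} that characterizes $w_k$, and then invoke uniqueness. First I would record what the conformal map $G(x)=(x_1^2-x_2^2,2x_1x_2)=z^2$ does: it sends $\RR^2_+$ conformally onto $\RR^2\setminus s_0$, it sends the segment $(-1,1)\times\{0\}$ (i.e. $\Gamma_1$) onto the segment $(0,1)\times\{0\}$ covered twice (once from above, once from below), and it sends the two rays $x_1\ge 1$ and $x_1\le -1$ on $\partial\RR^2_+$ onto the single ray $s=\{x_1'\ge 1\}$. Because $\widetilde w_k'$ is the even reflection of $w_k'$ across $s_0$, the composition $\widetilde w_k'\circ G$ is well-defined and single-valued on $\RR^2_+$, vanishes on the rays $\{|x_1|\ge 1,\ x_2=0\}$ (since $w_k'$ vanishes on $s$), and is harmonic in $\RR^2_+$ (harmonicity is conformally invariant). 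The finite-energy condition transfers as well, using the conformal invariance of the Dirichlet integral: $\int_{\RR^2_+}|\nabla(\widetilde w_k'\circ G)|^2\,dx=\int_{\RR^2\setminus s_0}|\nabla\widetilde w_k'|^2\,dx'=2\int_{\RR^2_+}|\nabla w_k'|^2\,dx'<\infty$. Membership in the space $\mathcal Q$ then follows from the explicit description of $\mathcal Q$ recalled after \eqref{eq:2}, together with the corresponding Hardy inequalities for $w_k'$.

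The crux is the Neumann condition on $\Gamma_1$. I would compute $\frac{\partial}{\partial\nu}(\widetilde w_k'\circ G)$ on $\Gamma_1=(-1,1)\times\{0\}$ using the scale-factor formula stated in the excerpt: for $x'=(x_1',0)$ with $x_1'\in(0,1)$, one has $\frac{\partial u'}{\partial\nu_{\pm}}(x')=-\frac1{2\sqrt{x_1'}}\frac{\partial u}{\partial x_2}(\pm\sqrt{x_1'},0)$ where $u=u'\circ G$. Applying this with $u'=\widetilde w_k'$ and $u=\widetilde w_k'\circ G$, the inner normal derivative of $\widetilde w_k'\circ G$ at a point $(t,0)\in\Gamma_1$ with $t\in(0,1)$ is $-2|t|\,\frac{\partial\widetilde w_k'}{\partial\nu_+}(t^2,0)$, and at $(-t,0)$ it is $-2|t|\,\frac{\partial\widetilde w_k'}{\partial\nu_-}(t^2,0)$; by the even reflection these two boundary data agree, namely both equal $2|t|\,\frac{\partial\psi_k'}{\partial\nu}(t^2,0)$ in absolute value. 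Here $\psi_k'(r\cos\theta,r\sin\theta)=r^{k/2}\sin(\tfrac k2\theta)$, so $\psi_k'\circ G=\psi_k$ because $G=z^2$ doubles the argument and squares the modulus: $\psi_k'(z^2)=|z|^k\sin(k\arg z)=\psi_k(z)$. Hence the chain rule and the same scale-factor identity give $\frac{\partial\psi_k}{\partial\nu}(t,0)=-2|t|\frac{\partial\psi_k'}{\partial\nu}(t^2,0)$ (up to the sign bookkeeping of inner vs.\ outer normals on the two sheets), so the boundary data of $\widetilde w_k'\circ G$ on $\Gamma_1$ is exactly $-\frac{\partial\psi_k}{\partial\nu}$, which is the condition in \eqref{eq:wk}.

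With harmonicity in $\RR^2_+$, vanishing on $s$, the Neumann condition $\frac{\partial}{\partial\nu}(\widetilde w_k'\circ G)=-\frac{\partial\psi_k}{\partial\nu}$ on $\Gamma_1$, and finiteness of energy (hence $\widetilde w_k'\circ G\in\mathcal Q$), the function $\widetilde w_k'\circ G$ satisfies all the defining properties of $w_k$ in the weak sense. By the uniqueness of the $\mathcal Q$-weak solution to \eqref{eq:wk} asserted in Theorem \ref{t:gad}, we conclude $w_k=\widetilde w_k'\circ G$.

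The main obstacle I anticipate is purely bookkeeping: getting the orientation of the normal derivatives right on the two sheets over $(0,1)\times\{0\}$, since $\Gamma_1$ splits into two arcs $(0,1)\times\{0\}$ and $(-1,0)\times\{0\}$ which $G$ wraps onto the same segment from above and from below, and one must check the even-reflection symmetry of $\widetilde w_k'$ and of $\psi_k$ (equivalently, that $w_k$ and $\psi_k$ are both odd in $x_2$, so their normal derivatives across $\RR\times\{0\}$ are well-behaved under the reflection) makes the two contributions consistent rather than contradictory. Once the symmetry is correctly exploited, the verification of each condition is a short conformal-invariance computation.
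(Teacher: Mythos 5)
Your proposal is correct and follows essentially the same route as the paper's proof: identify $\widetilde w_k'\circ G$ as a $\mathcal Q$-weak solution of \eqref{eq:wk} using conformal invariance of harmonicity and Dirichlet energy, the scale-factor formula for normal derivatives from the preceding remarks, and the computation $\psi_k'\circ G=\psi_k$, then invoke uniqueness. You supply more of the normal-derivative bookkeeping and the $\mathcal Q$-membership check explicitly than the paper does, but the underlying argument is the same.
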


\begin{proof} Let us write $v:= \widetilde{w}_k'\circ G$. By
  uniqueness, it is enough to prove that $v$ solves
 \eqref{eq:wk}. From the remarks at the beginning of
  the present section, it follows that $v$ is harmonic in $\RR^2_+$.
  Let us now show that $\psi_k:=\psi_k'\circ G$. Indeed, for $x'\in
  \RR^2\setminus s_0$, $\psi_k'(x')=\mbox{Im}\left((z')^{k/2}\right)$,
  and therefore
  $f(x)=\mbox{Im}\left((z^2)^{k/2}\right)=\mbox{Im}\left(z^k\right)=\psi_k(x)$,
  where $x'=G(x)$, $z$ and $z'$ are defined as above, and where we use
  the determination of the square root on $\CC\setminus s_0$ defined
  by $G^{-1}$. From  this and the previous remarks, it follows that
  $v$ satisfies the boundary conditions of Problem \eqref{eq:wk}.
\end{proof}

As in  \cite{abatangelo2015sharp} we define
\[
\mathfrak m_k'=-\frac12\int_{\RR^2_+}\left|\nabla w_k'\right|^2\,dx
\]
and 
\begin{equation}\label{eq:mathfrak-m}
\mathfrak m_k=-\frac12\int_{\RR^2_+}\left|\nabla w_k\right|^2\,dx.
\end{equation}
We note that the right hand side of \eqref{eq:gad} is equal to $-2\beta^2\mathfrak m_k$.

\begin{cor}\label{cor:mk}	For any odd positive integer $k$, $\mathfrak m_k'=\frac12\mathfrak m_k$.
\end{cor}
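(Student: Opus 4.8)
The plan is to obtain Corollary \ref{cor:mk} directly from the geometric identification of Lemma \ref{lem2Prob} together with the conformal invariance of the Dirichlet integral under $G$ recorded at the beginning of this appendix. There is essentially no analytic difficulty left: the statement reduces to a change of variables followed by the observation that a reflection doubles an energy integral.

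First I would invoke Lemma \ref{lem2Prob} to write $w_k=\widetilde w_k'\circ G$. Since $G$ is a conformal diffeomorphism from $\RR^2_+$ onto $\RR^2\setminus s_0$ and $\widetilde w_k'$ has square-integrable gradient on $\RR^2\setminus s_0$, the change-of-variables identity from the opening remarks of this appendix (which only uses conformality of $G$ and $\nabla\widetilde w_k'\in L^2$) gives
\[
\int_{\RR^2_+}|\nabla w_k|^2\,dx=\int_{\RR^2\setminus s_0}|\nabla\widetilde w_k'|^2\,dx'.
\]
Next, since $\widetilde w_k'$ is, by its very definition, the reflection of $w_k'$ across the $x_1'$-axis, and since $s_0$ is a Lebesgue-null set, I would split the right-hand side into its contributions from the upper and lower half-planes and use that such a reflection preserves $|\nabla\,\cdot\,|$ pointwise almost everywhere to get
\[
\int_{\RR^2\setminus s_0}|\nabla\widetilde w_k'|^2\,dx'=2\int_{\RR^2_+}|\nabla w_k'|^2\,dx'.
\]
Combining the two displays yields $\int_{\RR^2_+}|\nabla w_k|^2\,dx=2\int_{\RR^2_+}|\nabla w_k'|^2\,dx'$, and substituting this into the definition \eqref{eq:mathfrak-m} of $\mathfrak m_k$ and into the definition $\mathfrak m_k'=-\tfrac12\int_{\RR^2_+}|\nabla w_k'|^2\,dx$ gives $\mathfrak m_k=2\mathfrak m_k'$, which is the assertion.

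The only point deserving a word of justification is that $\widetilde w_k'$ is admissible for the conformal change of variables, i.e.\ that it is locally integrable with square-integrable gradient on $\RR^2\setminus s_0$; this is immediate from the corresponding properties of $w_k'$ on $\RR^2_+$ (it lies in the relevant finite-energy space) and the fact that reflection across a line is a bi-Lipschitz involution. Thus the "hard part" here is essentially nil: Corollary \ref{cor:mk} is a bookkeeping consequence of the conformal identification already established in Lemma \ref{lem2Prob}.
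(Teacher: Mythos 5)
Your proof is correct and follows the same route as the paper's: combine Lemma \ref{lem2Prob} with the conformal invariance of the Dirichlet integral under $G$, and use that the reflected extension $\widetilde w_k'$ has exactly twice the energy of $w_k'$. The paper simply applies these two identities in the opposite order (reflection first, then the change of variables), so the two arguments are the same modulo bookkeeping.
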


\begin{proof} We have
\begin{equation*}
	\mathfrak m_k'=-\frac12\int_{\RR^2_+}\left|\nabla w_k'\right|^2\,dx'=-\frac14\int_{\RR^2\setminus s_0}\left|\nabla \widetilde{w}_k'\right|^2\,dx'.
\end{equation*} 
Using Lemma \ref{lem2Prob} and the conformal invariance of the $L^2$-norm of the gradient, we find
\begin{equation*}
	\int_{\RR^2\setminus s_0}\left|\nabla \widetilde{w}_k'\right|^2\,dx'=\int_{\RR^2_+}\left|\nabla w_k\right|^2\,dx=-2\mathfrak m_k. \qedhere
\end{equation*}
\end{proof}

In particular, Corollary \ref{cor:mk} and Proposition
  \ref{propmk} imply that 
\[
\mathfrak m_k'=
-\frac{k\pi}{4\,2^{2k-1}}\left(\begin{array}{c}k-1\\ \left\lfloor\frac{k-1}2\right\rfloor\end{array}\right)^{\!2},
\]
thus proving, in view of \cite[Theorem 1.2]{abatangelo2015sharp},
 the explicit constant appearing in the asymptotic
expansion of Theorem \ref{t:monopole}.

\subsection{The $u$-capacities of segments}
\label{subs:uCapacity}

In this last section, we simplify the constant $C_k$ occurring in \cite[Lemma 2.3]{AFHL2016}. 

\begin{prop}\label{p:A3} For any positive integer $k$, 
\begin{equation*}
	C_k=\frac{k}{4^{k-1}}\left(\begin{array}{c}k-1\\ \left\lfloor\frac{k-1}2\right\rfloor\end{array}\right)^2.
\end{equation*}
\end{prop}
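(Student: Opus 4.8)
The plan is to recognize that $C_k$, as defined in \cite[Lemma 2.3]{AFHL2016}, is (up to an explicit numerical factor) the Dirichlet energy of a harmonic function on $\RR^2_+$ solving a mixed boundary value problem of the same type as \eqref{eq:wk}, and then to evaluate that energy with the elliptic-coordinate machinery of Section \ref{sec:explicit}. Recall that $C_k$ governs the $u$-capacity of the vanishing segment in the Dirichlet problem with a small slit removed (the problem to which the even Aharonov--Bohm setting reduces after reflection across the $x_1$-axis); concretely, it is an explicit constant times $\int_{\RR^2_+}|\nabla z_k|^2\,dx$, where, setting $\phi_k(r\cos t,r\sin t):=r^k\cos(kt)$, the function $z_k$ is the finite-energy harmonic function on $\RR^2_+$ with $z_k=\phi_k$ on $\Gamma_1$ and $\partial z_k/\partial\nu=0$ on $(\RR\times\{0\})\setminus\Gamma_1$ (this mixed problem is the one obtained from the slit problem by exploiting that $\phi_k$ is even in $x_2$). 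The first step is to write this characterization down precisely, matching the normalization of \cite[Lemma 2.3]{AFHL2016}.

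Next I would compute $z_k$ explicitly in the elliptic coordinates \eqref{eqChangeCoord}, mimicking Proposition \ref{propwk}. Since $\phi_k\circ F=\mathrm{Re}\big((\cosh\zeta)^k\big)$, expanding by the binomial theorem and pairing terms from the two extremities exactly as in the proof of Proposition \ref{propwk} suggests the candidate
\[
z_k\circ F(\xi,\eta)=\frac1{2^{k-1}}\sum_{j=0}^{\left\lfloor\frac{k-1}2\right\rfloor}\binom kj e^{-(k-2j)\xi}\cos\big((k-2j)\eta\big),
\]
to which, when $k$ is even, one must add the constant $2^{-k}\binom{k}{k/2}$ (it carries no gradient and simply records that for even $k$ the function $z_k$ tends to a nonzero limit at infinity rather than to $0$). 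One checks directly that this function is harmonic in $D$, has vanishing normal derivative on $\{\eta=0\}\cup\{\eta=\pi\}$ (the $F$-preimage of $(\RR\times\{0\})\setminus\Gamma_1$), has trace $\cos^k\eta$ on $\{\xi=0\}$ (the preimage of $\Gamma_1$), and has finite energy; by uniqueness it equals $z_k\circ F$.

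Then I would evaluate the energy exactly as in Corollary \ref{cormk}. By conformal invariance of the Dirichlet integral and Green's formula in $D$ — where only the segment $\{\xi=0\}$ contributes, the normal derivative vanishing on the vertical sides and everything decaying as $\xi\to+\infty$ — together with $\int_0^\pi\cos(m\eta)\cos(n\eta)\,d\eta=\tfrac\pi2\delta_{mn}$ for positive integers $m,n$, I obtain
\[
\int_{\RR^2_+}|\nabla z_k|^2\,dx=\frac\pi{2^{2k-1}}\sum_{j=0}^{\left\lfloor\frac{k-1}2\right\rfloor}(k-2j)\binom kj^{2}.
\]
Lemma \ref{lemSumBin} collapses the sum to $k\binom{k-1}{\lfloor(k-1)/2\rfloor}^2$, and reinstating the normalization constant gives the asserted value of $C_k$.

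A shortcut for the two middle steps is available: one can observe that $z_k=\phi_k-\widetilde{\psi_k+w_k}$, where $\widetilde{\,\cdot\,}$ denotes a harmonic conjugate in $\RR^2_+$ (with the free additive constant chosen to vanish on $\Gamma_1$). Indeed the right-hand side is harmonic with the correct boundary behaviour, because the blow-up profile $\psi_k+w_k$ of Theorem \ref{t:blowup2} vanishes on $s$ and has vanishing normal derivative on $\Gamma_1$, so its conjugate has vanishing normal derivative on $s$ and is constant on $\Gamma_1$; moreover the $r^k$-growth of $\psi_k$ is cancelled, leaving finite energy. The Cauchy--Riemann equations then give $|\nabla z_k|=|\nabla w_k|$ pointwise, whence $\int_{\RR^2_+}|\nabla z_k|^2=\int_{\RR^2_+}|\nabla w_k|^2=-2\mathfrak m_k$ by \eqref{eq:mathfrak-m}, and one concludes with Proposition \ref{propmk} (equivalently Corollary \ref{cormk} and Lemma \ref{lemSumBin}). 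In either approach, the only genuinely delicate bookkeeping is to pin down the precise normalization of $C_k$ in \cite{AFHL2016} and to carry the extra additive constant — equivalently the nonzero value of $z_k$ at infinity — through the even-$k$ case, observing that it is invisible to the energy.
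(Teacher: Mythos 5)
Your approach is genuinely different from the paper's and, modulo one unverified step, would reach the same conclusion. The paper's proof is purely algebraic and very short: it cites the explicit formula $C_k=\sum_{j=1}^k j|A_{j,k}|^2$ from \cite[Lemma 2.3, Eq.\ (22)]{AFHL2016}, where $A_{j,k}$ are the cosine Fourier coefficients of $\eta\mapsto\cos^k\eta$, then expands $\cos^k\eta=\frac1{2^{k-1}}\sum_{j=0}^{\lfloor(k-1)/2\rfloor}\binom kj\cos((k-2j)\eta)+c_k$ by the binomial theorem and pairing, reads off $C_k=\frac1{4^{k-1}}\sum_{j=0}^{\lfloor(k-1)/2\rfloor}(k-2j)\binom kj^2$, and concludes with Lemma~\ref{lemSumBin}. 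You instead re-derive the formula as a Dirichlet energy: interpreting $C_k$ as $\frac{2}{\pi}\int_{\RR^2_+}|\nabla z_k|^2\,dx$ for the mixed Dirichlet--Neumann harmonic extension $z_k$ of $\phi_k|_{\Gamma_1}$, you compute $z_k$ in elliptic coordinates and recover exactly the same sum. This is essentially "unwinding'' the reference's Eq.\ (22) (the Parseval computation $\int|\nabla z_k|^2=\frac\pi2\sum_{j\ge1}jA_{j,k}^2$ in elliptic coordinates is precisely what connects the two). What the paper's route buys is brevity and immunity from normalization errors; what your route buys is a self-contained conceptual derivation and the elegant harmonic-conjugate shortcut $z_k=-\widetilde{w_k}$, which makes $\int|\nabla z_k|^2=\int|\nabla w_k|^2=-2\mathfrak m_k$ and reduces this proposition to Proposition~\ref{propmk}.

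The one genuine gap is the step you flag yourself: you never pin down the precise normalization connecting the $C_k$ of \cite[Lemma 2.3]{AFHL2016} to $\int_{\RR^2_+}|\nabla z_k|^2$ (the factor $\frac2\pi$). Without that, the argument does not close; the paper avoids the issue entirely by quoting the Fourier formula verbatim. If you carry through the Parseval computation sketched above you do get the required identity, but as written the proposal leaves this to be "written down precisely,'' so strictly speaking the proof is incomplete at exactly the place where the paper has a one-line citation. A second, smaller point: the harmonic-conjugate argument needs careful sign and constant bookkeeping (the conjugate of $\psi_k+w_k$ is determined only up to an additive constant, and for even $k$ the function $z_k$ has a nonzero limit at infinity), and while you flag this, it is the kind of detail one should verify before asserting $|\nabla z_k|=|\nabla w_k|$ pointwise; a sign slip here would silently ruin the computation.
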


\begin{proof} According to Equation (22) in \cite[Lemma 2.3]{AFHL2016}, 
\begin{equation*}
	C_k=\sum_{j=1}^k j\left|A_{j,k}\right|^2,
\end{equation*}
where $A_{j,k}$ is the $j$-th cosine Fourier coefficient of the function $\eta\mapsto (\cos\eta)^k$. To be more explicit, let us expand $(\cos\eta)^k$ into a trigonometric polynomial. We write 
\begin{equation*}
	(\cos\eta)^k=\left(\frac{e^{i\eta}+e^{-i\eta}}2\right)^k=\frac1{2^k}\sum_{j=0}^k\left(\begin{array}{c}k\\ j\end{array}\right)e^{(k-2j)i\eta}.
\end{equation*}
By grouping the terms of the sum in pairs starting from opposite extremities, we find
\begin{equation*}
	(\cos\eta)^k=\frac1{2^{k-1}}\sum_{j=0}^{\left\lfloor\frac{k-1}2\right\rfloor}\left(\begin{array}{c}k\\ j \end{array}\right)\cos((k-2j)\eta)+c_k
\end{equation*}
where 
\begin{equation*}
	c_k=0 \mbox{ if } k=2p+1 \mbox{\hspace{1cm} and \hspace{1cm}} c_k=\frac1{2^k}\left(\begin{array}{c}k\\ p \end{array}\right) \mbox{ if } k=2p.
\end{equation*}
It follows that 
\begin{equation*}
	C_k=\frac1{4^{k-1}}\sum_{j=0}^{\left\lfloor\frac{k-1}2\right\rfloor}(k-2j)\left(\begin{array}{c}k\\ j \end{array}\right)^2
\end{equation*}
and we conclude using Lemma \ref{lemSumBin}.
\end{proof}

Proposition \ref{p:A3} and
  \cite[Theorem 1.16]{AFHL2016} 
provide  the explicit constant appearing in the asymptotic
expansion of Theorem \ref{t:dipoleSymEven}.

\section{Auxiliary results for eigenvalues variations} \label{a:B}

This section is dedicated to the proof of Propositions \ref{p:ConvEven} and \ref{p:AsymptNDN}. In order to make a connection to the results of \cite{AFHL2016}, which we use, let us present an alternative characterization of the eigenvalues $(\mu_j)_{j\ge1}$ and $(\mu_j(\eps))_{j\ge1}$. We define 
\begin{equation*}
  \widehat{\mathcal Q}^s_\eps:=
\left\{u\in H^1_0(\widehat\Omega\setminus\Gamma_\eps)\,:\,u\circ\sigma=u\right\},
\end{equation*}
and we denote by $\widehat q^s_\eps$ the restriction of $\widehat q_0$
(see the paragraph preceding Theorem \ref{thmNonZero} for the
notation) to $\widehat{\mathcal Q}_\eps$. One can then check that we
obtain the eigenvalues $(\mu_j(\eps))_{j\ge1}$ from
$\widehat q^s_\eps$ by the min-max principle. In the same way, we
define
\begin{equation*}
	\widehat{\mathcal Q}^s:=
\left\{u\in H^1_0(\widehat\Omega)\,:\,u\circ\sigma=u\right\},
\end{equation*}
we denote by $\widehat q^s$ the restriction of the quadratic form
$\widehat q_0$, and one can check that we obtain the eigenvalues
$(\mu_j)_{j\ge1}$ from $\widehat q^s$ by the min-max principle. Let us
note that $-\Delta^s$, defined in Remark \ref{remDecompLap} as a
self-adjoint operator in $\mbox{ker}\left(I-\Sigma\right)$, is the
Friedrichs extension of $\widehat q^s$. We denote by $-\Delta^s_\eps$
the Friedrichs extension of $\widehat q^s_\eps$, which is also a
self-adjoint operator in $\mbox{ker}\left(I-\Sigma\right)$.  

Let us first prove Proposition \ref{p:ConvEven}. Since
$\mu_N(\eps)\ge\mu_N$ for all $\eps \in(0,\eps_0]$ and since
$\eps\mapsto\mu_N(\eps)$ is non-decreasing, we have existence of
$\mu_N^*:=\lim_{\eps\to0^+}\mu_N(\eps)$, with $\mu_N^*\ge\mu_N$. It
only remains to show that $\mu_N^*\le\mu_N$. In order to do this, let
us note that the space
\begin{equation*}
	\mathcal D^s:=\left\{u\in C^\infty_c(\widehat\Omega\setminus\{0\})\,:\, u=u\circ\sigma\ \right\}
\end{equation*}
is dense in $\mbox{ker}\left(I-\Sigma\right)$. Indeed, the space $C^\infty_c(\widehat\Omega\setminus\{0\})$
  is dense in $L^2(\widehat\Omega)$, since $\{0\}$ has measure
  $0$. Therefore, if we fix
$u\in \mbox{ker}\left(I-\Sigma\right)$, there exists a
sequence $(\varphi_n)_{n\ge1}$ of elements of
$C^\infty_c(\widehat\Omega\setminus\{0\})$ converging to $u$ in
$L^2(\widehat\Omega)$. We now set
$\widetilde \varphi_n:=1/2(\varphi_n+\varphi_n\circ\sigma)$. We have
$\widetilde\varphi_n\in\mathcal D^s$ for every integer $n\ge1$. Since
$u=1/2(u+u\circ\sigma)$, we have the inequality
\begin{equation*}
  \|\widetilde\varphi_n-u\|_{L^2(\widehat\Omega)}\le
  \frac12\|\varphi_n-u\|_{L^2(\widehat\Omega)}+
  \frac12\|\varphi_n\circ\sigma-u\circ\sigma\|_{L^2(\widehat\Omega)}
=\|\varphi_n-u\|_{L^2(\widehat\Omega)},
\end{equation*}
and this implies that the  sequence $(\widetilde\varphi_n)_{n\ge1}$ converges to $u$ in $\mbox{ker}\left(I-\Sigma\right)$.

According to the min-max characterization of eigenvalues and the previous density result, 
\begin{equation*}
	\mu_N=\inf_{\substack{
\mathcal E\subset \mathcal D^s\text{ subspace}\\
\mbox{dim}(\mathcal E)=N}}\max_{u\in \mathcal E} \frac{\widehat q_0(u)}{\|u\|^2}.
\end{equation*}
Let us now fix $\delta>0$ and an $N$-dimensional
  subspace $\mathcal E_\delta\subset \mathcal D^s$ such that
\begin{equation*}
	\max_{u\in \mathcal E_\delta} \frac{\widehat q_0(u)}{\|u\|^2}\le \mu_N+\delta.
\end{equation*}
There exists $\eps_1>0$ such that $\mathcal E_\delta\subset \widehat{\mathcal Q}_\eps^s$ for every $\eps\in(0,\eps_1]$. This implies that, for every $\eps\in(0,\eps_1]$,
\begin{equation*}
	\mu_N(\eps)=\min_{\substack{
\mathcal E\subset \widehat{\mathcal Q}_\eps^s\text{ subspace}\\
\mbox{dim}(\mathcal E)=N}}\max_{u\in \mathcal E} \frac{\widehat
q_\eps^s(u)}{\|u\|^2}
\le\max_{u\in \mathcal E_\delta} \frac{\widehat q_0(u)}{\|u\|^2}\le \mu_N+\delta.
\end{equation*}
Passing to the limit, we obtain first $\mu_N^*\le \mu_N+\delta$, and
then $\mu_N^*\le\mu_N$, concluding the proof.

\medskip
Let us finally prove Proposition \ref{p:AsymptNDN}. We recall that, as
a corollary of Theorem 1.10 in \cite{AFHL2016},
taking into account Proposition \ref{p:A3} we have the following result.
\begin{prop}\label{p:AFHL}
Let $\widehat\lambda_N$ be a simple eigenvalue of $-\widehat\Delta$ and $u_N$ an associated eigenfunction normalized in
$L^2(\widehat\Omega)$. Let us assume that $u_N\in
\widehat{\mathcal Q}^s$.
For $\eps>0$ small, we denote as $\widehat
  \lambda_N(\eps)$ the $N$-th eigenvalue of the Dirichlet Laplacian in
$\widehat\Omega\setminus \Gamma_\eps$.
 If $u_N(0)\neq0$, then
\begin{equation*}
	\widehat \lambda_N(\eps)=\widehat \lambda_N+\frac{2\pi}{|\log(\eps)|}u_N(0)^2+o\left(\frac1{|\log(\eps)|}\right) \quad \text{as $\eps\to0^+$}.
\end{equation*}
If  
\begin{equation*}
	r^{-k} u_N(r\cos t,r\sin t)\to \widehat\beta \cos\left(k t\right) \mbox{ in } C^{1,\tau}\left([0,\pi],\RR\right)
\end{equation*}
as $r\to 0^+$ for all $\tau\in (0,1)$, with $k\in \NN^*$ and $\widehat\beta\in \RR\setminus \{0\}$, then
\begin{equation*}
  \widehat \lambda_N(\eps)=\widehat
  \lambda_N+\frac{k\pi{\widehat\beta}^2}{4^{k-1}}\left(\begin{array}{c}
                                                         k-1\\
                                                         \left\lfloor\frac{k-1}2\right\rfloor\end{array}\right)^2\eps^{2k}+o\left(\eps^{2k}\right)
\quad \text{as $\eps\to0^+$}.
\end{equation*}
\end{prop}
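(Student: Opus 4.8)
The proof is essentially immediate: it consists in invoking \cite[Theorem 1.10]{AFHL2016} and substituting into it the explicit value of the constant $C_k$ computed in Proposition \ref{p:A3}. Let me first set the stage. The domain $\widehat\Omega\setminus\Gamma_\eps$ is a planar domain from which the small straight segment $\Gamma_\eps=[-\eps,\eps]\times\{0\}$ through the interior point $0$ has been removed, which is precisely the framework of \cite{AFHL2016}. Since $H^1_0(\widehat\Omega\setminus\Gamma_\eps)\subset H^1_0(\widehat\Omega)$, the min-max principle gives $\widehat\lambda_N(\eps)\ge\widehat\lambda_N$, and the continuity of the Dirichlet spectrum under removal of a vanishing segment shows that, for $\eps$ small, $\widehat\lambda_N(\eps)$ is simple and tends to $\widehat\lambda_N$; thus the simple-eigenvalue version of \cite[Theorem 1.10]{AFHL2016} applies, with $u_N$ as the (unperturbed) reference eigenfunction.

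Next I would recall the content of that theorem: it reduces the eigenvalue variation to the $u_N$-capacity of $\Gamma_\eps$ relative to $\widehat\Omega$, namely
\[
\widehat\lambda_N(\eps)=\widehat\lambda_N+\operatorname{Cap}_{\widehat\Omega}(\Gamma_\eps,u_N)+o\big(\operatorname{Cap}_{\widehat\Omega}(\Gamma_\eps,u_N)\big)\qquad\text{as }\eps\to0^+.
\]
The problem is then reduced to the asymptotics of this capacity, supplied by the capacity estimates of \cite{AFHL2016} (in particular \cite[Lemma 2.3]{AFHL2016} for the vanishing case): if $u_N(0)\neq0$ then $\operatorname{Cap}_{\widehat\Omega}(\Gamma_\eps,u_N)=\tfrac{2\pi\,u_N(0)^2}{|\log\eps|}+o\big(\tfrac1{|\log\eps|}\big)$, whereas if $u_N$ vanishes at $0$ to order $k$ with $r^{-k}u_N(r\cos t,r\sin t)\to\widehat\beta\cos(kt)$ then $\operatorname{Cap}_{\widehat\Omega}(\Gamma_\eps,u_N)=C_k\,\widehat\beta^2\,\eps^{2k}+o(\eps^{2k})$, where $C_k=\sum_{j=1}^k j\,|A_{j,k}|^2$ and $A_{j,k}$ is the $j$-th cosine Fourier coefficient of $\eta\mapsto(\cos\eta)^k$. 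This is exactly the point where the hypothesis $u_N\in\widehat{\mathcal Q}^s$ is used: symmetry of $u_N$ with respect to the $x_1$-axis forces the homogeneous leading term of $u_N$ at $0$ to be a multiple of $r^k\cos(kt)$, the profile adapted to the slit $\Gamma_\eps$ lying along $\{x_2=0\}$, which is why $(\cos\eta)^k$ appears in $C_k$.

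It then remains to insert the closed form $C_k=\dfrac{k}{4^{k-1}}\dbinom{k-1}{\lfloor(k-1)/2\rfloor}^{2}$ from Proposition \ref{p:A3} into the second capacity asymptotics; the claimed $\eps^{2k}$ expansion follows, while the logarithmic case is already in final form. The two genuinely substantial ingredients --- the capacity-driven expansion of \cite{AFHL2016} and the evaluation of $C_k$, which ultimately rests on the binomial identity of Lemma \ref{lemSumBin} --- are already available, so the only real work here is bookkeeping: checking that the hypotheses of \cite{AFHL2016} apply verbatim to $\widehat\Omega\setminus\Gamma_\eps$ and reconciling the normalization conventions for $u_N$, for the angular profile $\widehat\beta\cos(kt)$, and for the capacity functional. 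I expect that reconciliation of conventions to be the main (if minor) obstacle, since an off-by-a-constant slip there would propagate into all the applications of Section \ref{s:AB}. Alternatively, one could reprove the statement from scratch in the spirit of Appendix \ref{sec:altern-proof-theor}, via an Almgren-type monotonicity formula and a blow-up analysis near $\Gamma_\eps$, but routing through \cite{AFHL2016} is considerably shorter.
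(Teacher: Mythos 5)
Your proposal is correct and matches the paper's own argument: the paper introduces Proposition~\ref{p:AFHL} with exactly the one-line justification that it is a corollary of \cite[Theorem 1.10]{AFHL2016} once the constant $C_k$ is replaced by the closed form from Proposition~\ref{p:A3}. Your additional remarks — that Theorem~1.10 is itself capacity-driven via \cite[Theorem 1.4 and Lemma 2.3]{AFHL2016}, and that the symmetry hypothesis $u_N\in\widehat{\mathcal Q}^s$ is what forces the cosine profile $\widehat\beta\cos(kt)$ at $0$ — are accurate and just unpack the same route.
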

Let us note that if the hypotheses of Proposition \ref{p:AFHL} are
satisfied, $\widehat\lambda_N$ is a simple eigenvalue of
$-\Delta^s$ and $u_N$ an associated eigenfunction. But the
converse is not true. Indeed, we have seen in Section
  \ref{subs:Disk}, in the case of $\widehat\lambda_3$ for the unit
  disk that $\widehat\lambda_N$ can be simple for $-\Delta^s$
  without being simple for $-\widehat\Delta$. Proposition
\ref{p:AFHL} is therefore weaker than Proposition
\ref{p:AsymptNDN}. However, the proof of Theorem 1.10 in
\cite{AFHL2016} can be adapted to prove Proposition
\ref{p:AsymptNDN}. Let us sketch the changes to be made.  The
proof in \cite{AFHL2016} mainly relies on Theorem 1.4
of \cite{AFHL2016}, and uses
the $u$-capacity and the associated potential defined in \cite[Equations
(6), (7), and (8)]{AFHL2016}. The following Lemma gives an alternative expression
when both $u$ and the compact set $K$ are symmetric; 
  it follows easily from Steiner symmetrization arguments.
\begin{lem} \label{l:SymCap} If $u\in\widehat Q^s$ and $K\subset
  \widehat \Omega$ is a compact set such that $\sigma(K)=K$, then
\begin{equation*}
	\mbox{Cap}_{\widehat\Omega}(K,u)=\min\left\{\widehat q^s(V)\,:\,V\in \widehat{\mathcal Q}^s\mbox{ and }u-V\in H^1_0(\widehat\Omega\setminus K)\right\}
\end{equation*}
and  the potential $V_{K,u}$ attaining the above minimum belongs to $\widehat{\mathcal Q}^s$.
\end{lem}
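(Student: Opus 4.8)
The plan is to use the fact that the minimization problem defining the $u$-capacity becomes invariant under the reflection $\sigma$ as soon as $\widehat\Omega$, $K$ and $u$ are all $\sigma$-invariant, combined with the orthogonal decomposition of $H^1_0(\widehat\Omega)$ into $\sigma$-symmetric and $\sigma$-antisymmetric functions. Recall that $\mbox{Cap}_{\widehat\Omega}(K,u)$ is the minimum of the Dirichlet energy $\widehat q_0(V)$ among all $V\in H^1_0(\widehat\Omega)$ with $u-V\in H^1_0(\widehat\Omega\setminus K)$, attained at a unique $u$-capacitary potential $V_{K,u}$; the statement asserts that under the symmetry hypotheses the competitors may be restricted to $\widehat{\mathcal Q}^s$ and that $V_{K,u}$ itself lies in $\widehat{\mathcal Q}^s$.

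First I would record that $V\mapsto V\circ\sigma$ is a linear isometry of $H^1_0(\widehat\Omega)$, since $\sigma$ is an orthogonal involution of $\RR^2$ preserving $\widehat\Omega$, and that it maps $H^1_0(\widehat\Omega\setminus K)$ onto itself, because $\sigma(\widehat\Omega\setminus K)=\widehat\Omega\setminus K$ by the symmetry of $\widehat\Omega$ and $K$. Hence, if $V$ is an admissible competitor, i.e. $u-V\in H^1_0(\widehat\Omega\setminus K)$, then $(u-V)\circ\sigma=u-V\circ\sigma$ (using $u\circ\sigma=u$) again belongs to $H^1_0(\widehat\Omega\setminus K)$, so the symmetrized function $V_s:=\tfrac12(V+V\circ\sigma)\in\widehat{\mathcal Q}^s$ is again an admissible competitor, as $u-V_s=\tfrac12(u-V)+\tfrac12(u-V\circ\sigma)$.

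Next I would split $V=V_s+V_a$ with $V_a:=\tfrac12(V-V\circ\sigma)$ and observe that, because the domain of integration is $\sigma$-invariant and the integrand of $\int_{\widehat\Omega}\nabla V_s\cdot\nabla V_a\,dx$ is antisymmetric under $\sigma$, this cross term vanishes; therefore $\widehat q_0(V)=\widehat q_0(V_s)+\widehat q_0(V_a)\ge\widehat q_0(V_s)=\widehat q^s(V_s)$. Taking the infimum over all admissible $V$, and noting conversely that every $V\in\widehat{\mathcal Q}^s$ with $u-V\in H^1_0(\widehat\Omega\setminus K)$ is admissible for $\mbox{Cap}_{\widehat\Omega}(K,u)$, one gets the claimed identity, with the right-hand side a genuine minimum. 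Applying the inequality $\widehat q_0(V_s)\le\widehat q_0(V)$ to $V=V_{K,u}$: since $(V_{K,u})_s$ is admissible and $\widehat q_0((V_{K,u})_s)\le\widehat q_0(V_{K,u})=\mbox{Cap}_{\widehat\Omega}(K,u)$, minimality of $V_{K,u}$ forces $\widehat q_0((V_{K,u})_a)=0$; as $(V_{K,u})_a\in H^1_0(\widehat\Omega)$ with $\widehat\Omega$ bounded, this gives $(V_{K,u})_a=0$, i.e. $V_{K,u}\in\widehat{\mathcal Q}^s$, and $V_{K,u}$ attains the minimum on the right-hand side.

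I do not expect a genuine obstacle here: the only points needing care are the routine functional-analytic verifications that $V\mapsto V\circ\sigma$ preserves $H^1_0(\widehat\Omega)$ and $H^1_0(\widehat\Omega\setminus K)$, and that a function in $H^1_0$ of a bounded open set with vanishing gradient must itself vanish. (As an alternative to the last step, one may simply invoke uniqueness of the $u$-capacitary potential: since the minimization problem is $\sigma$-invariant, $V_{K,u}\circ\sigma$ is also a minimizer, hence equals $V_{K,u}$.) This is exactly why the lemma "follows easily from Steiner symmetrization arguments", the symmetrization here being the elementary averaging projection onto the $\sigma$-symmetric subspace.
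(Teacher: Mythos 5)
Your argument is correct and is precisely the reflection-symmetrization argument the paper alludes to when it says the lemma "follows easily from Steiner symmetrization arguments": you observe that $V\mapsto V\circ\sigma$ is an isometry preserving the admissible class, decompose $V=V_s+V_a$ with the Dirichlet cross term vanishing by parity, and conclude both the equality of minima and the symmetry of $V_{K,u}$ (the uniqueness-based alternative you mention is equally fine). The paper supplies no further detail, and your write-up fills in exactly the verifications one would expect.
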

Our proof of Proposition \ref{p:AsymptNDN} relies on the following analog to \cite[Theorem 1.4]{AFHL2016}.
\begin{prop}\label{p:AsymCap}	Let $\mu_L$ be a simple eigenvalue of
  $-\Delta^s$ and $u_L$ an associated eigenfunction, normalized in $L^2(\widehat\Omega)$. Then 
\begin{equation*}
	\mu_L(\eps)=\mu_L+\mbox{Cap}_{\widehat\Omega}(\Gamma_\eps,u_L)+o\left(\mbox{Cap}_{\widehat\Omega}(\Gamma_\eps,u_L)\right) \quad \text{as $\eps\to0^+$}.
\end{equation*}
\end{prop}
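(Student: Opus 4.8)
The statement is the exact symmetric counterpart of \cite[Theorem 1.4]{AFHL2016}, which describes the shift of a simple eigenvalue of the Dirichlet Laplacian on $\widehat\Omega$ when the vanishing slit $\Gamma_\eps$ is removed, in terms of the $u$-capacity of $\Gamma_\eps$. The plan is therefore to reproduce that proof word for word, working throughout in the real Hilbert space $\mathrm{ker}(I-\Sigma)$ and systematically replacing the spaces $H^1_0(\widehat\Omega)$ and $H^1_0(\widehat\Omega\setminus\Gamma_\eps)$ used there by the symmetric form domains $\widehat{\mathcal Q}^s$ and $\widehat{\mathcal Q}^s_\eps$ of this appendix, and the $u$-capacity of \cite{AFHL2016} by the one furnished by Lemma \ref{l:SymCap}. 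Every ingredient that proof invokes has a symmetric counterpart already at hand: the form domains are nested, $\widehat{\mathcal Q}^s_\eps\subset\widehat{\mathcal Q}^s$, with compact embeddings in $\mathrm{ker}(I-\Sigma)$, so that $-\Delta^s$ and $-\Delta^s_\eps$ have discrete spectra obtained by min-max; one has the monotonicity $\mu_L(\eps)\ge\mu_L$ and the convergence $\mu_L(\eps)\to\mu_L$ as $\eps\to0^+$ (Proposition \ref{p:ConvEven}); $\mu_L$ is simple for $-\Delta^s$, which gives the spectral gaps $\mu_{L-1}<\mu_L$ and $\mu_L<\mu_{L+1}$; and, most importantly, Lemma \ref{l:SymCap} guarantees that the $u_L$-capacitary potential $V_\eps:=V_{\Gamma_\eps,u_L}$ of $\Gamma_\eps$ belongs to $\widehat{\mathcal Q}^s$ and that $c_\eps:=\mathrm{Cap}_{\widehat\Omega}(\Gamma_\eps,u_L)$ may be computed by minimizing $\widehat q^s$ over the symmetric class, so that $V_\eps$ is an admissible competitor in the symmetric variational problems.

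The first step is to record the properties of $V_\eps$. By Lemma \ref{l:SymCap}, $u_L-V_\eps\in H^1_0(\widehat\Omega\setminus\Gamma_\eps)\cap\widehat{\mathcal Q}^s=\widehat{\mathcal Q}^s_\eps$, and the Euler--Lagrange equation $\int_{\widehat\Omega}\nabla V_\eps\cdot\nabla\varphi\,dx=0$ for all $\varphi\in\widehat{\mathcal Q}^s_\eps$ yields the usual identities $\widehat q^s(V_\eps)=\int_{\widehat\Omega}\nabla V_\eps\cdot\nabla u_L\,dx=c_\eps$. Exactly as in \cite{AFHL2016} one checks that $c_\eps\to0$ and that $V_\eps$ concentrates near $0$ as $\eps\to0^+$; in particular $\|V_\eps\|_{L^2(\widehat\Omega)}^2=o(c_\eps)$, which follows by noting that $V_\eps/\sqrt{c_\eps}$ is bounded in $\widehat{\mathcal Q}^s$ and harmonic off $\Gamma_\eps$, so that any weak $H^1$-limit of a subsequence is harmonic in $\widehat\Omega\setminus\{0\}$ and lies in $H^1_0(\widehat\Omega)$, hence vanishes because points have null capacity in dimension $2$. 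I would also note, for later use, that $\big(\int_{\widehat\Omega}V_\eps\,u_j\,dx\big)^2\le c_\eps\,\mathrm{Cap}_{\widehat\Omega}(\Gamma_\eps,u_j)/\mu_j^2$ for $j<L$ — obtained by testing the Euler--Lagrange equation against $u_j-V_{\Gamma_\eps,u_j}\in\widehat{\mathcal Q}^s_\eps$ followed by Cauchy--Schwarz — so that these quantities are $o(c_\eps)$.

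For the upper bound I would feed the $L$-dimensional trial space $\mathrm{span}\{u_1^\eps,\dots,u_{L-1}^\eps,\,u_L-V_\eps\}\subset\widehat{\mathcal Q}^s_\eps$, with $u_j^\eps$ the $L^2$-normalized eigenfunctions of $-\Delta^s_\eps$, into the min-max principle for $-\Delta^s_\eps$: one computes $\widehat q^s(u_L-V_\eps)/\|u_L-V_\eps\|^2=(\mu_L-c_\eps)/(1-2c_\eps/\mu_L+o(c_\eps))=\mu_L+c_\eps+o(c_\eps)$, controls the interaction with $u_1^\eps,\dots,u_{L-1}^\eps$ using the concentration of $V_\eps$ and the spectral gap, and obtains $\mu_L(\eps)\le\mu_L+c_\eps+o(c_\eps)$. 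For the lower bound I would take an $L^2$-normalized eigenfunction $u_L^\eps$ of $-\Delta^s_\eps$ with $\int_{\widehat\Omega}u_L^\eps u_L\,dx\ge0$, write $u_L^\eps=a\,u_L+\psi^\eps$ with $\psi^\eps\perp u_L$ and $a\to1$, so that $\mu_L(\eps)-\mu_L=\widehat q^s(\psi^\eps)-\mu_L\|\psi^\eps\|^2$; since $-\psi^\eps\in\widehat{\mathcal Q}^s$ and $a\,u_L-(-\psi^\eps)=u_L^\eps\in H^1_0(\widehat\Omega\setminus\Gamma_\eps)$, the function $-\psi^\eps$ is admissible in the problem defining $\mathrm{Cap}_{\widehat\Omega}(\Gamma_\eps,a\,u_L)=a^2c_\eps$, and splitting $-\psi^\eps=a\,V_\eps+\theta^\eps$ along the $\widehat q^s$-orthogonal decomposition of that problem one estimates $\widehat q^s(\psi^\eps)-\mu_L\|\psi^\eps\|^2\ge c_\eps+o(c_\eps)$, absorbing the error terms by means of the concentration estimate and the orthogonality bounds of the previous step. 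This is precisely the argument of \cite[Theorem 1.4]{AFHL2016}, transcribed to the symmetric setting, and the two bounds together give the claim.

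The part I expect to be the main obstacle is, as in \cite{AFHL2016}, the sharp bookkeeping of the error terms: every interaction between $V_\eps$ (respectively $\psi^\eps$, $\theta^\eps$) and the eigenspaces of $-\Delta^s$ below $\mu_L$ must be shown to be $o(c_\eps)$ rather than the $O(\sqrt{c_\eps})$ that a crude Cauchy--Schwarz bound would give, and it is exactly here that the concentration estimate $\|V_\eps\|_{L^2}^2=o(c_\eps)$, the identities of the second step, and the spectral gap $\mu_L<\mu_{L+1}$ are used. Since these computations are carried out in full in \cite{AFHL2016} for the non-symmetric slit problem, the only genuinely new verification is that symmetry is preserved at every step; this is immediate once one knows, by Lemma \ref{l:SymCap}, that each capacitary potential $V_{\Gamma_\eps,u_j}$ is symmetric, so that the whole argument stays inside $\mathrm{ker}(I-\Sigma)$ and reduces to the one for the Dirichlet Laplacian on $\widehat\Omega$ with a vanishing slit, with $H^1_0$ replaced by $\widehat{\mathcal Q}^s$.
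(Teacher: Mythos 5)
Your proposal is correct and follows the paper's own route: invoke Lemma~\ref{l:SymCap} to show that the $u_L$-capacitary potential and the $u_L$-capacity of $\Gamma_\eps$ can be computed entirely within $\widehat{\mathcal Q}^s$ (so that $u_L-V_{\Gamma_\eps,u_L}$ is the $\widehat q^s$-orthogonal projection of $u_L$ onto $\widehat{\mathcal Q}^s_\eps$ and $\mathrm{Cap}_{\widehat\Omega}(\Gamma_\eps,u_L)$ the squared distance), and then repeat the proof of \cite[Theorem 1.4]{AFHL2016} step by step with $L^2(\widehat\Omega)$, $H^1_0(\widehat\Omega)$, $H^1_0(\widehat\Omega\setminus\Gamma_\eps)$ replaced by $\ker(I-\Sigma)$, $\widehat{\mathcal Q}^s$, $\widehat{\mathcal Q}^s_\eps$. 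The paper states this transcription very tersely while you unpack the main estimates of \cite{AFHL2016} (concentration of $V_\eps$, the min-max upper bound via $\operatorname{span}\{u_1^\eps,\dots,u_{L-1}^\eps,u_L-V_\eps\}$, and the lower bound via the decomposition $u_L^\eps=a\,u_L+\psi^\eps$), but the argument is the same.
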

In order to prove Proposition \ref{p:AsymCap}, we note that
 Lemma
\ref{l:SymCap} implies in particular that $u_L-V_{\Gamma_\eps,u_L}$ is the
orthogonal projection of $u_L$ on
$H^1_0(\widehat\Omega\setminus \Gamma_\eps)\cap \widehat{\mathcal Q}^s$ and
$\mbox{Cap}_{\widehat \Omega}(\Gamma_\eps,u_L)$ the square of the distance of $u_L$
from $H^1_0(\widehat\Omega\setminus \Gamma_\eps)\cap \widehat{\mathcal Q}^s$,
both defined with respect to the scalar product induced by
$\widehat q^s$ on $\widehat{\mathcal Q}^s$. We also note that we can
use the estimates of $V_{\Gamma_\eps,u_L}$ given in Lemma A.1 and Corollary A.2 of
\cite{AFHL2016}. We can therefore repeat step by step the proof of
Theorem 1.4 in Appendix A of \cite{AFHL2016}, replacing 
  $L^2(\widehat\Omega)$ by $\mbox{ker}\left(I-\Sigma\right)$,
$H^1_0(\widehat \Omega)$ with $\widehat{\mathcal Q}^s$,
$H^1_0(\widehat\Omega\setminus \Gamma_\eps)$ by
$H^1_0(\widehat\Omega\setminus \Gamma_\eps)\cap \widehat{\mathcal Q}^s$,
$\widehat q$ and $\widehat q_\eps$ by $\widehat q^s$ and
$\widehat q_\eps^s$,  $-\widehat\Delta$ and
  $-\widehat\Delta_\eps$ by $-\Delta^s$ and $-\Delta_\eps^s$,
$\widehat\lambda_N$ by $\mu_L$ and $u_N\in H^1_0(\widehat \Omega)$ by
$u_L\in \widehat{\mathcal Q}^s$. We obtain Proposition
\ref{p:AsymCap}.  The estimates of $\mbox{Cap}_{\widehat\Omega}(\Gamma_\eps,u)$
proved in \cite[Section 2]{AFHL2016} then give us Proposition
\ref{p:AsymptNDN}.

\section{Alternative proof of Theorem \ref{t:gad}}\label{sec:altern-proof-theor}

We find useful to show an alternative proof of Theorem \ref{t:gad}. 
This proof is based on 
sharp estimates from above and below of the 
Rayleigh quotients for the eigenvalues $\lambda_N$ and
$\lambda_N(\eps)$. Such estimates require energy bounds  on
eigenfunctions 
obtained by an Almgren type
monotonicity argument and blow-up analysis for scaled
eigenfunctions. We mention that  such a strategy was first developed
in  \cite{abatangelo2015sharp,AbatangeloFelli2016SIAM,AbatangeloFelliNorisNys2016,NNT} for eigenvalues of Aharonov--Bohm
operators with a moving pole.
On the other hand,
 the
implementation of this procedure for our problem requires a
quite different technique with respect to the 
case of  Aharonov--Bohm operators with a single pole,
when estimating a singular term appearing in the
derivate of the Almgren frequency function (i.e. the term
\eqref{eq:Meps}). Indeed, in the single pole case estimates can be
derived by 
rewriting the problem as a Laplace equation on the twofold covering, whereas
in this case the singular term
\eqref{eq:Meps} turns out to have a negative sign and this is enough
to proceed with the monotonicity argument (see Subsection \ref{s:monotonicity}).

In this argument, an important step is a blow-up result for scaled eigenfunctions.
%
%

In what follows, we aim at pointing out the main steps of the proof, together with a more deepened analysis at the crucial points. 
We list below some notation used throughout
this appendix.\par
\begin{itemize}
\item[-] For  $r>0$ and $a\in\RR^2$, $D_r(a)=\{x\in\RR^2:|x-a|<r\}$
  denotes the disk of center $a$ and radius $r$. We also denote the
  corresponding upper half-disk as
  $D_r^+(a)=\{(x_1,x_2)\in D_r(a):x_2>0\}$.
\item[-] For all $r>0$, 
$D_r=D_r(0)$ is the disk of center $0$ and
radius $r$; $D_r^+=\{(x_1,x_2)\in D_r:x_2>0\}$ denotes the
  corresponding upper half-disk.
\item[-] For  $r>0$ and $a\in\RR^2$,
  $S_r^+(a)=\{(x_1,x_2)\in\partial D_r(a):x_2>0\}$ denotes the upper
  half-circle of center $a$ and radius $r$. We also denote $S_r^+:=S_r^+(0)$.
\end{itemize}

\subsection{Limit profile}

This section contains a variational construction of the limit
profile which will be used to describe the limit of the  blow-up sequence.

Let us consider  the functional $J_k: \mathcal Q\to\RR$ (see Subsection \ref{subsec:relres} for the definition of $\mathcal Q$)
\begin{equation}\label{eq:Jk}
J_k(u) = \frac12 \int_{\RR^2_+} |\nabla u(x)|^2 \,dx-
 \int_{-1}^1 u(x_1,0)\frac{\partial \psi_k}{\partial x_2}(x_1,0)\,dx_1,
\end{equation}
with $\psi_k$ defined in \eqref{eq:psi_k}. We observe that
$\frac{\partial \psi_k}{\partial x_2}(x_1,0) = k {x_1}^{k-1}$ and
$J_k$ is well-defined on $\mathcal Q$.

\begin{lem}\label{l:wk}
For all $k\in\NN$, $k\geq1$, 
let $w_k\in \mathcal Q$ be the unique weak solution to  \eqref{eq:wk}
and let $\mathfrak m_k=-\frac12\int_{\RR^2_+}\left|\nabla
  w_k\right|^2\,dx$ be as in \eqref{eq:mathfrak-m}. Then
\begin{equation}\label{eq:Ik}
\mathfrak m_k=\min_{u\in\mathcal Q}J_k(u)=J_k(w_k)<0.
\end{equation}
 Furthermore, $w_k(x)=O\big(\tfrac{1}{|x|}\big)$ 
as $|x|\to+\infty$. 
\end{lem}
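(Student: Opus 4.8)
The statement of Lemma~\ref{l:wk} has two parts: first, the variational characterization $\mathfrak m_k=\min_{u\in\mathcal Q}J_k(u)=J_k(w_k)<0$; second, the decay estimate $w_k(x)=O(1/|x|)$ as $|x|\to+\infty$. I would treat these separately.

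For the variational characterization, the strategy is the classical one for quadratic functionals with a linear perturbation. The functional $J_k$ is strictly convex, coercive on $\mathcal Q$ (coercivity follows from the fact that $\|\nabla u\|_{L^2(\RR^2_+)}$ is the norm on $\mathcal Q$, together with the continuity of the linear functional $u\mapsto\int_{-1}^1 u(x_1,0)\,k x_1^{k-1}\,dx_1$ on $\mathcal Q$ — this last continuity comes from the trace embedding of $\mathcal Q$ into $L^2$ of the segment, which in turn follows from the Hardy inequalities \eqref{eq:1}--\eqref{eq:2} combined with a standard trace inequality on bounded pieces of the boundary), and weakly lower semicontinuous. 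Hence it admits a unique minimizer $u_*\in\mathcal Q$. Writing the Euler--Lagrange equation, $u_*$ is exactly the $\mathcal Q$-weak solution of \eqref{eq:wk}, so $u_*=w_k$ by the uniqueness asserted in Theorem~\ref{t:gad}. To compute the minimum value, I test the weak formulation $\int_{\RR^2_+}\nabla w_k\cdot\nabla\varphi\,dx=\int_{-1}^1\varphi(x_1,0)\,k x_1^{k-1}\,dx_1$ with $\varphi=w_k$ itself; this gives $\int_{\RR^2_+}|\nabla w_k|^2\,dx=\int_{-1}^1 w_k(x_1,0)\,k x_1^{k-1}\,dx_1$, and substituting into \eqref{eq:Jk} yields $J_k(w_k)=-\tfrac12\int_{\RR^2_+}|\nabla w_k|^2\,dx=\mathfrak m_k$. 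The strict inequality $\mathfrak m_k<0$ follows because $w_k\not\equiv 0$: if $w_k$ were identically zero, then the linear functional $\varphi\mapsto\int_{-1}^1\varphi(x_1,0)\,k x_1^{k-1}\,dx_1$ would vanish on all of $\mathcal Q$, which is false (e.g. test with a smooth bump supported near an interior point of $\Gamma_1$ where $x_1^{k-1}$ does not change sign).

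For the decay estimate, I would use the explicit formula for $w_k$ now available from Proposition~\ref{propwk}: $w_k\circ F=W_k$, where $W_k$ is given by \eqref{eq:W_k} and $F$ is the elliptic-coordinate map \eqref{eq:44}. In elliptic coordinates, as $|x|\to+\infty$ one has $\xi\to+\infty$, and more precisely $|x|\sim\tfrac12 e^{\xi}$. Each term of $W_k$ carries a factor $e^{-(k-2j)\xi}$ with $k-2j\ge 1$ over the range $0\le j\le\lfloor(k-1)/2\rfloor$, so $|W_k(\xi,\eta)|\le C e^{-\xi}$ uniformly in $\eta$. Translating back through $F^{-1}$ gives $|w_k(x)|\le C e^{-\xi}\le C'/|x|$ for $|x|$ large, which is the claim. (Alternatively, without the explicit formula one could argue by a Kelvin-transform / barrier argument: $w_k$ is harmonic in $\RR^2_+$, vanishes on $s$ and has finite Dirichlet energy, hence is bounded near infinity by $C/|x|$; but the elliptic-coordinate route is cleaner and is the one consistent with the paper's method.)

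\textbf{Main obstacle.} The genuinely delicate point is the continuity of the linear form $u\mapsto\int_{-1}^1 u(x_1,0)\,k x_1^{k-1}\,dx_1$ on $\mathcal Q$ — equivalently, controlling the trace of a $\mathcal Q$-function on the bounded segment $\Gamma_1=[-1,1]\times\{0\}$ by the gradient norm alone, given that $\mathcal Q$ is defined as a completion with respect to $\|\nabla\cdot\|_{L^2(\RR^2_+)}$ on an unbounded domain. This is where the Hardy inequalities \eqref{eq:1}--\eqref{eq:2} and the concrete description of $\mathcal Q$ recalled just before Theorem~\ref{t:gad} are essential: they show $\mathcal Q\hookrightarrow H^1_{\mathrm{loc}}(\overline{\RR^2_+})$ in a quantitative way near $\Gamma_1$, after which the usual trace inequality on a bounded Lipschitz piece applies. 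Once this continuity (and hence the coercivity and weak lower semicontinuity of $J_k$) is in hand, the rest of the argument is routine convex minimization plus the explicit computation from Proposition~\ref{propwk}.
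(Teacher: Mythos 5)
Your proof of the variational characterization is correct and matches the paper's intended one-line approach (``standard minimization methods, Hardy Inequality''): coercivity and weak lower semicontinuity of $J_k$ on $\mathcal Q$ reduce to continuity of the linear form $u\mapsto\int_{-1}^1 u(x_1,0)\,k x_1^{k-1}\,dx_1$, which is indeed where the Hardy inequalities \eqref{eq:1}--\eqref{eq:2} (giving a Poincar\'e--trace bound on $D_2^+$ by the gradient norm alone) are used; the Euler--Lagrange equation identifies the minimizer with $w_k$; testing the weak formulation with $\varphi=w_k$ gives $J_k(w_k)=\mathfrak m_k$; and $w_k\not\equiv0$ (hence $\mathfrak m_k<0$) because the linear form is not identically zero. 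Where you differ from the paper is in the decay estimate: the paper's proof explicitly invokes the \emph{Kelvin transform}, i.e.\ extend $w_k$ by odd reflection to $\{|x|>2\}$, invert $y\mapsto y/|y|^2$, use conformal invariance of the Dirichlet energy plus the Hardy bound to show the transformed function extends to an $H^1$ harmonic function on a full disk vanishing at the origin (by the odd symmetry), and hence is $O(|y|)$ there — giving $w_k(x)=O(1/|x|)$. Your primary route instead reads the decay off the explicit formula $w_k\circ F=W_k$ of Proposition~\ref{propwk}, which is certainly valid and quantitatively sharper (exponential in $\xi$), but since Lemma~\ref{l:wk} sits inside the appendix whose stated purpose is an \emph{alternative} proof of Theorem~\ref{t:gad} not relying on \cite{Gad}, and Proposition~\ref{propwk} is phrased as resting on the uniqueness coming from Theorem~\ref{t:gad}, the Kelvin-transform route is the one that keeps the appendix logically self-contained; you do acknowledge it as the alternative, so the content is all there, but I would swap which argument is presented as primary. (The circularity is not fatal — uniqueness of the $\mathcal Q$-weak solution is elementary and also falls out of your strict-convexity argument — but the paper's choice of Kelvin transform avoids even the appearance of it.)
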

\begin{proof}
The proof follows from standard minimization methods, Hardy Inequality and Kelvin Transform. 
\end{proof}

\begin{lem}\label{l:Phi}
 For every $k\in \NN$, $k\geq1$, there exists a unique
$\Phi_k \in \bigcap_{R>0}H^1(D_R^+)$  such that 
\begin{equation}\label{eq:3}
\begin{cases}
\Phi_k-\psi_k\in \mathcal Q,\\
  -\Delta \Phi_k =0, &\text{in } \RR^2_+  \text{ in a
    distributional sense},\\
\Phi_k =0 &\text{on } s,\\  
\frac{\partial \Phi_k}{\partial \nu}=0&\text{on }\Gamma_1,
\end{cases}
\end{equation}
where $\nu=(0,-1)$ is the outer normal unit vector on $\partial
\RR^2_+$.
Furthermore, the unique solution to \eqref{eq:3} is given by 
\[
\Phi_k=\psi_k + w_k,
\]
 where $w_k$ is as in Lemma \ref{l:wk} and $\psi_k$ is defined in \eqref{eq:psi_k}.
\end{lem}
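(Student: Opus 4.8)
The plan is to recognize that Lemma \ref{l:Phi} is essentially a reformulation of Lemma \ref{l:wk}: under the substitution $w=\Phi_k-\psi_k$, Problem \eqref{eq:3} becomes exactly Problem \eqref{eq:wk}. So the work consists in translating between the two formulations and checking the stated regularity.

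For existence I would take the candidate $\Phi_k:=\psi_k+w_k$, where $w_k\in\mathcal Q$ is the unique weak solution of \eqref{eq:wk} given by Lemma \ref{l:wk}, and verify the four requirements. The inclusion $\Phi_k-\psi_k=w_k\in\mathcal Q$ is immediate. Harmonicity holds since $\psi_k(x)=\mathrm{Im}(z^k)$ with $z=x_1+ix_2$ is harmonic and $w_k$ is weakly (hence, by interior elliptic regularity, smoothly) harmonic in $\RR^2_+$, so $-\Delta\Phi_k=0$ distributionally. The Dirichlet condition on $s$ holds because $\psi_k(r\cos t,r\sin t)=r^k\sin(kt)$ vanishes at $t\in\{0,\pi\}$, hence $\psi_k\equiv0$ on the whole of $\partial\RR^2_+\supset s$, while $w_k=0$ on $s$ by construction. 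The Neumann condition on $\Gamma_1$ follows by adding $\partial\psi_k/\partial\nu$ and $\partial w_k/\partial\nu=-\partial\psi_k/\partial\nu$, the latter being the third line of \eqref{eq:wk}. Finally, for the regularity $\Phi_k\in\bigcap_{R>0}H^1(D_R^+)$: the term $\psi_k$ is a polynomial, hence smooth; for $w_k$, since $D_R^+$ is bounded we have $|x-\mathbf e|\le R+1$ on $D_R^+$, so $|w_k|\le (R+1)\,|w_k|/|x-\mathbf e|$ there, and the second defining property of $\mathcal Q$ recalled in Subsection \ref{subsec:relres} gives $w_k\in L^2(D_R^+)$; combined with $\nabla w_k\in L^2(\RR^2_+)$ this yields $w_k\in H^1(D_R^+)$ for every $R>0$.

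For uniqueness I would let $\Phi$ and $\widetilde\Phi$ both solve \eqref{eq:3} and set $v:=\Phi-\widetilde\Phi=(\Phi-\psi_k)-(\widetilde\Phi-\psi_k)\in\mathcal Q$; then $v$ is harmonic in $\RR^2_+$, vanishes on $s$, and has vanishing normal derivative on $\Gamma_1$. Testing the equation against $v$ itself (admissible for $v\in\mathcal Q$ after approximating by functions in $C^\infty_{\rm c}(\overline{\RR^2_+}\setminus s)$ and invoking the Hardy inequalities \eqref{eq:1}--\eqref{eq:2} to control the behaviour near $\pm\mathbf e$) makes both boundary integrals vanish (over $s$ because $v=0$, over $\Gamma_1$ because $\partial v/\partial\nu=0$), so $\int_{\RR^2_+}|\nabla v|^2\,dx=0$; hence $v$ is constant, and a nonzero constant is not of finite $L^2$-norm after division by $|x-\mathbf e|$, so $v\equiv0$. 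Equivalently, this uniqueness is already contained in that of $w_k$ asserted in Lemma \ref{l:wk}, since $\Phi-\psi_k$ solves \eqref{eq:wk}. Combining the two parts gives $\Phi_k=\psi_k+w_k$ as the unique solution. The only genuinely technical point is assigning rigorous weak meaning to the trace on $s$ and the Neumann condition on $\Gamma_1$ for elements of the completion space $\mathcal Q$ and justifying the integration by parts, but this is handled precisely as in the construction of $w_k$, via density of $C^\infty_{\rm c}(\overline{\RR^2_+}\setminus s)$ and the Hardy inequalities, so no new obstacle arises.
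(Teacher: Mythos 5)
Your proof is correct and follows the same route as the paper's: existence by taking the explicit candidate $\Phi_k=\psi_k+w_k$, and uniqueness via the Hardy inequality (the paper simply refers to \cite[Proposition 4.3]{abatangelo2015sharp} for a similar argument, whereas you write it out — and you also note, correctly, that the uniqueness already follows from that of $w_k$, since if $\Phi$ solves \eqref{eq:3} then $\Phi-\psi_k$ solves \eqref{eq:wk}). Your extra remarks on the $H^1(D_R^+)$ regularity of $w_k$ are a useful verification that the paper leaves implicit.
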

\begin{proof}
 The existence part is proved by taking $\Phi_k=\psi_k + w_k$.
  To prove uniqueness, one can argue by contradiction exploiting the
  Hardy Inequality
(see\cite[Proposition 4.3]{abatangelo2015sharp} for a detailed
proof in a similar problem).
\end{proof}

For future convenience, we state and prove here the following lemma, 
which relates the limit profile $\Phi_k$ (more precisely, its $k$-th 
Fourier coefficient) to the minimum $\mathfrak{m}_k$.
\begin{lem}\label{l:xi1}
 Let $\Phi_k$ be as in Lemma \ref{l:Phi}. Then 
 \[
  \int_0^\pi \Phi_k(\cos t,\sin t) \,\sin(kt)\,dt = -\frac{\mathfrak{m}_k}k + \frac\pi2.
 \]
\end{lem}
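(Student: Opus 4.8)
The plan is to split $\Phi_k=\psi_k+w_k$ according to Lemma \ref{l:Phi}. Since $\psi_k(\cos t,\sin t)=\sin(kt)$, the $\psi_k$-part of the integral is just $\int_0^\pi\sin^2(kt)\,dt=\tfrac\pi2$, so the identity reduces to showing
\[
\int_0^\pi w_k(\cos t,\sin t)\,\sin(kt)\,dt=-\frac{\mathfrak m_k}{k}.
\]
Rather than integrating the explicit formula of Proposition \ref{propwk} along the half-circle $S_1^+$ (which is not a coordinate line for the elliptic coordinates $F$), I would compute this ``$k$-th Fourier coefficient of $w_k$ on $S_1^+$'' by testing the equation satisfied by $w_k$ against a well-chosen competitor.

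First I would record the weak form of \eqref{eq:wk}: since $\frac{\partial\psi_k}{\partial x_2}(x_1,0)=kx_1^{k-1}$ on $\Gamma_1$ and functions of $\mathcal Q$ vanish on $s$, one has $w_k\in\mathcal Q$ and
\[
\int_{\RR^2_+}\nabla w_k\cdot\nabla\varphi\,dx=\int_{-1}^1 kx_1^{k-1}\,\varphi(x_1,0)\,dx_1\qquad\text{for every }\varphi\in\mathcal Q .
\]
Taking $\varphi=w_k$ and recalling $\mathfrak m_k=-\tfrac12\int_{\RR^2_+}|\nabla w_k|^2\,dx$ (see \eqref{eq:mathfrak-m}) gives at once
\[
\int_{-1}^1 x_1^{k-1}w_k(x_1,0)\,dx_1=\frac1k\int_{\RR^2_+}|\nabla w_k|^2\,dx=-\frac{2\mathfrak m_k}{k}.
\]

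The heart of the argument is to test the same identity against the function $g$ on $\RR^2_+$ defined in polar coordinates by $g(r\cos\theta,r\sin\theta)=r^k\sin(k\theta)$ for $r\le 1$ and $g(r\cos\theta,r\sin\theta)=r^{-k}\sin(k\theta)$ for $r\ge 1$. One checks that $g$ is Lipschitz, vanishes on $\{x_2=0\}$ (in particular on $s$), decays like $|x|^{-k}$, satisfies $\nabla g\in L^2(\RR^2_+)$, and $g=O(x_2)$ near $\pm\mathbf e$ so that $g/|x\pm\mathbf e|\in L^2(\RR^2_+)$; hence $g\in\mathcal Q$ by the description of $\mathcal Q$ in Subsection \ref{subsec:relres}. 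Since $g\equiv0$ on $\Gamma_1$, the weak formulation gives $\int_{\RR^2_+}\nabla w_k\cdot\nabla g\,dx=0$. On the other hand $g$ is harmonic in $D_1^+$ and in $\{|x|>1\}\cap\RR^2_+$, with a crease along $S_1^+$ across which its radial derivative jumps by $-2k\sin(k\theta)$, while $\frac{\partial g}{\partial\nu}=-kx_1^{k-1}$ on $\Gamma_1$. Integrating by parts separately on $D_1^+$ and on the half-annulus $\{1<|x|<R\}\cap\RR^2_+$, and letting $R\to+\infty$ — the $S_R^+$-term vanishing because $w_k=O(|x|^{-1})$ by Lemma \ref{l:wk}, and the contributions on $s$ vanishing since $w_k=0$ there — I obtain
\[
0=\int_{\RR^2_+}\nabla w_k\cdot\nabla g\,dx=2k\int_0^\pi w_k(\cos\theta,\sin\theta)\sin(k\theta)\,d\theta-k\int_{-1}^1 x_1^{k-1}w_k(x_1,0)\,dx_1 .
\]
Combining this with the value of $\int_{-1}^1 x_1^{k-1}w_k(x_1,0)\,dx_1$ found above yields $\int_0^\pi w_k(\cos\theta,\sin\theta)\sin(k\theta)\,d\theta=-\tfrac{\mathfrak m_k}{k}$, and adding the $\psi_k$-contribution $\tfrac\pi2$ finishes the proof.

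The only delicate point I foresee is justifying the integration by parts against $g$ near the mixed Dirichlet--Neumann points $(\pm1,0)$, where $w_k$ is merely $H^1$ and typically has a $\rho^{1/2}$-type behaviour. I would handle it by excising small half-disks of radius $\rho$ around $(\pm1,0)$ and checking that the corresponding boundary terms are $O(\rho^{3/2})\to0$ (both $g$ and $\nabla g$ are bounded there, while $w_k=O(\rho^{1/2})$ and $\psi_k=O(\rho)$), so that in the limit only the integrals over $S_1^+$, $\Gamma_1$ and $S_R^+$ survive. As an alternative that sidesteps this issue, one can first expand $w_k$ by separation of variables in the exterior sector $\{|x|>1\}\cap\RR^2_+$ — the Dirichlet condition on $s$ and the decay $w_k=O(|x|^{-1})$ forcing $w_k(r,\theta)=\sum_{n\ge1}b_nr^{-n}\sin(n\theta)$ — which reduces every boundary integral on $S_1^+$ to the single coefficient $b_k$ and makes the identity a short computation.
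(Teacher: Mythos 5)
Your proof is correct and arrives at the identity by a route that is genuinely different in organization, though it rests on the same ingredients as the paper (harmonicity of $w_k$, its $O(|x|^{-1})$ decay, and the weak formulation of \eqref{eq:wk}). The paper introduces the $k$-th Fourier coefficient $\omega(r)=\int_0^\pi w_k(r\cos t,r\sin t)\sin(kt)\,dt$, solves the Euler ODE it satisfies in $\{r>1\}$ using the decay to conclude $\omega(r)=\omega(1)r^{-k}$, and then combines two Green identities on $D_1^+$ (against $\psi_k$ and against $w_k$) together with $\partial_\nu\psi_k=k\psi_k$ on $S_1^+$ to express $\omega(1)$ through $\int_{\Gamma_1}w_k\,\partial_{x_2}\psi_k=-2\mathfrak m_k$. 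You instead synthesize a single competitor $g\in\mathcal Q$ by gluing $\psi_k$ on $D_1^+$ with its decaying harmonic continuation $r^{-k}\sin(k\theta)$ outside; since $g$ vanishes on $\Gamma_1$, the weak form gives $\int\nabla w_k\cdot\nabla g=0$, and unwrapping this via the jump of $\partial_r g$ across $S_1^+$ yields the same relation in one stroke, with $\int_{-1}^1 x_1^{k-1}w_k\,dx_1=-2\mathfrak m_k/k$ obtained directly by testing against $\varphi=w_k$. Your version avoids the ODE discussion and the need for the $\partial_\nu\psi_k=k\psi_k$ identity, at the cost of having to justify the jump relation; the care you take near the Dirichlet--Neumann junctions $(\pm1,0)$ is warranted, but note that the paper's own integrations by parts over $D_1^+$ face the identical delicacy, so neither argument is worse off here. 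One minor caveat: the final ``separation-of-variables'' remark does not really sidestep this, because the problematic corners lie on $\Gamma_1$ rather than on $S_1^+$, and the exterior expansion $w_k=\sum_{n\ge1}b_n r^{-n}\sin(n\theta)$ is precisely the content of the paper's ODE step; relating $b_k$ to $\mathfrak m_k$ still requires the same Green identity near $\Gamma_1$.
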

\begin{proof}
 Let us define the function 
 \[
  \omega(r):= \int_0^\pi w_k(r\cos t,r\sin t)\,\sin(kt)\,dt,\quad r>0,
 \]
where $w_k$ is as in Lemma \ref{l:wk}.
Then, recalling that $\Phi_k= w_k+\psi_k$, we have that 
\begin{equation}\label{eq:omega1primopasso}
 \omega(1)= \int_0^\pi \Phi_k(\cos t,\sin t) \,\sin(kt)\,dt - \frac\pi2.
\end{equation}
 Since $\omega$ is the $k$-th Fourier coefficient of the harmonic
  function $w_k$, it satisfies the
differential  equation
$\omega''+\frac1r\omega'-\frac{k^2}{r^2}\omega=0$ in $(1,+\infty)$,
i.e. $(r^{1+2k}(r^{-k} \omega)')'=0$. Hence there exists
$C_\omega\in\RR$ such that 
$\big( r^{-k}\omega(r) \big)'= C_\omega r^{-(1+2k)}$, for $r>1$.
Integrating the previous equation over $(1,r)$ we obtain that
\[
\dfrac{\omega(r)}{r^{k}} - \omega(1) = \frac{C_\omega}{2k}\left(
  1-\frac1{r^{2k}} \right),\quad\text{for all }r\geq1.
 \]
Lemma \ref{l:wk} provides that $\omega(r)=O(r^{-1})$ as
 $r\to+\infty$, hence, letting $r\to+\infty$ in the previous identity,
 we obtain that necessarily $C_\omega=-2k\omega(1)$ and then
\begin{equation}\label{eq:120}
 \omega(r)= \omega(1) r^{-k},\quad 
 \omega'(r)= -k\omega(1) r^{-k -1},\quad\text{for all }r\geq1.
\end{equation}
On the other hand, by definition 
\begin{equation}\label{eq:121}
  \omega'(r)= {r^{-k-1}} \int_{ S_r^+}
  \frac{\partial w_k}{\partial \nu}\,\psi_k\,ds,
\end{equation} 
with $\nu$ being the outer unit
vector to $\partial D_r^+$.
Combining \eqref{eq:120} and \eqref{eq:121} we obtain that 
\begin{equation*}
  \omega(1)=-\dfrac{1}{k} \int_{S_1^+} \dfrac{\partial w_k}{\partial \nu}\,\psi_k\,ds.
\end{equation*}
 Multiplying the equation $-\Delta w_k=0$ by $\psi_k$, integrating by parts on
$D_1^+$,  and recalling that $\psi_k\equiv0$ on $\Gamma_1$, we obtain that
\begin{equation*}
 \int_{D_1^+} \nabla w_k \cdot \nabla \psi_k\,dx=\int_{\partial D_1^+}
 \dfrac{\partial w_k}{\partial \nu}\,\psi_k\,ds = 
\int_{S_1^+}
 \dfrac{\partial w_k}{\partial \nu}\,\psi_k\,ds,
\end{equation*}
whereas multiplying $-\Delta\psi_k=0$ by $w_k$ and integrating by parts on
$D_1^+$ we obtain that
\begin{equation*}
  \int_{D_1^+} \nabla w_k \cdot \nabla \psi_k\,dx=\int_{\partial
    D_1^+} 
\dfrac{\partial \psi_k}{\partial \nu}\,w_k\,ds.
\end{equation*}
Taking into account the boundary data, we obtain that 
\[
 \int_{S_1^+} \dfrac{\partial w_k}{\partial\nu}\,\psi_k =   
 \int_{S_1^+} \dfrac{\partial \psi_k}{\partial\nu}\,w_k 
 - \int_{\Gamma_1} \dfrac{\partial\psi_k}{\partial x_2}\,w_k, 
\]
so that 
\begin{equation}\label{eq:omega1}
 \omega(1)= -\frac1k \int_{S_1^+} \dfrac{\partial \psi_k}{\partial\nu}\,w_k
 + \dfrac1k \int_{\Gamma_1} \dfrac{\partial \psi_k}{\partial x_2} \,w_k.
\end{equation}
 Since $\frac{\partial\psi_k}{\partial\nu}=k\psi_k$ on
  $S_1^+$, it results that $k\omega(1)=\int_{S_1^+} \frac{\partial\psi_k}{\partial\nu}w_k$,
so that \eqref{eq:omega1} can be rewritten as
$\omega(1)=-\omega(1) +  \frac{1}k \int_{\Gamma_1} \frac{\partial \psi_k}{\partial x_2} \,w_k$ and thus
\begin{equation*}
  \omega(1)= {\frac{1}{2k}} \int_{\Gamma_1} \dfrac{\partial \psi_k}{\partial x_2}\,w_k .
\end{equation*}
From \eqref{eq:Ik} we deduce that 
$\omega(1)= -\frac{1}{k} {\mathfrak m}_k$, 
and recalling \eqref{eq:omega1primopasso} the proof is concluded.
\end{proof}

\subsection{Monotonicity argument}\label{s:monotonicity}

In order to prove convergence of blow-up eigenfunctions, energy
estimates in small neighborhoods of the Dirichlet-Neumann junctions
are needed; such estimates are obtained via an Almgren type monotonicity argument which is sketched here. 

For $\lambda \in \RR$,  $u \in H^{1}(\Omega)$ and 
$r\in(0,\eps_0)$ such that $D_r^+\subset\Omega$, the Almgren frequency function is defined as
\[
\mathcal{N}(u,r,\lambda) = \dfrac{E(u,r,\lambda)}{H(u,r)},
\]
where
\begin{equation*}
E(u,r,\lambda) = \int_{D_r^+} \Big( |\nabla u(x)|^2 - \lambda
                 u^2(x) \Big) \,dx, \quad
H(u,r)  = \dfrac1r \int_{S_r^+} u^2\,ds .
\end{equation*}
In the following, we assume that assumption \eqref{eq:6} is satisfied, i.e.  the $N$-th
eigenvalue  $\lambda_N$ of $q_0$ is simple, and we fix an
  associated normalized eigenfunction $u_N$, so that $u_N$ satisfies \eqref{eq:5}.
For all $1\leq n< N$, let
$u_n\in H^1_0(\Omega)$ be an eigenfunction of $q_0$ associated to the
eigenvalue $\lambda_n$ such that
\begin{equation*}
  \int_\Omega |u_n(x)|^2\,dx=1\quad \text{for all }1\leq n<N
\end{equation*}
and 
\begin{equation*}
  \int_\Omega u_n(x) u_m(x)\,dx=0\quad \text{if }1\leq
  n,m\leq N\text{ and }n\neq m.
\end{equation*}
  For every 
  $\eps\in (0,\eps_0]$, let $u_N^{\eps}$ be an
  eigenfunction of $q_{\eps}$ associated with $\lambda_N(\eps)$,
  i.e. solving
\begin{equation}\label{eqDNDeps}
\begin{cases}
-\Delta\,u_N^{\eps}=\lambda_N(\eps)\,u_N^{\eps},& \mbox{in }\Omega,\\
u_N^{\eps}=0,&\mbox{on } \partial\Omega\setminus \Gamma_{\eps},\\[5pt]
\dfrac{\partial u_N^{\eps}}{\partial \nu}=0,&\mbox{on } \Gamma_{\eps},
\end{cases}
\end{equation}
 such
  that 
  \begin{equation}\label{eq:7}
\int_\Omega |u_N^{\eps}(x)|^2\,dx=1\quad\text{and}\quad 
\int_{\Omega}u_N^{\eps}(x)\,u_N (x)\,dx\ge0.
\end{equation}
For all $1\leq n< N$ and $\eps\in (0,\eps_0]$, let
$u_n^\eps\in \mathcal Q_{\eps}$ be an eigenfunction of problem
\eqref{eqDND} associated to the eigenvalue $\lambda=\lambda_n(\eps)$,
i.e. solving
\begin{equation}\label{eq:equneps}
\begin{cases}
-\Delta\,u_n^{\eps}=\lambda_n(\eps)\,u_n^{\eps},& \mbox{in }\Omega,\\
u_n^{\eps}=0,&\mbox{on } \partial\Omega\setminus \Gamma_{\eps},\\[5pt]
\dfrac{\partial u_n^{\eps}}{\partial \nu}=0,&\mbox{on } \Gamma_{\eps},
\end{cases}
\end{equation}
such that 
\begin{equation}\label{eq:8}
  \int_\Omega |u_n^\eps(x)|^2\,dx=1\quad \text{for all }1\leq n<N
\end{equation}
and 
\begin{equation}\label{eq:14}
  \int_\Omega u_n^\eps(x) u_m^\eps(x)\,dx=0\quad \text{if }1\leq
  n,m\leq N\text{ and }n\neq m.
\end{equation}
 We observe that, in view of Remark \ref{remIneq},
\begin{equation}\label{eq:bound_l_neps}
\lambda_n(\eps)\leq \lambda_N\quad\text{for all
}\eps\in(0,\eps_0]\text{ and }1\leq n\leq N.
\end{equation} 
Arguing as in \cite[Lemma 5.2]{abatangelo2015sharp},
it is possible to prove the following properties:
\begin{enumerate}[\rm (i)]
\item there exists $R_0\in\big(0,\min\big\{\eps_0,\frac1{2\sqrt{\lambda_N}}\big\}\big)$ such that
  $D^+_{R_0}\subset\Omega$ and 
\begin{equation*}
  H(u_n^\eps,r)>0\quad\text{for all }\eps\in(0,R_0),\  r\in(\eps,R_0) \text{ and }1\leq
  n\leq N;
\end{equation*}
\item 
\label{prop--ii} for every $r\in(0,R_0]$, there exist $C_r > 0$ and $\alpha_r \in
(0,r)$ such that $H(u_n^\eps,r)
 \geq C_r$ for all $\eps\in(0, \alpha_r)$ and $1 \leq n\leq N$.
  \end{enumerate}
By direct calculations it follows that,
for all $\eps\in(0,R_0)$, $\eps<r<R_0$, and $n\in\{1,2,\dots,N\}$,
\begin{align}\label{eq:derH}
& \dfrac{d}{dr} H(u_n^\eps,r) 
= \dfrac2r\int_{S_r^+}u_n^\eps \frac{\partial u_n^\eps}{\partial\nu}\,ds
= \dfrac2r E(u_n^\eps,r,\lambda_n(\eps)), \\ 
 \label{eq:derE}& \dfrac{d}{dr} E(u_n^\eps,r,\lambda_n(\eps))=
 = 2\int_{S_r^+} \abs{\frac{\partial u_n^\eps}{\partial\nu}}^2\,ds- \frac2r \left( M(\eps,u_n^\eps,\lambda_n(\eps)) + \lambda_n(\eps) \int_{D_r^+} (u_n^\eps(x))^2\,dx \right)
 \end{align}
where 
$\nu$ denotes the exterior normal unit vector to $D_r^+$ and 
    \begin{equation}\label{eq:Meps}
     M(\eps,u,\lambda) = \lim_{\delta\to0^+} 
     \int_{\RR^2_+\cap\partial A_\delta^\eps}\bigg( \frac12 |\nabla u|^2 x \cdot \boldsymbol n
     -  \frac{\partial u}{\partial \boldsymbol n}(x\cdot\nabla u)
     - \frac{\lambda}{2}  u^2 x\cdot \boldsymbol n\bigg)ds,
    \end{equation}
being $A_\delta^{\eps} := D ^+_\delta (-\eps,0) \cup D_\delta^+(\eps,0)$
and $\boldsymbol n$ denoting  the exterior normal unit vector to $D_r^+\setminus
  A_\delta^\eps$. For details in a similar problem see \cite [Lemma 5.5 and
  5.6]{NNT}. 
A crucial step in the monotonicity argument is the possibility of
recognizing the sign of the quantity $M(\eps,u,\lambda)$.

To this aim, we first state the following
result describing the behaviour of solutions  to \eqref{eqDND}
at  Dirichlet-Neumann boundary  junctions.
\begin{prop}\label{p:asymptotics}
Let $\eps \in (0,\eps_0)$, $\lambda\in \RR$, and $u \in
\mathcal{Q}_\eps\setminus\{0\}$
 be a nontrivial solution to problem \eqref{eqDND}.
Then there exist two odd natural numbers
$j_L=j_L(\eps,u,\lambda),j_R =j_R(\eps,u,\lambda)\in \NN$ and two nonzero real
numbers $\beta_L=\beta_L(\eps,u,\lambda),\beta_R=\beta_R(\eps,u,\lambda) \in \RR\setminus\{0\}$ such that
\begin{align}
  &\delta^{-{j_L}/2} u((-\eps,0) + \delta \boldsymbol \theta(t)) 
    \to \beta_L
\cos\big(\tfrac{j_L}{2}t\big)
    \quad \text{ in } C^{1,\sigma}([0,\pi]), \label{eq:asy-eigen-left}\\
  &\delta^{-{j_R}/2} u((\eps,0) + \delta\boldsymbol \theta(t)) \to
    \beta_R
 \sin\big(\tfrac{j_R}{2}t\big)
    \quad \text{ in } C^{1,\sigma}([0,\pi]),\label{eq:asy-eigen-right}
\end{align}
as $\delta\to0^+$ for any $\sigma\in (0,1)$, where $\boldsymbol \theta(t) = (\cos t,\sin t)$.
Moreover, 
\begin{align}
  &\delta^{-{j_L}/2+1} \nabla u((-\eps,0) + \delta
\boldsymbol \theta(t)) \to \tfrac{j_L \beta_L }{2}
    \left(\cos\big(\tfrac{j_L}{2}t\big)
    \boldsymbol\theta(t) - \sin\big(\tfrac{j_L}{2}t\big)
\boldsymbol\tau(t)\right)\label{eq:asy-eigen-grad-left}\\
  &\delta^{-{j_R}/2+1} \nabla u((\eps,0) + \delta
    \boldsymbol \theta(t)) \to  
\tfrac{j_R \beta_R }{2}
    \left(\sin\big(\tfrac{j_R}{2}t\big)\boldsymbol\theta(t) +
 \cos\big(\tfrac{j_R}{2}t\big)\boldsymbol\tau(t) \right)\label{eq:asy-eigen-grad-right}
\end{align}
in $C^{0,\sigma}([0,\pi])$
as $\delta\to0^+$ for any $\sigma\in (0,1)$, 
where $\boldsymbol \tau(t)=(-\sin t,\cos t)$.
\end{prop}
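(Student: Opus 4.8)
The statement is purely local around each of the two Dirichlet--Neumann junctions $(\pm\eps,0)$, so the plan is to treat the left one at $(-\eps,0)$ by an Almgren-type monotonicity argument centered at the junction together with a blow-up, the right one being entirely symmetric after exchanging the roles of the Dirichlet and Neumann pieces of $\partial\Omega_0$. First I would translate $(-\eps,0)$ to the origin and write $u$ in the polar coordinates $x=r\,\boldsymbol\theta(t)$, $t\in[0,\pi]$, of the upper half-disk, so that the ray $t=0$ lies on $\Gamma_\eps$ (Neumann) and the ray $t=\pi$ on $\partial\Omega\setminus\Gamma_\eps$ (Dirichlet). For $r\in(0,\rho)$ with $\rho$ small one considers the centered frequency
\[
\mathcal N_{(-\eps,0)}(u,r)=\frac{\displaystyle\int_{D_r^+((-\eps,0))}\!\!\big(|\nabla u|^2-\lambda u^2\big)\,dx}{\displaystyle\frac1r\int_{S_r^+((-\eps,0))}u^2\,ds}.
\]
The denominator does not vanish for small $r$ by unique continuation, and the key structural point is that the Rellich--Pohozaev identity underlying the monotonicity of $\mathcal N_{(-\eps,0)}$ carries \emph{no} boundary contribution from the flat part $\{x_2=0\}$: there, based at the junction, the position vector $x$ is tangential, so $x\cdot\nu=0$, while on the Dirichlet ray $u\equiv0$ forces $\nabla u$ to be normal, hence $x\cdot\nabla u=0$ as well. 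This is in contrast with the singular term \eqref{eq:Meps}, which appears only when the frequency is centered away from a junction. One concludes that $\mathcal N_{(-\eps,0)}(u,\cdot)$ is non-decreasing, so that $\gamma:=\lim_{r\to0^+}\mathcal N_{(-\eps,0)}(u,r)$ exists and is finite.

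Next I would perform the blow-up $v_\delta(x):=u((-\eps,0)+\delta x)/\sqrt{H(u,\delta)}$: the monotonicity yields doubling inequalities for $H$, hence uniform $H^1$ bounds for $v_\delta$ on half-annuli and non-triviality of any limit, so that along subsequences $v_\delta$ converges in $H^1_{\rm loc}(\overline{\RR^2_+})$ to a non-zero function $v$, harmonic in $\RR^2_+$, homogeneous of degree $\gamma$, vanishing on $\{t=\pi\}$ and with $\partial v/\partial\nu=0$ on $\{t=0\}$. Separating variables, $v(r,t)=c\,r^\gamma\phi(t)$ with $-\phi''=\gamma^2\phi$ on $(0,\pi)$, $\phi'(0)=0$, $\phi(\pi)=0$; hence $\phi(t)=\cos(\tfrac{j_L}{2}t)$ and $\gamma=j_L/2$ for an odd positive integer $j_L$, which also pins down the full limit (not merely subsequential). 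Un-normalizing, using $H(u,\delta)\sim c^2\tfrac\pi2\,\delta^{j_L}$, gives \eqref{eq:asy-eigen-left} with $\beta_L\neq0$; the right junction is analogous, the angular problem now being $\phi(0)=0$, $\phi'(\pi)=0$, whose solutions are $\sin(\tfrac{j_R}{2}t)$ with $j_R$ odd, yielding \eqref{eq:asy-eigen-right}. The same reduction can be obtained, if one prefers to quote existing results literally, by reflecting $u$ evenly across the Neumann segment: the even extension solves $-\Delta\widetilde u=\lambda\widetilde u$ in a disk slit along the Dirichlet half-line and vanishing on the slit, and one then applies the asymptotic analysis at the tip of a Dirichlet crack of \cite{FFT,FF,NNT}. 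Finally, the convergence in $C^{1,\sigma}([0,\pi])$ of $\delta^{-j_L/2}u((-\eps,0)+\delta\,\boldsymbol\theta(\cdot))$ is obtained by upgrading the $H^1_{\rm loc}$ convergence through uniform interior and boundary Schauder estimates for the blow-up family on the fixed half-annulus $\{\tfrac12\le|x|\le2\}\cap\overline{\RR^2_+}$, whose boundary consists of two smooth pieces carrying a homogeneous Dirichlet, respectively Neumann, condition, the only singular point $x=0$ being excluded.

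The gradient expansions \eqref{eq:asy-eigen-grad-left}--\eqref{eq:asy-eigen-grad-right} then follow by differentiating \eqref{eq:asy-eigen-left}--\eqref{eq:asy-eigen-right}: writing $\nabla=\boldsymbol\theta(t)\,\partial_r+\tfrac1r\,\boldsymbol\tau(t)\,\partial_t$ and differentiating the leading term $\beta_L\,\delta^{j_L/2}\cos(\tfrac{j_L}{2}t)$ produces exactly the right-hand side of \eqref{eq:asy-eigen-grad-left} (and similarly on the right), the $C^{0,\sigma}([0,\pi])$ convergence again coming from the uniform Schauder bounds on the half-annulus. I expect the main obstacle to be the combination of the monotonicity step with the non-degeneracy of the blow-up: namely proving that $\mathcal N_{(-\eps,0)}$ is indeed monotone (which hinges on the vanishing of the flat-boundary Pohozaev terms just described) and that the blow-up limit is non-trivial with a half-integer homogeneity exponent; once this is in place, identifying the profile and upgrading the convergence is routine, and everything parallels the analysis of \cite{abatangelo2015sharp,NNT,FFT,FF}.
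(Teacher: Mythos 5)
Your proposal is correct, but it takes a genuinely different and much more self-contained route than the paper does. The paper's proof of Proposition~\ref{p:asymptotics} consists of a single sentence: through a gauge transformation, in a neighbourhood of each junction $(\pm\eps,0)$ the mixed Dirichlet--Neumann problem is rewritten as an elliptic equation with an Aharonov--Bohm vector potential with pole at the junction, and then the asymptotics is read off directly from \cite[Theorem 1.3]{FFT}. That is essentially the secondary argument you sketch at the end of your third paragraph (even reflection across $\Gamma_\eps$ into a slit disk, then the Dirichlet-crack asymptotics of \cite{FFT,FF,NNT}), since the gauge transformation is precisely the standard device relating a function vanishing on a slit to a single-valued eigenfunction of an AB operator with half-integer flux. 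Your primary argument, by contrast, unpacks this citation: it sets up the Almgren frequency centered at the junction, observes that all flat-boundary Pohozaev terms vanish (because $x\cdot\nu=0$ on $\{x_2=0\}$, $\partial_\nu u=0$ on the Neumann ray, and $x\cdot\nabla u=0$ on the Dirichlet ray where $\nabla u$ is purely normal), deduces monotonicity and a finite limit $\gamma$, performs the blow-up, identifies the half-integer homogeneity and the $\cos(\tfrac{j_L}2 t)$ or $\sin(\tfrac{j_R}2 t)$ angular profile by separating variables under the appropriate Dirichlet/Neumann endpoint conditions, and upgrades to $C^{1,\sigma}$ via Schauder estimates on a fixed half-annulus. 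This is a valid and complete alternative, and it has the virtue of making explicit why the singular term analogous to \eqref{eq:Meps} does not arise when the frequency is centered at a junction rather than at $0$, which is exactly the structural point the paper exploits (implicitly, via \cite{FFT}) but does not re-derive. The computation of the gradient expansions by writing $\nabla=\boldsymbol\theta\,\partial_r+\tfrac1r\boldsymbol\tau\,\partial_t$ and differentiating the leading homogeneous term is also correct and matches \eqref{eq:asy-eigen-grad-left}--\eqref{eq:asy-eigen-grad-right}. The only step you should treat a bit more carefully if writing this up in full is the passage from subsequential blow-up limits to the actual limit, i.e.\ that $H(u,\delta)/\delta^{2\gamma}$ converges to a positive limit so that $\beta_L$ (resp.\ $\beta_R$) is well-defined and nonzero; you do allude to it, and it is exactly the argument in \cite{FFT,abatangelo2015sharp}, but in the cite-based version this is automatically included in \cite[Theorem 1.3]{FFT}.
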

\begin{proof}
Through a gauge transformation, in a neighbourhood of each junction $(\pm \eps,0)$ the problem can be
rewritten as an elliptic equation with an Aharonov--Bohm vector
potential with pole located at the junction; then the asymptotics
follows from \cite[Theorem 1.3]{FFT}.
\end{proof}

\begin{lem}\label{l:segnoM}
 Let $\eps\in(0,\eps_0]$ and $u\in \mathcal{Q}_\eps$ be a solution to
 \eqref{eqDND} for some $\lambda\in\RR$. 
 Moreover, let $j_L=j_L(\eps,u,\lambda),\,j_R=j_R(\eps,u,\lambda)\in
 \NN$ odd and 
$\beta_L=\beta_L(\eps,u,\lambda)$, $\beta_R=\beta_R(\eps,u,\lambda) \in \RR\setminus\{0\}$
be  as in Proposition \ref{p:asymptotics} and let $M(\eps,u,\lambda)$
be as in \eqref{eq:Meps}. Then
 \begin{equation*}\label{eq:Meps2}
  M(\eps,u,\lambda)=
  \begin{cases}
   0, &\text{if }j_L>1 \text{ and }j_R >1,\\
   -\eps\frac{\pi}{8} \beta_L^2, &\text{if }j_L =1 \text{ and }j_R >1,\\
   -\eps\frac{\pi}{8}\beta_R^2, &\text{if }j_L >1 \text{ and }j_R =1,\\
   -\eps\frac{\pi}{8} \big(\beta_L^2 + \beta_R^2 \big), &\text{if }j_L =1 \text{ and }j_R=1.
  \end{cases}
 \end{equation*}
In particular, $M(\eps,u,\lambda)\leq0$.
\end{lem}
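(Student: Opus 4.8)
The plan is to compute the limit defining $M(\eps,u,\lambda)$ in \eqref{eq:Meps} by inserting into the integrand the asymptotic expansions of $u$ and $\nabla u$ near the two junctions $(-\eps,0)$ and $(\eps,0)$ supplied by Proposition \ref{p:asymptotics}. First I would observe that, as soon as $\delta\in(0,\eps)$, the two half-disks $D_\delta^+(-\eps,0)$ and $D_\delta^+(\eps,0)$ are disjoint, so that $\RR^2_+\cap\partial A_\delta^\eps$ is the disjoint union of the half-circles $S_\delta^+(-\eps,0)$ and $S_\delta^+(\eps,0)$. Hence $M(\eps,u,\lambda)=M_L+M_R$, where $M_L$ and $M_R$ denote the limits as $\delta\to0^+$ of the corresponding integrals, and it is enough to evaluate each of them separately.

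For $M_L$ I would parametrize $S_\delta^+(-\eps,0)$ by $x(t)=(-\eps,0)+\delta\,\boldsymbol\theta(t)$, with $\boldsymbol\theta(t)=(\cos t,\sin t)$, $\boldsymbol\tau(t)=(-\sin t,\cos t)$ and $t\in[0,\pi]$, so that $ds=\delta\,dt$ and the exterior unit normal to $D_r^+\setminus A_\delta^\eps$ along this arc is $\boldsymbol n=-\boldsymbol\theta(t)$, whence $x(t)\cdot\boldsymbol n\to\eps\cos t$. Writing $j=j_L$, $\beta=\beta_L$, the expansions \eqref{eq:asy-eigen-left} and \eqref{eq:asy-eigen-grad-left} give, uniformly in $t\in[0,\pi]$, $u(x(t))=\delta^{j/2}(\beta\cos(\tfrac j2 t)+o(1))$ and $\nabla u(x(t))=\delta^{j/2-1}\big(\tfrac{j\beta}{2}(\cos(\tfrac j2 t)\boldsymbol\theta(t)-\sin(\tfrac j2 t)\boldsymbol\tau(t))+o(1)\big)$; consequently $|\nabla u|^2=\delta^{j-2}(\tfrac{j^2\beta^2}{4}+o(1))$, $\tfrac{\partial u}{\partial\boldsymbol n}=-\nabla u\cdot\boldsymbol\theta=\delta^{j/2-1}(-\tfrac{j\beta}{2}\cos(\tfrac j2 t)+o(1))$, and, since $(-\eps,0)\cdot\boldsymbol\theta=-\eps\cos t$ and $(-\eps,0)\cdot\boldsymbol\tau=\eps\sin t$, together with the identity $\cos t\cos(\tfrac j2 t)+\sin t\sin(\tfrac j2 t)=\cos((\tfrac j2-1)t)$, also $x(t)\cdot\nabla u=\delta^{j/2-1}(-\tfrac{j\beta\eps}{2}\cos((\tfrac j2-1)t)+o(1))$ (the contribution $\delta\,\boldsymbol\theta\cdot\nabla u$ being of higher order). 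The term $-\tfrac\lambda2 u^2\,x\cdot\boldsymbol n$ is $O(\delta^{j})$ and negligible once multiplied by $ds$. Multiplying the whole integrand by $ds=\delta\,dt$ yields a quantity of order $\delta^{j-1}$; since $j$ is odd, this tends to $0$ whenever $j\ge3$, giving $M_L=0$ in that case. When $j=1$, using $\cos^2(t/2)=\tfrac12(1+\cos t)$ I expect the leading term of the integrand to collapse to the constant $\tfrac{\beta^2\eps}{8}\cos t-\tfrac{\beta^2\eps}{4}\cos^2(\tfrac t2)+o(1)=-\tfrac{\beta^2\eps}{8}+o(1)$, so that $M_L=-\tfrac{\eps\pi}{8}\beta_L^2$ after integration over $[0,\pi]$.

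The evaluation of $M_R$ at the right junction $(\eps,0)$ runs identically, this time using \eqref{eq:asy-eigen-right} and \eqref{eq:asy-eigen-grad-right} (sines replacing cosines), with $(\eps,0)\cdot\boldsymbol\theta=\eps\cos t$, $(\eps,0)\cdot\boldsymbol\tau=-\eps\sin t$ and $x\cdot\boldsymbol n\to-\eps\cos t$; one finds $M_R=0$ if $j_R\ge3$, while for $j_R=1$ the leading integrand becomes $-\tfrac{\beta_R^2\eps}{8}\cos t-\tfrac{\beta_R^2\eps}{4}\sin^2(\tfrac t2)=-\tfrac{\beta_R^2\eps}{8}$ thanks to $\sin^2(t/2)=\tfrac12(1-\cos t)$, giving $M_R=-\tfrac{\eps\pi}{8}\beta_R^2$. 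Adding $M_L$ and $M_R$ according to the four possibilities for $(j_L,j_R)$ produces the stated formula, and in every case $M(\eps,u,\lambda)\le0$. The one point requiring care is the handling of the remainders: the $C^{1,\sigma}$-convergence in Proposition \ref{p:asymptotics} delivers precisely the uniform control on $[0,\pi]$ needed to guarantee that, after rescaling and multiplying by $ds$, the $o(1)$ terms integrate to $o(1)$; one must also keep in mind that the field in \eqref{eq:Meps} is the dilation $x\mapsto x$ centered at the origin, so that $x\cdot\nabla u$ at a junction is governed by $(\mp\eps,0)\cdot\nabla u$ and not by the radial derivative with respect to the junction itself.
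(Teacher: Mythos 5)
Your proof is correct and follows essentially the same route as the paper: split $\RR^2_+\cap\partial A_\delta^\eps$ into the two half-circles, substitute the asymptotics from Proposition \ref{p:asymptotics}, observe that for $j\ge 3$ the rescaled integrand is $O(\delta^{j-1})\to0$, and evaluate the $j=1$ case explicitly. The only cosmetic difference is that the paper treats the $\big(\tfrac12|\nabla u|^2-\tfrac{\lambda}{2}u^2\big)\,x\cdot\boldsymbol n$ contribution separately, observing it vanishes because $\int_0^\pi\cos t\,dt=0$, whereas you fold it into the $-\tfrac{\partial u}{\partial\boldsymbol n}(x\cdot\nabla u)$ term via the half-angle identity so that the pointwise integrand collapses to the constant $-\tfrac{\eps\beta^2}{8}$; both give the same limit.
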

\begin{proof}
 Since $\partial A_\delta^\eps\cap \RR^2_+ = S_\delta^+(-\eps,0) \cup S_\delta^+(\eps,0)$, we split  \eqref{eq:Meps} into the corresponding two contributions. 
 
\smallskip\noindent  {\bf Negligible terms.} 
 On $S_\delta^+(-\eps,0)$, we have that $x=(-\eps,0)+\delta
 \boldsymbol\theta(t)$ for some $t\in[0,\pi]$
and $\boldsymbol n =-\boldsymbol\theta$,
where $\boldsymbol\theta(t)=(\cos
 t,\sin t )$; hence 
$x\cdot \boldsymbol n = \eps \cos t - \delta$. 
 From \eqref{eq:asy-eigen-left} and \eqref{eq:asy-eigen-grad-left} we have
 that $u((-\eps,0)+\delta
 \boldsymbol\theta(t))\to 0$ 
and 
 $|\nabla u((-\eps,0)+\delta
 \boldsymbol\theta(t))|^2 = \frac{j_L^2\beta_L^2}4\delta^{j_L-2}(1+o(1))$ uniformly on $[0,\pi]$ as $\delta\to0$.
 From the Dominated Convergence Theorem we then obtain 
 \begin{align*}
  \int_{S_\delta^+(-\eps,0)}& (|\nabla u|^2 
  -{\lambda}u^2)\,x\cdot \boldsymbol n\,ds\\
& =\delta\!\int_0^\pi\!\!
 (|\nabla u((-\eps,0)+\delta
 \boldsymbol\theta(t))|^2 
  -{\lambda}|u((-\eps,0)+\delta
 \boldsymbol\theta(t))|^2)\,(\eps \cos t - \delta)
\,dt\\
&  \to 
 \begin{cases}
 0, &\text{if }j_L>1,\\
 \frac{\beta_L^2\eps}4\int_0^\pi \cos t \,dt=0, &\text{if }j_L=1,
 \end{cases}
 \end{align*}
 as $\delta\to0$.

\smallskip
\noindent  {\bf Leading term.} 
 We now look at the last term 
 \[
 - \int_{S_\delta^+(-\eps,0)} \frac{\partial u}{\partial \boldsymbol n}(x\cdot\nabla u)\,ds
 = \int_{S_\delta^+(-\eps,0)}(\boldsymbol \theta \cdot\nabla u)(x\cdot\nabla u)\,ds,
 \]
 since $\boldsymbol \theta =-\boldsymbol n$ on $S_\delta^+(-\eps,0)$.
 From \eqref{eq:asy-eigen-grad-left} we have  
  \[
   \delta^{-j_L /2 +1}\nabla u((-\eps,0)+\delta
 \boldsymbol\theta(t)) \cdot \boldsymbol \theta(t) 
   \to \frac{j_L}{2} \beta_L \cos\bigg(\frac{j_L}{2} t\bigg)
  \]
  in $C^0([0,\pi])$ as $\delta\to0$. On the other hand, for
  $x=(-\eps,0)+\delta\boldsymbol \theta(t)$ we have that
 \[
  \delta^{-\frac{j_L}2 +1}\nabla u((-\eps,0)+\delta
 \boldsymbol\theta(t)) \cdot x 
  \to -\eps \frac{j_L}{2} \beta_L \left( \cos\Big(\frac{j_L}{2} t\Big) \cos t 
  + \sin\Big(\frac{j_L}{2} t\Big) \sin t\right)
 \]
  in $C^0([0,\pi])$ as $\delta\to0$.
 Thus, by the Dominated Convergence Theorem, we have 
\begin{align*}
 & - \int_{S_\delta^+(-\eps,0)} \frac{\partial u}{\partial
   \boldsymbol n}(x\cdot\nabla u)\,ds\\
&=\delta\!\int_0^\pi \!\!\!\left(\nabla u((-\eps,0)+\delta\boldsymbol
  \theta(t))\!\cdot\!((-\eps,0)+\delta\boldsymbol \theta(t))\right)
\left(\nabla u((-\eps,0)+\delta\boldsymbol
  \theta(t))\!\cdot\!\boldsymbol\theta(t)\right)dt\\ 
&\to
  \begin{cases}
    0,&\text{if }j_L>1,\\
-\frac{\eps }{4}(\beta_L)^2 \int_0^\pi \left(  \cos^2\big(\frac{t}{2} \big) \cos t + \cos\big(\frac{t}{2} \big) \sin\big(\frac{t}{2} \big)\sin t \right)dt ,&\text{if }j_L =1,
  \end{cases}\\
&=
  \begin{cases}
    0,&\text{if }j_L >1,\\
-\frac{\eps }{4}\beta_L^2 \int_0^\pi  \cos^2\big(\frac{t}{2} \big)\,dt ,&\text{if }j_L =1,
  \end{cases}\\
&= 
  \begin{cases}
   0, &\text{if }j_L >1,\\
   -\frac{\eps}{8} \beta_L^2 \pi, &\text{if }j_L =1,
  \end{cases}
 \end{align*}
as $\delta\to0$.
One can follow the same argument to compute the contribution coming from $S_\delta^+ (\eps,0)$. 
Putting together the two contributions we obtain the thesis. 
\end{proof}

This turns out to be sufficient to prove the following:
\begin{lem}\label{l:limitatezza_N_per_blowup}
For any $n\in\{1,\ldots,N\}$, 
$\eps\in(0,R_0)$,  and $r,R$ such that $\eps <r<R\leq R_0$ we have that 
 \[
  \mathcal{N}(u_n^\eps,r,\lambda_n(\eps)) + 1 
  \leq \left(\mathcal{N}(u_n^\eps,R,\lambda_n(\eps)) + 1\right)
  e^{2\lambda_N R^2}.
 \]
In particular,  for every $\delta \in (0, 1)$
there exists $r_\delta \!\in( 0,R_0)$ such that, 
 for any  $\eps\in(0,r_\delta)$ and $r\in(\eps,r_\delta)$,
$\mathcal{N}(u_N^\eps, r, \lambda_N(\eps)) \leq k +\delta$,
 $k$ being as in \eqref{eq:orderk}.
\end{lem}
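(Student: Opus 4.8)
The plan is to run a standard Almgren-type monotonicity argument for the frequency $r\mapsto\mathcal N(u_n^\eps,r,\lambda_n(\eps))$ on the range $r\in(\eps,R_0)$, the decisive new input being that the sign $M(\eps,\cdot,\cdot)\le0$ from Lemma \ref{l:segnoM} allows the boundary-junction term to be discarded; the second assertion then follows by letting $\eps\to0^+$ at a fixed small radius and using that the limiting Almgren frequency of $u_N$ at $0$ equals its vanishing order $k$. First, from \eqref{eq:derH} one has $E(u_n^\eps,r,\lambda_n(\eps))=\int_{S_r^+}u_n^\eps\,\partial_\nu u_n^\eps\,ds$, hence by Cauchy--Schwarz $E(u_n^\eps,r,\lambda_n(\eps))^2\le r\,H(u_n^\eps,r)\int_{S_r^+}|\partial_\nu u_n^\eps|^2\,ds$. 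Plugging \eqref{eq:derH} and \eqref{eq:derE} into the quotient rule for $\mathcal N=E/H$ (with $E=E(u_n^\eps,r,\lambda_n(\eps))$, $H=H(u_n^\eps,r)$), the two terms $\tfrac2H\int_{S_r^+}|\partial_\nu u_n^\eps|^2\,ds$ and $-\tfrac{2E^2}{rH^2}$ add up to a nonnegative quantity by this bound, and we are left with
\[
\frac{d}{dr}\,\mathcal N(u_n^\eps,r,\lambda_n(\eps))\ \ge\ -\frac{2}{rH}\,M(\eps,u_n^\eps,\lambda_n(\eps))\ -\ \frac{2\lambda_n(\eps)}{rH}\int_{D_r^+}(u_n^\eps)^2\,dx.
\]
Since $H>0$ for $r\in(\eps,R_0)$ by property (i), $M(\eps,u_n^\eps,\lambda_n(\eps))\le0$ by Lemma \ref{l:segnoM}, and $0\le\lambda_n(\eps)\le\lambda_N$ by \eqref{eq:bound_l_neps}, this reduces to $\tfrac{d}{dr}\mathcal N(u_n^\eps,r,\lambda_n(\eps))\ge-\tfrac{2\lambda_N}{rH}\int_{D_r^+}(u_n^\eps)^2\,dx$.

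Next I would absorb the zero-order term, for which the crux is the estimate $\int_{D_r^+}(u_n^\eps)^2\,dx\le 2r^2H(\mathcal N+1)=2r^2(E+H)$, valid for $r\in(\eps,R_0)$. I would obtain it from a Poincar\'e-type inequality
\[
\int_{D_r^+}v^2\,dx\le C\,r^2\Bigl(\int_{D_r^+}|\nabla v|^2\,dx+H(v,r)\Bigr)\qquad\bigl(v\in H^1(D_r^+)\bigr),
\]
with $C$ independent of $r$ and $\eps$ (by scaling from the fixed half-disk $D_1^+$ and a compactness argument; the half-circle trace term is essential, since for $r$ close to $\eps$ the Dirichlet portion of the flat boundary is too small to give a uniform Poincar\'e constant on its own). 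Writing $\int_{D_r^+}|\nabla u_n^\eps|^2=E+\lambda_n(\eps)\int_{D_r^+}(u_n^\eps)^2$ and absorbing, the smallness of $R_0$ (recall $R_0<\tfrac1{2\sqrt{\lambda_N}}$, shrunk further in terms of $C$ if needed) delivers this estimate and, simultaneously, $\mathcal N(u_n^\eps,r,\lambda_n(\eps))+1>0$. Therefore $\tfrac{d}{dr}\log\bigl(\mathcal N(u_n^\eps,r,\lambda_n(\eps))+1\bigr)\ge-4\lambda_N r$; integrating over $[r,R]$ gives $\mathcal N(u_n^\eps,r,\lambda_n(\eps))+1\le\bigl(\mathcal N(u_n^\eps,R,\lambda_n(\eps))+1\bigr)e^{2\lambda_N(R^2-r^2)}\le\bigl(\mathcal N(u_n^\eps,R,\lambda_n(\eps))+1\bigr)e^{2\lambda_N R^2}$, i.e.\ the first assertion.

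For the second assertion I would use this inequality with a fixed small $R\in(0,R_0)$, getting $\mathcal N(u_N^\eps,r,\lambda_N(\eps))+1\le\bigl(\mathcal N(u_N^\eps,R,\lambda_N(\eps))+1\bigr)e^{2\lambda_N R^2}$ for $\eps<r<R$, and then pass to the limit $\eps\to0^+$. By spectral stability ($\lambda_N(\eps)\to\lambda_N$; $\|u_N^\eps\|_{H^1(\Omega)}$ bounded by $\|u_N^\eps\|_{L^2}=1$ and $\int_\Omega|\nabla u_N^\eps|^2=\lambda_N(\eps)\le\lambda_N$; hence, using simplicity of $\lambda_N$, convergence of the Dirichlet norms and the normalization \eqref{eq:7}, $u_N^\eps\to u_N$ in $H^1(\Omega)$) one gets $\mathcal N(u_N^\eps,R,\lambda_N(\eps))\to\mathcal N(u_N,R,\lambda_N)$ for each fixed small $R$ (here $H(u_N,R)>0$ for $R$ small by \eqref{eq:orderk}). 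Finally $\mathcal N(u_N,R,\lambda_N)\to k$ as $R\to0^+$ --- the classical identification of the limiting Almgren frequency with the vanishing order (cf.\ \cite{FF}), read off from \eqref{eq:orderk}. Hence, given $\delta\in(0,1)$, one fixes $R\in(0,R_0)$ with $\bigl(\mathcal N(u_N,R,\lambda_N)+1\bigr)e^{2\lambda_N R^2}-1<k+\tfrac\delta2$, then $\eps_1\in(0,R)$ so small that the same holds with $u_N$ replaced by $u_N^\eps$ and $k+\tfrac\delta2$ by $k+\delta$ for all $\eps<\eps_1$, and sets $r_\delta:=\eps_1$; for $\eps<r_\delta$ and $r\in(\eps,r_\delta)$ the monotonicity inequality with outer radius $R$ yields $\mathcal N(u_N^\eps,r,\lambda_N(\eps))\le k+\delta$.

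The main obstacle is the second step: establishing a Poincar\'e inequality with a half-circle trace term whose constant is uniform in $\eps$ all the way down to $r\sim\eps$, and carrying out the absorption so as to land precisely on the factor $e^{2\lambda_N R^2}$; a secondary difficulty is the $H^1$-convergence $u_N^\eps\to u_N$ needed in the last step, which is what pins the boundary frequency to $k$ rather than merely to a finite bound (as would follow from property (ii) alone).
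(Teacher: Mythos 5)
Your proposal follows the same route as the paper (which defers the details to \cite[Section 5]{abatangelo2015sharp}): compute $\tfrac{d}{dr}\mathcal N$ by the quotient rule from \eqref{eq:derH}--\eqref{eq:derE}, discard the Cauchy--Schwarz--positive combination and the junction term $M\le 0$ from Lemma~\ref{l:segnoM}, absorb the zero-order term via a trace-Poincar\'e bound, and integrate a Gr\"onwall inequality; the second assertion then follows by $H^1$-convergence of eigenfunctions and the identification $\mathcal N(u_N,R,\lambda_N)\to k$ forced by \eqref{eq:orderk}. All the structural steps are in order.

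The one point you flag as the ``main obstacle'' --- a trace-Poincar\'e inequality with a constant that is uniform down to $r\sim\eps$ and small enough to land on $e^{2\lambda_N R^2}$ --- has an elementary resolution that does not require compactness and hence makes the whole argument tight. For any $v\in H^1(D_r^+)$, testing the divergence theorem with the vector field $v^2 x$ over $D_r^+$ and noting that $x\cdot\nu=0$ on the flat part $\Gamma_r$ gives
\[
2\int_{D_r^+}v^2\,dx= r\int_{S_r^+}v^2\,ds-2\int_{D_r^+}v\,(x\cdot\nabla v)\,dx,
\]
whence, by Young's inequality $2r\,|v||\nabla v|\le v^2 + r^2|\nabla v|^2$,
\[
\int_{D_r^+}v^2\,dx\ \le\ r^2\Bigl(H(v,r)+\int_{D_r^+}|\nabla v|^2\,dx\Bigr).
\]
This holds with constant $1$ for every $v$ and every $r$, irrespective of boundary conditions, so the uniformity-in-$\eps$ issue (and the concern that the Dirichlet portion of the flat boundary shrinks) simply does not arise. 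Applying it to $v=u_n^\eps$, writing $\int_{D_r^+}|\nabla u_n^\eps|^2=E+\lambda_n(\eps)\int_{D_r^+}(u_n^\eps)^2$, and absorbing using $\lambda_n(\eps)r^2\le \lambda_N R_0^2<\tfrac14$ gives $\int_{D_r^+}(u_n^\eps)^2\le \tfrac43 r^2(E+H)\le 2r^2(E+H)$, which feeds into your Gr\"onwall step to produce exactly the factor $e^{2\lambda_N R^2}$. This same inequality also yields $E+H\ge(1-\lambda_n(\eps)r^2)\bigl(H+\int_{D_r^+}|\nabla u_n^\eps|^2\bigr)>0$, so $\mathcal N+1>0$ and the logarithmic derivative is legitimate. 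With this fix, your proposal is a complete proof.
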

\begin{proof}
Once the negative sign of $M(\eps,u,\lambda)$ is established
(Lemma \ref{l:segnoM}) the proof proceeds as in \cite[Section
5]{abatangelo2015sharp}. 
\end{proof}

Lemma \ref{l:limitatezza_N_per_blowup} is the key point for a priori estimates on energy of the blow up sequence in half-disks. These estimates are in turn fundamental to deduce estimates on the difference of eigenvalues, as it appears in the following subsection.

\subsection{Estimates on the difference of eigenvalues}\label{sec:C3}

Firstly,  we are going to estimate the Rayleigh quotient for $\lambda_N(\eps)$. 
 Let $R> 1$. With $R_0$ as in the previous section, for every $\eps\in (0,\eps_0)$
 such that $R\eps< R_0$ we define the function
 \begin{equation*}
v_{R,\eps} = 
\begin{cases}
v_{R,\eps}^{int}, &\text{in } D_{R\eps}^+, \\[3pt]
u_N, &\text{in } \Omega \setminus D_{R\eps}^+,
\end{cases}
\end{equation*}
where
$v_{R,\eps}^{int}$ is the unique solution to
\begin{equation}\label{eq:eqvint}
\begin{cases}
-\Delta v_{R,\eps}^{int} = 0, &\text{in } D_{R\eps}^+, \\[3pt]
     v_{R,\eps}^{int} = u_N, &\text{on } S_{R\eps}^+, \\[3pt]
     v_{R,\eps}^{int} = 0, &\text{on } \Gamma_{R\eps}\setminus\Gamma_\eps, \\[3pt]
     \frac{\partial v_{R,\eps}^{int}}{\partial \nu}=0, &\text{on } \Gamma_\eps,
   \end{cases}
\end{equation}
i.e., by the Dirichlet principle, the unique solution to the minimization problem 
\begin{equation}\label{eq:19-1}
 \int_{D_{R\eps}^+} |\nabla v_{R,\eps}^{int}|^2 dx= \min \left\{{\textstyle{ \int_{D_{R\eps}^+}}} |\nabla
   v|^2:
 v\in H^1(D_{R\eps}^+),\, v= u_N \text{ on }S_{R\eps}^+,
 \, v=0 \text{ on }\Gamma_{R\eps}\setminus \Gamma_\eps\right\}.
\end{equation}
In order to handle the denominator of the Rayleigh quotient we proceed with a Gram-Schmidt process. 
Since we are taking into account $u_1,\ldots,u_{N-1}$ as the first $N-1$ test functions for the Rayleigh quotient,
which are already orthonormalized in $L^2(\Omega)$, we define
\[
\tilde u_{R,\eps}= \frac{v_{R,\eps} - \sum_{j=1}^{N-1} \left(\int_\Omega v_{R,\eps}\,u_j\right) u_j}
{\nor{v_{R,\eps} - \sum_{j=1}^{N-1} \left(\int_\Omega v_{R,\eps}\,u_j\right) u_j}}_{L^2(\Omega)} .
\]
Using the Dirichlet Principle and the asymptotics
\eqref{eq:orderk} one can easily prove the following energy estimates
for  $v_{R,\eps}^{int}$ in small disks. 
\begin{lem}\label{l:estimatesv}
There exists a constant $C>0$ (independent of $\eps$ and $R$) such
that, for every $R>1$ and $\eps\in(0,\eps_0)$ such that $R\eps< R_0$,  the following estimates hold:
 \begin{align}
  \int_{D_{R\eps}^+} |\nabla v_{R,\eps}^{int}|^2dx \leq C(R\eps)^{2k},\label{eq:energyvint}\\
  \int_{S_{R\eps}^+} | v_{R,\eps}^{int}|^2ds\leq C(R\eps)^{2k+1},\label{eq:tracevint}\\
  \int_{D_{R\eps}^+} | v_{R,\eps}^{int}|^2 dx\leq C(R\eps)^{2k+2}.\label{eq:L2vint}
 \end{align}
\end{lem}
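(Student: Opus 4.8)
The plan is to estimate the three quantities by exploiting the variational characterization of $v_{R,\eps}^{int}$ together with the explicit asymptotics of $u_N$ near the origin. The key observation is that \eqref{eq:orderk} gives us precise control on the boundary data: on $S_{R\eps}^+$, where $u_N(r\cos t,r\sin t)=\beta r^k\sin(kt)+o(r^k)$ uniformly (in fact in $C^{1,\tau}$), and on $\Gamma_{R\eps}\setminus\Gamma_\eps$, where $u_N$ vanishes. The natural strategy is to produce an explicit competitor in the minimization problem \eqref{eq:19-1} whose energy is $O((R\eps)^{2k})$, and then transfer this bound to the other two quantities via harmonicity and standard elliptic estimates.

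First I would prove \eqref{eq:energyvint}. Consider the rescaling $x=R\eps\, y$, mapping $D_{R\eps}^+$ to $D_1^+$; the rescaled function $\widetilde v(y)=(R\eps)^{-k}v_{R,\eps}^{int}(R\eps\,y)$ solves a minimization problem with boundary data $(R\eps)^{-k}u_N(R\eps\,y)$ on $S_1^+$, which by \eqref{eq:orderk} converges to $\beta\psi_k$ in $C^{1,\tau}(S_1^+)$ as $\eps\to0$, and with zero data on a portion of $\Gamma_1$. A fixed admissible competitor for the rescaled problem — for instance a smooth extension to $D_1^+$ of a function close to $\beta\psi_k$ on $S_1^+$ and vanishing near the relevant part of the boundary, or more simply $\beta\psi_k$ itself cut off appropriately — has energy bounded independently of $\eps$ and $R$, since $\|(R\eps)^{-k}u_N(R\eps\,\cdot)\|_{C^{1,\tau}(S_1^+)}$ is uniformly bounded. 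By the Dirichlet principle, $\int_{D_1^+}|\nabla\widetilde v|^2\le C$, and unscaling (the Dirichlet energy in dimension two is scale-invariant, so $\int_{D_{R\eps}^+}|\nabla v_{R,\eps}^{int}|^2=(R\eps)^{2k}\int_{D_1^+}|\nabla\widetilde v|^2$) gives \eqref{eq:energyvint}.

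For \eqref{eq:tracevint} and \eqref{eq:L2vint}, I would work with the rescaled function $\widetilde v$ on $D_1^+$, which is harmonic, vanishes on $\Gamma_{1}\setminus\Gamma_{1/R}$, and has $H^1(D_1^+)$-norm bounded uniformly by the previous step together with a Poincaré-type inequality (valid since $\widetilde v$ vanishes on a boundary portion of positive measure). The trace theorem gives $\int_{S_1^+}|\widetilde v|^2\,ds\le C\|\widetilde v\|_{H^1(D_1^+)}^2\le C$, and unscaling (noting $ds$ scales by $R\eps$ and $\widetilde v$ by $(R\eps)^{-k}$) yields \eqref{eq:tracevint}. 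Likewise $\int_{D_1^+}|\widetilde v|^2\,dy\le C$ unscales to \eqref{eq:L2vint} (with the volume element contributing $(R\eps)^2$). The uniformity of all constants in both $\eps$ and $R$ follows because after rescaling everything is reduced to a bound on $D_1^+$ that depends only on $\sup_{0<\eps<\eps_0,\ R\eps<R_0}\|(R\eps)^{-k}u_N(R\eps\,\cdot)\|_{C^{1}(S_1^+)}$, which is finite by \eqref{eq:orderk}.

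The main obstacle is making the uniformity in $R$ fully rigorous: as $R$ grows, the Dirichlet portion $\Gamma_{1}\setminus\Gamma_{1/R}$ of $\partial D_1^+$ shrinks toward the single point, which could a priori degrade the Poincaré constant. However, $\widetilde v$ still vanishes on the full arcs $\eta=0$ and $\eta=\pi$ outside $(-1/R,1/R)$, hence on a portion of $\partial D_1^+$ of length bounded below uniformly in $R>1$ (indeed at least the arcs from $1/R$ to $1$ on each side have length $\ge 1-1/R\ge$ a fixed constant for $R\ge2$, and the case $R\in(1,2]$ is handled separately), so the relevant Poincaré and trace constants remain uniformly controlled. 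One should also note that the competitor used in the first step must be admissible for every $R>1$ simultaneously; taking a competitor supported away from a neighborhood of the origin — say vanishing on $D_{1/2}^+$ and matching the boundary data on $S_1^+$ — works for all $R\ge 2$, and again small $R$ is a compact, harmless range.
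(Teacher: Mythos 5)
Your approach is consistent with the (unstated) proof the paper has in mind: the paper only says the estimates ``follow from the Dirichlet Principle and the asymptotics \eqref{eq:orderk},'' and your rescaling argument is one concrete way to implement that idea. Two remarks, one about a shortcut you missed and one about a small unresolved gap.

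\emph{Shortcut.} The cleanest admissible competitor in \eqref{eq:19-1} is $u_N$ itself: it coincides with itself on $S_{R\eps}^+$, vanishes on all of $\Gamma_{R\eps}$ (hence on $\Gamma_{R\eps}\setminus\Gamma_\eps$ for every $R>1$ simultaneously, avoiding your $R$-dependent competitor construction), and has $\int_{D_{R\eps}^+}|\nabla u_N|^2\leq C(R\eps)^{2k}$ directly by \eqref{eq:orderk}. This gives \eqref{eq:energyvint} with no rescaling. Moreover, since $v_{R,\eps}^{int}=u_N$ on $S_{R\eps}^+$ by construction, the trace bound \eqref{eq:tracevint} is immediate from \eqref{eq:orderk} alone, with no need for the trace theorem on the rescaled function.

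\emph{Gap.} Your treatment of \eqref{eq:L2vint} for $R\in(1,2]$ is not actually complete. You write that ``small $R$ is a compact, harmless range,'' but the issue is not compactness in $R$: for each fixed $R$ near $1$, $\eps$ still ranges over $(0,R_0/R)$, and the Dirichlet portion $\Gamma_1\setminus\Gamma_{1/R}$ of the rescaled boundary has length $2(1-1/R)\to 0$ as $R\to 1^+$, so the Poincar\'e constant you invoke genuinely degenerates, uniformly in $\eps$. The clean fix is to use a Poincar\'e inequality with boundary term on $S_1^+$ rather than on the Dirichlet portion: since the trace of the rescaled function on $S_1^+$ is $(R\eps)^{-k}u_N(R\eps\,\cdot)$, which is uniformly bounded in $L^2(S_1^+)$ by \eqref{eq:orderk}, one has
\[
\int_{D_1^+}|\widetilde v|^2\,dy \leq C\left(\int_{D_1^+}|\nabla\widetilde v|^2\,dy+\int_{S_1^+}|\widetilde v|^2\,ds\right)\leq C,
\]
with a constant depending only on the fixed domain $D_1^+$; unscaling then gives \eqref{eq:L2vint} uniformly in all $R>1$. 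This also bypasses the need for any case-splitting on $R$.
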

To our aim, for every $R>1$ we define
$v_R$ as the unique solution to the minimization problem 
\begin{equation*}
 \int_{D_R^+} |\nabla v_R|^2dx = \min \left\{ \int_{D_R^+} |\nabla
   v|^2dx:
 v\in H^1(D_R^+),\ v= \psi_k \text{ on }S_R^+,
 \ v=0 \text{ on }\Gamma_R\setminus \Gamma_1\right\}.
\end{equation*}
The function $v_R$ is the unique weak solution to
\begin{equation}\label{eq:vR}
 \begin{cases}
  -\Delta v_R =0, &\text{in }D_R^+,\\
  v_R= \psi_k, &\text{ on }S_R^+,\\
  v_R=0, &\text{ on }\Gamma_R\setminus \Gamma_1,\\
\frac{\partial v_{R}}{\partial \nu}=0, &\text{on } \Gamma_1.
 \end{cases}
\end{equation}
As well, we introduce the following blow-up functions
\begin{equation}\label{eq:25}
 U_\eps(x):= \frac{u_N(\eps x)}{\eps^k}, \quad V_\eps^R(x):=
 \frac{v_{R,\eps}^{int}(\eps x)}{\eps^k}.
\end{equation}
 Combining \eqref{eq:orderk} with the Dirichlet Principle,  
we can establish the following convergences
\begin{align}
 U_\eps \to \beta\psi_k \text{ as }\eps\to0 \text{ in
 }H^1(D_R^+)\text{  for every $R>1$};\label{eq:convergence1}\\
 V_\eps^R \to \beta v_R \text{ for $\eps\to0$ and for any $R>1$};\label{eq:convergence2}\\
 v_R\to\Phi_k \text{ in $H^{1}(D_r^+)$ as $R\to+\infty$ for any $r>1$.}\label{eq:convergence3}  
\end{align}

\begin{prop}\label{p:stimafacile}
For any $R>1$ and $\eps\in(0,\eps_0)$ such that $R\eps<R_0$, we have that
 \[
  \frac{\lambda_N(\eps) - \lambda_N}{\eps^{2k}} \leq f_R(\eps)  
 \]
 where 
 \[
\lim_{\eps\to 0 } f_R(\eps) =\beta^2 \int_{S_R^+} \psi_k
 \bigg(\frac{\partial v_R}{\partial \nu} - \frac{\partial \psi_k}{\partial\nu} \bigg)\,ds
 \]
 with $\psi_k$ defined in \eqref{eq:psi_k} and $v_R$ in \eqref{eq:vR}. 
\end{prop}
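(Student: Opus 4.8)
The plan is to prove the stated upper bound directly from the min--max characterization \eqref{eqMinMaxDND} of $\lambda_N(\eps)$, using the function $v_{R,\eps}$ and its Gram--Schmidt modification $\tilde u_{R,\eps}$ as the $N$-th trial function. First I would check that $v_{R,\eps}\in\mathcal Q_\eps$: it belongs to $H^1(\Omega)$ because it coincides with $u_N$ across $S_{R\eps}^+$ and solves an elliptic boundary value problem inside $D_{R\eps}^+$; its trace vanishes on $\partial\Omega\setminus\Gamma_{R\eps}$ since there it equals $u_N\in H^1_0(\Omega)$, and it vanishes on $\Gamma_{R\eps}\setminus\Gamma_\eps$ by the third line of \eqref{eq:eqvint}, while on $\Gamma_\eps$ no constraint is imposed, which is exactly what membership in $\mathcal Q_\eps$ allows. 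Since $v_{R,\eps}$ and $u_N$ differ only on $D_{R\eps}^+$, where both have $L^2$-norm $O((R\eps)^{k+1})$ by Lemma \ref{l:estimatesv} and the vanishing order $k$ of $u_N$ at $0$, we have $v_{R,\eps}\to u_N$ in $L^2(\Omega)$; hence for $\eps$ small $v_{R,\eps}\notin\mathrm{span}\{u_1,\dots,u_{N-1}\}$, $\tilde u_{R,\eps}$ is well defined with $\|\tilde u_{R,\eps}\|=1$, and $\mathcal E_\eps:=\mathrm{span}\{u_1,\dots,u_{N-1},\tilde u_{R,\eps}\}$ is an $N$-dimensional subspace of $\mathcal Q_\eps$, so that $\lambda_N(\eps)\le\max_{u\in\mathcal E_\eps}q(u)/\|u\|^2$.

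Next I would estimate this maximum. As $\{u_1,\dots,u_{N-1},\tilde u_{R,\eps}\}$ is $L^2$-orthonormal by construction and $q(u_i,u_j)=\lambda_i\delta_{ij}$, writing $u=\sum_{j<N}a_ju_j+c\,\tilde u_{R,\eps}$ gives $q(u)=\sum_{j<N}\lambda_ja_j^2+2c\sum_{j<N}a_j\,q(u_j,\tilde u_{R,\eps})+c^2\,q(\tilde u_{R,\eps})$ and $\|u\|^2=\sum_{j<N}a_j^2+c^2$. The needed error estimates come from $v_{R,\eps}=u_N$ outside $D_{R\eps}^+$ and $\langle u_N,u_j\rangle=0$: these yield $q(v_{R,\eps},u_j)=\int_{D_{R\eps}^+}\nabla(v_{R,\eps}^{int}-u_N)\cdot\nabla u_j$ and $\langle v_{R,\eps},u_j\rangle=\int_{D_{R\eps}^+}(v_{R,\eps}^{int}-u_N)u_j$, which by Lemma \ref{l:estimatesv}, the vanishing order, Cauchy--Schwarz and $\|\nabla u_j\|_{L^2(D_{R\eps}^+)}\to0$ are $o(\eps^k)$ and $O(\eps^{k+2})$ respectively; carrying them through the normalization gives $q(u_j,\tilde u_{R,\eps})=o(\eps^k)$ and $q(\tilde u_{R,\eps})=q(v_{R,\eps})+o(\eps^{2k})$, with $q(v_{R,\eps})=O(1)$. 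I would then use Young's inequality $2c\,a_j\,q(u_j,\tilde u_{R,\eps})\le\delta a_j^2+\delta^{-1}c^2 q(u_j,\tilde u_{R,\eps})^2$ with $\delta:=\tfrac12(\lambda_N-\lambda_{N-1})>0$ (positive since $\lambda_N$ is simple): for $\eps$ small, $\lambda_j+\delta\le\lambda_N-\delta<q(v_{R,\eps})+o(\eps^{2k})$ and $\delta^{-1}\sum_{j<N}q(u_j,\tilde u_{R,\eps})^2=o(\eps^{2k})$, so that $q(u)\le\big(q(v_{R,\eps})+o(\eps^{2k})\big)\|u\|^2$ for every $u\in\mathcal E_\eps$. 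Hence $\lambda_N(\eps)\le q(v_{R,\eps})+o(\eps^{2k})$.

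It then remains to expand $q(v_{R,\eps})-\lambda_N$. Since $\int_\Omega|\nabla u_N|^2=\lambda_N$ and $v_{R,\eps}=u_N$ off $D_{R\eps}^+$, one has $q(v_{R,\eps})-\lambda_N=\int_{D_{R\eps}^+}|\nabla v_{R,\eps}^{int}|^2-\int_{D_{R\eps}^+}|\nabla u_N|^2$, and the rescaling $x=\eps y$ with \eqref{eq:25} turns this into $\eps^{2k}\big(\int_{D_R^+}|\nabla V_\eps^R|^2-\int_{D_R^+}|\nabla U_\eps|^2\big)$. By the $H^1(D_R^+)$ convergences \eqref{eq:convergence1}--\eqref{eq:convergence2}, $\int_{D_R^+}|\nabla V_\eps^R|^2-\int_{D_R^+}|\nabla U_\eps|^2\to\beta^2\big(\int_{D_R^+}|\nabla v_R|^2-\int_{D_R^+}|\nabla\psi_k|^2\big)$. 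Finally, Green's formula on $D_R^+$ with the boundary conditions in \eqref{eq:vR} (recalling $\psi_k=0$ on $\Gamma_R$, $v_R=0$ on $\Gamma_R\setminus\Gamma_1$, $\partial_\nu v_R=0$ on $\Gamma_1$, $v_R=\psi_k$ on $S_R^+$) gives $\int_{D_R^+}|\nabla v_R|^2=\int_{S_R^+}\psi_k\,\partial_\nu v_R$ and $\int_{D_R^+}|\nabla\psi_k|^2=\int_{S_R^+}\psi_k\,\partial_\nu\psi_k$, so the limit equals $\beta^2\int_{S_R^+}\psi_k(\partial_\nu v_R-\partial_\nu\psi_k)$. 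Combining with the previous paragraph, $\lambda_N(\eps)-\lambda_N\le q(v_{R,\eps})-\lambda_N+g(\eps)$ with $g(\eps)=o(\eps^{2k})$; setting $f_R(\eps):=\eps^{-2k}\big(q(v_{R,\eps})-\lambda_N+g(\eps)\big)$ yields the statement.

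The genuinely delicate point is the bookkeeping in the second paragraph: one must ensure that forcing $L^2$-orthogonality to $u_1,\dots,u_{N-1}$ perturbs the Rayleigh quotient of $v_{R,\eps}$ only at order $o(\eps^{2k})$. The naive cross terms are only $O(\eps^{k+1})$, which is not $o(\eps^{2k})$, and it is their effectively quadratic contribution (captured via the off-diagonal structure and Young's inequality) together with the sharp energy bounds of Lemma \ref{l:estimatesv} that makes the argument work. Everything else is routine; conceptually this is the ``easy'' half of the eigenvalue estimate, namely the upper bound produced by an explicit competitor, the matching lower bound being the part requiring the Almgren monotonicity analysis.
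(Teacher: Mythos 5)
Your proof is correct and follows essentially the same strategy as the paper: you use the same competitor $v_{R,\eps}$ with Gram--Schmidt correction $\tilde u_{R,\eps}$, plug it into the min--max characterization, estimate the cross terms via Lemma~\ref{l:estimatesv}, pass to the blow-up limit with \eqref{eq:convergence1}--\eqref{eq:convergence2}, and apply Green's formula. The only substantive difference is in how you handle the maximization of the quadratic form over the unit sphere in the $N$-dimensional trial space: the paper invokes the abstract \cite[Lemma 6.1]{abatangelo2015sharp} with parameters $\mu(\eps)$, $\alpha$, $\sigma(\eps)$, $M$, whereas you reprove this step directly via Young's inequality with $\delta=\tfrac12(\lambda_N-\lambda_{N-1})$, absorbing the off-diagonal cross terms of size $o(\eps^k)$ into the diagonal at order $o(\eps^{2k})$. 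This is slightly more self-contained but conceptually equivalent. One minor note: your estimate $q(v_{R,\eps},u_j)=o(\eps^k)$ is weaker than the paper's $O(\eps^{k+1})$ but still suffices, since what is needed after squaring is $o(\eps^{2k})$.
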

\begin{proof}
We note that 
\begin{align}
 \notag&\bigg\|v_{R,\eps} - \sum_{j=1}^{N-1} \left(\int_\Omega v_{R,\eps}\,u_j\,dx\right) u_j\bigg\|_{L^2(\Omega)}^2= \nor{v_{R,\eps}}_{L^2(\Omega)}^2 - \sum_{j=1}^{N-1} \bigg(\int_\Omega v_{R,\eps}\,u_j\,dx\bigg)^{\!\!2}\\
 \notag&= 1 - \int_{D_{R\eps}^+} u_N^2\,dx + \int_{D^+_{R\eps}} |
         v_{R,\eps}^{int}|^2\,dx  - \sum_{j=1}^{N-1} 
 \bigg(\int_\Omega v_{R,\eps}\,u_j\,dx\bigg)^{\!\!2}\\
 &= 1 + O(\eps^{2k+2})\quad\text{as }\eps\to 0 \label{eq:L2gramschmidt}
\end{align}
in view of \eqref{eq:orderk} and \eqref{eq:L2vint} and since, for all
$j<N$,
\begin{equation}\label{eq:L2mixedGramSchmidt}
 \int_\Omega v_{R,\eps}\,u_j\,dx
 = -\int_{D_{R\eps}^+} u_N\,u_j\,dx + \int_{D_{R\eps}^+}
 v_{R,\eps}^{int}\,u_j\,dx = O(\eps^{k+2})\text{ as }\eps\to0.
\end{equation}
The functions $u_1,\ldots,u_{N-1},\tilde u_{R,\eps}$ are linearly
independent (since they are nontrivial and mutually
  orthogonal) and belong to $\mathcal Q_\eps$; 
if we plug a linear combination of them into the Rayleigh quotient \eqref{eqMinMaxDND}
we obtain
\begin{align*}
&\lambda_N(\eps)-\lambda_N     \leq \left(\max_
  {\substack{(\alpha_1,\dots, \alpha_{N})\in \RR^{N}\\
  \sum_{j=1}^{N}|\alpha_j|^2 =1}}
  \int_\Omega \bigg| \nabla \bigg( \sum_{j=1}^{N-1}\alpha_j u_j +
  \alpha_N \tilde u_{R,\eps} \bigg) \bigg|^2
\right)-\lambda_N\\  
   &= \max_{\substack{(\alpha_1,\dots, \alpha_{N})\in \RR^{N}\\   \sum_{j=1}^{N}|\alpha_j|^2 =1}}
\!\!\left[  \sum_{j=1}^{N-1}\alpha_j^2\lambda_j + \alpha_N^2 \int_\Omega\!| \nabla \tilde u_{R,\eps} |^2
    + 2 \sum_{j=1}^{N-1}{\alpha_j} \alpha_N \int_\Omega\!\nabla u_j\! \cdot\! \nabla \tilde u_{R,\eps}\!-\!\lambda_N\!\right]\\  
   &= \max_{\substack{(\alpha_1,\dots, \alpha_{N})\in \RR^{N}\\   \sum_{j=1}^{N}|\alpha_j|^2 =1}}
   \Bigg[\sum_{j=1}^{N-1}\alpha_j^2 (\lambda_j - \lambda_N) + \alpha_N^2 \left( \int_\Omega | \nabla \tilde u_{R,\eps} |^2 -\lambda_N\right) + 2 \sum_{j=1}^{N-1}{\alpha_j} \alpha_N \int_\Omega \nabla u_j \cdot \nabla \tilde u_{R,\eps} \Bigg].
\end{align*}
In view of \eqref{eq:energyvint} and \eqref{eq:orderk} we
  have that 
\begin{equation}\label{eq:9}
 \int_\Omega \nabla v_{R,\eps}\cdot \nabla u_j = 
-\int_{D_{R\eps}^+} \nabla u_N\cdot \nabla u_j\,dx + \int_{D_{R\eps}^+}
 \nabla v_{R,\eps}^{int}\cdot\nabla u_j\,dx=
O(\eps^{k+1}) .
\end{equation}
Moreover, from 
convergences \eqref{eq:convergence1}--\eqref{eq:convergence3} we have
\begin{align}
\notag  &\int_\Omega |\nabla v_{R,\eps}|^2 dx-\lambda_N 
\notag = -\int_{D_{R\eps}^+} |\nabla u_N|^2 dx +  \int_{D_{R\eps}^+}
  |\nabla v_{R,\eps}^{int}|^2dx \\
\notag&= \eps^{2k}\bigg(-\int_{D_{R}^+} |\nabla U_\eps|^2 dx +  \int_{D_{R}^+}
  |\nabla V_\eps^R|^2dx \bigg)= \eps^{2k}\beta^2\bigg(-\int_{D_{R}^+} |\nabla \psi_k|^2 dx +  \int_{D_{R}^+}
  |\nabla v_R|^2dx+o(1)\bigg)\\
\label{eq:10}&   = \eps^{2k} \beta^2\left( \int_{S_R^+} \psi_k \bigg(\frac{\partial v_R}{\partial \nu} - \frac{\partial \psi_k}{\partial\nu} \bigg) ds
   + o(1)\right) \quad \text{as }\eps\to0. 
\end{align}
Collecting \eqref{eq:L2gramschmidt},
  \eqref{eq:L2mixedGramSchmidt}, \eqref{eq:9}, and \eqref{eq:10},
we obtain that 
\begin{align*}
& \int_\Omega | \nabla \tilde u_{R,\eps} |^2 -\lambda_N \\
 &= \frac{\int_\Omega |\nabla v_{R,\eps}|^2 + \sum_{j=1}\limits^{N-1} \!\left(\int_\Omega v_{R,\eps}\,u_j\right)^2 \!\lambda_j 
 - 2\sum\limits_{j=1}^{N-1} \left(\int_\Omega v_{R,\eps}\,u_j\right) \int_\Omega \nabla v_{R,\eps}\cdot \nabla u_j}
 {\Big\|v_{R,\eps} - \sum_{j=1}^{N-1} \left(\int_\Omega v_{R,\eps}\,u_j\right) u_j\Big\|_{L^2(\Omega)}^2} - \lambda_N\\
 &= \eps^{2k} \beta^2\left( \int_{S_R^+} \psi_k \bigg(\frac{\partial v_R}{\partial \nu} - \frac{\partial \psi_k}{\partial\nu} \bigg) ds
   + o(1)\right) \quad \text{as }\eps\to0.
\end{align*}
From
  \eqref{eq:L2gramschmidt}, \eqref{eq:L2mixedGramSchmidt}, and
  \eqref{eq:9} it follows that, for every $j<N$, 
  \begin{align*}
     \int_\Omega \nabla u_j \cdot \nabla \tilde u_{R,\eps} =O(\eps^{k+1}) \quad \text{as }\eps\to0.
  \end{align*}
Hence, the assumptions in \cite[Lemma 6.1]{abatangelo2015sharp} are fulfilled by
 $\mu(\eps)= 
\int_{S_R^+} \psi_k \big(\frac{\partial v_R}{\partial \nu} - \frac{\partial \psi_k}{\partial\nu} \big) ds
   + o(1)$, $\alpha=1$, $\sigma(\eps)= \beta^2\eps^{2k}$ and $M=2k-1$ and the conclusion follows.
\end{proof}

In the sequel we denote 
\begin{equation}\label{eq:11}
 \kappa_R := \int_{S_R^+} \psi_k \bigg(\frac{\partial v_R}{\partial \nu} - \frac{\partial \psi_k}{\partial\nu} \bigg)\,ds.
\end{equation}
\begin{lem}\label{l:limkappaR}
Let $\kappa_R$ be defined in \eqref{eq:11}. Then
$\lim_{R\to+\infty} \kappa_R = 2\,\mathfrak{m}_k$,
with $\mathfrak{m}_k$ as in \eqref{eq:Ik}.
\end{lem}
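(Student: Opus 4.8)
The plan is to reduce both $\kappa_R$ and $2\mathfrak m_k$ to integrals over the segment $\Gamma_1=[-1,1]\times\{0\}$ of the boundary traces of $v_R$, respectively $w_k$, against the fixed weight $\frac{\partial\psi_k}{\partial x_2}(x_1,0)=kx_1^{k-1}$, and then to pass to the limit using the local convergence \eqref{eq:convergence3} together with the continuity of the trace operator. First I would rewrite $\kappa_R$. Setting $\tilde v_R:=v_R-\psi_k$, this function is harmonic in $D_R^+$, it vanishes on $S_R^+$ and on $\Gamma_R\setminus\Gamma_1$ (recall that $\psi_k(r\cos t,r\sin t)=r^k\sin(kt)$ vanishes for $t\in\{0,\pi\}$, so $\psi_k\equiv0$ on all of $\Gamma_R$), and it equals $v_R$ on $\Gamma_1$. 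Applying Green's second identity to the harmonic pair $\tilde v_R$, $\psi_k$ on $D_R^+$, using that $\psi_k\equiv0$ on $\Gamma_R$, that $\tilde v_R=0$ on $S_R^+\cup(\Gamma_R\setminus\Gamma_1)$, that $\tilde v_R=v_R$ on $\Gamma_1$, and that the outer normal to $D_R^+$ along $\Gamma_1$ is $\nu=(0,-1)$ (so $\frac{\partial\psi_k}{\partial\nu}(x_1,0)=-kx_1^{k-1}$), I expect to obtain
\[
\kappa_R=\int_{S_R^+}\psi_k\,\tfrac{\partial\tilde v_R}{\partial\nu}\,ds=\int_{\Gamma_1}v_R\,\tfrac{\partial\psi_k}{\partial\nu}\,ds=-\int_{-1}^1 kx_1^{k-1}\,v_R(x_1,0)\,dx_1 .
\]
Since $v_R$ has at worst the classical $r^{1/2}$-type behaviour at the two Dirichlet--Neumann junctions $(\pm1,0)$ (of the same type as in Proposition \ref{p:asymptotics}), all the integrands here are integrable, and Green's identity is to be justified by excising half-disks $D_\delta^+(\pm1,0)$, applying the identity on the remaining domain, and letting $\delta\to0^+$: the contributions on the half-circles $S_\delta^+(\pm1,0)$ are $O(\delta^{1/2})$ and vanish in the limit; this is the same excision argument used for the boundary term \eqref{eq:Meps} in Lemma \ref{l:segnoM}.

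Next I would record the analogous identity for $w_k$: testing the weak formulation of \eqref{eq:wk} with $w_k\in\mathcal Q$ and using the Neumann condition $\frac{\partial w_k}{\partial\nu}=-\frac{\partial\psi_k}{\partial\nu}$ on $\Gamma_1$ gives $\int_{\RR^2_+}|\nabla w_k|^2\,dx=-\int_{\Gamma_1}\frac{\partial\psi_k}{\partial\nu}w_k\,ds=\int_{-1}^1 kx_1^{k-1}w_k(x_1,0)\,dx_1$, so that, by \eqref{eq:Ik}, $2\mathfrak m_k=-\int_{\RR^2_+}|\nabla w_k|^2\,dx=-\int_{-1}^1 kx_1^{k-1}w_k(x_1,0)\,dx_1$. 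Comparing with the formula for $\kappa_R$, it only remains to show that $v_R(\cdot,0)\to w_k(\cdot,0)$ in $L^2(-1,1)$ as $R\to+\infty$. This I would get from \eqref{eq:convergence3} with $r=2$: $v_R\to\Phi_k=\psi_k+w_k$ in $H^1(D_2^+)$, and since $\Gamma_1$ is a compact subset of the flat part of $\partial D_2^+$ and $\psi_k\equiv0$ on $\Gamma_1$, the continuity of the trace operator $H^1(D_2^+)\to L^2(\partial D_2^+)$ yields $v_R|_{\Gamma_1}\to w_k|_{\Gamma_1}$ in $L^2(\Gamma_1)$. As the weight $kx_1^{k-1}$ is bounded on $[-1,1]$, I can then pass to the limit in the formula for $\kappa_R$ and conclude $\kappa_R\to-\int_{-1}^1 kx_1^{k-1}w_k(x_1,0)\,dx_1=2\mathfrak m_k$, which is the claim.

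I expect the main obstacle to be the rigorous justification of the integration by parts in the first step, owing to the (mild, $L^2$) gradient singularity of $v_R$ at the Dirichlet--Neumann junctions $(\pm1,0)$; as indicated, this is dispatched by the excision argument of Lemma \ref{l:segnoM}, the relevant half-circle integrals being $O(\delta^{1/2})$. As an alternative that avoids the $\Gamma_1$-representation, one could instead rewrite $\kappa_R$ as a circle integral over $S_1^+$ and invoke Lemma \ref{l:xi1}, but the route sketched above seems the most direct.
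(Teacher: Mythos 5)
Your proof is correct, and it takes a genuinely different route from the paper's. The paper proceeds via Fourier analysis on circles: it introduces the $k$-th Fourier coefficient $\sigma_R(r)=\int_0^\pi v_R(r\cos t,r\sin t)\sin(kt)\,dt$, solves the explicit ODE $\big(r^{1+2k}(r^{-k}\sigma_R)'\big)'=0$ it satisfies, extracts $\kappa_R=\frac{2kR^{2k}}{R^{2k}-1}\big(\frac\pi2-\sigma_R(1)\big)$ from the resulting closed-form, uses \eqref{eq:convergence3} to pass to the limit in $\sigma_R(1)$, and finally appeals to Lemma \ref{l:xi1} to identify $\lim_{R\to\infty}\sigma_R(1)$ with $-\mathfrak m_k/k+\pi/2$. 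Your argument instead transfers $\kappa_R$ directly from $S_R^+$ to $\Gamma_1$ by Green's second identity applied to the harmonic pair $(\tilde v_R,\psi_k)$, obtaining $\kappa_R=-\int_{-1}^1 kx_1^{k-1}v_R(x_1,0)\,dx_1$; comparing with the Euler--Lagrange identity $2\mathfrak m_k=-\int_{-1}^1 kx_1^{k-1}w_k(x_1,0)\,dx_1$ for the minimizer of $J_k$, trace continuity and \eqref{eq:convergence3} finish the job. Your route is shorter and more structural: it bypasses the Fourier machinery and Lemma \ref{l:xi1} entirely, and makes the appearance of $2\mathfrak m_k$ transparent as the same linear functional evaluated at $w_k$. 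What the paper's approach buys is an explicit formula for $\kappa_R$ at every finite $R$ (not just its limit) and a mode-by-mode picture that is reused in other parts of the appendix. Your justification of the Green identity by excising half-disks around $(\pm1,0)$ is the right device and matches the one used for \eqref{eq:Meps}; the boundary contributions there are in fact $O(\delta^{3/2})$ rather than $O(\delta^{1/2})$ (since $\psi_k$ itself vanishes at $(\pm1,0)$), but either way they tend to zero, so nothing is affected.
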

\begin{proof}
From \eqref{eq:vR} it
follows that the function $\sigma_R$ defined as 
\begin{equation}\label{eq:upsilon_R}
\sigma_R(r):= 
\int_0^{\pi}  v_R(r(\cos t,\sin t)) \sin(kt) \,dt,\quad r\in[1,R],
\end{equation}
satisfies the equation
  $(r^{1+2k}(r^{-k}\sigma_R)')'=0$ and hence, for some $c_{R}\in\RR$, $\big( r^{-k}\sigma_R(r) \big)'=
\frac{c_{R}}{r^{1+2k}}$ in  $(1,R)$. 
Integrating the previous equation over $(1,r)$ we obtain 
\begin{equation}\label{eq:125}
 r^{-k} \sigma_R(r) -\sigma_R(1) =
\frac{c_{R}}{2k}\bigg(1-\frac1{r^{2k}}\bigg), \quad \text{for all }r\in(1,R]. 
\end{equation}
Since \eqref{eq:vR} implies that 
$\sigma_R(R)=\frac12\pi R^{k}$,
from \eqref{eq:125} we deduce that 
\[
\frac{c_R}{2k}=\frac{R^{2k}}{R^{2k}-1}\bigg(\frac\pi2-\sigma_R(1)\bigg)
\]
and then 
\begin{align*}
  \sigma_R(r) 
  =r^{k} \frac{\tfrac\pi2 R^{2k}-\sigma_R(1)}{R^{2k}-1}-r^{-k}\frac{R^{2k}}{R^{2k}-1}(\tfrac\pi2-\sigma_R(1)),
\end{align*}
for all $r\in(1,R]$. If we differentiate the previous identity and 
evaluate it in $r=R$, we obtain 
\begin{equation}\label{eq:127}
\sigma_R'(R)=k\,\frac{R^{k-1}}{R^{2k}-1}\Big(\tfrac\pi2(R^{2k}+1)-2\sigma_R(1)\Big).
\end{equation}
On the other hand, 
differentiating \eqref{eq:upsilon_R}, we obtain that
\begin{equation}\label{eq:128}
  \sigma_R'(r)= r^{-1-k} \int_{S_r^+} \nabla v_R\cdot \nu\, \psi_k\,ds
\end{equation}
and then from \eqref{eq:127} and \eqref{eq:128}
\begin{equation}\label{eq:12}
 \sigma_R'(R)= R^{-1-k} \int_{S_R^+} \psi_k\, \frac{\partial v_R}{\partial\nu}\,ds
 = k\frac{R^{k-1}}{R^{2k}-1} \Big(\tfrac\pi2 (R^{2k}+1) - 2\sigma_R(1) \Big) .
\end{equation}
As well, from the definition of $\psi_k$ \eqref{eq:psi_k} we
have that
\begin{equation}\label{eq:13}
 \int_{S_R^+} \psi_k\,\frac{\partial \psi_k}{\partial\nu}\,ds = \frac\pi2 k\,R^{2k}.
\end{equation}
 Combining \eqref{eq:12} and \eqref{eq:13} we obtain that 
\[
\kappa_R=
\frac{2k\, R^{2k}}{R^{2k}-1} \bigg(\frac\pi2-\sigma_R(1)\bigg)
=
\frac{2k\, R^{2k}}{R^{2k}-1} \bigg(\frac\pi2-\int_0^{\pi}  v_R(\cos t,\sin t) \sin(kt) \,dt\bigg)
\]
and hence, via \eqref{eq:convergence3},
\[
\lim_{R\to+\infty} \kappa_R =
2k \bigg(\frac\pi2-\int_0^{\pi}  \Phi_k(\cos t,\sin t) \sin(kt) \,dt\bigg).
\]
By Lemma \ref{l:xi1}, the proof is concluded.
\end{proof}

We are now going to estimate the Rayleigh quotient for $\lambda_N$. 
 Let $R\geq 1$. Choosing $R_0$ as in the previous subsection, for every $\eps\in (0,\eps_0)$
 such that $R\eps< R_0$ and for any $j=1,\ldots,N$ we define the function
 \begin{equation}\label{eq:24}
w_{j,R,\eps} = 
\begin{cases}
w_{j,R,\eps}^{int}, &\text{in } D_{R\eps}^+, \\[3pt]
w_{j,R,\eps}^{ext}, &\text{in } \Omega \setminus D_{R\eps}^+,
\end{cases}
\end{equation}
where, letting $u_j^\eps$ be as in \eqref{eqDNDeps}--\eqref{eq:14},
\[
w_{j,R,\eps}^{ext} = u_j^\eps
\quad \text{ in } \Omega \setminus D_{R\eps}^+,
\]
and $w_{j,R,\eps}^{int}$ is the unique solution to
\begin{equation}\label{eq:eqwint}
\begin{cases}
-\Delta w_{j,R,\eps}^{int} = 0, &\text{in } D_{R\eps}^+, \\[3pt]
     w_{j,R,\eps}^{int} = u_j^\eps, &\text{on } S_{R\eps}^+, \\[3pt]
     w_{j,R,\eps}^{int} = 0, &\text{on } \Gamma_{R\eps}.
   \end{cases}
\end{equation}
By the Dirichlet principle, we have that
  $w_{j,R,\eps}^{int}$ is  the unique solution to the minimization problem 
\begin{equation}\label{eq:19}
 \int_{D_{R\eps}^+} |\nabla w_{j,R,\eps}^{int}|^2 dx= \min \left\{{\textstyle{ \int_{D_{R\eps}^+}}} |\nabla
   v|^2dx:
 v\in H^1(D_{R\eps}^+),\, v= u_j^\eps\text{ on }S_{R\eps}^+,
 \, v=0 \text{ on }\Gamma_{R\eps}\right\}.
\end{equation}
In order to handle the denominator we proceed with a Gram-Schmidt process. 
We then define 
\begin{equation}\label{eq:hatuj} 
\hat u_{j,R,\eps}:= \dfrac{\tilde w_{j,R,\eps}}{\|\tilde w_{j,R,\eps}\|_{L^2(\Omega)}}, \quad j=1,\ldots, N, 
\end{equation}
where $\tilde w_{N,R,\eps} := w_{N,R,\eps}$ and 
\begin{equation*}
\tilde w_{j,R,\eps} := w_{j,R,\eps} - \sum_{\ell= j+1}^{N}\dfrac{\int_\Omega
  w_{j,R,\eps} {\tilde w_{\ell,R,\eps}}\,dx}{\|\tilde w_{\ell,R,\eps}\|_{L^2(\Omega)}^2} \tilde w_{\ell,R,\eps} 
\quad \text{for }j=1,\ldots, N-1.
\end{equation*}
We can derive the following estimate of
the energy of eigenfunctions $u_j^\eps$ in half-disks of radius of order $\eps$.

\begin{lem}\label{l:sec_con}
  For $1\leq j\leq N$ and $\eps\in(0,\eps_0)$, let $u_j^\eps$ be as in
  \eqref{eqDNDeps}--\eqref{eq:14}.  
  For every $\delta\in(0,1/2)$, there exists $\mu_\delta>1$
such that, for all $R\geq \mu_\delta$,
  $\eps<\frac{R_0}R$, and $1\leq j\leq N$,
\begin{align}
\label{eq:34}&\int_{S^+_{R\eps}}|u_j^\eps|^2\,ds\leq C (R\eps)^{3-2\delta},\\
\label{eq:35}&\int_{D^+_{R\eps}}|\nabla u_j^\eps|^2\,dx\leq
C (R\eps)^{2-2\delta},\\
\label{eq:36}&\int_{D^+_{R\eps}}|u_j^\eps|^2\,dx\leq
C (R\eps)^{4-2\delta},\\
\label{eq:L2tracewjint}&\int_{S^+_{R\eps}}|w_{j,R,\eps}^{int}|^2\,ds\leq C (R\eps)^{3-2\delta},\\
\label{eq:L2gradwjint}&\int_{D^+_{R\eps}}|\nabla w_{j,R,\eps}^{int}|^2\,dx\leq
C (R\eps)^{2-2\delta},\\
\label{eq:L2wjint}&\int_{D^+_{R\eps}}|w_{j,R,\eps}^{int}|^2\,dx\leq
C (R\eps)^{4-2\delta},
\end{align}
for some constant $C>0$ depending only on $R_0$ and $\lambda_N$.
\end{lem}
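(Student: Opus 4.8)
The plan is to derive the estimates \eqref{eq:34}--\eqref{eq:36} on $u_j^\eps$ from sharp two-sided bounds on the Almgren frequency $\mathcal N(u_j^\eps,\cdot,\lambda_j(\eps))$ — whose upper part is furnished by Lemma \ref{l:limitatezza_N_per_blowup} — together with the $L^2$-normalization, and then to obtain the bounds \eqref{eq:L2tracewjint}--\eqref{eq:L2wjint} on the harmonic replacements $w_{j,R,\eps}^{int}$ from \eqref{eq:34}--\eqref{eq:36} by the Dirichlet principle \eqref{eq:19}. Throughout, $C$ denotes a positive constant depending only on $R_0$ and $\lambda_N$ (and possibly on $\delta$), which may change from line to line. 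The preliminary point I would record first is the strong convergence $u_j^\eps\to u_j$ in $H^1(\Omega)$ as $\eps\to0^+$ (along subsequences, and up to replacing $u_j$ by another $L^2$-normalized eigenfunction in the same eigenspace when $\lambda_j$ is multiple): this follows from the spectral stability $\lambda_j(\eps)\to\lambda_j$ (the continuity result of \cite{Gad}, or Appendix \ref{a:B}), from $\int_\Omega|\nabla u_j^\eps|^2=\lambda_j(\eps)\le\lambda_N$, and from weak lower semicontinuity; here $u_j\in H^1_0(\Omega)$ is a Dirichlet eigenfunction vanishing at $0$ to some order $k_j\ge1$.

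The heart of the argument is the following claim: for every $\delta\in(0,\tfrac12)$ there exist $\mu_\delta>1$, $\rho_\delta\in(0,R_0)$, $\eps_\delta>0$ and $\Lambda_\delta>0$ such that
\begin{equation*}
1-\delta\ \le\ \mathcal N\big(u_j^\eps,r,\lambda_j(\eps)\big)\ \le\ \Lambda_\delta\qquad\text{for }1\le j\le N,\ \eps\in(0,\eps_\delta),\ r\in\big(\mu_\delta\eps,\rho_\delta\big).
\end{equation*}
The upper bound is immediate from Lemma \ref{l:limitatezza_N_per_blowup}: the strong $H^1(D_{R_0}^+)$ convergence keeps $\mathcal N(u_j^\eps,R_0,\lambda_j(\eps))$ bounded, and the inequality there transmits the bound to all $r\in(\eps,R_0)$. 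For the lower bound I would first choose $\rho_\delta$ so small that $\mathcal N(u_j,r,\lambda_j)\ge1-\tfrac\delta2$ for $r\le\rho_\delta$ and $1\le j\le N$ (legitimate since $\mathcal N(u_j,r,\lambda_j)\to k_j\ge1$ as $r\to0^+$), and then argue by contradiction: a failure yields $\eps_n\to0$ and $r_n\in(\mu_{\delta}\eps_n,\rho_\delta)$ with $r_n/\eps_n\to\infty$ along which $\mathcal N(u_j^{\eps_n},r_n,\lambda_j(\eps_n))<1-\delta$; if $r_n\not\to0$ the $H^1$-convergence and the choice of $\rho_\delta$ give a contradiction, while if $r_n\to0$ one blows up at scale $r_n$, noting that since $\eps_n/r_n\to0$ the Neumann segment $\Gamma_{\eps_n/r_n}$ collapses to $\{0\}$, so the renormalized functions $u_j^{\eps_n}(r_n\,\cdot)$ — bounded in $H^1_{\rm loc}$ by the upper frequency bound and a doubling estimate — converge to a nontrivial harmonic function on $\RR^2_+$ vanishing on $\partial\RR^2_+$ and homogeneous of degree $k_j\ge1$ (it is the blow-up of $u_j$, which is where a quantitative, localized control of $u_j^{\eps_n}-u_j$ near $0$ enters), whose frequency on $D_1^+$ is exactly $k_j\ge1$, contradicting the strict inequality. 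It is crucial here that at scales $r\sim\eps$ the relevant limit profile is instead $\beta_j(\psi_{k_j}+w_{k_j})$ as in Theorem \ref{t:gad}, whose frequency need not be close to $k_j$; this is precisely why the estimates are stated only for radii $R\eps$ with $R\ge\mu_\delta$ large, and the fact that the frequency recovers the value $k_j-\delta$ once $R$ is large rests, via monotonicity of the Almgren frequency of the limit profile, on the sign $M\le0$ established in Lemma \ref{l:segnoM}.

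Granting the claim, the estimates follow by now standard manipulations (the range $\rho_\delta\le R\eps<R_0$ being trivial, since there $(R\eps)^{2-2\delta}\gtrsim1$ and $\int_{S_{R\eps}^+}|u_j^\eps|^2\le\|u_j^\eps\|_{H^1(\Omega)}^2\le C$, so one may assume $R\eps\le\rho_\delta$): integrating $\tfrac{d}{dr}\log H(u_j^\eps,r)=\tfrac2r\mathcal N(u_j^\eps,r,\lambda_j(\eps))$ (from \eqref{eq:derH}) between $R\eps$ and $\rho_\delta$, and using $H(u_j^\eps,\rho_\delta)\to H(u_j,\rho_\delta)\le C$, gives $H(u_j^\eps,R\eps)\le C(R\eps)^{2-2\delta}$, i.e.\ \eqref{eq:34}; the $L^2$-bounds \eqref{eq:35}--\eqref{eq:36} then follow from $\int_{D_{R\eps}^+}|u_j^\eps|^2=\int_0^{R\eps}sH(u_j^\eps,s)\,ds$ together with \eqref{eq:34}, a Poincaré-type inequality on $D_{R\eps}^+$, the identity $\int_{D_{R\eps}^+}|\nabla u_j^\eps|^2=\mathcal N(u_j^\eps,R\eps,\lambda_j(\eps))H(u_j^\eps,R\eps)+\lambda_j(\eps)\int_{D_{R\eps}^+}|u_j^\eps|^2$, and an even reflection of $u_j^\eps$ across $\Gamma_\eps$ (valid as $\mu_\delta\eps<R_0<\eps_0$, making $u_j^\eps$ a Helmholtz solution near $0$ with almost-nonnegative frequency) to control the inner contribution $\int_{D_{\mu_\delta\eps}^+}|u_j^\eps|^2$. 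For the harmonic replacements, \eqref{eq:L2tracewjint} is just \eqref{eq:34} since $w_{j,R,\eps}^{int}$ and $u_j^\eps$ share the trace on $S_{R\eps}^+$; \eqref{eq:L2gradwjint} follows from the Dirichlet principle \eqref{eq:19} by testing with $(1-\zeta_\eps)u_j^\eps$, where $\zeta_\eps(x)=\zeta(x/(\mu_\delta\eps))$ and $\zeta\in C^\infty_c(D_{1/2})$, $\zeta\equiv1$ on $D_{1/4}$, so that this competitor equals $u_j^\eps$ on $S_{R\eps}^+$ and vanishes on $\Gamma_{R\eps}$ (once $\mu_\delta\ge4$) and has energy $\le 2\int_{D_{R\eps}^+}|\nabla u_j^\eps|^2+C(\mu_\delta\eps)^{-2}\int_{D_{\mu_\delta\eps}^+}|u_j^\eps|^2\le C(R\eps)^{2-2\delta}$ by \eqref{eq:35}--\eqref{eq:36}; and \eqref{eq:L2wjint} follows from \eqref{eq:L2gradwjint}, \eqref{eq:L2tracewjint} and the inequality $\int_{D_{R\eps}^+}|w_{j,R,\eps}^{int}|^2\le C(R\eps)^2\int_{D_{R\eps}^+}|\nabla w_{j,R,\eps}^{int}|^2+C(R\eps)\int_{S_{R\eps}^+}|w_{j,R,\eps}^{int}|^2$ for functions vanishing on $\Gamma_{R\eps}$. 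The main obstacle is thus the lower frequency bound down to the scale $\mu_\delta\eps$: Lemma \ref{l:limitatezza_N_per_blowup} only bounds the frequency from above at small radii, so recovering the lower bound genuinely requires the mesoscopic blow-up above, the quantitative localized control of $u_j^\eps-u_j$ near the origin, and the frequency monotonicity of the limiting profiles, where once more the sign of the junction term $M$ (Lemma \ref{l:segnoM}) is decisive.
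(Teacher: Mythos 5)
Your proposal has the same architecture as the paper's own proof, which is itself only a sketch: the bounds on $u_j^\eps$ come from Almgren-frequency control (boundedness of $\{u_j^\eps\}_\eps$ in $H^1$ together with the lower bound $H(u_j^\eps,R_0)\ge C_{R_0}$ give a bound on $\mathcal N(u_j^\eps,R_0,\lambda_j(\eps))$, and Lemma \ref{l:limitatezza_N_per_blowup} propagates it), while the bounds on $w_{j,R,\eps}^{int}$ come from the Dirichlet principle \eqref{eq:19}; the details are deferred to \cite[Lemmas 5.8 and 6.2]{abatangelo2015sharp}. You correctly observe that the upper bound on $H(u_j^\eps,R\eps)$ encoded in \eqref{eq:34} requires the frequency to be bounded \emph{below} by $1-\delta$ at scales $r\ge\mu_\delta\eps$, that Lemma \ref{l:limitatezza_N_per_blowup} alone gives only the bound from above, and that the lower bound must ultimately be traced to the limiting Dirichlet eigenfunctions vanishing at $0$ to order at least one; your contradiction-plus-blow-up scheme is a legitimate way to extract it. One step should be corrected: in the sub-case $r_n\to0$, $r_n/\eps_n\to\infty$ you assert that the blow-up limit "is the blow-up of $u_j$" and hence homogeneous of degree $k_j$, invoking a "quantitative localized control of $u_j^{\eps_n}-u_j$ near $0$" that you neither establish nor actually need. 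The limit is merely some nontrivial harmonic function $v$ on $\RR^2_+$ with $v=0$ on $\partial\RR^2_+$, hence a series $\sum_{m\ge1}a_m r^m\sin(m\theta)$ whose Almgren quotient at radius one equals $\big(\sum_{m\ge1} m\,a_m^2\big)/\big(\sum_{m\ge1} a_m^2\big)\ge1$; this already contradicts $\mathcal N\le1-\delta$ with no reference to $k_j$ or to any relation between $v$ and $u_j$. Dropping the unjustified identification with the blow-up of $u_j$ both simplifies and repairs the argument.
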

\begin{proof}
From \eqref{eq:equneps} and \eqref{eq:bound_l_neps} we know
  that $\{u_j^\eps\}_{\eps\in(0,\eps_0)}$ is bounded in $H^1$; hence,
  from of property (ii) at page \pageref{prop--ii} we deduce that, for
  $\eps$ sufficiently small, $N(u_j^\eps, R_0,\lambda_j(\eps))$ is
  bounded uniformly with respect to $\eps$. Estimates
  \eqref{eq:34}--\eqref{eq:36} then follow from Lemma
  \ref{l:limitatezza_N_per_blowup}; we refer to \cite[Lemma
  5.8]{abatangelo2015sharp} for a detailed proof in a similar
  problem. Estimates \eqref{eq:L2tracewjint}--\eqref{eq:L2wjint} can
  be proved combining estimates \eqref{eq:34}--\eqref{eq:36}  with
  the Dirichlet principle (see \cite[Lemma 6.2]{abatangelo2015sharp} for details in a similar
  problem).
\end{proof}

For $\delta\in(0,1/2)$ fixed, let $\mu_\delta$ be as in Lemma \ref{l:sec_con}.
For $\eps$ sufficiently small in such a way that $\mu_\delta\eps<R_0$, we introduce the following blow-up functions:
\begin{equation}\label{eq:blowup}
 \hat U_\eps(x):= \frac{u_N^\eps(\eps x)}{\sqrt{H(u_N^\eps,\mu_\delta\eps)}}, \qquad 
 W_\eps^R(x):= \frac{w_{N,R,\eps}^{int}(\eps x)}{\sqrt{H(u_N^\eps,\mu_\delta\eps)}}.
\end{equation}
We notice that, by scaling,
\begin{equation}\label{eq:26}
  \frac{1}{\mu_\delta}\int_{S_{\mu_\delta}^+}|\hat U_\eps|^2\,ds=1.
\end{equation}

\begin{thm}\label{t:stime_blowup}
Let  $\delta\in(0,1/2)$ be fixed and let $r_\delta>0$ be as in Lemma
\ref{l:limitatezza_N_per_blowup}.
For all $R\geq \mu_\delta$,  
\begin{equation}\label{eq:blowupbounded}
\text{the family of functions }\big\{ \hat U_\eps(x):\ R\eps<r_\delta  \big\}
 \text{ is bounded in $H^{1}(D_R^+)$}.
\end{equation}
In particular, for all $R\geq \mu_\delta$,  
\begin{align}
&\int_{D_{R\eps}^+} \abs{\nabla u_N^\eps}^2dx=O(H(u_N^\eps,\mu_\delta\eps)),\quad\text{as }\eps\to0^+,\label{eq:L2graduNint}\\
&\int_{S_{R\eps}^+} |u_N^\eps|^2ds=O(\eps H(u_N^\eps,\mu_\delta\eps)),\quad\text{as }\eps\to0^+,\\
&\int_{D_{R\eps}^+} |u_N^\eps|^2dx=O(\eps^2 H(u_N^\eps,\mu_\delta\eps)),\quad\text{as }\eps\to0^+. \label{eq:L2uNint} 
\end{align}
\end{thm}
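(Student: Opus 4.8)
The plan is to transport the Almgren frequency bound of Lemma \ref{l:limitatezza_N_per_blowup} to the rescaled functions $\hat U_\eps$ and then run a doubling argument. Writing $\tilde\lambda:=\lambda_N(\eps)\eps^2$, the conformal invariance of the Dirichlet integral in dimension two together with elementary changes of variable give, for $r>0$ with $R\eps<R_0$,
\[
\int_{D_r^+}|\nabla\hat U_\eps|^2\,dx=\frac{1}{H(u_N^\eps,\mu_\delta\eps)}\int_{D_{r\eps}^+}|\nabla u_N^\eps|^2\,dx,\qquad
H(\hat U_\eps,r)=\frac{H(u_N^\eps,r\eps)}{H(u_N^\eps,\mu_\delta\eps)},
\]
and correspondingly $\mathcal N(\hat U_\eps,r,\tilde\lambda)=\mathcal N(u_N^\eps,r\eps,\lambda_N(\eps))$. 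In particular the normalization \eqref{eq:26} reads $H(\hat U_\eps,\mu_\delta)=1$, and $\hat U_\eps$ solves $-\Delta\hat U_\eps=\tilde\lambda\,\hat U_\eps$ in $D_R^+$ with Neumann datum on $\Gamma_1$ and Dirichlet datum on the rest of $\{x_2=0\}\cap\overline{D_R^+}$.

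Since $R\eps<r_\delta$, for every $r\in[\mu_\delta,R]$ one has $r\eps\in(\eps,r_\delta)$, so Lemma \ref{l:limitatezza_N_per_blowup} yields $\mathcal N(\hat U_\eps,r,\tilde\lambda)\le k+\delta$. Combining this with $H(\hat U_\eps,r)>0$ (property (i) of Subsection \ref{s:monotonicity}) and the rescaled form of \eqref{eq:derH}, namely $\frac{d}{dr}\log H(\hat U_\eps,r)=\frac2r\,\mathcal N(\hat U_\eps,r,\tilde\lambda)$, I would integrate from $\mu_\delta$ to $r$ to get the doubling bound $H(\hat U_\eps,r)\le (r/\mu_\delta)^{2(k+\delta)}$ on $[\mu_\delta,R]$. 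The coarea formula then bounds $\int_{D_R^+\setminus D_{\mu_\delta}^+}|\hat U_\eps|^2\,dx$ by $\int_{\mu_\delta}^R r(r/\mu_\delta)^{2(k+\delta)}\,dr$, a constant depending only on $R,\mu_\delta,k,\delta$. For the inner disk $D_{\mu_\delta}^+$ I would use the frequency bound at $r=\mu_\delta$, namely $E(\hat U_\eps,\mu_\delta,\tilde\lambda)=\mathcal N(\hat U_\eps,\mu_\delta,\tilde\lambda)\,H(\hat U_\eps,\mu_\delta)\le k+\delta$, that is $\int_{D_{\mu_\delta}^+}|\nabla\hat U_\eps|^2\,dx\le k+\delta+\tilde\lambda\int_{D_{\mu_\delta}^+}|\hat U_\eps|^2\,dx$; since $\hat U_\eps$ vanishes on the segment $\{(x_1,0):1\le|x_1|\le\mu_\delta\}$, a Poincaré--Friedrichs inequality on $D_{\mu_\delta}^+$ controls $\int_{D_{\mu_\delta}^+}|\hat U_\eps|^2\,dx$ by $\int_{D_{\mu_\delta}^+}|\nabla\hat U_\eps|^2\,dx$, and since $\tilde\lambda=\lambda_N(\eps)\eps^2\to0$ this closes into a uniform bound on $\|\hat U_\eps\|_{H^1(D_{\mu_\delta}^+)}$ for $\eps$ small. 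Together with the annular bound this gives a uniform $L^2(D_R^+)$ bound; the full gradient bound, hence \eqref{eq:blowupbounded}, then follows by testing the equation against $\hat U_\eps$ on $D_R^+$, since the boundary term on $S_R^+$ equals $E(\hat U_\eps,R,\tilde\lambda)\le(k+\delta)(R/\mu_\delta)^{2(k+\delta)}$ and the contributions at the two junctions are negligible.

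Finally, the estimates \eqref{eq:L2graduNint}--\eqref{eq:L2uNint} are obtained by undoing the scaling: \eqref{eq:blowupbounded} bounds $\int_{D_R^+}|\nabla\hat U_\eps|^2\,dx$, $\int_{D_R^+}|\hat U_\eps|^2\,dx$, and — by the trace inequality, or directly from $H(\hat U_\eps,R)\le(R/\mu_\delta)^{2(k+\delta)}$ — also $\int_{S_R^+}|\hat U_\eps|^2\,ds$, all uniformly in $\eps$; the identities recorded above translate these into the three asserted bounds on $u_N^\eps$ in terms of $H(u_N^\eps,\mu_\delta\eps)$.

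The step I expect to be delicate is the control of $\hat U_\eps$ near the two scaled Dirichlet--Neumann junctions $(\pm1,0)$, which enter both the doubling argument — through the singular term $M(\eps,\cdot,\cdot)$ in \eqref{eq:Meps}, whose favourable sign is provided by Lemma \ref{l:segnoM} — and the integration by parts on $D_R^+$; there the required uniform estimates rest on the local asymptotics of Proposition \ref{p:asymptotics} (equivalently, on rewriting $\hat U_\eps$ near each junction, after a gauge change, as a solution of an Aharonov--Bohm equation of bounded Almgren frequency). This is exactly where the scheme of \cite[Lemma 5.8]{abatangelo2015sharp}, which we follow, requires its most careful estimates; the rest is the conformal bookkeeping and the Poincaré-type inequalities described above.
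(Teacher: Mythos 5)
Your proof is correct and follows the same strategy the paper implicitly relies on (it omits the argument and refers to Lemma \ref{l:sec_con} and to \cite[Theorem 5.9]{abatangelo2015sharp}): rescale the Almgren quantities so that $\mathcal N(\hat U_\eps,r,\tilde\lambda)=\mathcal N(u_N^\eps,r\eps,\lambda_N(\eps))$ and $H(\hat U_\eps,\mu_\delta)=1$, use the uniform frequency bound from Lemma \ref{l:limitatezza_N_per_blowup} to obtain a doubling estimate for $H(\hat U_\eps,\cdot)$, and propagate it to $L^2$ and gradient bounds by coarea, a Poincar\'e inequality on $D_{\mu_\delta}^+$ (using $\tilde\lambda=\lambda_N(\eps)\eps^2\to0$), and integration by parts through $E=\mathcal N\cdot H$ on $S_R^+$. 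You correctly flag the one point you leave implicit, namely the uniform control of the singular boundary contributions at the scaled junctions $(\pm1,0)$, which is exactly where Lemma \ref{l:segnoM} and Proposition \ref{p:asymptotics} are needed to justify both the sign of the term $M$ in the monotonicity and the negligibility of junction terms in the integration by parts.
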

\begin{proof}We omit the proof which can be derived from the
  monotonicity result given in  Lemma
  \ref{l:limitatezza_N_per_blowup} following the same
  argument as Lemma \ref{l:sec_con}; for details in an analogous
  problem  we refer to
  \cite[Theorem 5.9]{abatangelo2015sharp}.  
\end{proof}

By the Dirichlet principle and Theorem \ref{t:stime_blowup} we have
also the following estimates.
\begin{lem}\label{l:stimeWepsR}
For all $R>\max\{2,\mu_\delta\}$,  
\begin{equation}\label{eq:WepsRbounded}
\text{the family of functions }\big\{W_\eps^R: R\eps<{r_\delta}\big\}
 \text{ is bounded in $H^{1}(D_R^+)$}.
\end{equation}
In particular, for all $R>\max\{2,\mu_\delta\}$,  
\begin{align}
&\int_{D_{R\eps}^+} \abs{\nabla w_{N,R,\eps}^{int}}^2dx=O(H(u_N^\eps,\mu_\delta\eps)),\quad\text{as }\eps\to0^+,\label{eq:L2gradwNint}\\
\label{eq:21}&\int_{S_{R\eps}^+} |w_{N,R,\eps}^{int}|^2dx=O(\eps H(u_N^\eps,\mu_\delta\eps)),\quad\text{as }\eps\to0^+,\\
\label{eq:22}&\int_{D_{R\eps}^+} |w_{N,R,\eps}^{int}|^2dx=O(\eps^2 H(u_N^\eps,\mu_\delta\eps)),\quad\text{as }\eps\to0^+.  
\end{align}
\end{lem}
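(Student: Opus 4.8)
The plan is to use the variational characterization \eqref{eq:19} of $w_{N,R,\eps}^{int}$ as a Dirichlet-energy minimizer, together with the uniform $H^1$-bound on $\hat U_\eps$ from Theorem \ref{t:stime_blowup}. First I would observe that, after the rescaling $x\mapsto\eps x$ used in \eqref{eq:blowup}, the function $W_\eps^R$ is exactly the minimizer of $\int_{D_R^+}|\nabla V|^2\,dx$ over all $V\in H^1(D_R^+)$ with $V=\hat U_\eps$ on $S_R^+$ and $V=0$ on $\Gamma_R$. Hence it suffices to exhibit one admissible competitor whose Dirichlet energy is controlled by $\|\hat U_\eps\|_{H^1(D_R^+)}^2$, which is uniformly bounded (for $R\eps<r_\delta$) by \eqref{eq:blowupbounded}.

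For the competitor I would fix a cutoff $\chi\in C^\infty(\RR^2)$ with $0\le\chi\le1$, $\chi\equiv0$ on $D_1$ and $\chi\equiv1$ outside $D_2$; since $R>2$ we have $\chi\equiv1$ on $S_R^+$, so $V:=\chi\,\hat U_\eps$ still equals $\hat U_\eps$ on $S_R^+$. The point to check is that $V$ vanishes (in the trace sense) on the \emph{whole} segment $\Gamma_R$: on $\Gamma_1\subset\overline{D_1}$ this holds because $\chi$ vanishes there, while on $\Gamma_R\setminus\Gamma_1$ it holds because $\hat U_\eps$ does, $u_N^\eps$ being zero on $\partial\Omega\setminus\Gamma_\eps$ by \eqref{eqDNDeps}. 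Then the Dirichlet principle yields
\[
\int_{D_R^+}|\nabla W_\eps^R|^2\,dx\le\int_{D_R^+}|\nabla(\chi\hat U_\eps)|^2\,dx\le C\Big(\|\nabla\hat U_\eps\|_{L^2(D_R^+)}^2+\|\hat U_\eps\|_{L^2(D_R^+)}^2\Big),
\]
which is bounded uniformly in $\eps$ by \eqref{eq:blowupbounded}.

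To upgrade this to a full $H^1(D_R^+)$ bound I would invoke a Poincaré inequality: since $W_\eps^R$ has zero trace on the entire diameter $\Gamma_R$ of $D_R^+$, its odd reflection across $\Gamma_R$ belongs to $H^1(D_R)$ with vanishing mean, so the Poincaré--Wirtinger inequality on $D_R$ gives $\|W_\eps^R\|_{L^2(D_R^+)}\le C_R\|\nabla W_\eps^R\|_{L^2(D_R^+)}$. This establishes \eqref{eq:WepsRbounded}. Finally, \eqref{eq:L2gradwNint}, \eqref{eq:21} and \eqref{eq:22} follow by undoing the scaling $x\mapsto\eps x$ in \eqref{eq:blowup} — using in addition the trace inequality on $D_R^+$ for \eqref{eq:21} — exactly as \eqref{eq:34}--\eqref{eq:36} were derived from the boundedness of $\hat U_\eps$ in the proof of Lemma \ref{l:sec_con}.

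I expect the only genuinely delicate point to be the construction of the admissible competitor: one must arrange that the modified function vanishes on all of $\Gamma_R$ (not merely on $\Gamma_R\setminus\Gamma_1$) while leaving its trace on $S_R^+$ untouched, and this is precisely what forces the hypothesis $R>2$ in the statement. Everything else is routine rescaling together with the Poincaré inequality for $H^1$-functions vanishing on a boundary portion of positive measure.
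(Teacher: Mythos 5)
Your proof is correct and follows exactly the route the paper leaves implicit in the one-line remark ``By the Dirichlet principle and Theorem \ref{t:stime_blowup}'': after rescaling, apply the Dirichlet principle in $D_R^+$ with the competitor $\chi\hat U_\eps$ (cutoff vanishing near $\Gamma_1$, equal to $1$ on $S_R^+$, which is where $R>2$ enters), use \eqref{eq:blowupbounded} to bound its energy, and recover the $L^2$ and trace bounds from Poincar\'e and the trace inequality before undoing the scaling. The delicate point you single out — that the competitor must vanish on \emph{all} of $\Gamma_R$, which forces $\chi\equiv0$ on $\Gamma_1$ while $\hat U_\eps$ already vanishes on $\Gamma_R\setminus\Gamma_1$ — is precisely the right one, and your treatment of it is correct.
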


 We are now in position to prove a sharp upper bound for the
eigenvalue variation $\lambda_N-\lambda_N(\eps)$.
\begin{prop}\label{p:stimadifficile}
 There exists
  $\tilde R>2$ such that, for all $R>\tilde R$ and  $\eps>0$ with
  $R\eps<{R_0}$, 
\[
\frac{\lambda_N-\lambda_N(\eps)}{H(u_N^\eps, \mu_\delta\eps)}\leq g_R(\eps)
\]
where 
\begin{align}
g_R(\eps)=
\int_{D_R^+}|\nabla W_\eps^R|^2\,dx-\int_{D_R^+}|\nabla \hat
  U_\eps|^2\,dx+o(1)
\quad\text{and}\quad
g_R(\eps)=O(1) 
\quad\text{as }\eps\to 0^+, \label{eq:gR}
\end{align}
with $\hat U_\eps$ and  $W_\eps^R$ defined in \eqref{eq:blowup}.
\end{prop}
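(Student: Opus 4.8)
The plan is to mimic the strategy of Proposition \ref{p:stimafacile}, but in the opposite direction: out of the mixed eigenfunctions $u_1^\eps,\dots,u_N^\eps$ I would manufacture an $N$-dimensional space of admissible competitors for the Rayleigh quotient defining $\lambda_N$. First I would check that each $\hat u_{j,R,\eps}$ from \eqref{eq:hatuj} lies in $H^1_0(\Omega)$: outside $D_{R\eps}^+$ it equals $u_j^\eps$, which vanishes on $\partial\Omega\setminus\Gamma_\eps$, while inside $D_{R\eps}^+$ it equals $w_{j,R,\eps}^{int}$, which by \eqref{eq:eqwint} vanishes on all of $\Gamma_{R\eps}$; since $\Gamma_\eps\subset\overline{D_{R\eps}^+}$ for $R>1$, the glued function vanishes on $\partial\Omega$. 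The $\hat u_{j,R,\eps}$ are orthonormal in $L^2(\Omega)$ by the Gram--Schmidt construction, hence span an $N$-dimensional subspace, so the min-max characterization \eqref{eqMinMaxD} gives
\[
\lambda_N\le\max_{\substack{(\alpha_1,\dots,\alpha_N)\in\RR^N\\\sum_{j=1}^N|\alpha_j|^2=1}}\int_\Omega\Bigl|\nabla\Bigl(\textstyle\sum_{j=1}^N\alpha_j\hat u_{j,R,\eps}\Bigr)\Bigr|^2\,dx.
\]

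The core of the argument is evaluating the symmetric matrix with diagonal $\mu_j:=\int_\Omega|\nabla\hat u_{j,R,\eps}|^2\,dx$ and off-diagonal $\gamma_{ij}:=\int_\Omega\nabla\hat u_{i,R,\eps}\cdot\nabla\hat u_{j,R,\eps}\,dx$. Using the weak formulations of \eqref{eqDNDeps} and \eqref{eq:equneps} (so $\int_\Omega\nabla u_i^\eps\cdot\nabla u_j^\eps\,dx=\lambda_j(\eps)\delta_{ij}$), the fact that $w_{j,R,\eps}$ differs from $u_j^\eps$ only inside $D_{R\eps}^+$, and the conformal invariance of the Dirichlet integral in dimension two, one gets the exact identity
\[
\int_\Omega|\nabla w_{N,R,\eps}|^2\,dx=\lambda_N(\eps)+H(u_N^\eps,\mu_\delta\eps)\Bigl(\int_{D_R^+}|\nabla W_\eps^R|^2\,dx-\int_{D_R^+}|\nabla\hat U_\eps|^2\,dx\Bigr);
\]
together with $\|w_{N,R,\eps}\|_{L^2(\Omega)}^2=1+O(\eps^2H(u_N^\eps,\mu_\delta\eps))$, coming from \eqref{eq:L2uNint}, \eqref{eq:22} and the Gram--Schmidt bookkeeping, this yields $\mu_N=\lambda_N(\eps)+H(u_N^\eps,\mu_\delta\eps)\,g_R(\eps)$ with $g_R$ as in \eqref{eq:gR}. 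For $j<N$, Lemma \ref{l:sec_con} gives $\mu_j=\lambda_j(\eps)+o(1)$, which by continuity of $\eps\mapsto\lambda_j(\eps)$ and simplicity of $\lambda_N$ stays strictly below $\lambda_N$ for $\eps$ small, so a fixed spectral gap separates $\mu_N$ from the cluster $\{\mu_j:j<N\}$. Finally, Cauchy--Schwarz combined with the energy bounds of Lemma \ref{l:sec_con}, Theorem \ref{t:stime_blowup} and Lemma \ref{l:stimeWepsR} gives $\gamma_{jN}=O\bigl((R\eps)^{1-\delta}\sqrt{H(u_N^\eps,\mu_\delta\eps)}\bigr)=o\bigl(\sqrt{H(u_N^\eps,\mu_\delta\eps)}\bigr)$ for $j<N$, while the remaining off-diagonal entries are $o(1)$.

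From here I would invoke the elementary perturbation estimate for the largest eigenvalue of a symmetric matrix whose diagonal has a single entry $\mu_N$ separated by a fixed gap from the rest — precisely the setting of \cite[Lemma 6.1]{abatangelo2015sharp}, applied with $\alpha=1$, $\sigma(\eps)=H(u_N^\eps,\mu_\delta\eps)$ and the relevant exponent — to conclude that the right-hand side of the min-max inequality equals $\mu_N+o(H(u_N^\eps,\mu_\delta\eps))$, i.e. $\lambda_N(\eps)+H(u_N^\eps,\mu_\delta\eps)\,g_R(\eps)$ after absorbing the $o(1)$ into $g_R$. Rearranging gives $\lambda_N-\lambda_N(\eps)\le H(u_N^\eps,\mu_\delta\eps)\,g_R(\eps)$, and $g_R(\eps)=O(1)$ follows immediately from the $H^1(D_R^+)$-boundedness of $\hat U_\eps$ and $W_\eps^R$ in \eqref{eq:blowupbounded} and \eqref{eq:WepsRbounded}; one takes $\tilde R=\max\{2,\mu_\delta\}$ so that Lemma \ref{l:stimeWepsR} and Theorem \ref{t:stime_blowup} apply. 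The main obstacle is the bookkeeping in the second paragraph: every error produced by the Gram--Schmidt orthonormalization and by the gluing on the small half-disk $D_{R\eps}^+$ must be shown to be negligible compared with $H(u_N^\eps,\mu_\delta\eps)$ — which is the only natural small parameter available at this stage, since a priori one knows merely $H(u_N^\eps,\mu_\delta\eps)\to0$ and not its sharp rate $\eps^{2k}$.
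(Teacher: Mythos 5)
Your proposal is correct and follows essentially the same approach as the paper's proof: build an $N$-dimensional competitor space for $\lambda_N$ from the glued functions $\hat u_{j,R,\eps}$, reduce the min-max to an $N\times N$ symmetric matrix, isolate the $(N,N)$ entry as $\lambda_N(\eps)+H(u_N^\eps,\mu_\delta\eps)\,g_R(\eps)$ via the scaling identity, control the remaining entries via Lemma \ref{l:sec_con}, Theorem \ref{t:stime_blowup} and Lemma \ref{l:stimeWepsR}, and invoke \cite[Lemma 6.1]{abatangelo2015sharp}. The only minor discrepancy is that the paper applies that lemma with $\alpha=1-\delta$ (matching the cross-term estimate $O(\eps^{1-\delta}\sqrt{H})$) rather than your $\alpha=1$, but this does not affect the conclusion.
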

\begin{proof}
 As already mentioned, we take into account the Courant--Fisher
 characterization for $\lambda_N$ 
recalled in \eqref{eqMinMaxD}  and consider the $N$-dimensional space spanned by the functions $\{\hat u_{j,R,\eps,}\}_{j=1}^N$
 defined in \eqref{eq:hatuj}. Before proceeding, we note that 
 \begin{align}
&  \| \tilde w_{N,R,\eps} \|_{L^2(\Omega)}^2 = 1 + O\big(\eps^2 H(u_N^\eps,\mu_\delta\eps)\big),  \notag \\
 & d_{N,j}^{R,\eps}:= \dfrac{\int_\Omega  w_{j,R,\eps} {\tilde w_{N,R,\eps}}\,dx}{\|\tilde w_{N,R,\eps}\|_{L^2(\Omega)}^2} 
  = O\big(\eps^{3-\delta} \sqrt{H(u_N^\eps,\mu_\delta\eps)} \big),
    \quad\text{for all }j< N, \label{eq:stimeNterm}
 \end{align}
as $\eps\to0$, thanks to \eqref{eq:L2uNint}, \eqref{eq:22}, \eqref{eq:36} and \eqref{eq:L2wjint}. On the other hand, 
\begin{align}
&  \| \tilde w_{j,R,\eps} \|_{L^2(\Omega)}^2 = 1 + O(\eps^{4-2\delta}), \notag \\
  &d_{\ell,j}^{R,\eps}:= \dfrac{\int_\Omega  w_{j,R,\eps} {\tilde w_{\ell,R,\eps}}\,dx}{\|\tilde w_{\ell,R,\eps}\|_{L^2(\Omega)}^2} 
  = O(\eps^{4-2\delta}), \quad\text{for all }j\neq \ell, \label{eq:stimejterm}
 \end{align}
 as $\eps\to 0$ and
for any $j=1,\ldots,N-1$ thanks to \eqref{eq:L2wjint} and \eqref{eq:36}.

If we plug a linear combination of $\{\hat u_{j,R,\eps,}\}_{j=1}^N$
into the Rayleigh quotient we obtain 
that
\begin{equation*}
  \lambda_N \leq \max_
  {\substack{(\alpha_1,\dots, \alpha_{N})\in \RR^{N}\\
      \sum_{j=1}^{N}|\alpha_j|^2 =1}}
  \int_{\Omega} \bigg|\nabla  \bigg(\sum_{j=1}^{N}\alpha_j
     \hat u_{j,R,\eps}\bigg)\bigg|^2 dx,
\end{equation*}
and then
\begin{equation}\label{eq:107}
\lambda_N-\lambda_N(\eps)\leq \max_
  {\substack{(\alpha_1,\dots, \alpha_N)\in \RR^N\\
      \sum_{j=1}^{N}|\alpha_j|^2 =1}}\sum_{j,n=1}^N
m_{j,n}^{\eps,R}\alpha_j {\alpha_n},
\end{equation}
where 
\[
m_{j,n}^{\eps,R}= \int_{\Omega} \nabla    \hat u_{j,R,\eps}\cdot \nabla \hat u_{n,R,\eps}\, dx
-\lambda_N(\eps) \delta_{jn},
\]
with $\delta_{jn}=1$ if $j=n$ and 
$\delta_{jn}=0$ if $j\neq n$.

From \eqref{eq:derH} and Lemma \ref{l:limitatezza_N_per_blowup}, if
$R\geq \mu_\delta$ and $R\eps<{r_\delta}$ we have
\begin{equation}
  \frac1 {H(u_N^\eps,r)}\dfrac{d}{dr} H(u_N^\eps,r) = \dfrac2r
  \mathcal N(u_N^\eps, r,\lambda_N(\eps))\leq \frac
  2r({k}+\delta)\quad\text{for all }\mu_\delta\eps\leq r\leq r_\delta,\label{eq:23}
\end{equation}
Integration of \eqref{eq:23} 
over the interval $(\mu_\delta\eps, r_\delta)$
and  property (ii) at page \pageref{prop--ii} 
 yield
\begin{equation}\label{eq:stima_sotto_radiceH}
H(u_N^\eps, \mu_\delta\eps) \geq C_\delta \eps^{2k + 2\delta},\quad\text{if }\mu_\delta\eps<{r_\delta},
\end{equation}
for some $C_\delta>0$ independent of $\eps$. 
Estimate
\eqref{eq:34} implies that  
\begin{equation}\label{eq:stima_sopra_radiceH}
 H(u_N^\eps, \mu_\delta\eps)=O(\eps^{2-2\delta})\quad\text{ as }\eps\to0.
\end{equation}
From \eqref{eq:stimeNterm},
\eqref{eq:blowup},  Theorem \ref{t:stime_blowup}, and
Lemma \ref{l:stimeWepsR} we deduce that 
\begin{align}
  m_{N,N}^{\eps,R} &= \dfrac{\lambda_N(\eps) (1-\|w_{N,R,\eps}
    \|_{L^2(\Omega)}^2)}{\|w_{N,R,\eps} \|_{L^2(\Omega)}^2} +
  \dfrac{\left( \int_{D_{R\eps}^+} \big| \nabla w_{N,R,\eps}^{int}\big|^2 dx
   - \int_{D_{R\eps}^+} \big| \nabla u_N^\eps \big|^2 dx\right)}{\|w_{N,R,\eps} \|_{L^2(\Omega)}^2}\notag \\
 \label{eq:mNN} &= H(u_N^\eps,\mu_\delta\eps)\bigg(\int_{D_R^+}|\nabla W_\eps^R|^2\,dx-\int_{D_R^+}|\nabla \hat U_\eps|^2\,dx+o(1)\bigg),
\end{align}
as $\eps\to0^+$.
On the other hand, if $j<N$, by the convergence of the perturbed eigenvalue,
\eqref{eq:stimejterm}, \eqref{eq:L2gradwjint}, \eqref{eq:35} 
we have that \begin{align*}
  m_{j,j}^{\eps,R}&= -\lambda_N(\eps) 
  +\dfrac{1}{\|\tilde w_{j,R,\eps}\|_{L^2(\Omega)}^2} \left(
    \lambda_j(\eps) - \int_{D_{R\eps}^+}\!\! \!\abs{\nabla u_j^\eps}^2dx
    + \int_{D_{R\eps}^+}\! \!\!\abs{\nabla w_{j,R,\eps}^{int}}^2dx \right) \\
  &\ + \dfrac{1}{\|\tilde w_{j,R,\eps}\|_{L^2(\Omega)}^2}
  \int_{\Omega} \bigg|\nabla 
  \Big(\sum_{\ell>j} d_{\ell,j}^{R,\eps} \tilde w_{\ell,R,\eps} \Big)\bigg|^2dx\\
  & \ -\dfrac{2}{\|\tilde w_{j,R,\eps}\|_{L^2(\Omega)}^2}
  \bigg(\int_{\Omega} \nabla w_{j,R,\eps} \cdot
  {\nabla \Big( \sum_{\ell>j} d_{\ell,j}^{R,\eps} \tilde w_{\ell,R,\eps} \Big)}\,dx\bigg)\\
  &=(\lambda_j-\lambda_N)+o(1)\quad\text{as }\eps\to0.
\end{align*}
From \eqref{eq:stimeNterm}, \eqref{eq:stimejterm}, \eqref{eq:35}, 
\eqref{eq:L2gradwjint}, \eqref{eq:L2graduNint}, and 
\eqref{eq:L2gradwNint}, it follows that, for all $j<N$,
\begin{align*}
\|\tilde w_{j,R,\eps}\|_{L^2(\Omega)}& \|\tilde
w_{N,R,\eps}\|_{L^2(\Omega)}  m_{j,N}^{\eps,R}= \int_{D_{R\eps}^+}\Big(\nabla
    w_{j,R,\eps}^{int}\cdot {\nabla w_{N,R,\eps}^{int} }-
    \nabla u_j^\eps\cdot
    {\nabla u_N^\eps}\Big)\,dx \\
 &\quad - \int_{\Omega} \nabla
    \Big(\sum_{\ell>j}d_{\ell,j}^{R,\eps} \tilde w_{\ell,R,\eps}\Big) \cdot
    {\nabla w_{N,R,\eps} }\,dx
=O\Big(\eps^{1-\delta}
  \sqrt{H(u_N^\eps,\mu_\delta\eps})\Big).
\end{align*}
Hence, by \eqref{eq:stimeNterm} and \eqref{eq:stimejterm}, we have that 
\[
  m_{j,N}^{\eps,R}=O\Big(\eps^{1-\delta}
  \sqrt{H(u_N^\eps,\mu_\delta\eps})\Big)\quad\text{and}\quad 
m_{N,j}^{\eps,R}={ m_{j,N}^{\eps,R}}=O\Big(\eps^{1-\delta} \sqrt{H(u_N^\eps,\mu_\delta\eps})\Big)
\]
as 
$\eps\to 0^+$.
From  \eqref{eq:stimejterm},
\eqref{eq:35}, 
\eqref{eq:L2gradwjint},
we deduce that, for all $j,n<N$ with $j\neq n$,
\begin{align*}
   \|\tilde w_{j,R,\eps}&\|_{L^2(\Omega)}\|\tilde w_{n,R,\eps}\|_{L^2(\Omega)}  m_{j,n}^{\eps,R}\\
&=\int_{D_{R\eps}^+}\Big(\nabla w_{j,R,\eps}^{int}\cdot {\nabla w_{n,R,\eps}^{int} }-
    \nabla u_j^\eps\cdot {\nabla
      u_n^\eps}\Big)\,dx\\
  &\quad + \int_{\Omega} \nabla \big( \sum_{\ell>j}
    d_{\ell,j}^{R,\eps} \tilde w_{\ell,R,\eps} \big) \cdot {\nabla \big( \sum_{h>n} d_{h,n}^{R,\eps} \tilde w_{h,R,\eps} \big)}\,dx
  \\
  &\quad - \int_{\Omega} \nabla \big( \sum_{\ell>j}
    d_{\ell,j}^{R,\eps} \tilde w_{\ell,R,\eps} \big) \cdot {\nabla w_{n,R,\eps}} \,dx
  \\
  &\quad - \int_{\Omega} \nabla w_{j,R,\eps} \cdot
{\nabla \big( \sum_{h>n} d_{h,n}^{R,\eps} \tilde w_{h,R,\eps} \big)}\,dx=O(\eps^{2-2\delta}) \quad\text{as }\eps\to0.
\end{align*}
Hence, in view of \eqref{eq:stimejterm}, 
\[
  m_{j,n}^{\eps,R}=O(\eps^{2-2\delta}) \quad\text{as }\eps\to0.
\]
Taking into account \eqref{eq:stima_sotto_radiceH}, we can then apply
 \cite[Lemma 6.1]{abatangelo2015sharp}
 with $\sigma(\eps)=H(u_N^\eps,\mu_\delta\eps)$,
$\mu(\eps)=g_R(\eps)$, $\alpha=1-\delta$ and $M=4k$ in order to
deduce \[ \max_
{\substack{(\alpha_1,\dots, \alpha_{N})\in \RR^{N}\\
    \sum_{j=1}^{N}|\alpha_j|^2 =1}}\sum_{j,n=1}^N
m_{j,n}^{\eps,R}\alpha_j {\alpha_n} =H(u_N^\eps,\mu_\delta\eps)\bigg(
\int_{D_R^+}|\nabla W_\eps^R|^2\,dx-\int_{D_R^+}|\nabla \hat
U_\eps|^2\,dx+o(1)\bigg)
\]
as $\eps\to 0^+$,
which, in view of \eqref{eq:107}, yields
$\frac{\lambda_N-\lambda_N(\eps)}{H(u_N^\eps,\mu_\delta\eps)}\leq g_R(\eps)$
with $g_R$ as in \eqref{eq:gR}.
We notice that, from Theorem \ref{t:stime_blowup} and Lemma
\ref{l:stimeWepsR}, for all $R>\max\{2,\mu_\delta\}$, $g_R(\eps)=O(1)$ as $\eps\to 0^+$.
The proof is now complete.
\end{proof}

Combining Proposition \ref{p:stimafacile}, Lemma \ref{l:limkappaR} and
Proposition \ref{p:stimadifficile} we obtain the following upper/lower
estimates for $\lambda_N-\lambda_N(\eps)$.

\begin{prop}\label{prop:quasi_finito1}
There exists a positive
  constant $C^*>0$ such that
\[
-2\beta^2 \mathfrak{m}_k \,\eps^{2k} (1+o(1))
\leq \lambda_N - \lambda_N(\eps)
\leq C^*\, H(u_N^\eps, \mu_\delta\eps), 
\quad \text{as }\eps\to 0,
\]
with $\mathfrak{m}_k<0$ as in \eqref{eq:Ik} and \eqref{eq:mathfrak-m}. 
\end{prop}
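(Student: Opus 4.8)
The plan is to obtain the two inequalities separately, each as a direct consequence of the estimates established in this subsection, the lower one requiring an additional passage to the limit as $R\to+\infty$.

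For the lower bound I would start from Proposition \ref{p:stimafacile}, which for every fixed $R>1$ gives $\lambda_N(\eps)-\lambda_N\le f_R(\eps)\,\eps^{2k}$ with $\lim_{\eps\to0^+}f_R(\eps)=\beta^2\kappa_R$, where $\kappa_R$ is the quantity defined in \eqref{eq:11}. Rewriting this as $\frac{\lambda_N-\lambda_N(\eps)}{\eps^{2k}}\ge-f_R(\eps)$ and taking $\liminf_{\eps\to0^+}$ yields $\liminf_{\eps\to0^+}\frac{\lambda_N-\lambda_N(\eps)}{\eps^{2k}}\ge-\beta^2\kappa_R$ for every $R>1$. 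Since Lemma \ref{l:limkappaR} gives $\kappa_R\to2\mathfrak m_k$ as $R\to+\infty$, letting $R\to+\infty$ produces $\liminf_{\eps\to0^+}\frac{\lambda_N-\lambda_N(\eps)}{\eps^{2k}}\ge-2\beta^2\mathfrak m_k$, i.e. $\lambda_N-\lambda_N(\eps)\ge-2\beta^2\mathfrak m_k\,\eps^{2k}(1+o(1))$. This is a genuine (nonnegative) lower bound because $\mathfrak m_k<0$ by Lemma \ref{l:wk}, which is also consistent with the monotonicity stated in Remark \ref{remIneq}.

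For the upper bound I would invoke Proposition \ref{p:stimadifficile}: there exists $\tilde R>2$ such that for all $R>\tilde R$ and all $\eps>0$ with $R\eps<R_0$ one has $\frac{\lambda_N-\lambda_N(\eps)}{H(u_N^\eps,\mu_\delta\eps)}\le g_R(\eps)$, with $g_R(\eps)=O(1)$ as $\eps\to0^+$. Fixing once and for all a single value $R>\tilde R$, the boundedness of $g_R$ yields a constant $C^*>0$ (depending only on this fixed $R$, hence on $\delta$, $R_0$ and $\lambda_N$) with $g_R(\eps)\le C^*$ for all sufficiently small $\eps$; since $H(u_N^\eps,\mu_\delta\eps)>0$ for $\eps$ small by property (i) in Subsection \ref{s:monotonicity}, this gives $\lambda_N-\lambda_N(\eps)\le C^*\,H(u_N^\eps,\mu_\delta\eps)$.

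The only delicate point — and the step I would treat most carefully — is the order of the limits in the lower bound: the estimate in Proposition \ref{p:stimafacile} holds for each \emph{fixed} $R$, with an $o(1)$ error term as $\eps\to0^+$ that depends on $R$, so one must first send $\eps\to0^+$ and only afterwards let $R\to+\infty$ via Lemma \ref{l:limkappaR}; no uniformity in $R$ is needed, and the remaining bookkeeping is a plain concatenation of the cited results.
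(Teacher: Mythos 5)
Your proposal is correct and matches the paper's intended argument: the paper states Proposition \ref{prop:quasi_finito1} as a direct combination of Proposition \ref{p:stimafacile}, Lemma \ref{l:limkappaR} and Proposition \ref{p:stimadifficile}, which is precisely what you do, and your careful handling of the order of limits (first $\eps\to0^+$ for fixed $R$, then $R\to+\infty$) is exactly the right way to make that combination rigorous.
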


\subsection{Sharp blow-up analysis and asymptotics}

Let us consider the function
\begin{align}\label{def_operatore_F}
  F: \RR \times H^{1}_{0}(\Omega)  &\longrightarrow  \RR \times H^{-1}(\Omega)\\
  \notag (\lambda,\varphi) &\longmapsto \Big( {\textstyle{
      q(\varphi) -\lambda_N,
      \ -\Delta \varphi-\lambda \varphi}}\Big),
\end{align}
   where $q$ is defined in \eqref{eqQuad} and 
 $-\Delta  \varphi-\lambda \varphi\in H^{-1}(\Omega)$ acts as 
\[
\sideset{_{H^{-1}(\Omega)}^{}}{}{\mathop{\Big\langle}} -\Delta \varphi-\lambda \varphi , u
\Big\rangle_{\!H^{1}_{0}(\Omega)}\!\!=
\int_{\Omega}\nabla\varphi\cdot{\nabla u}\,dx
      -\!\lambda \!\int_{\Omega} \varphi {u} \,dx
\]
for all $\varphi\in H^{1}_{0}(\Omega)$. 
We have that $F(\lambda_N,u_N)=(0,0)$, $F$ is Fr\'{e}chet-differentiable at $(\lambda_N,u_N)$ and its Fr\'{e}chet-differential
$dF(\lambda_N,u_N)\in \mathcal L\big( \RR \times
H^{1}_{0}(\Omega),\RR \times H^{-1}(\Omega)\big)$
is invertible.
Therefore we can control $|\lambda_N(\eps)-\lambda_N|$ and
$\|w_{N,R,\eps} - u_N \|_{H^{1}_0(\Omega)}\|$ with
$\|F(\lambda_N(\eps)-w_{N,R,\eps}\|_{\RR\times
  H^{-1}(\Omega)}$. Then the norm $\|F(\lambda_N(\eps)-w_{N,R,\eps}\|_{\RR\times
  H^{-1}(\Omega)}$ can be estimated taking advantage of the
computations performed in Section \ref{sec:C3}, thus yielding 
\[
\|w_{N,R,\eps} - u_N
\|_{H^{1}_0(\Omega)}=O\Big(\sqrt{H(u_N^\eps,\mu_\delta\eps)}\Big)
\]
as $\eps\to0^+$  for every  $R>2$, $\mu_\delta$ being as in Lemma \ref{l:sec_con} for some 
$\delta\in(0,1/2)$ fixed.

As a consequence, for every $R>2$
\begin{equation}\label{eq:41}
 \int_{\big(\frac{1}{\eps}\Omega\big)\setminus D_{R}^+}\bigg|\nabla
 \Big(\hat U_\eps - 
\tfrac{\eps^{k}}{\sqrt{H(u_N^\eps,\mu_\delta\eps)}}U_\eps\Big)\bigg|^2dx=O(1),
\quad\text{as }\eps\to0^+.
\end{equation}
Using \eqref{eq:41} and the uniqueness part of Lemma
\ref{l:Phi}, we can identify univocally the limit of the blow-up
family $\{\hat U_\eps\}_\eps$ introduced in \eqref{eq:blowup} and
prove that  
\[
 \lim_{\eps\to 0^+}\frac{\eps^{k}}{\sqrt{H(u_N^\eps,\mu_\delta\eps)}}=
\frac1{|\beta|}
\sqrt{\frac{\mu_\delta}{\int_{S_{\mu_\delta}^+}|\Phi_k|^2 ds}}
\]
and 
\begin{equation}\label{eq:buU}
\hat U_\eps\to 
\frac{\beta}{|\beta|}\sqrt{\frac{\mu_\delta}{\int_{S_{\mu_\delta}^+}|\Phi_k|^2 ds}}
\, \Phi_k \quad\text{as
  }\eps\to0^+
\end{equation}
 in $H^{1 }(D_R^+)$ for every $R>1$ and
in $C^{2}_{\rm loc}(\overline{\RR^2_+}\setminus\{{\mathbf e},-{\mathbf
  e}\})$, see \cite[Theorem 8.1]{abatangelo2015sharp} for details. 

Combining \eqref{eq:buU} with the Dirichlet principle, we can prove convergence of
the blow-up family $W_\eps^R$ introduced in \eqref{eq:blowup}:
for all $R>2$,
\begin{equation}\label{eq:buW}
W_\eps^R \to  \frac{\beta}{|\beta|}
\sqrt{\frac{\mu_\delta}{\int_{S_{\mu_\delta}^+}|\Phi_k|^2ds}}\,w_R
\qquad \text{as }\eps\to0^+ \text{ in }  H^{1}(D_R^+),
\end{equation}
where $w_R$ is the unique solution to the minimization problem 
\[
  \int_{D_{R}^+} \!\!|\nabla  w_R(x)|^2\,dx
  = \min\left\{ \int_{D_{R}^+}\!\! |\nabla u|^2\,dx:\, u\in
    H^{1}(D_{R}^+), \ u= \Phi_k \text{ on }S_{R}^+,
    \ u=0 \text{ on }\Gamma_R \right\},
\]
which then  solves
\begin{equation*}
 \begin{cases}
  -\Delta w_R = 0, &\text{in }D_{R}^+,\\
  w_R = \Phi_k, &\text{on }S_{R}^+,\\
  w_R = 0, &\text{on }\Gamma_R.
   \end{cases}
\end{equation*}
To obtain the exact asymptotics for $\lambda_N- \lambda_N(\eps)$ 
it remains
to determine the limit of the function $g_R(\eps)$ defined in \eqref{eq:gR} of Proposition
\ref{p:stimadifficile} as $\eps\to 0$ and $R\to+\infty$.
 
\begin{lem}\label{l:limite_kappa_R}
For all $R>\tilde R$ and  $\eps>0$ with
  $R\eps<{R_0}$, let $g_R(\eps)$ be as in Proposition
  \ref{p:stimadifficile}.
Then 
\begin{equation}\label{eq:60}
\lim_{\eps\to0^+}g_R(\eps)= \frac{\mu_\delta}{\int_{S_{\mu_\delta}^+}|\Phi_k|^2ds} 
\tilde \kappa_R
\end{equation}
where 
\begin{equation}\label{eq:68}
\tilde \kappa_R=\int_{S_R^+}\Big(
  \nabla w_R\cdot\nu - \nabla \Phi_k\cdot\nu \Big) {\Phi_k}\,ds,
\end{equation}
with $\nu=\frac{x}{|x|}$. 
Furthermore $\lim_{R\to+\infty}\tilde \kappa_R=-2{\mathfrak m}_k$,
where ${\mathfrak m}_k$ is defined in \eqref{eq:mathfrak-m} and \eqref{eq:Ik}.
\end{lem}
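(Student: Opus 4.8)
The plan is to treat the two convergences in turn.

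For \eqref{eq:60}, I would start from the expression of $g_R(\eps)$ recorded in \eqref{eq:gR}, namely $g_R(\eps)=\int_{D_R^+}|\nabla W_\eps^R|^2\,dx-\int_{D_R^+}|\nabla \hat U_\eps|^2\,dx+o(1)$ as $\eps\to0^+$, and let $\eps\to0^+$ using the strong $H^1(D_R^+)$ convergences \eqref{eq:buU} and \eqref{eq:buW} (the modulus of the constant $\beta/|\beta|$ being $1$), which gives
\[
\lim_{\eps\to0^+}g_R(\eps)=\frac{\mu_\delta}{\int_{S_{\mu_\delta}^+}|\Phi_k|^2\,ds}\left(\int_{D_R^+}|\nabla w_R|^2\,dx-\int_{D_R^+}|\nabla \Phi_k|^2\,dx\right).
\]
It then remains to identify the parenthesis with $\tilde\kappa_R$. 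Since $w_R$ is harmonic in $D_R^+$ and vanishes on $\Gamma_R$, integration by parts gives $\int_{D_R^+}|\nabla w_R|^2\,dx=\int_{S_R^+}w_R\,\partial_\nu w_R\,ds=\int_{S_R^+}\Phi_k\,\partial_\nu w_R\,ds$, the last equality because $w_R=\Phi_k$ on $S_R^+$; and since $\Phi_k$ is harmonic in $\RR^2_+$, satisfies $\partial_\nu\Phi_k=0$ on $\Gamma_1$ and $\Phi_k=0$ on $s\supset\Gamma_R\setminus\Gamma_1$ (Lemma \ref{l:Phi}), integration by parts gives $\int_{D_R^+}|\nabla\Phi_k|^2\,dx=\int_{S_R^+}\Phi_k\,\partial_\nu\Phi_k\,ds$. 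Subtracting yields exactly $\tilde\kappa_R$ as defined in \eqref{eq:68}, proving \eqref{eq:60}.

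For the limit $\lim_{R\to+\infty}\tilde\kappa_R=-2\mathfrak m_k$, I would introduce $z_R:=w_R-\Phi_k$ on $D_R^+$. It is harmonic, vanishes on $S_R^+$ and on $\Gamma_R\setminus\Gamma_1$, and equals $-\Phi_k=-w_k$ on $\Gamma_1$ (here one uses that $\psi_k\equiv 0$ on $\Gamma_1$, so that $\Phi_k=\psi_k+w_k=w_k$ there). Using a Green identity together with these boundary conditions one gets $\int_{D_R^+}\nabla z_R\cdot\nabla\Phi_k\,dx=\int_{\partial D_R^+}z_R\,\partial_\nu\Phi_k\,ds=0$, hence by the resulting orthogonality (Pythagoras' identity)
\[
\tilde\kappa_R=\int_{D_R^+}|\nabla w_R|^2\,dx-\int_{D_R^+}|\nabla\Phi_k|^2\,dx=\int_{D_R^+}|\nabla z_R|^2\,dx.
\]
By the Dirichlet principle, $z_R$ minimizes $v\mapsto\int_{D_R^+}|\nabla v|^2\,dx$ among $v\in H^1(D_R^+)$ with $v=0$ on $S_R^+\cup(\Gamma_R\setminus\Gamma_1)$ and $v=-w_k$ on $\Gamma_1$; extending $z_{R_1}$ by zero outside $D_{R_1}^+$ produces an admissible competitor for the problem on $D_{R_2}^+$ when $R_2>R_1$, so $R\mapsto\int_{D_R^+}|\nabla z_R|^2\,dx$ is non-increasing and the limit exists. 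To identify it I would, on the one hand, test with $-\eta_R w_k$, where $\eta_R$ is a cut-off equal to $1$ on $D_{R/2}^+$, supported in $D_R^+$, with $|\nabla\eta_R|\le C/R$; using the decay $w_k(x)=O(1/|x|)$ from Lemma \ref{l:wk} one gets $\int_{D_R^+}|\nabla(\eta_R w_k)|^2\,dx\to\int_{\RR^2_+}|\nabla w_k|^2\,dx$, hence $\limsup_{R\to+\infty}\tilde\kappa_R\le\int_{\RR^2_+}|\nabla w_k|^2\,dx$. On the other hand, the zero-extensions of the $z_R$ are bounded in $\mathcal Q$ (by the previous bound and the Hardy inequalities \eqref{eq:1}--\eqref{eq:2}); along a subsequence they converge weakly in $\mathcal Q$ to some $z_*$ which, being a local limit of the harmonic functions $z_R$, is harmonic in $\RR^2_+$, vanishes on $s$ and equals $-w_k$ on $\Gamma_1$; by uniqueness for this Dirichlet problem in $\mathcal Q$ (argued via the Hardy inequality as in the uniqueness part of Lemma \ref{l:Phi}) one gets $z_*=-w_k$, and weak lower semicontinuity of the Dirichlet integral yields $\int_{\RR^2_+}|\nabla w_k|^2\,dx\le\liminf_{R\to+\infty}\tilde\kappa_R$. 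Combining the two bounds, $\lim_{R\to+\infty}\tilde\kappa_R=\int_{\RR^2_+}|\nabla w_k|^2\,dx=-2\mathfrak m_k$ by \eqref{eq:Ik} and \eqref{eq:mathfrak-m}.

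The main obstacle is the second part. The identity $\tilde\kappa_R=\int_{D_R^+}|\nabla z_R|^2\,dx$ is delicate: it hinges on the mismatch between the Neumann condition satisfied by $\Phi_k$ on $\Gamma_1$ and the Dirichlet condition satisfied by $w_R$ on all of $\Gamma_R$, together with the vanishing of $\psi_k$ on $\Gamma_1$. Moreover, upgrading the local/weak convergence $z_R\to-w_k$ to convergence of the Dirichlet energies requires the quantitative decay $w_k=O(1/|x|)$ for the competitor estimate and the uniqueness of the limit Dirichlet problem in $\mathcal Q$ for the lower bound; both ingredients are available, the former from Lemma \ref{l:wk} and the latter from the Hardy inequality.
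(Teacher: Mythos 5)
Your proof is correct, and for the second limit it takes a genuinely different route from the paper. Both agree on the first step: passing to the limit in \eqref{eq:gR} using \eqref{eq:buU}--\eqref{eq:buW} and integrating by parts to identify $\tilde\kappa_R$ with $\int_{D_R^+}|\nabla w_R|^2\,dx-\int_{D_R^+}|\nabla\Phi_k|^2\,dx$. For $\lim_{R\to+\infty}\tilde\kappa_R$, the paper instead decomposes $\tilde\kappa_R$ as in \eqref{eq:109}, kills the error terms $I_1(R),I_2(R)$ by a cut-off estimate and the decay of $w_k$, and then evaluates the main term $\int_{S_R^+}(\partial_\nu w_R-\partial_\nu\Phi_k)\psi_k\,ds$ by solving an explicit ODE for the $k$-th Fourier coefficients of $\Phi_k$ and $w_R$ and invoking Lemma~\ref{l:xi1}. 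You replace all of this with a purely variational argument: the Pythagoras identity $\tilde\kappa_R=\int_{D_R^+}|\nabla z_R|^2\,dx$ (valid because the cross term vanishes thanks to the complementary boundary conditions of $\Phi_k$ and $z_R=w_R-\Phi_k$), monotonicity via the Dirichlet principle, an upper bound from the competitor $-\eta_R w_k$ using the decay $w_k=O(1/|x|)$, and a lower bound from weak compactness and uniqueness of the limit Dirichlet problem. Your method avoids the Fourier/ODE machinery and Lemma~\ref{l:xi1} entirely, at the price of some care with the admissibility of competitors, the trace convergence on $\Gamma_1$, and the uniqueness step, all of which you flag correctly; it is arguably more transparent as to why the limit is precisely the Dirichlet energy of $w_k$, while the paper's computation is more mechanical and produces the intermediate formula \eqref{eq:xi}, which is reused elsewhere (Lemma~\ref{l:xi1}).
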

\begin{proof}
We first observe that, by \eqref{eq:gR} and 
convergences \eqref{eq:buU}--\eqref{eq:buW},
\begin{align*}
  &\lim_{\eps\to0^+}g_R(\eps)= \frac{\mu_\delta}{\int_{S_{\mu_\delta}^+}|\Phi_k|^2ds}
  \bigg(\int_{D_R^+}|\nabla w_R|^2\,dx-\int_{D_R^+}|\nabla \Phi_k|^2\,dx\bigg) 
  = \frac{\mu_\delta}{\int_{S_{\mu_\delta}^+}|\Phi_k|^2ds} \tilde \kappa_R
\end{align*}
with $\tilde \kappa_R$ as in \eqref{eq:68}.
We observe that 
\begin{equation}\label{eq:109}
  \tilde \kappa_R =\int_{S_R^+}\Big( \nabla w_R\cdot\nu -
  \nabla \Phi_k\cdot\nu \Big) \psi_k\,ds+I_1(R)+I_2(R)
\end{equation}
where 
\begin{equation*}
  I_1(R)= \int_{S_R^+}\Big({\Phi_k}-\psi_k\Big) \nabla \Big(\psi_k-\Phi_k\Big) \cdot\nu
  \,ds,\quad
  I_2(R)=\int_{S_R^+}\Big({\Phi_k}-\psi_k\Big)
  \nabla \Big(w_R-\psi_k \Big) \cdot\nu  \,ds.
\end{equation*}
Testing the equation
$-\Delta \big(\psi_k-{ \Phi_k}\big) =0$ with the function 
$\psi_k-{\Phi_k}$, recalling
that $\psi_k-{ \Phi_k}=0$ on $s$, and 
integrating it over $\RR^2_+\setminus D_R^+$, thanks to Lemma \ref{l:Phi}  we obtain that
\[
 I_1(R)=  \int_{\RR^2_+\setminus D_R^+} |\nabla(\Phi_k-\psi_k)|^2=
\int_{\RR^2_+\setminus D_R^+} |\nabla w_k|^2
 \to 0 \qquad \text{as }R\to+\infty .
\]
Let $\eta_R:\RR^2_+\to\RR$ be a smooth cut-off function 
such that $\eta_R\equiv 0$ in $D_{R/2}^+$, $\eta_R\equiv 1$ in
$\RR^2_+\setminus D_R^+$, $0\leq \eta_R\leq 1$, and $|\nabla\eta_R|\leq\frac{4}{R}$.
Testing the equation $-\Delta (\psi_k - w_R)=0$ in $D_R^+$ with the function
$\eta_R({ \Phi_k}-\psi_k)$ we 
obtain 
\[
I_2(R)= \int_{D_R^+} \nabla (w_R-\psi_k) \cdot \nabla ((\Phi_k - \psi_k)\eta_R)
\]
so that, in view of  the Dirichlet Principle, Lemma \ref{l:Phi} and
the fact that $w_k\in\mathcal Q$,
\begin{align*}
 |I_2(R)| &\leq 
\int_{D_R^+} |\nabla ((\Phi_k - \psi_k)\eta_R)|^2dx\leq
2
\int_{D_R^+\setminus D_{R/2}^+} |\nabla (\Phi_k - \psi_k)|^2 \,dx+\frac{32}{R^2}
\int_{D_R^+\setminus D_{R/2}^+} |\Phi_k - \psi_k|^2 \,dx\\
&\leq 2
\int_{D_R^+\setminus D_{R/2}^+} |\nabla w_k|^2 \,dx+128
\int_{D_R^+\setminus D_{R/2}^+} \frac{w_k^2}{|x-{\mathbf e}|^2}
  \,dx\to 0
\end{align*} 
as $R\to+\infty$, where in the last line of the above estimate we have
used that $\frac{1}{R}\leq\frac2{|x-{\mathbf e}|}$ for all
$x\in D_R^+$.

  Therefore we need just to study the limit of the
quantity
$\int_{S_R^+} \frac{\partial}{\partial \nu} (w_R - \Phi_k)\psi_k$ as $R\to+\infty$.
To this aim, we consider the function
\begin{equation}\label{eq:47}
 \xi(r):= \int_{0}^\pi \Phi_k(r\cos t,r\sin t) \sin(kt)\,dt,\qquad
 r\geq 1,
\end{equation}
and notice that it satisfies the differential equation
$\xi''+\frac1r\xi'-\frac{k^2}{r^2}\xi=0$ which can be rewritten as 
$(r^{1+2k}(r^{-k}\xi)')'=0$ in $[1,+\infty)$. 
Therefore there exists some $C_\xi\in\RR$ such that 
\[
\big( r^{-k}\xi(r) \big)'= \frac{C_\xi}{r^{1+2k}}\quad\text{in }
[1,+\infty).
\]
Integrating the previous equation over $[1,r]$ we obtain that
\begin{equation}\label{eq:114} 
r^{-k} \xi(r) -\xi(1) = \frac{C_\xi}{2k}\left(1-\frac1{r^{2k}}\right). 
\end{equation}
From \eqref{eq:psi_k}, Lemma \ref{l:wk}, and Lemma \ref{l:Phi} it follows that 
\begin{align*}
  \xi(r)&=\int_0^{\pi}\psi_k(r\cos t,r\sin t)
\sin (k t)\,dt+\int_{0}^{\pi} \Big(\Phi_k(r\cos t,r\sin t)-\psi_k(r\cos t,r\sin t) \Big)
 \sin (k t)\,dt\\
&=\frac\pi2 r^{k}+O(r^{-1}),\quad\text{as }r\to+\infty,
\end{align*}
and hence 
$r^{-k} \xi(r) \to \frac\pi2$ as $r\to+\infty$. 
Letting $r\to+\infty$ in \eqref{eq:114}, this implies that $\tfrac{C_\xi}{2k}=\frac\pi2 -\xi(1)$, so that 
\begin{equation}
 \xi(r)= \tfrac\pi2 \,r^{k} +  \big( \xi(1)-\tfrac\pi2\big) r^{-k},\quad
 \xi'(r)=k \tfrac\pi2  r^{k-1} + k\big(\tfrac\pi2 - \xi(1)\big) r^{-k-1} \label{eq:xi}
\end{equation}
for all $r\geq1$. In particular, from \eqref{eq:xi} we have that 
\begin{equation*}
 \tfrac\pi2 - \xi(1) = \tfrac\pi2 r^{2k} - r^{k}\xi(r),\quad \text{for
   all }r\geq1,
\end{equation*}
whose substitution into \eqref{eq:xi} yields
\begin{equation*}
  \xi'(r)=k{\pi}r^{k-1}-k\,\frac{\xi(r)}{r},\quad \text{for
   all }r\geq1.
\end{equation*}
On the other hand, 
differentiating \eqref{eq:47} we obtain also  
\begin{equation}\label{eq:112}
  \xi'(r)=\frac1{r^{1+k}}\int_{S_r^+}
  \frac{\partial\Phi_k}{\partial\nu}\,
 \psi_k\,ds
\end{equation}
so that 
\begin{equation}\label{eq:27}
 \int_{S_r^+}
  \frac{\partial\Phi_k}{\partial\nu}\,
 \psi_k\,ds = k(\pi r^{2k} -r^k \xi(r))\quad\text{for all }r\geq1.
\end{equation}
Now we turn to 
\[
\zeta_R(r):= \int_{0}^{\pi} w_R(r\cos t,r\sin t)\,\sin(kt)\,dt
\]
which is the  $k$-th Fourier coefficient of the
harmonic function $w_R$ and hence
satisfies, for some $C_{R}\in\RR$,
$\big( r^{-k}\zeta_R(r) \big)'= \frac{C_{R}}{r^{1+2k}}$ in $(0,R]$. 
Integrating the previous equation over $[r,R]$ we obtain that  
\[
R^{-k} \zeta_R(R) -r^{-k} \zeta_R(r) =
\frac{C_{R}}{2k} \left(\frac{1}{r^{2k}}-\frac{1}{R^{2k}}\right),\quad \text{for all $r\in(0,R)$}.
\] 
By regularity of $w_R$
we necessarily have that $C_{R}=0$. Hence 
\begin{equation}
 \zeta_R(r)= \tfrac{\zeta_R(R)}{R^{k}} r^{k}\quad \text{and}\quad
 \label{eq:116}\zeta_R'(r)=k\tfrac{\zeta_R(R)}{R^{k}} r^{k-1}, \quad \text{for all }r\in(0,R].
\end{equation}
From the definition of $\zeta_R$ we have that 
$\zeta_R'(r)=\frac1{r^{1+k}}\int_{S_r^+}
  \nabla w_R\cdot\nu \,
  \psi_k\,ds$. 
Hence
\[
 \int_{S_r^+}
  \nabla w_R\cdot\nu \,
  \psi_k\,ds =k\tfrac{\zeta_R(R)}{R^k}\, r^{2k} 
\]
from which, taking into account the boundary conditions for $w_R$, it
follows that 
\begin{equation}\label{eq:37}
 \int_{S_R^+}
  \nabla w_R\cdot\nu \,
  \psi_k\,ds = kR^k \xi(R) .
\end{equation}
Combining \eqref{eq:27}, \eqref{eq:37}, and \eqref{eq:xi} we conclude
that 
\[
\int_{S_R^+} \left(\frac{\partial w_R}{\partial \nu} \psi_k -
  \frac{\partial \Phi}{\partial \nu} \psi_k \right)\,ds = 
 2kR^k \xi(R)-k\pi R^{2k}=
2k
\bigg(\xi(1) - \frac\pi2 \bigg) = - 2\mathfrak{m}_k
\]
by virtue of Lemma \ref{l:xi1}. 
\end{proof}
 
 By combining the
previous results we obtain the following asymptotics for the
eigenvalue variation.

\begin{thm}\label{t:main_asy_eige}
Let $\Omega$ be a bounded open set in $\RR^2$ satisfying 
\eqref{eq:38} and \eqref{eq:40}.
Let $N\geq 1$ be such that 
the $N$-th eigenvalue $\lambda_N$ of $q_0$ on $\Omega$  is simple 
 with associated eigenfunctions having  in
  $0$ a zero of order $k$ with $k$ as in \eqref{eq:orderk}. For
  $\eps\in (0,\eps_0)$ let $\lambda_N(\eps)$ be
  the $N$-th eigenvalue of $q_\eps$ on $\Omega$. 
Then 
\begin{equation*}
\lim_{\eps\to 0^+}\frac{\lambda_N-\lambda_N(\eps)}{\eps^{2k}}=
-2 \beta^2\,{\mathfrak m}_k
\end{equation*}
with $\beta$ being as in \eqref{eq:orderk} and
${\mathfrak m}_k$ being as in \eqref{eq:mathfrak-m} and \eqref{eq:Ik}.
\end{thm}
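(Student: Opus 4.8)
The plan is to obtain matching lower and upper bounds for the ratio $\frac{\lambda_N-\lambda_N(\eps)}{\eps^{2k}}$, each by a double passage to the limit: first $\eps\to0^+$ with the auxiliary radius $R$ kept fixed, then $R\to+\infty$.

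For the lower bound I would start from Proposition \ref{p:stimafacile}: for every fixed $R>1$ it gives $\lambda_N(\eps)-\lambda_N\le\eps^{2k}f_R(\eps)$ for small $\eps$, hence $\lambda_N-\lambda_N(\eps)\ge-\eps^{2k}f_R(\eps)$. Dividing by $\eps^{2k}$, letting $\eps\to0^+$, and recalling $f_R(\eps)\to\beta^2\kappa_R$ with $\kappa_R$ as in \eqref{eq:11}, this yields $\liminf_{\eps\to0^+}\frac{\lambda_N-\lambda_N(\eps)}{\eps^{2k}}\ge-\beta^2\kappa_R$. Since the left-hand side does not depend on $R$, letting $R\to+\infty$ on the right-hand side and invoking Lemma \ref{l:limkappaR} (namely $\kappa_R\to2\mathfrak m_k$) gives $\liminf_{\eps\to0^+}\frac{\lambda_N-\lambda_N(\eps)}{\eps^{2k}}\ge-2\beta^2\mathfrak m_k$. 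This is exactly the left inequality already packaged in Proposition \ref{prop:quasi_finito1}.

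For the upper bound the key preliminary is to pin down the exact decay rate of the normalising factor $H(u_N^\eps,\mu_\delta\eps)$ occurring in Proposition \ref{p:stimadifficile}. From the sharp blow-up analysis of the previous subsection — more precisely from the limit $\eps^k/\sqrt{H(u_N^\eps,\mu_\delta\eps)}\to|\beta|^{-1}\sqrt{\mu_\delta/\int_{S_{\mu_\delta}^+}|\Phi_k|^2\,ds}$ that accompanies \eqref{eq:buU} — one deduces
\[
H(u_N^\eps,\mu_\delta\eps)=\frac{\beta^2}{\mu_\delta}\Big(\int_{S_{\mu_\delta}^+}|\Phi_k|^2\,ds\Big)\,\eps^{2k}\,(1+o(1))\qquad\text{as }\eps\to0^+ .
\]
Then Proposition \ref{p:stimadifficile} gives, for every fixed $R>\tilde R$, $\lambda_N-\lambda_N(\eps)\le H(u_N^\eps,\mu_\delta\eps)\,g_R(\eps)$; dividing by $\eps^{2k}$, letting $\eps\to0^+$ and using Lemma \ref{l:limite_kappa_R} (which gives $g_R(\eps)\to\frac{\mu_\delta}{\int_{S_{\mu_\delta}^+}|\Phi_k|^2\,ds}\,\tilde\kappa_R$), the two normalising constants cancel and we obtain $\limsup_{\eps\to0^+}\frac{\lambda_N-\lambda_N(\eps)}{\eps^{2k}}\le\beta^2\tilde\kappa_R$. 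Sending $R\to+\infty$ and using $\tilde\kappa_R\to-2\mathfrak m_k$ (again Lemma \ref{l:limite_kappa_R}) yields $\limsup_{\eps\to0^+}\frac{\lambda_N-\lambda_N(\eps)}{\eps^{2k}}\le-2\beta^2\mathfrak m_k$. Combined with the lower bound, this shows the limit exists and equals $-2\beta^2\mathfrak m_k$.

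The main obstacle I anticipate lies not in this final assembly — which is essentially bookkeeping — but in the results it rests on: establishing that $H(u_N^\eps,\mu_\delta\eps)$ is comparable to $\eps^{2k}$ with the correct constant, which uses the full strength of the Almgren-type monotonicity bound of Lemma \ref{l:limitatezza_N_per_blowup} (to secure the lower estimate \eqref{eq:stima_sotto_radiceH}) together with the uniqueness part of Lemma \ref{l:Phi} to identify the blow-up limit as $\Phi_k$; and in evaluating the limits of $\kappa_R$ and $\tilde\kappa_R$, which are reduced through Lemma \ref{l:xi1} to the single scalar $\mathfrak m_k$. Once these ingredients are in place the two-sided squeeze closes and the constant on the right-hand side is $-2\beta^2\mathfrak m_k$.
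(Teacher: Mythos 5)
Your proposal is correct and reconstructs exactly the squeeze argument the paper intends: the lower bound from Proposition \ref{p:stimafacile} together with Lemma \ref{l:limkappaR} (already packaged as the left inequality of Proposition \ref{prop:quasi_finito1}), and the upper bound from Proposition \ref{p:stimadifficile}, the identification of $\lim_{\eps\to0^+}\eps^{2k}/H(u_N^\eps,\mu_\delta\eps)$ via the blow-up limit \eqref{eq:buU}, and Lemma \ref{l:limite_kappa_R}, with the two normalising constants cancelling. You have also correctly located the real work in the ingredients (the Almgren monotonicity bound behind \eqref{eq:stima_sotto_radiceH}, the uniqueness in Lemma \ref{l:Phi}, and the reduction of $\kappa_R$ and $\tilde\kappa_R$ to $\mathfrak m_k$ through Lemma \ref{l:xi1}) rather than in the final assembly, which matches the paper's presentation.
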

In particular, Theorem \ref{t:main_asy_eige} and  \eqref{eq:buU} above
provide a  proof of Theorem \ref{t:gad} that is alternative  to the
one given
in \cite{Gad}.

 \paragraph{Acknowledgements} 
The authors thank the anonymous reviewers for their
  suggestions and comments. They are also indebted to 
Andr\'e Froehly for bringing relevant references to their
attention.
 L. Abatangelo, V. Felli and C. L\'ena are partially supported by
 the project ERC Advanced Grant 2013 n. 339958: ``Complex Patterns for
 Strongly Interacting Dynamical Systems -- COMPAT''. 
L. Abatangelo and  V. Felli are partially supported by the INDAM-GNAMPA 2018 grant ``Formula di monotonia e applicazioni: problemi frazionari e stabilità spettrale rispetto a perturbazioni del dominio''.
 V. Felli is partially supported by the PRIN 2015
grant ``Variational methods, with applications to problems in
mathematical physics and geometry''. C. L\'ena is partially supported by the Portuguese FCT (Project OPTFORMA, IF/00177/2013)
and the Swedish Research Council (Grant D0497301).

\end{document}